\newtheorem{thm}{Theorem}
\newtheorem{prop}[thm]{Proposition}
\newtheorem{lemma}[thm]{Lemma}
\newtheorem{claim}{Claim}
\newtheorem{defi}[thm]{Definition}
\newtheorem{rmk}[thm]{Remark}
\newenvironment{pf}[1][Proof.]{\noindent \textbf{#1:} }{}
\newenvironment{enui}{\begin{enumerate}[(i)]}{\end{enumerate}}
\def\Aut{{\operatorname{Aut}}}
\def\X{\mathcal{X}}
\def\E{\mathcal{E}}
\def\id{{\operatorname{id}}}
\def\F{{\mathcal{F}}}
\def\hol{{\operatorname{hol}}}
\def\im{{\operatorname{im}}}
\def\empty{\emptyset}
\def\nn{{\nonumber}}
\newcommand\wt[1]{{\widetilde{#1}}}
\newcommand{\BAR}[1]{{\overline{#1}}}
\def\al{{\alpha}}
\def\De{\Delta}
\def\eps{\varepsilon}
\def\Om{\Omega}
\def\om{\omega}
\def\lam{\lambda}
\def\Si{\Sigma}
\def\si{\sigma}
\renewcommand\phi{\varphi}
\def\S{{\mathcal{S}}}
\newcommand{\N}{\mathbb{N}}
\newcommand{\Z}{\mathbb{Z}}
\newcommand{\R}{\mathbb{R}}
\def\C{\mathbb C}
\def\Coi{\mathcal{C}^{\operatorname{flat}}}
\def\Con{\mathcal{C}}
\def\TT{\mathbb{T}}
\def\T{\mathcal{T}}
\def\D{\mathbb{D}}
\def\Gr{\operatorname{G}}
\def\Sp{{\operatorname{Sp}}}
\def\OO{{\operatorname{O}}}
\def\U{{\operatorname{U}}}
\def\g{\mathfrak g}
\def\A{\mathcal A}
\def\pr{{\operatorname{pr}}}
\def\sub{\subseteq}
\def\x{\times}
\def\wo{\setminus}
\def\one{\mathbf{1}}
\def\iso{\cong}
\def\Iso{{\operatorname{Iso}}}
\def\Hom{{\operatorname{Hom}}}
\def\End{{\operatorname{End}}}
\def\d{\partial}
\def\lan{\langle}
\def\ran{\rangle}
\def\rank{{\operatorname{rank}}}
\def\GL{{\operatorname{GL}}}
\def\O{{\operatorname{O}}}
\def\Fix{{\operatorname{Fix}}}
\def\Ham{\operatorname{Ham}}
\def\Colon{:}
\def\CP{\mathbb{C}P}
\def\FS{{\operatorname{FS}}}
\def\GS{{\operatorname{GS}}}
\def\th{\theta}
\def\tt{\mathfrak{t}}
\def\spec{{\operatorname{spec}}}
\def\Ga{\Gamma}
\def\exp{\operatorname{exp}}
\def\disj{\coprod}
\def\cont{\supseteq}
\def\corank{{\operatorname{corank}}}
\def\gcd{\operatorname{gcd}}
\def\const{\equiv}
\def\then{\Rightarrow}
\def\codim{\operatorname{codim}}
\def\real{{\operatorname{real}}}
\title[A Maslov Map for Coisotropic Submanifolds]{A Maslov Map for Coisotropic Submanifolds, Leaf-wise Fixed Points and Presymplectic Non-Embeddings}
\author{Fabian Ziltener (University of Toronto)}
\begin{document}

\begin{abstract} Let $(M,\om)$ be a symplectic manifold, $N\sub M$ a coisotropic submanifold, and $\Si$ a compact oriented (real) surface. I define a natural Maslov index for each continuous map $u:\Si\to M$ that sends every connected component of $\d\Si$ to some isotropic leaf of $N$. This index is real valued and generalizes the usual Lagrangian Maslov index. The idea is to use the linear holonomy of the isotropic foliation of $N$ to compensate for the loss of boundary data in the case $\codim N<\dim M/2$. The definition is based on the Salamon-Zehnder (mean) Maslov index of a path of linear symplectic automorphisms. I prove a lower bound on the number of leafwise fixed points of a Hamiltonian diffeomorphism, if $(M,\om)$ is geometrically bounded and $N$ is closed, regular (i.e. "fibering"), and monotone. As an application, we obtain a presymplectic non-embedding result. I also prove a coisotropic version of the Audin conjecture.
\end{abstract}

\maketitle
\tableofcontents

\section{Motivation and main results}\label{sec:main}
This article is concerned with the following two problems. Let $(M,\om)$ be a symplectic manifold and $N\sub M$ a coisotropic submanifold. A leafwise fixed point of a map $\phi:M\to M$ is by definition a point $x\in N$ such that $\phi(x)$ lies in the isotropic leaf through $x$. We denote by $\Fix(\phi,N):=\Fix(\phi,N,\om)$ the set of such points.

{\bf Problem A:} Find conditions on $(M,\om,N,\phi)$ under which $\Fix(\phi,N)$ is non-empty and give a lower bound on $\big|\Fix(\phi,N)\big|$.

Note that in the case $N=M$ the set $\Fix(\phi,N)$ equals the set $\Fix(\phi)$ of usual fixed points. In the other extreme case, in which $N$ is Lagrangian, we have $\Fix(\phi,N)=N\cap\phi^{-1}(N)$. 

To formulate the second problem, let $V$ be a real vector space and $\om$ a skew-symmetric form on $V$. We denote $\corank\om:=\dim\ker\big(V\ni v\mapsto \om(v,\cdot)\in V^*\big)$. A presymplectic form on a manifold $M$ is a closed two-form $\om$ of constant corank. We say that a presymplectic manifold $(M',\om')$ embeds into another presymplectic manifold $(M,\om)$ iff there exists an embedding $\psi:M'\to M$ such that $\psi^*\om=\om'$. The next problem generalizes the symplectic and Lagrangian non-embedding problems:

{\bf Problem B:} Find conditions on $(M,\om)$ and $(M',\om')$ under which $(M',\om')$ does not embed into $(M,\om)$. 

In \cite{ZiLeafwise}, I gave some solution to problem A, imposing the conditions that $N$ is regular and the Hofer distance of $\phi$ and the identity is small enough. In the present article, the second condition is replaced by the assumption that $N$ is monotone. The paper \cite{ZiLeafwise} also contains some solution to problem B, assuming that $\om$ is non-degenerate and aspherical. In the present article the latter condition is replaced by monotonicity of $\om$. 

To define monotonicity for a coisotropic submanifold $N$, I introduce a natural Maslov map for $N$, which equals the usual Maslov index in the case $\dim N=\dim M/2$, and twice the first Chern class of $(M,\om)$ in the case $N=M$. 

\subsection{Definition of the Maslov map}
Let $(M,\om)$ be a symplectic manifold (without boundary), $N\sub M$ a coisotropic submanifold, and $X$ a topological manifold. We denote by $\Con(X)$ the set of connected components of $X$ and by $N_\om$ the set of isotropic leaves of $N$. We define 
\[C(X,M;N,\om):=\big\{u\in C(X,M)\,\big|\,\forall Y\in \Con(\d X)\,\exists F\in N_\om:\,u(Y)\sub F\big\}.\] 
Let $u\in C([0,1]\x X,M)$. We call $u$ an \emph{$(N,\om)$-admissible homotopy} iff for every $Y\in\Con(\d X)$ there exists $F\in N_\om$ such that $u(t,x)\in F$, for every $t\in[0,1]$, $x\in Y$. We denote by $\big[X,M;N,\om\big]$ the corresponding set of all $(N,\om)$-admissible homotopy classes of maps from $X$ to $M$. 

We denote by $\S$ the class of all compact oriented (real) topological surfaces (possibly with boundary and disconnected). Let $\Si\in\S$. The Maslov map introduced in this article is a map 
\begin{equation}\label{eq:m Si om N []}m_{\Si,N}:=m_{\Si,\om,N}:\big[\Si,M;N,\om\big]\to\R.\end{equation}
Its definition involves the following four steps. A more direct, but less natural definition is given on page \pageref{elementary}.

\subsubsection*{The Salamon-Zehnder Maslov index}
Let $(V,\om)$ be a symplectic vector space. We denote by $\Aut\om$ the group of linear symplectic automorphisms of $V$. We define the \emph{Salamon-Zehnder Maslov index}
\begin{equation}\label{eq:m om C}m_\om:C([0,1],\Aut\om)\to\R\end{equation} 
as follows. We define the winding map $\al:C\big([0,1],\R/\Z\big)\to\R$ by $\al(z):=\wt z(1)-\wt z(0)$, where $\wt z\in C([0,1],\R)$ is any path such that $\wt z(t)+\Z=z(t)$, for every $t\in\R$. We denote by $\rho_\om:\Aut(\om)\to \R/\Z\iso S^1$ the Salamon-Zehnder map (see Proposition \ref{prop:rho} below). Let $\Phi\in C([0,1],\Aut\om)$. We define $m_\om(\Phi):=2\al(\rho_\om\circ\Phi)$. 

\subsubsection*{The Maslov map for pairs of flat transports}
Let $X$ be a topological manifold. We denote by $\Pi X$ the fundamental groupoid of $X$. This is a topological groupoid. Its set of objects is $X$ and its set of morphisms consists of all homotopy classes (with fixed end-points) of continuous paths in $X$. 

For two vector spaces $V$ and $V'$ we denote by $\Iso(V,V')$ the set of all isomorphisms from $V$ to $V'$. Let $E\to X$ be a vector bundle. We denote by $\GL(E)$ the general linear groupoid of $E$. This is a topological groupoid. Its set of objects is $X$ and its set of morphisms consists of all triples $(x,y,\Phi)$, where $x,y\in X$ and $\Phi\in\Iso(E_x,E_y)$. 

By a flat (linear) transport we mean a (continuous) representation $\Phi$ of $\Pi X$ on $E$, i.e.~a morphism of topological groupoids from $\Pi X$ to $\GL(E)$ that covers the identity on $X$. Such a $\Phi$ associates to every homotopy class of paths $x\in C([0,1],X)$ an isomorphism $\Phi([x])\in \Iso(x(0),x(1))$. It is equivariant with respect to concatenation of paths. We denote by $\T(E)$ the set of all flat transports on $E$. 

We call $\Phi\in\T(E)$ \emph{regular} iff $\Phi([x])=\id$, for every $x\in C([0,1],X)$ satisfying $x(0)=x(1)$. Note that if $X$ is a smooth manifold and $E$ is a smooth vector bundle then the parallel transport of a smooth flat connection on $E$ is a flat transport. 

For symplectic vector spaces $(V,\om)$ and $(V',\om')$ we denote by $\Iso(\om,\om')$ the set of linear isomorphisms $\Phi:V\to V'$ such that $\Phi^*\om'=\om$. Let $X$ be a topological manifold and $(E,\om)$ be a symplectic vector bundle over $X$. We define $\GL(E,\om)$ to be the subgroupoid of $\GL(E)$ consisting of all $(x,y,\Phi)$ such that $\Phi\in \Iso(\om_x,\om_y)$. We call a transport $\Phi\in\T(E)$ symplectic iff $\Phi(\Pi X)\sub\GL(E,\om)$, and denote by $\T(E,\om)$ the set of all such $\Phi$'s. 

Let $X$ be an oriented closed curve (i.e.~topological real one-manifold), $(E,\om)$ a symplectic vector bundle over $X$, and $\Phi,\Phi'\in\T(E,\om)$ be such that $\Phi$ or $\Phi'$ is regular. We define the number $m_\om(\Phi,\Phi')\in\R$ as follows. Namely, we choose a path $z\in C([0,1],X)$ such that $z(0)=z(1)$ and the map $S^1\iso[0,1]/\{0,1\}\ni [t]\mapsto z(t)\in X$ has degree one. We define $\Psi\in C([0,1],\Aut(\om_{z(0)}))$ by $\Psi(t):=\Phi'([z|_{[0,t]}])^{-1}\Phi([z|_{[0,t]}])$, and 
\begin{equation}\label{eq:m om Phi Phi'}m_\om(\Phi,\Phi'):=m_{\om_{z(0)}}(\Psi).\end{equation}
By Lemma \ref{le:m Phi Phi'} below this number is well-defined.

\subsubsection*{The coisotropic Maslov map for bundles}
Let $(V,\om)$ be a symplectic vector space and $W\sub V$ be a subspace. We denote by $W^\om:=\big\{v\in V\,\big|\,\om(v,w)=0,\,\forall w\in W\big\}$ its symplectic complement. Assume that $W$ is coisotropic. We denote by $(W_\om:=W/W^\om,\om_W)$ its linear symplectic quotient, and for $\Phi\in\Aut(\om)$ we define 
\[\Phi_W:W_\om\to (\Phi W)_\om,\quad \Phi_W(v+W^\om):=\Phi v+(\Phi W)^\om.\]

Let $E$ be a vector bundle over $X$, $W\sub E$ a subbundle and $\Phi\in\T(E)$. We say that $\Phi$ leaves $W$ invariant iff $\Phi([z])W_{z(0)}=W_{z(1)}$, for every $[z]\in\Pi X$. Let $(E,\om)$ be a symplectic vector bundle over $X$. We define $\Coi(E,\om)$ to be the set of all pairs $(W,\Phi)$, where $W\sub E$ is an $\om$-coisotropic subbundle, and $\Phi\in\T(W_\om,\om_W)$. Let $W\sub E$ be a coisotropic subbundle, and $\Phi\in\T(E,\om)$ be a transport that leaves $W$ invariant. We define $\Phi_W\in\T(W_\om,\om_W)$ by $\Phi_W([z]):=\Phi([z])_{W_{z(0)}}$. 
\begin{thm}(Coisotropic Maslov map for bundles)\label{thm:m Si E om} Let $\Si\in\S$ be connected and such that $\d\Si\neq\emptyset$, and let $(E,\om)$ be a symplectic vector bundle over $\Si$. Then there exists a unique map $m_{\Si,E,\om}:\Coi(E,\om)\to\R$ with the following properties.
\begin{enui}
\item\label{thm:m Si E om:boundary}(Boundary) For every regular transport $\Phi_0\in\T(E,\om)$ and every $\Phi\in\T((E,\om)|_{\d\Si})$ we have $m_{\Si,E,\om}(E|_{\d\Si},\Phi)=m_{\d\Si,\om|_{\d\Si}}(\Phi,\Phi_0)$. 
\item\label{thm:m Si E om:inv}(Invariant subbundle) Let $W\sub E|_{\d\Si}$ be an $\om$-coisotropic subbundle, and $\Psi\in\T((E,\om)|_{\d\Si})$. If $\Psi$ leaves $W$ invariant then $m_{\Si,E,\om}(E|_{\d\Si},\Psi)=m_{\Si,E,\om}(W,\Psi_W)$. 
\end{enui}
\end{thm}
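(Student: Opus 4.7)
I would prove uniqueness first by using axioms (ii) and then (i) to force any candidate map into an explicit formula, and then prove existence by taking that formula as the definition.

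For uniqueness, let $(W,\Phi)\in\Coi(E,\om)$. Since $\d\Si$ is a disjoint union of circles, on each component I construct a symplectic lift $\Psi\in\T((E,\om)|_{\d\Si})$ leaving $W$ invariant with $\Psi_W=\Phi$, as follows: trivialize $E$ on the circle so that $W$, a symplectic complement $L$ of $W^\om$ in $W$, and a dual isotropic complement $U'$ of $W$ in $E$ are all constant, and use the resulting block-triangular description of the $W$-stabilizer in $\Sp$ to promote the $\Sp(W_0/W_0^\om)$-valued path coming from $\Phi$ to an $\Sp$-valued path preserving $W$. Next, I choose a regular $\Phi_0\in\T(E,\om)$; such a transport exists because $\Si$ is a connected oriented surface with $\d\Si\neq\empty$, hence retracts to a graph, so $(E,\om)$ admits a symplectic trivialization (the obstruction lies in $H^2(\Si;\pi_1\Sp)=0$), and any such trivialization yields a regular transport. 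With these choices, axiom (ii) forces $m_{\Si,E,\om}(W,\Phi)=m_{\Si,E,\om}(E|_{\d\Si},\Psi)$, and axiom (i) forces this to equal $m_{\d\Si,\om|_{\d\Si}}(\Psi,\Phi_0|_{\d\Si})$; the value is thus uniquely determined.

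For existence, I define $m_{\Si,E,\om}(W,\Phi)$ by that formula. The two axioms then follow by construction: in (i) take $\Psi:=\Phi$ as a tautological lift, and in (ii) use $\Psi$ as a lift of $\Psi_W$. The substantive task is well-definedness, i.e.\ independence of the choices of the lift $\Psi$ and of the regular transport $\Phi_0$. For independence of $\Psi$: any two lifts of $\Phi$ can be joined by a continuous path of lifts, because on each boundary circle the lift-ambiguity is controlled by the connected solvable subgroup $H\sub\Sp$ of elements preserving $W_0$ and inducing the identity on $W_0/W_0^\om$; continuity of the Salamon-Zehnder map then keeps $m_{\d\Si,\om|_{\d\Si}}(\Psi,\Phi_0)$ constant along this path.

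The hardest part will be independence of $\Phi_0$. Two regular transports on the trivialized $E$ differ by a symplectic bundle automorphism $g\colon\Si\to\Sp$. Via a cocycle identity for $m_\om(\cdot,\cdot)$ from \eqref{eq:m om Phi Phi'} (cf.\ Lemma \ref{le:m Phi Phi'}), the difference
\[
\sum_{Y\in\Con(\d\Si)}\bigl(m_\om(\Psi|_Y,\Phi_0|_Y)-m_\om(\Psi|_Y,\Phi_0'|_Y)\bigr)
\]
collapses to the total Salamon-Zehnder index of the loop $g|_{\d\Si}$, which by the definition of $m_\om$ equals twice the total winding of $\rho_\om\circ g\colon\d\Si\to S^1$. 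Since $\rho_\om\circ g$ extends continuously to all of $\Si$, the pullback of the standard angular $1$-form on $S^1$ is a closed $1$-form on $\Si$, and its integral over $\d\Si$ vanishes by Stokes' theorem. Hence the total winding is zero, completing well-definedness and thereby existence.
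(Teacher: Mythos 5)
Your proof takes essentially the same route as the paper: uniqueness via axioms (ii) then (i) to force an explicit formula involving a lift $\Psi$ and a regular transport $\Phi_0$, existence via well-definedness of that formula, with the $\Phi_0$-independence handled by a degree argument ($\rho_\om$ of the comparison extends continuously over $\Si$, so its boundary degree vanishes) that is exactly the paper's Lemma \ref{le:m Phi Psi T}. However, the step ``independence of the lift $\Psi$'' contains a genuine gap, and it is precisely at the point the paper flags as the crucial technical input.

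You argue: two lifts can be joined by a continuous path of lifts (true, since the fiberwise ambiguity group is connected), and then ``continuity of the Salamon--Zehnder map then keeps $m_{\d\Si,\om|_{\d\Si}}(\Psi,\Phi_0)$ constant along this path.'' This is false as stated. The function $s\mapsto m_{\d\Si,\om|_{\d\Si}}(\Psi_s,\Phi_0)$ is real-valued, and continuity alone gives you nothing. Concretely, on one boundary circle the quantity is $2\al(\rho_\om\circ\Psi_s)$ for a path $[0,1]\ni t\mapsto\Psi_s(t)\in\Aut(\om_{z(0)})$ with $\Psi_s(0)=\id$; the winding $\al$ depends on the endpoint $\rho_\om(\Psi_s(1))$, and if that endpoint moves in $S^1$ as $s$ varies, the winding moves too. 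What saves the day is that $\Psi_s(1)=\Psi_s([z])$ always preserves $W_{z(0)}$ and induces the fixed automorphism $\Phi([z])$ on the quotient, so by Proposition \ref{prop:rho om Psi} one has $\rho_\om(\Psi_s(1))=\pm\rho_{\om_W}(\Phi([z]))$, a two-element set; hence the endpoint, and therefore the winding, is locally constant in $s$. Your proposal never establishes (or cites) the identity $\rho_\om(\Psi)=\pm\rho_{\om_W}(\Psi_W)$ for $\Psi$ preserving $W$; the remark that $H$ is connected and solvable does not by itself constrain $\rho_\om$ on the coset $\Psi_0([z])H$. The paper's proof of this identity (deforming $\Psi$ to a block-split automorphism while keeping $\rho_\om$ constant, Claims \ref{claim:Psi t om}--\ref{claim:rho Psi t}, then reducing to the Lagrangian-pair case Lemma \ref{le:rho om Psi}) is the substantial piece of work that your sketch elides.
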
 
For the proof of this theorem, the idea is to define 
\[m_{\Si,E,\om}(W,\Phi):=m_{\d\Si,\om|_{\d\Si}}(\Psi,\Phi_0),\]
where $\Psi\in\T((E,\om)|_{\d\Si})$ is a lift of $(W,\Phi)$, and $\Phi_0\in\T(E,\om)$ is a regular transport. In order to show that this does not depend on the choice of $\Psi$, the following result is crucial. Namely, let $(V,\om)$ be a symplectic vector space, $W\sub V$ a coisotropic subspace, and $\Psi\in\Aut(\om)$ be such that $\Psi W=W$. Then $\rho_\om(\Psi)=\pm\rho_{\om_W}(\Psi_W)$. (See Proposition \ref{prop:rho om Psi} below.) The proof of this identity is based on the existence of a path $\Psi^.\in C([0,1],\Aut(\om))$, such that $\Psi^1=\Psi$, $\Psi^0$ leaves three fixed subspaces of $V$ invariant, and the map $[0,1]\ni t\mapsto\rho_\om(\Psi^t)\in\R$ is constant. 

Let $\Si\in\S$ be a connected surface satisfying $\d\Si\neq\emptyset$. We define $\E_\Si$ to be the class of all quadruples $\big(E,\om,W,\Phi\big)$, where $(E,\om)$ is a symplectic vector bundle over $\Si$ and $(W,\Phi)\in \Coi((E,\om)|_{\d\Si})$. We define 
\[m_\Si:\E_\Si\to\R,\quad m_\Si(E,\om,W,\Phi):=m_{\Si,E,\om}(W,\Phi),\]
where $m_{\Si,E,\om}$ is the unique map satisfying the conditions of Theorem \ref{thm:m Si E om}.  
\subsubsection*{Definition of $m_{\Si,\om,N}$}
We now define the map (\ref{eq:m Si om N []}) as follows. Assume first that $\Si$ is connected. If $\d\Si=\emptyset$ then we define $m_{\Si,\om,N}(a):=2\lan c_1(M,\om),a\ran$. Assume now that $\d\Si\neq\empty$. We denote by $\hol^{N,\om}$ the linear holonomy of the isotropic foliation of $N$ (see (\ref{eq:hol F}) below). We define the map $\wt m_{\Si,\om,N}:C(\Si,M;N,\om)\to\R$ by 
\begin{equation}\label{eq:m Si om N}\wt m_{\Si,\om,N}(u):=m_\Si\big(u^*(TM,\om),u|_{\d\Si}^*(TN,\hol^{N,\om})\big).
\end{equation} 
It follows from Theorem \ref{thm:m}(\ref{thm:m:homotopy}) below that this map is invariant under $(N,\om)$-admissible homotopies. For a general $\Si\in\S$ we define $\wt m_{\Si,\om,N}$ by $\wt m_{\Si,\om,N}(u):=\sum_{\Si'\in\Con(\Si)}\wt m_{\Si',\om,N}(u|_{\Si'})$.
\begin{defi}\label{defi:m Si om N S} Let $(M,\om)$ be a symplectic manifold, $N\sub M$ a coisotropic submanifold, and $\Si\in\S$. We define the Maslov map $m_{\Si,N}:\big[\Si,M;N,\om\big]\to\R$ to be the map induced by $\wt m_{\Si,\om,N}$.
\end{defi}
As an example, let $\Si:=\D\sub\R^2$ be the unit disk, $M:=\R^{2n}$, $\om$ the standard structure $\om_0$, $N:=S^{2n-1}$, and $u:\D\to\R^{2n}$ the inclusion $u(z):=(z,0,\ldots,0)$. Then $m_{\D,\om_0,S^{2n-1}}(u)=2$. For more examples see the subsection on page \pageref{GS} about the Gaio-Salamon Maslov index.

The map $m_{\Si,\om,N}$ may be viewed as a \emph{mean} Maslov index. Analogously to the definition of the Conley-Zehnder index there should also be a natural \emph{integer valued} map with the same domain. 

\subsubsection*{The regular case}
Let $X$ be a compact topological manifold. We call a map $u\in C([0,1]\x X,M)$ a \emph{weakly $(N,\om)$-admissible homotopy} iff for every $Y\in\Con(\d X)$ and $t\in[0,1]$ there exists $F\in N_\om$ such that $u(t,x)\in F$, for every $x\in Y$. We denote by $\big\lan X,M;N,\om\big\ran$ the corresponding set of homotopy classes. We call $N$ regular iff its isotropic leaf relation is a closed subset and a submanifold of $N\x N$. Assume now that $N$ is regular. Then it follows from Theorem \ref{thm:m M om N}(\ref{thm:m M om N:weak homotopy},\ref{thm:m M om N:reg}) below that the Maslov map $m_{\Si,N}$ takes on integer values and is invariant under weak homotopies. If $N$ is also orientable then by Theorem \ref{thm:m M om N}(\ref{thm:m M om N:reg}) $m_{\Si,N}$ takes on even values. 

\subsection{More elementary description} \label{elementary}
In more elementary, but less natural terms, the map $\wt m_{\Si,\om,N}$ is given as follows. Let $(V,\om)$ be a symplectic vector space, and $W_0\sub V$ a coisotropic subspace. We define the \emph{framed coisotropic Grassmannian} $\Gr(\om,W_0)$ to be the manifold consisting of all pairs $(W,\Phi)$, where $W\sub V$ is a coisotropic subspace and $\Phi\in\Iso\big((W_0)_\om,\om_{W_0};W_\om,\om_W\big)$. 

Let $(W,\Phi)\in C([0,1],\Gr(\om,W_0))$ be a path such that $W(0)=W(1)$. We choose a path $\Psi\in C([0,1],\Aut\om)$ satisfying $\Psi(t)W_0=W(t)$ and $\Psi(t)_{W_0}=\Phi(t)$. (It follows from Lemma \ref{le:bundle} below that such a path exists.) We define $m_\om(W,\Phi):=m_\om(\Psi)$. (It follows from Theorem \ref{thm:m X V om}(\ref{thm:m X V om:Psi Psi'}) below that this number does not depend on the choice of $\Psi$.) 

Let now $M,\om,N,\Si$ and $u$ be as above. For simplicity, assume that $\Si=\D$. We denote $2n:=\dim M$. We choose a symplectic trivialization $\Psi:\D\x\R^{2n}\to u^*TM$, and define $W_0:=\Psi_{u(1)}^{-1}T_{u(1)}N\sub\R^{2n}$. We define the path $(W,\Phi):[0,1]\to\Gr(W_0,\om_0)$ as follows. Let $s\in[0,1]$. We set $W(s):=\Psi_{u(e^{2\pi is})}^{-1}T_{u(e^{2\pi is})}N\sub\R^{2n}$. Furthermore, we define $F_s:(T_{u(1)}N)_\om\to(T_{u(e^{2\pi is})}N)_\om$ to be the linear holonomy of the isotropic foliation of $N$ along the path $[0,1]\ni t\mapsto u(e^{2\pi ist})\in F$. We set $\Phi(s):=(\Psi_{u(e^{2\pi is})})_{W_0}^{-1}F_s(\Psi_{u(1)})_{W_0}$. The Maslov index of $u$ is now given by 
\[\wt m_{\D,\om,N}(u)=m_{\om_0}(W,\Phi).\]

\subsection{Leaf-wise fixed points, presymplectic embeddings and minimal Maslov numbers}
\subsubsection*{Leaf-wise fixed points}
Assume that $N$ is regular. We define the minimal Maslov number 
\[m(N):=m(N,\om):=\inf\big(\big\{m_{\D,N}(a)\,\big|\,a\in \big\lan X,M;N,\om\big\ran\big\}\cap\N\big)\in\N\cup\{\infty\}.\]
We call $N$ monotone iff there exists a constant $c>0$ such that for every $u\in C(\D,M;N,\om)$ we have $\wt m_{\D,\om,N,F}(u)=c\int_\D u^*\om$. 

We denote by $\Ham(M,\om)$ the group of Hamiltonian diffeomorphisms on $M$. For every $\phi\in\Ham(M,\om)$ the pair $(N,\phi)$ is called \emph{non-degenerate} iff the following holds. For $x_0\in N$ we denote by $\pr_{x_0}:T_{x_0}N\to (T_{x_0}N)_\om=T_{x_0}N/(T_{x_0}N)^\om$ the canonical projection. Let $F\sub N$ be an isotropic leaf, and $x\in C^\infty([0,1],F)$ a path. Assume that $\phi(x(0))=x(1)$, and let $v\in T_{x(0)}N\cap T_{x(0)}\phi^{-1}(N)$ be a vector. Then $v\neq0$ implies that 
\begin{equation}\label{eq:hol x pr}\hol^{\om,N}_x\pr_{x(0)}v\neq \pr_{x(1)}d\phi(x(0))v.\end{equation}
In the case $N=M$ this condition means that for every $x_0\in\Fix(\phi)$, $1$ is not an eigenvalue of $d\phi(x_0)$. Furthermore, in the case that $N$ is Lagrangian the condition means that for every connected component $N'\sub N$ we have $N'\pitchfork\phi(N')$, i.e. $N'$ and $\phi(N')$ intersect transversely. 

For a topological space $X$ and $i\in\N\cup\{0\}$ we denote by $b_i(X,\Z_2)$ the $i$-th $\Z_2$-Betti number of $X$. 
\begin{thm}\label{thm:Fix} Let $(M,\om)$ be a (geometrically) bounded symplectic manifold, $N\sub M$ a closed monotone regular coisotropic submanifold and $\phi\in\Ham(M,\om)$. If $(N,\phi)$ is non-degenerate then 
\begin{equation}\label{eq:Fix phi N}|\Fix(\phi,N)|\geq\sum_{i=\dim N-m(N)+2,\ldots,m(N)-2}b_i(N,\Z_2).\end{equation}
\end{thm}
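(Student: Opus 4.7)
The strategy is to construct a Floer-theoretic homology $HF_*(\phi,N)$ generated over $\Z_2$ by $\Fix(\phi,N)$, and then to identify $HF_*(\phi,N)$ with the singular homology $H_*(N;\Z_2)$ in the degree window $[\dim N-m(N)+2,\,m(N)-2]$; Morse-type inequalities for the resulting Floer complex then give (\ref{eq:Fix phi N}). Non-degeneracy of $(N,\phi)$ makes $\Fix(\phi,N)$ a discrete set, so the rank of $CF_*(\phi,N)$ is exactly $|\Fix(\phi,N)|$, which is bounded below by the total dimension of $HF_*$.

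The first step is to set up the Floer data. I would pick a time-dependent Hamiltonian $H\in C^\infty([0,1]\x M,\R)$ generating $\phi$ and a generic $\om$-compatible $J$, and form the moduli spaces of maps $u\colon\R\x[0,1]\to M$ solving $\d_s u+J(u)(\d_t u-X_H(u))=0$ subject to the coisotropic boundary condition that $u(s,0)$ and $u(s,1)$ lie on a common isotropic leaf of $N$, identified by the linear holonomy $\hol^{N,\om}$. The stationary solutions are exactly the leafwise fixed points, and the relative grading is computed by the Maslov map $m_{\D,\om,N}$ applied to capping half-disks; by the regular case discussion, this takes integer (even, in the orientable case) values, providing the correct $\Z$-grading modulo $m(N)$ for the differential.

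The heart of the argument is the analytic compactness package. Geometric boundedness of $(M,\om)$ yields, via a maximum-principle argument against a plurisubharmonic exhaustion, a $C^0$-confinement of Floer trajectories of bounded energy to a compact set. Monotonicity of $N$ converts energy bounds into Maslov-index bounds, and the definition of $m(N)$ forces any sphere or disk bubble to consume at least $m(N)$ units of index. Standard dimension counting then excludes bubbling from moduli spaces of expected dimension zero or one, provided the indices of the asymptotic generators lie in the window $[\dim N-m(N)+2,\,m(N)-2]$. Regularity of $N$ makes the leaf-matching boundary condition a well-posed Fredholm problem, and transversality can be achieved by a generic choice of $(H,J)$. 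To compute $HF_*$ in this window, I would run a Piunikhin-Salamon-Schwarz type argument: for $H$ a $C^2$-small Morse function on $N$ suitably extended to $M$, the leafwise fixed points are the critical points of $H|_N$ and the Floer differential degenerates to the Morse differential of $N$, yielding $H_*(N;\Z_2)$; continuation maps then match this model with $CF_*(\phi,N)$ in the admissible range, again by the same bubble-exclusion argument applied to parametrized moduli.

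The main obstacle will be the analytic foundation of the coisotropic Floer theory with the holonomy-matching boundary condition. Unlike the Lagrangian case, the boundary condition is not encoded by a Lagrangian submanifold of $M\x M$ but by a fibered product over the leaf space, so one must develop Fredholm theory, transversality and Gromov compactness adapted to this setting, and verify that the broken configurations arising in the boundary of one-dimensional moduli spaces are exactly those needed for $\d^2=0$ and for the continuation and PSS maps to be chain homotopy equivalences in the required degree range. Once these analytic foundations are in place, the algebraic part of the argument follows the standard monotone Floer pattern.
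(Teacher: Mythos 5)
Your strategy — build a coisotropic Floer homology with holonomy-matching boundary conditions and argue as in the monotone Lagrangian case — is a natural direct approach, and you are honest that the analytic foundations (Fredholm theory, Gromov compactness, transversality for the leaf-matching boundary condition) would all have to be built from scratch. But that is exactly where the proposal stalls: as written it is a research program rather than a proof, and it leaves the hardest part open.

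The paper takes a genuinely different and far shorter route by recognizing that the obstacle you describe dissolves under regularity. Since $N$ is regular, the leaf space $N_\om$ is a smooth symplectic manifold $(N_\om,\om_N)$, and $N$ embeds as a Lagrangian $\wt N=\iota_N(N)$ in $\wt M:=M\x N_\om$ with $\wt\om:=\om\oplus(-\om_N)$ via $\iota_N(x)=(x,N_x)$. The fibered-product boundary condition you flag as ``not encoded by a Lagrangian submanifold of $M\x M$'' \emph{is} encoded by this Lagrangian in $\wt M$: leafwise fixed points of $\phi$ on $N$ correspond bijectively to intersection points $\wt N\cap\wt\phi(\wt N)$ with $\wt\phi:=\phi\x\id_{N_\om}$, non-degeneracy of $(N,\phi)$ translates to $\wt N\pitchfork\wt\phi(\wt N)$, and monotonicity together with $m(\wt N,\wt\om)=m(N,\om)$ is established through Theorem \ref{thm:m M om N}(\ref{thm:m M om N:wt M}) and Proposition \ref{prop:phi}. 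Albers' Lagrangian result (Theorem \ref{thm:Lag}) then applies directly in $(\wt M,\wt\om)$. So the paper never needs a new Floer theory — the Maslov map is developed precisely to feed into this reduction. If you wanted to pursue your direct construction, you would in effect be re-deriving Albers' argument on $\wt M$ in disguise, and the cleanest way to make your moduli spaces well-posed would already be to pass to $\wt M$, which is the paper's starting point.
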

This theorem generalizes a result for the case $\dim N=\dim M/2$, which is due to P. Albers \cite{Al}. 

\subsubsection*{Examples}
A big class of examples is given as follows. Let $(X,\si)$ and $(X',\si')$ be closed symplectic manifolds and $L\sub X$ a closed Lagrangian submanifold. We define $(M,\om,N):=(X'\x X,\si'\oplus\si,X'\x L)$. Then $N$ is a closed regular coisotropic submanifold of $M$. 

Let $\Si\in\S$. We define $\Si'$ to be the closed surface obtained from $\Si$ by collapsing each boundary circle to a point. By straight-forward arguments the map $\Phi:[\Si',X']\x\lan\Si,X;L,\si\ran\to\big\lan\Si,M;N,\om\big\ran$, $\Phi([u'],[u]):=[(u',u)]$, is well-defined and a bijection. Furthermore, $m_{\Si,\om,N}\circ\Phi([u'],[u])=2\lan c_1(TX',\si'),[u']\ran+m_{\Si,\si,L}([u])$. This follows from Theorem \ref{thm:m M om N}(\ref{thm:m M om N:prod},\ref{thm:m M om N:M}) below. It follows that $m(N,\om)$ is the greatest common divisor of twice the minimal Chern number of $(TX',\si')$ and $m(L,\si)$. 

Assume that there exists $c>0$ such that $2\lan c_1(TX',\si'),[u]\ran=c\int_{S^2}u^*\si'$ for every $u\in C^\infty(S^2,X')$, and $m_{\D,L,\si}([u])=c\int_\D u^*\si$, for every $u\in C^\infty(\D,X)$ satisfying $u(S^1)\sub L$. Then $N$ is monotone, and hence $(M,\om,N)$ satisfies the conditions of Theorem \ref{thm:Fix}. 

As a concrete example, let $n\in\N$, $k\in\{1,\ldots,2n\}$, $X':=\CP^n$, $\si'$ be the Fubini-Studi form $\om_\FS$, $X$ the torus $\TT^{2k}$ with the standard form $\si:=\om_0$, and $L=\TT^k\sub \TT^{2k}$ the standard Lagrangian subtorus. Let $\phi\in\Ham(M,\om)$ be such that $(N,\phi)$ is non-degenerate. Then applying Theorem \ref{thm:Fix} we obtain $|\Fix(\phi,N)|\geq\sum_{i=k,\ldots,2n,\,j=0,\ldots,2n}$''$k$ choose $i-2j$''. 

\subsubsection*{Idea of proof of Theorem \ref{thm:Fix}}
The idea is to find a Lagrangian embedding of $N$ into a suitable symplectic manifold, and then apply the Main Theorem in \cite{Al}. Since $N$ is regular, the set of isotropic leaves $N_\om$ carries canonical smooth and symplectic structures $\A_{N,\om}$ and $\om_N$. We define 
\begin{eqnarray}\label{eq:wt M wt om}&\wt M:=M\x N_\om,\quad \wt\om:=\om\oplus(-\om_N),&\\
\label{eq:iota N wt N} &\iota_N\Colon N\to \wt M,\quad \iota_N(x):=(x,N_x),\quad \wt N:=\iota_N(N).&
\end{eqnarray} 
Then $\iota_N$ is an embedding of $N$ into $\wt M$ that is Lagrangian with respect to the symplectic form $\wt\om$ on $\wt M$. In order for the hypotheses of Albers' result to be satisfied, the inequality $m(\wt N,\wt\om)\geq m(N,\om)$ is crucial. It follows from Theorem \ref{thm:m M om N}(\ref{thm:m M om N:wt M}) and Propositions \ref{prop:phi} below.
\subsubsection*{Application: presymplectic non-embeddings}
Let $(M,\om)$ be a symplectic manifold. We denote by $c_1^{M,\om}:[S^2,M]\to\R$ the contraction with the first Chern class of $(M,\om)$, and by $c_1(M,\om):=\inf\big(c_1^{M,\om}([S^2,M])\cap\N\big)\in\Z§$ the (spherical) minimal Chern number. Let $(M',\om')$ be a regular presymplectic manifold. This means that the isotropic leaf relation of $\om'$ is a closed subset and a submanifold of $M'\x M'$. For $x,y\in\N\cup\{\infty\}$ we denote by $\gcd(x,y)\in\N\cup\{\infty\}$ the greatest common divisor of $x$ and $y$. (Our convention is that $\gcd(x,\infty)=\gcd(\infty,x)=x$, for $x\in\N$, and $\gcd(\infty,\infty)=\infty$.) We define $\mu:=2\gcd\big(c_1(M,\om),c_1(M'_{\om'},\om'_{M'})\big)$. The proof of the following result is based on Theorem \ref{thm:Fix}. 
\begin{thm}\label{thm:non-emb} Assume that $(M,\om)$ is connected and bounded, every compact subset of $M$ is Hamiltonianly displaceable, $M'$ is connected and closed, there exists an index $i\in\big\{\dim M'-\mu+2,\ldots,\mu-2\big\}$ such that $b_i(M',\Z_2)\neq0$, for some fiber $F\sub M'$ every loop $u\in C(S^1,F)$ is contractible in $M'$, $\dim M'+\corank\om'=\dim M$, and the following condition is satisfied.
\begin{enui}
\item\label{thm:non-emb:pi 1 F pi 1 M'} There exists a constant $c>0$ such that $c_1^{M,\om}=c[\om]$ on $[S^2,M]$ and $c_1^{M'_{\om'},\om'_{M'}}=c[\om'_{M'}]$ on $[S^2,M'_{\om'}]$.
\end{enui}
Then $(M',\om')$ does not embed into $(M,\om)$.
\end{thm}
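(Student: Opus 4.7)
The plan is to argue by contradiction. Suppose $\psi\Colon(M',\om')\to(M,\om)$ is a presymplectic embedding, and set $N:=\psi(M')$. Since $\psi^*\om=\om'$ and $\dim M'+\corank\om'=\dim M$, the submanifold $N$ is coisotropic in $(M,\om)$, and its isotropic foliation is the $\psi$-image of the characteristic foliation of $(M',\om')$. Because $M'$ is closed and regular, $N$ is closed in $M$ and regular as a coisotropic submanifold, and $\psi$ induces a symplectomorphism $(M'_{\om'},\om'_{M'})\iso(N_\om,\om_N)$.

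The key intermediate step is to show that $N$ is monotone with minimal Maslov number $m(N,\om)\geq\mu$. Given $u\in C(\D,M;N,\om)$, the boundary loop $u|_{\d\D}$ lies in some isotropic leaf $F\sub N$; by the loop-contractibility hypothesis (which, once it holds for one fiber of the bundle $N\to N_\om$, transfers to all fibers by connectedness of $N_\om$), $u|_{\d\D}$ is null-homotopic in $N$. Choose a capping disk $v\Colon\D\to N$ with $v|_{\d\D}=u|_{\d\D}$, form the sphere $w:=u\#(-v)\Colon S^2\to M$, and let $\bar v\Colon S^2\to N_\om$ be $v$ followed by the quotient $N\to N_\om$ (well-defined as a sphere because $v|_{\d\D}$ lies in a single leaf, i.e.~a single point of $N_\om$). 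Using the Lagrangian embedding $\iota_N\Colon N\to \wt M=M\x N_\om$ of (\ref{eq:wt M wt om}), (\ref{eq:iota N wt N}) together with the functoriality of $m$ packaged in Theorem \ref{thm:m M om N}(\ref{thm:m M om N:prod},\ref{thm:m M om N:M},\ref{thm:m M om N:wt M}) and Proposition \ref{prop:phi}, one arrives at
\begin{equation*}
\wt m_{\D,\om,N}(u)=2\bigl\lan c_1(M,\om),[w]\bigr\ran+2\bigl\lan c_1(M'_{\om'},\om'_{M'}),[\bar v]\bigr\ran.
\end{equation*}
Condition (\ref{thm:non-emb:pi 1 F pi 1 M'}) converts both Chern pairings into symplectic areas with the common constant $c$, and since $v$ lies in $N$ where $\om$ descends to $\om'_{M'}$, the two areas sum to $\int_\D u^*\om$. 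Hence $\wt m_{\D,\om,N}(u)=c\int_\D u^*\om$, which gives monotonicity, and the same formula forces every positive integer value of $m_{\D,N}$ to be a multiple of $\mu$, so $m(N,\om)\geq\mu$.

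With this in hand, $N\sub M$ satisfies the hypotheses of Theorem \ref{thm:Fix} for any choice of $\phi\in\Ham(M,\om)$. Since $N$ is compact and every compact subset of $M$ is Hamiltonianly displaceable, choose $\phi$ with $\phi(N)\cap N=\empty$. Then $\Fix(\phi,N)=\empty$, and the non-degeneracy condition on $(N,\phi)$ holds vacuously, because its hypothesis $\phi(x(0))=x(1)\in N$ never occurs. Theorem \ref{thm:Fix} therefore gives
\begin{equation*}
0=|\Fix(\phi,N)|\geq\sum_{i=\dim N-m(N)+2}^{m(N)-2}b_i(N,\Z_2).
\end{equation*}
Because $m(N,\om)\geq\mu$, the summation range contains $\{\dim M'-\mu+2,\ldots,\mu-2\}$, in which by hypothesis some $b_i(M',\Z_2)=b_i(N,\Z_2)$ is nonzero. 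The right hand side is strictly positive, contradicting $|\Fix(\phi,N)|=0$.

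The main obstacle is the second paragraph: matching the intrinsic Maslov map of the embedded $N$ with spherical Chern numbers of $(M,\om)$ and $(M'_{\om'},\om'_{M'})$. The loop-contractibility hypothesis is exactly what produces the capping disk $v$ and turns $\bar v$ into a sphere rather than a disk with boundary data; regularity of $N$ is what equips $N_\om$ with a smooth symplectic structure in which $c_1(N_\om,\om_N)$ lives, and $\psi$ identifies it with $c_1(M'_{\om'},\om'_{M'})$. The remaining ingredients are formal bookkeeping consequences of the properties of $m_{\Si,\om,N}$ asserted in Theorem \ref{thm:m M om N}.
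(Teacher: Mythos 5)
Your proof is correct and follows essentially the same approach as the paper's: assume an embedding exists, show the resulting coisotropic $N=\psi(M')$ is monotone with $m(N,\om)\geq\mu$, then apply Theorem \ref{thm:Fix} to a Hamiltonian $\phi$ that displaces $N$ (non-degeneracy holds vacuously) to contradict the hypothesis that some $b_i(M',\Z_2)\neq0$ in the relevant range. The main difference is that you derive the monotonicity and the lower bound on $m(N,\om)$ inline, whereas the paper delegates this to Proposition \ref{prop:M'}, which in fact proves the stronger equality $m(N,\om)=\mu$; you correctly observe that only the inequality is needed. One point where your derivation of the key identity $\wt m_{\D,\om,N}(u)=2\lan c_1(M,\om),[w]\ran+2\lan c_1(M'_{\om'},\om'_{M'}),[\bar v]\ran$ is a bit off track is the cited machinery: the product property (\ref{thm:m M om N:prod}), removal of point (\ref{thm:m M om N:M}), the Lagrangian embedding property (\ref{thm:m M om N:wt M}) and Proposition \ref{prop:phi} do not cleanly yield this formula; the paper instead uses the Splitting property (\ref{thm:m M om N:split}) to cut the sphere $u\#(-v)$ along the boundary circle and the Chern-class-of-quotient property (\ref{thm:m M om N:N om}) to identify the Maslov index of the capping disk $v$ (which lies in $N$) with $2c_1^{N_\om,\om_N}$ of the collapsed sphere. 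The Lagrangian embedding $\iota_N$ is used in the proof of Theorem \ref{thm:Fix}, not in establishing monotonicity. Finally, your observation that contractibility of loops in one fiber transfers to all fibers of $N\to N_\om$ by connectedness (via the long exact homotopy sequence of the Ehresmann fibration) addresses a small discrepancy between the hypotheses of Theorem \ref{thm:non-emb} (one fiber) and Proposition \ref{prop:M'} (every leaf) that the paper passes over silently.
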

Note that the condition $\dim M'+\corank\om'=\dim M$ is critical in the sense that in the case $\dim M'+\corank\om'>\dim M$ there is no presymplectic embedding of any open non-empty subset of $M'$ into $M$, whereas in the case $\dim M'+\corank\om'\leq\dim M$ for every point $x'\in M'$ there exists an open neighbourhood that embeds presymplectically into $M$. 

The next result gives a criterion under which condition (\ref{thm:non-emb:pi 1 F pi 1 M'}) in Theorem \ref{thm:non-emb} holds and $\mu$ becomes simpler.
\begin{prop}\label{prop:F simply} Let $(M,\om)$ be a connected symplectic manifold and $(M',\om')$ a regular presymplectic manifold, such that some isotropic fiber $F\sub M'$ is simply-connected, $\dim M'+\corank\om'=\dim M$, and $(M',\om')$ embeds into $(M,\om)$. Then $\mu=2c_1(M,\om)$. Furthermore, if $(M,\om)$ is spherically monotone then condition (\ref{thm:non-emb:pi 1 F pi 1 M'}) of Theorem \ref{thm:non-emb} holds. 
\end{prop}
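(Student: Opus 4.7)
\begin{pf}[Proof plan] My plan is to exploit the presymplectic embedding $\psi\Colon(M',\om')\hookrightarrow(M,\om)$ to push every sphere $u'\Colon S^2\to M'_{\om'}$ forward to a sphere $u\Colon S^2\to M$, and then show that the two spheres have the same symplectic area and first Chern number. First I would set $N:=\psi(M')$: since $\psi^*\om=\om'$ and $\dim M'+\corank\om'=\dim M$, the symplectic orthogonal of $d\psi(TM')$ inside $\psi^*TM$ has rank $\corank\om'$ and contains $d\psi(\ker\om')$, so the two must coincide. Hence $N\sub M$ is coisotropic with characteristic foliation equal to the $\psi$-push-forward of the isotropic foliation of $(M',\om')$, yielding a canonical symplectic identification $N_\om\iso M'_{\om'}$.

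Next I would lift spheres across the canonical submersion $\pi\Colon M'\to M'_{\om'}$. Regularity of $(M',\om')$ makes $\pi$ a smooth submersion with the distinguished leaf $F$ as one of its fibres. Using the hypothesis $\pi_1(F)=0$ together with the sphere homotopy-lifting property for $\pi$, the exact sequence $\pi_2(F)\to\pi_2(M')\to\pi_2(M'_{\om'})\to\pi_1(F)$ gives surjectivity of $\pi_*\Colon\pi_2(M')\to\pi_2(M'_{\om'})$. Given $u'\Colon S^2\to M'_{\om'}$ I choose a lift $\wt u'\Colon S^2\to M'$ and set $u:=\psi\circ\wt u'$. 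Since $\pi^*\om'_{M'}=\om'$ and $\psi^*\om=\om'$, the area identity
\[\int_{S^2}u^*\om=\int_{S^2}(\wt u')^*\om'=\int_{S^2}u'^*\om'_{M'}\]
is immediate.

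For Chern numbers I would use that $u^*(TM,\om)$ contains the coisotropic subbundle $W:=d\psi\big((\wt u')^*TM'\big)$, whose symplectic quotient $W_\om$ is canonically isomorphic to $u'^*(TM'_{\om'},\om'_{M'})$. Choosing compatible complex structures on $W_\om$ and on $W^\om$ (and the dual complex structure on $(W^\om)^*$), the smooth splitting $u^*TM\iso W_\om\oplus W^\om\oplus(W^\om)^*$ combined with $c_1(W^\om)+c_1((W^\om)^*)=0$ yields $c_1^{M,\om}([u])=c_1^{M'_{\om'},\om'_{M'}}([u'])$. Therefore the image of $c_1^{M'_{\om'},\om'_{M'}}$ on $[S^2,M'_{\om'}]$ lies in $c_1(M,\om)\Z$, so $c_1(M,\om)\mid c_1(M'_{\om'},\om'_{M'})$ and $\mu=2c_1(M,\om)$, which is the first assertion. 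The ``furthermore'' clause then follows at once: if $(M,\om)$ is spherically monotone with constant $c>0$, combining the area and Chern identities above gives $c_1^{M'_{\om'},\om'_{M'}}([u'])=c[\om'_{M'}]([u'])$ for every $u'$, so condition (\ref{thm:non-emb:pi 1 F pi 1 M'}) of Theorem \ref{thm:non-emb} holds with the same $c$.

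The main technical point will be the lifting step: one has to argue that regularity of the presymplectic foliation is strong enough for $\pi$ to have the $S^2$-homotopy-lifting property, so that simple connectedness of a single fibre $F$ really yields the desired surjection on $\pi_2$. The invariance of $c_1$ under linear symplectic reduction is standard and just needs to be quoted.
\end{pf}
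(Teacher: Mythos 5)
Your overall plan coincides with the paper's: pick a representative $u'\Colon S^2\to M'_{\om'}$, use $\pi_1(F)=0$ and the long exact homotopy sequence of the fibre bundle $\pi_{M'}\Colon M'\to M'_{\om'}$ (a locally trivial fibration by regularity of $(M',\om')$ and Ehresmann) to lift it to $M'$, push into $M$ via the embedding, and compare areas and Chern numbers; the area step is identical. Where you genuinely diverge is the Chern number comparison. The paper channels this through the axioms of the Maslov map it has constructed --- specifically Theorem~\ref{thm:m M om N}(\ref{thm:m M om N:M}), (\ref{thm:m M om N:nat}) and (\ref{thm:m M om N:N om}) --- identifying $2c_1^{M,\om}(a)$ with $m_{M,\om,N}(u)$ and then with $2c_1^{N_\om,\om_N}$ of the descended sphere. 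You instead argue directly at the level of vector bundles: the filtration $W^\om\sub W\sub u^*TM$ (with $W=d\psi\,\wt u'^*TM'$) has graded pieces $W^\om$, $W_\om\iso u'^*T(M'_{\om'})$, $(W^\om)^*$, and for an adapted compatible $J$ the summand $W^\om\oplus JW^\om$ is the complexification of $W^\om$, so its $c_1$ vanishes, giving $c_1^{M,\om}([u])=c_1^{M'_{\om'},\om'_{M'}}([u'])$. This is a correct, more elementary route that avoids the Maslov-map machinery entirely; in exchange, the paper's derivation is a one-line application of properties it has already established and keeps Proposition~\ref{prop:F simply} uniform with the surrounding arguments. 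The one thing you should make explicit, rather than just flag, is that regularity plus Ehresmann's theorem does yield the locally trivial fibration needed for the homotopy lifting step --- the paper relies on this too, citing its earlier reference.
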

It follows from Theorem \ref{thm:non-emb} and Proposition \ref{prop:F simply} that $(M',\om')$ does not embed into $(M,\om)$, provided that $\dim M'+\corank\om'=\dim M$ and some conditions on $(M,\om)$ and some conditions on $(M',\om')$ are satisfied. (The point here is that there are no further assumptions involving both $(M,\om)$ and $(M',\om')$.) 

As an example, let $m$ and $n$ be positive integers, $(X,\si)$ a closed symplectic manifold and $\pi:M'\to X$ a closed smooth fiber bundle with simply connected fibers, such that $\dim X/2+k=m+n$ and there exists $i\in\{2n-k,\ldots,2m\}$ such that $b_i(M',\Z_2)\neq0$, where $k$ denotes the dimension of the fibers. We define $\om':=\pi^*\si$ and denote by $\om_{\FS}$ the Fubini-Studi form on $\CP^m$ and by $\om_0$ the standard symplectic form on $\R^{2n}$. It follows from Theorem \ref{thm:non-emb} that $(M',\om')$ does not embed into $\big(\CP^m\x\R^{2n},\om_\FS\oplus\om_0\big)$. 

More concretely, let $m$ be a positive integer and $k\in\{2,\ldots,2m\}$. Then $\big(\CP^m\x S^k,\om_\FS\oplus0\big)$ does not embed into $\big(\CP^m\x\R^{2k},\om_\FS\oplus\om_0\big)$.

\subsubsection*{Coisotropic Audin conjecture}
Recall that a topological space $X$ is called aspherical iff $\pi_k(X)=0$, for every $k\geq2$. Furthermore, a manifold is called spin iff it is orientable and its second Stiefel-Whitney number vanishes. 
\begin{thm}\label{thm:m 2} Let $(M,\om)$ be a symplectic manifold that is convex at infinity, and $N\sub M$ a coisotropic submanifold that is closed, regular, aspherical, spin, and displaceable. Then $m(N,\om)=2$. 
\end{thm}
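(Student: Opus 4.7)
The plan is to reduce the theorem to the Lagrangian Audin conjecture via the Lagrangian-embedding trick already introduced in the proof sketch of Theorem~\ref{thm:Fix}. Set $(\wt M,\wt\om):=(M\x N_\om,\om\oplus(-\om_N))$ and $\wt N:=\iota_N(N)\sub\wt M$, where $\iota_N(x):=(x,N_x)$. Since $N$ is closed and regular, $N_\om$ carries the structure of a closed symplectic manifold, and $\wt N$ is a closed embedded Lagrangian diffeomorphic to $N$.

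First I would verify that $(\wt M,\wt\om,\wt N)$ meets the hypotheses of the displaceable Lagrangian Audin conjecture (proved for closed, aspherical, spin, displaceable Lagrangians in symplectic manifolds convex at infinity by Damian, Fukaya--Oh--Ohta--Ono, and others). Compactness of $N_\om$ ensures that $\wt M$ inherits convexity at infinity from $M$. Via $\iota_N:N\to\wt N$, the Lagrangian $\wt N$ is closed, aspherical, and spin. To displace $\wt N$, I choose $\phi\in\Ham(M,\om)$ with $\phi(N)\cap N=\empty$ (which exists by displaceability of $N$); then $\phi\x\id_{N_\om}\in\Ham(\wt M,\wt\om)$, and $(\phi\x\id)(x,N_x)=(\phi(x),N_x)\in\wt N$ would force $\phi(x)\in N$, a contradiction. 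Hence $\wt N$ is displaceable in $\wt M$.

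Applying the Lagrangian Audin conjecture to $\wt N$ then yields $m(\wt N,\wt\om)=2$. Combining this with the inequality $m(\wt N,\wt\om)\geq m(N,\om)$, which follows from Theorem~\ref{thm:m M om N}(\ref{thm:m M om N:wt M}) together with Proposition~\ref{prop:phi}, gives $m(N,\om)\leq 2$; in particular $m(N,\om)\in\N$ is finite. Conversely, $N$ is spin, hence orientable, so by the observation in the ``regular case'' subsection (i.e.~Theorem~\ref{thm:m M om N}(\ref{thm:m M om N:reg})) the map $m_{\D,N}$ takes only even values. This forces $m(N,\om)\geq 2$, and we conclude $m(N,\om)=2$.

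The main obstacle is locating and, if necessary, adapting the exact form of the Lagrangian Audin conjecture invoked above: the precise statement needed is that any closed, aspherical, spin, displaceable Lagrangian in a symplectic manifold convex at infinity has minimal Maslov number equal to $2$. Once this ingredient is in hand, the remaining verifications (convexity at infinity of the product, displaceability and asphericity of $\wt N$, even-valuedness of $m_{\D,N}$) are routine given the compactness of $N_\om$ and the properties of $N$.
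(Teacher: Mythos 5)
Your proposal is correct and follows essentially the same route as the paper's proof: pass to the Lagrangian $\wt N=\iota_N(N)\sub(\wt M,\wt\om)$, invoke Fukaya's Lagrangian Audin theorem (Theorem \ref{thm:Fukaya}) to produce a disk of Maslov index $2$, transfer this back via Theorem \ref{thm:m M om N}(\ref{thm:m M om N:wt M}) and Proposition \ref{prop:phi}, and conclude $m(N,\om)=2$ by combining with the even-valuedness of $m_{\D,N}$ furnished by Theorem \ref{thm:m M om N}(\ref{thm:m M om N:reg}) (using that spin implies orientable). One small remark: since $\phi$ in Proposition \ref{prop:phi} is a bijection and Theorem \ref{thm:m M om N}(\ref{thm:m M om N:wt M}) is an identity $m_{\D,M,\om,N}=m_{\D,\wt M,\wt\om,\wt N}\circ\phi$, you actually get the equality $m(\wt N,\wt\om)=m(N,\om)$, not just the one-sided inequality you use — but your argument goes through regardless.
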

In the Lagrangian case this result is due to K. Fukaya \cite{Fu}. It generalizes a conjecture by Audin about the minimal Maslov number of a Lagrangian submanifold of $\R^{2n}$ diffeomorphic to the torus $\TT^n$. The idea of proof of Theorem \ref{thm:m 2} is to reduce to the Lagrangian case using the construction (\ref{eq:wt M wt om},\ref{eq:iota N wt N}), and then to apply Fukaya's result.

\subsection{Related work}
\subsubsection*{Oh's Maslov index}
Let $J$ be an $\om$-compatible almost complex structure, assume that $N$ is gradable and equipped with a grading $[\De]$ in the sense of \cite{Oh}, and that $\Si=\D$. In this situation, Y.-G. Oh defined a Maslov index $\mu_{(N,\De)}:\big\{u\in C^\infty(\D,M)\,\big|\,u(S^1)\sub N\big\}\to\Z$, see Definition 3.3. in \cite{Oh}. If $u\in C(\D,M;N,\om)$ is a smooth map then $\mu_{(N,\De)}(u)=m_{\D,\om,N}(u)$. Note that $\mu_{(N,\De)}$ is defined on a larger set of maps than $m_{\D,\om,N}$ (after restriction to $C^\infty(\D,M)$), but requires $[\De]$ as an additional datum. Observe also that the definition of $m_{\Si,\om,N}$ does not involve the choice of any $\om$-compatible almost complex structure on $M$. 
\subsubsection*{The Gaio-Salamon Maslov index}\label{GS}
Let $(M,\om,G,\om)$ be a Hamiltonian $G$-manifold. This means that $(M,\om)$ is a symplectic manifold, and $G$ is a connected Lie group acting on $M$ in a Hamiltonian way, with moment map $\mu$. Assume that $G$ acts freely on $N:=\mu^{-1}(0)$. Let $\Si\in\S$. We define the map $m_{\Si,\om,\mu}:[\Si,M;N,\om]\to \Z$ as follows. Let $a\in[\Si,M;N,\om]$. We choose a representative $u$ of $a$, a symplectic vector space $(V,\Om)$ of dimension $\dim M$, a trivialization $\Psi\in\Iso\big(\Si\x V,\Om;u^*(TM,\om)\big)$, and points $z_X\in X$, for every $X\in\Con(\d\Si)$. We define $g:\d\Si\to G$ by defining $g(z)$ to be the unique solution of $u(z)=g(z)u(z_X)$, for every $z\in X$ and $X\in\Con(\d\Si)$. 

We define $m_{\Si,\om,\mu}(a):=m_\Om\big(S^1\ni z\mapsto \Psi_z^{-1}g(z)\cdot\Psi_1\big)$, where for every $g_0\in G$ we denote by $g_0\cdot:TM\to TM$ the differential of the action of $g_0$. By a standard homotopy argument, this number does not depend on the choices of $u,\Psi$ and $z_X$. By Lemma \ref{le:m om mu} below the maps $m_{\Si,\om,\mu}$ and $m_{\Si,\om,N}$ agree. 

For $\Si=\D$ the map $m_{\D,\om,\mu}$ was introduced by R.~Gaio and D.~A.~Salamon in \cite{GS}. (More precisely, their definition relies on a choice of an $\om$-compatible almost complex structure $J$ on $M$ and a unitary trivialization of $u^*TM$.)

\subsubsection*{Work by M.~Entov and L.~Polterovich and by V.~L.~Ginzburg}
Let now $(M,\om)$ be a closed (spherically) monotone symplectic manifold and $G$ a torus acting on $M$ in a Hamiltonian way, with moment map $\mu$. Then by Theorem 1.7 in the article \cite{EP} by M. Entov and L. Polterovich the pre-image $N$ of the special element of $\g^*$ under $\mu$ is strongly (i.e.~symplectically) non-displaceable. 

Assume that the action of $G$ on $N$ is free. Then by Lemma \ref{le:special} below $N\sub M$ is a closed, monotone regular coisotropic submanifold. Hence if $b_i(N,\Z_2)$ is non-zero for some $i\in\{\dim N-m(N,\om)+2,\ldots,m(N,\om)-2\}$ then it follows from Theorem \ref{thm:Fix} that $N$ is not leafwise displaceable (and hence not displaceable). Thus in this case we obtain a stronger statement than in Theorem 1.7 in \cite{EP}, provided that also $H^1(M,\R)=0$. 

In his recent paper \cite{Gi} (Theorem 1.5) V. L. Ginzburg proved an upper bound on the minimal Maslov number of a closed, stable, displaceable coisotropic submanifold.

\subsection{Organization and Acknowledgments}
\subsubsection*{Organization of the article} In Section \ref{sec:proofs} it is shown that the Maslov map for pairs of flat transports is well-defined, and Theorem \ref{thm:m Si E om} is proved. Section \ref{sec:proofs Fix non-emb m 2 F simply} contains the proofs of the other results of Section \ref{sec:main}. They are based on Theorem \ref{thm:m M om N}, which summarizes the main properties of the Maslov map. Section \ref{sec:proof:thm:m M om N} is devoted to the proof of this theorem, using a similar result for the coisotropic Maslov index for bundles (Theorem \ref{thm:m}). The appendix contains some results about the Salamon-Zehnder map, the Gaio-Salamon Maslov index, the relation with the mixed action-Maslov index, the linear holonomy of a foliation, and some topological results.

\subsubsection*{Acknowledgments}
I would like to thank Yael Karshon for her continuous support and enlightening discussions, Masrour Zoghi and Dietmar Salamon for useful comments, Shengda Hu for making me aware of Lemma \ref{le:bundle}, and Viktor L. Ginzburg for his interest in my work.

\section{Proof of Theorem \ref{thm:m Si E om} (Coisotropic Maslov map for bundles)}\label{sec:proofs}
The following lemma was used in Section \ref{sec:main}.
\begin{lemma}\label{le:m Phi Phi'} The number $m_\om(\Phi,\Phi')$ in (\ref{eq:m om Phi Phi'}) is well-defined, i.e. it does not depend on the choice of $z$. Furthermore, if $\Phi$ and $\Phi'$ are regular then $m_\om(\Phi,\Phi')\in 2\Z$. 
\end{lemma}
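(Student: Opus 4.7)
The plan is to lift everything to the universal cover $\pi\colon\tilde X=\R\to X$ and express the winding of $\Psi$ through a single auxiliary function $\hat\Psi\colon\R\to\Aut(\om_{x_0})$ that satisfies a quasi-periodicity identity, then conclude using naturality and conjugation invariance of the Salamon-Zehnder map $\rho_\om$ from Proposition~\ref{prop:rho}. Without loss of generality I assume $\Phi'$ is regular; the case where $\Phi$ is regular reduces to the previous one upon exchanging the roles of $\Phi$ and $\Phi'$, which merely negates $m_\om$ (since $\rho_\om(A^{-1})=-\rho_\om(A)$).

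Fix $\tilde x_0\in\R$, write $x_0:=\pi(\tilde x_0)$, and for each $\tilde x\in\R$ let $\gamma_{\tilde x_0,\tilde x}$ denote the straight-line path in $\R$ from $\tilde x_0$ to $\tilde x$. Set
\[
\tilde A(\tilde x):=\Phi([\pi(\gamma_{\tilde x_0,\tilde x})]),\quad \tilde A'(\tilde x):=\Phi'([\pi(\gamma_{\tilde x_0,\tilde x})]),\quad \hat\Psi(\tilde x):=\tilde A'(\tilde x)^{-1}\tilde A(\tilde x)\in\Aut(\om_{x_0}).
\]
Regularity of $\Phi'$ gives $\tilde A'(\tilde x+1)=\tilde A'(\tilde x)$, while concatenation gives $\tilde A(\tilde x+1)=\Phi([\ell_{\pi(\tilde x)}])\,\tilde A(\tilde x)$, where $\ell_{\pi(\tilde x)}$ is the once-around loop at $\pi(\tilde x)$. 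Combining this with the standard conjugacy $\Phi([\ell_{\pi(\tilde x)}])=\tilde A(\tilde x)M_0\tilde A(\tilde x)^{-1}$, where $M_0:=\Phi([\ell_{x_0}])$, yields the quasi-periodicity
\[
\hat\Psi(\tilde x+1)=\hat\Psi(\tilde x)\,M_0 \qquad\text{for every }\tilde x\in\R.
\]
For a degree-one loop $z$ with a lift $\tilde z\in C([0,1],\R)$, an analogous concatenation computation shows
\[
\Psi(t)=\tilde A(\tilde z(0))\,\Upsilon(t)\,\tilde A(\tilde z(0))^{-1},\qquad \Upsilon(t):=\hat\Psi(\tilde z(0))^{-1}\hat\Psi(\tilde z(t))\in\Aut(\om_{x_0}).
\]

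By naturality of $\rho_\om$ under symplectic conjugation, $\al(\rho_\om\circ\Psi)=\al(\rho_{\om_{x_0}}\circ\Upsilon)$. The path $\Upsilon$ begins at $\id$ and, by quasi-periodicity, ends at $\Upsilon(1)=M_0$, which is independent of both $z$ and the chosen lift. Since the space of degree-one loops in $X$ is path-connected, any two such loops can be joined by a homotopy $z_s$ with continuously varying lifts, producing a continuous homotopy $\Upsilon_s\colon[0,1]\to\Aut(\om_{x_0})$ of paths with fixed endpoints $\id$ and $M_0$; the winding $\al(\rho_{\om_{x_0}}\circ\Upsilon_s)$ is then a continuous function of $s$ taking values in the discrete coset $\rho_{\om_{x_0}}(M_0)+\Z\subset\R$, and is therefore constant. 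This will prove that $m_\om(\Phi,\Phi')$ is well-defined. For the second claim, when both $\Phi$ and $\Phi'$ are regular one has $M_0=\id$, so $\Upsilon$ is a loop based at $\id$ and $\al(\rho_{\om_{x_0}}\circ\Upsilon)\in\Z$, giving $m_\om(\Phi,\Phi')\in 2\Z$.

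The main obstacle will be the quasi-periodicity identity, since it packages regularity of $\Phi'$, concatenation equivariance of the flat transports, and the conjugation relating monodromies at different basepoints. Once it is verified, everything else is a formal consequence of the naturality and conjugation invariance of $\rho_\om$ recorded in Proposition~\ref{prop:rho}, together with the path-connectedness of the space of degree-one maps $S^1\to S^1$.
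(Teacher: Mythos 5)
Your proof is correct, and while it rests on the same two pillars as the paper's proof (continuity of $\rho$ and of the winding map $\al$, plus discreteness of the winding of a path with $z$-independent endpoints), the way you establish the endpoint control is genuinely different and arguably more transparent. The paper homotopes $z_0$ to $z_1$ through degree-one loops $z_s$ with moving basepoint, sets $f(s,t)=\rho_{\om_{z_s(0)}}(\Psi_s(t))$, and shows $f(s,1)$ is constant in $s$ by a conjugation argument that invokes regularity of $\Phi$ (or $\Phi'$) along a connecting path between basepoints. You instead pull everything back to the universal cover $\R\to X$, develop both transports to functions $\tilde A,\tilde A'\colon\R\to\Iso(\om_{x_0},\om_\cdot)$, and show that $\hat\Psi=\tilde A'^{-1}\tilde A$ satisfies the quasi-periodicity $\hat\Psi(\tilde x+1)=\hat\Psi(\tilde x)M_0$; this reduces $\Psi$ (up to a single $\om$-conjugation, killed by naturality of $\rho$) to a path $\Upsilon$ in the fixed group $\Aut(\om_{x_0})$ from $\id$ to the monodromy $M_0$, whose endpoints are manifestly independent of $z$. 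What your route buys is that the target value $\rho_{\om_{x_0}}(M_0)$ of the path $\rho\circ\Upsilon$ is identified explicitly as the Salamon-Zehnder invariant of the monodromy, rather than appearing only implicitly as a constant of the homotopy; what the paper's route buys is that it avoids the universal cover and works directly with the family $\Psi_s$. I verified the conjugation identity $\Psi(t)=\tilde A(\tilde z(0))\,\Upsilon(t)\,\tilde A(\tilde z(0))^{-1}$ that you label "an analogous concatenation computation"; it does hold, but since it is the crux of the transfer to $\Aut(\om_{x_0})$, it would be worth writing out (it uses that $\tilde A(\tilde z(t))\tilde A(\tilde z(0))^{-1}=\Phi([z|_{[0,t]}])$ and likewise for $\Phi'$, which follows because $z|_{[0,t]}$ and $\pi(\gamma_{\tilde z(0),\tilde z(t)})$ are homotopic rel endpoints). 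Two cosmetic points: the coset you mean is $\{r\in\R:\,r+\Z=\rho_{\om_{x_0}}(M_0)\}$ rather than "$\rho_{\om_{x_0}}(M_0)+\Z$" literally (the value lives in $S^1$, not $\R$), and "$\rho_\om(A^{-1})=-\rho_\om(A)$" is correct only in the additive notation $S^1\iso\R/\Z$; multiplicatively it is $\rho_\om(A^{-1})=\rho_\om(A)^{-1}=\BAR{\rho_\om(A)}$.
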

The next Remark is used in the proof of Lemma \ref{le:m Phi Phi'}.
\begin{rmk}\label{rmk:rho cont} Let $X$ be a topological space and $(E,\om)$ a symplectic vector bundle over $X$. Then the map $\Aut(E,\om)\ni(x,\Phi)\mapsto\rho_{\om_x}(\Phi)\in S^1$ is continuous. To see this, we choose a symplectic vector space $(V,\Om)$ of dimension $\rank E$. Let $(U,\Phi)$ be a pair, where $U\sub X$ is an open subset and $\Phi\in\Iso\big(U\x V,\Om;(E,\om)|_U\big)$. By Proposition \ref{prop:rho}(\ref{prop:rho:nat}) we have $\rho_{\om_x}(\Psi)=\rho_{\Om}\big(\Phi_x^{-1}\Psi\Phi_x\big)$, for every $x\in U$ and $\Phi\in\Aut(E_x,\om_\x)$. Since the map $\rho_\Om:\Aut(\Om)\to S^1$ is continuous, the statement follows. 
\end{rmk}
\begin{proof}[Proof of Lemma \ref{le:m Phi Phi'}]\setcounter{claim}{0} To prove the first assertion, let $z_0$ and $z_1$ be two choices of a path $z$ as above. We choose a map $z\in C\big([0,1]\x[0,1],C\big)$ such that $z(s,0)=z(s,1)$, for every $s\in[0,1]$, and $z(i,\cdot)=z_i$. We denote $z_s:=z(s,\cdot)$, and we define $\Psi_s(t):=\Phi'([z_s|_{[0,t]}])^{-1}\Phi([z_s|_{[0,t]}])$, for $s,t\in[0,1]$. We also define $f:[0,1]\x[0,1]\to S^1$ by $f(s,t):=\rho_{\om_{z_s(0)}}(\Psi_s(t))$. It follows that $\Psi_s(0)=\id_{E_{z_s(0)}}$ and hence $f(s,0)=1$, for every $s\in[0,1]$. By Remark \ref{rmk:rho cont} the map $f$ is continuous.
\begin{claim}\label{claim:rho Psi s} The map $[0,1]\ni s\mapsto f(s,1)\in S^1$ is constant.
\end{claim}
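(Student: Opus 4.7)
The plan is to use the regularity hypothesis to collapse $\Psi_s(1)$ to a single transport element $\Phi([z_s])$, then rewrite $\Phi([z_s])$ as a symplectic conjugate of $\Phi([z_{s_0}])$ inside the fundamental groupoid $\Pi X$, and finally invoke the naturality of $\rho_\om$ under symplectic conjugation (Proposition \ref{prop:rho}(\ref{prop:rho:nat}), as quoted in Remark \ref{rmk:rho cont}) to conclude that $s\mapsto f(s,1)$ is constant.

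First I would assume, without loss of generality, that $\Phi'$ is regular; the symmetric case in which $\Phi$ is regular is handled by the same argument, using that $\rho_\om$ commutes with inversion in $S^1$. Since $z_s|_{[0,1]}$ is a loop at $z_s(0)$, regularity forces $\Phi'([z_s|_{[0,1]}]) = \id$, so that $\Psi_s(1) = \Phi([z_s])$.

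Next I fix $s_0\in[0,1]$ and introduce the basepoint-tracing path $\alpha_s(r):= z(s_0 + r(s-s_0),0)$ from $z_{s_0}(0)$ to $z_s(0)$. Because $z(\cdot,0)=z(\cdot,1)$, the restriction of $z$ to $[s_0,s]\x[0,1]$ produces, after a standard reparametrization, an explicit homotopy rel $z_{s_0}(0)$ between the loops $\alpha_s\cdot z_s\cdot\alpha_s^{-1}$ and $z_{s_0}$. In $\Pi X$ this reads
\[[\alpha_s]\cdot[z_s]\cdot[\alpha_s]^{-1} = [z_{s_0}],\]
and applying the representation $\Phi$ gives
\[\Psi_s(1) = \Phi([z_s]) = \Phi([\alpha_s])^{-1}\,\Psi_{s_0}(1)\,\Phi([\alpha_s]).\]
Since $\Phi\in\T(E,\om)$, the isomorphism $\Phi([\alpha_s]):E_{z_{s_0}(0)}\to E_{z_s(0)}$ is symplectic, so Proposition \ref{prop:rho}(\ref{prop:rho:nat}) yields $f(s,1) = f(s_0,1)$. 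As $s_0$ was arbitrary, $s\mapsto f(s,1)$ is constant.

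I expect the only genuinely non-formal ingredient to be the free-to-based homotopy identity in $\Pi X$; it is a standard fact but requires writing down an explicit reparametrization of $z|_{[s_0,s]\x[0,1]}$ together with the basepoint trace $\alpha_s$. Everything else is purely formal, coming from groupoid-equivariance of $\Phi$ and conjugation-invariance of the Salamon-Zehnder map $\rho_\om$.
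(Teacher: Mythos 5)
Your proof is correct and follows essentially the same route as the paper: use regularity to reduce $\Psi_s(1)$ to a single transport element, conjugate the loop $z_s$ to $z_{s_0}$ via a path between basepoints, and invoke conjugation-invariance of $\rho_\om$. The paper handles the case $\Phi$ regular first (conjugating with $\Phi'$ along an arbitrarily chosen path $\wt z$, which suffices here because $\pi_1$ of a closed curve is abelian), while you handle $\Phi'$ regular and use the explicit basepoint trace $\alpha_s$; your aside about $\rho_\om$ commuting with inversion is harmless but not needed, since inversion already commutes with conjugation.
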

\begin{proof}[Proof of Claim \ref{claim:rho Psi s}] Consider the case in which $\Phi$ is regular. We choose a path $\wt z\in C([0,1],C)$ such that $\wt z(i)=z_i(0)$, for $i=0,1$. We fix $s\in[0,1]$. By assumption we have $\Phi\big(\big[\wt z\#z_s\#\BAR{\wt z}\big]\big)=\id_{E_{z_0(0)}}$. Furthermore, the paths $z_0$ and $\wt z\#z_s\#\BAR{\wt z}$ are homotopic with fixed end-points. It follows that $\Phi'([z_0])=\Phi'([\wt z])^{-1}\Phi'([z_s])\Phi'([\wt z])=\Phi'([\wt z])^{-1}\Psi_s(1)\Phi'([\wt z])$. Hence by Proposition \ref{prop:rho}(\ref{prop:rho:nat}) we have $f(s,0)=f(s,1)$. The case in which $\Phi'$ is regular, is treated similarly. This proves Claim \ref{claim:rho Psi s}.
\end{proof}
Claim \ref{claim:rho Psi s}, the fact $\Psi_s(0)=\id_{E_{z_s(0)}}$ (for every $s\in[0,1]$) and continuity of $f$ imply that $m_{\om_{z_0(0)}}(\Psi_0)=m_{\om_{z_1(0)}}(\Psi_1)$. Hence $m_\om(\Phi,\Phi')$ is well-defined. 

The second assertion of the lemma follows directly from the definition of the Maslov index of a path of automorphisms of a symplectic vector space. This proves Lemma \ref{le:m Phi Phi'}.
\end{proof}

For the proof of Theorem \ref{thm:m Si E om} we need the following. Let $X$ be a topological manifold and $\X\sub C([0,1],X)$. We define the equivalence relation $\sim_\X$ on $\X$ by $x_0\sim_\X x_1$ iff there exists $x\in C([0,1]\x[0,1],X)$ such that $x(s,\cdot)\in\X$, $x(s,i)=x(0,i)$ and $x(i,\cdot)=x_i$, for every $s\in[0,1]$ and $i=0,1$. We equip $\X$ with the compact open topology and $\X/\!\!\sim_\X$ with the quotient topology. Then $\X/\!\!\sim_\X$ is a topological groupoid. We call $\X$ \emph{admissible} iff it contains the constant paths, and the following conditions hold. If $x\in\X$ and $f\in C([0,1],[0,1])$ then $x\circ f\in\X$. Furthermore, if $x,x'\in\X$ are such that $x(1)=x'(0)$ then the concatenation $x\#x'$ lies in $\X$. Assume that $\X$ is admissible, and let $E\to X$ be a topological vector bundle. A \emph{flat transport on $E$ along $\X$} a morphism of topological groupoids $\Phi:\X/\!\!\sim_\X\to\GL(E)$ that descends to the identity on $X\x X$. We denote by $\T(\X,E)$ the set of such $\Phi$'s. Let $X'$ be another topological manifold and $f\in C(X',X)$. Then the pullback $f^*\X:=(f\circ)^{-1}(\X)\sub C([0,1],X')$ is again admissible. For $\Phi\in\T(\X,E)$ we define the pullback $f^*\Phi\in\T(f^*(\X,E))$ by $(f^*\Phi)_{x'}:=\Phi_{f\circ x'}$. 

Let $X$ be a topological manifold, $(E,\om)$ a symplectic vector bundle over $X$, $(W,\Phi)\in\Coi(E,\om)$ and $\Psi\in\T(E,\om)$. We call $\Psi$ a lift of $(W,\Phi)$ iff for every $z\in C([0,1],C)$ we have $\Psi([z])W_{z(0)}=W_{z(1)}$ and $\Psi([z])_{W_{z(0)}}=\Phi([z])$. Let $X$ be a closed curve. We denote by $\pi:[0,1]\x X\to X$ the canonical projection, and for $s\in[0,1]$, we define $\iota_s:X\to[0,1]\x X$ by $\iota_s(z):=(s,z)$. Furthermore, we define $\X:=\big\{t\mapsto (s,z(t))\,\big|\,s\in[0,1],\,z\in C([0,1],X)\big\}$. Let $(E,\om)$ be a symplectic vector bundle over $X$. 
\begin{thm}\label{thm:m X V om} The following statements hold.
\begin{enui}
\item\label{thm:m X V om:Psi} For every $(W,\Phi)\in\Coi(E,\om)$ there exists a lift $\Psi\in\T(E,\om)$ of $(W,\Phi)$. 
\item\label{thm:m X V om:Psi Psi'} Let $(W,\Phi)\in\Coi(E,\om)$, $\Psi_0$ and $\Psi_1$ be lifts of $(W,\Phi)$, and $\Psi\in\T(E,\om)$ a regular transport. Then $m_{X,\om}(\Psi_0,\Psi)=m_{X,\om}(\Psi_1,\Psi)$. 
\item\label{thm:m X V om:Pi} Let $W\sub\pi^*E$ be an $\pi^*\om$-coisotropic subbundle and $\Phi\in\T\big(\Pi([0,1],X),W_\om,\om_W\big)$. Then there exists $\Psi\in\T(E,\om)$ such that $\iota_s^*\Psi$ is a lift of $\iota_s^*\Phi$, for every $s\in[0,1]$.
\item\label{thm:m X V om:X} Let $W\sub\pi^*E$ be an $\pi^*\om$-coisotropic subbundle and $\Phi\in\T(\X,W_\om,\om_W)$. Then there exists $\Psi\in\T(\X,E,\om)$ such that $\iota_s^*\Psi$ is a lift of $\iota_s^*\Phi$, for every $s\in[0,1]$.
\end{enui}
\end{thm}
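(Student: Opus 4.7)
We treat the four parts in order, with (ii) as the technical heart; we may work on a single connected component and assume $X\iso S^1$. For (i), parametrize $X$ as $[0,1]/(0\sim 1)$ and choose a symplectic trivialization $\Theta\colon[0,1]\x(V,\Om)\to(E,\om)|_{[0,1]}$. Under $\Theta$, $W$ corresponds to a continuous path $t\mapsto W(t)\sub V$ of coisotropic subspaces with $W(0)=W(1)=:W_0$, and $\Phi$ corresponds to a path $\phi(t)\in\Iso((W_0)_\Om,(W(t))_\Om)$ with $\phi(0)=\id$. Viewing $(W,\phi)$ as a loop in the framed coisotropic Grassmannian $\Gr(\Om,W_0)$, Lemma~\ref{le:bundle} produces a lift $\psi\in C([0,1],\Aut(V,\Om))$ with $\psi(0)=\id$, $\psi(t)W_0=W(t)$, and $\psi(t)_{W_0}=\phi(t)$. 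The monodromy $\psi(1)\in\Aut(V,\Om)$ preserves $W_0$ and, reassembled via $\Theta$, defines a flat transport $\Psi\in\T(E,\om)$ that lifts $(W,\Phi)$.

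For (ii), pick a degree-one loop $z\in C([0,1],X)$ and set $\chi_i(t):=\Psi([z|_{[0,t]}])^{-1}\Psi_i([z|_{[0,t]}])\in\Aut(\om_{z(0)})$, so $m_{X,\om}(\Psi_i,\Psi)=m_{\om_{z(0)}}(\chi_i)$. Both $\chi_0,\chi_1$ start at $\id$, and both send $W_{z(0)}$ to the common time-varying coisotropic subspace $W'(t):=\Psi([z|_{[0,t]}])^{-1}W_{z(t)}$ via the common induced map $\Phi'(t)$ on the quotient (since $\Psi_0,\Psi_1$ lift the same $(W,\Phi)$). Hence the difference $\mu(t):=\chi_0(t)^{-1}\chi_1(t)$ lies in the subgroup $H\sub\Aut(\om_{z(0)})$ of elements preserving $W_{z(0)}$ and inducing the identity on $(W_\om)_{z(0)}$, with $\mu(0)=\id$. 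Proposition~\ref{prop:rho om Psi} gives $\rho_\om(H)\sub\{\pm1\}$, and continuity with $\mu(0)=\id$ forces $\rho_\om\circ\mu\equiv 1$. The plan is then to deform $\chi_1=\chi_0\mu$ back to $\chi_0$ by contracting $\mu$ to the constant path $\id$ within the identity component of $H$; along this homotopy $\chi^s$, each $\chi^s$ is again a lift of $(W',\Phi')$, and by the deformation argument underlying Proposition~\ref{prop:rho om Psi} the family $\rho_\om\circ\chi^s$ is constrained enough that its winding stays constant, yielding $m_\om(\chi_0)=m_\om(\chi_1)$.

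Parts (iii) and (iv) parallel (i) with $s\in[0,1]$ as a parameter. In (iii), the flat transport $\Phi$ over the fundamental groupoid of $[0,1]\x X$ permits a joint application of the parametric form of Lemma~\ref{le:bundle} in $(s,t)$, producing a flat transport on $\pi^*E$. In (iv), since $\iota_s^*\pi^*E=E$ for every $s$, a single trivialization $\Theta$ of $E$ over $[0,1]$ works on all slices; the slice-wise lift from Lemma~\ref{le:bundle} depends continuously on $s$, and these assemble into a transport in $\T(\X,\pi^*E,\pi^*\om)$ lifting $\iota_s^*\Phi$ on each slice.

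\emph{Main obstacle.} Part (ii) is the core difficulty. Since $\rho_\om$ is not multiplicative, the relation $\chi_1=\chi_0\mu$ with $\rho_\om\circ\mu\equiv 1$ does not immediately imply equality of the windings of $\rho_\om\circ\chi_0$ and $\rho_\om\circ\chi_1$. The sign dichotomy $\rho_\om(H)\sub\{\pm1\}$ from Proposition~\ref{prop:rho om Psi} supplies the rigidity, but turning it into equality of windings requires careful control over how $\rho_\om\circ\chi^s(t)$ evolves with $s$, i.e., how the endpoint $\chi^s(1)$ traces through the fiber of the framed Grassmannian during the contracting homotopy.
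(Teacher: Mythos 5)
Your outline for (i), (iii), (iv) matches the paper's approach (reduce to a trivialized bundle over $S^1$, apply Lemma~\ref{le:bundle} to lift the path in the framed coisotropic Grassmannian $\Gr(\Om,W_0)$, reassemble; parametrize by $s$ for the last two). The real issue is (ii), where you yourself flag a gap — and the gap is genuine.

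Your factorization $\chi_1=\chi_0\mu$ with $\mu$ valued in the subgroup $H$ of automorphisms fixing $W_{z(0)}$ and inducing the identity on the quotient does not lead cleanly to equality of windings, for two concrete reasons. First, $\mu(1)=\chi_0(1)^{-1}\chi_1(1)$ is an arbitrary element of $H$, so there is nothing to contract to: a homotopy of $\mu$ rel $\mu(0)=\id$ to a constant forces $\mu(1)=\id$, which you have not arranged; a free-endpoint contraction moves $\chi^s(1)$, and then you must control the winding of $\rho_\om\circ\chi^s$ while the terminal automorphism drifts. Second, $\rho_\om$ is not a homomorphism, so even $\rho_\om\circ\mu\equiv 1$ gives no direct comparison of $\rho_\om\circ\chi_0$ with $\rho_\om\circ\chi_1$. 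The observation $\rho_\om(H)\sub\{\pm1\}$ (from Proposition~\ref{prop:rho om Psi}) is the right invariant, but it only constrains the \emph{endpoint} values of $\rho_\om\circ\chi^s$, not the winding along the way.

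The paper closes this precisely by not factoring. Instead of producing the quotient path $\mu$, it uses the fiber-bundle structure $\pi\colon\Aut\om\to\Gr(\om,W_{z(0)})$ from Lemma~\ref{le:bundle} to run the homotopy lifting argument directly: since $\wt\Psi_0(0)=\wt\Psi_1(0)=\id$ and $\pi\circ\wt\Psi_0=\pi\circ\wt\Psi_1$ (both project to the fixed loop $t\mapsto(W_{z(t)},\Phi([z|_{[0,t]}]))$ in $\Gr(\om,W_{z(0)})$), one obtains a two-parameter family $\wt\Psi\in C([0,1]^2,\Aut\om)$ interpolating them \emph{within the fiber over that fixed loop}, with $\wt\Psi(s,0)=\id$ for all $s$. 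Then Proposition~\ref{prop:homotopy} is the decisive step: because $\rho_\om(\wt\Psi(s,0))=\pm1$ and $\rho_\om(\wt\Psi(s,1))=\pm\rho_{\om_W}(\Phi([z]))$ (both by Proposition~\ref{prop:rho om Psi}, and crucially both independent of $s$ up to sign), continuity in $s$ pins the sign and hence the winding. That is the missing ingredient in your argument: it is not enough to know the endpoint value is constrained to $\{\pm1\}$; you need that both endpoint values are locked (up to a finite ambiguity resolved by continuity) uniformly in $s$, which is exactly the content of Proposition~\ref{prop:homotopy}, not of the sign dichotomy alone.
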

For the proof of Theorem \ref{thm:m X V om} we need the following. Let $f$ be a homeomorphism between two topological manifolds $X$ and $X'$, and $\pi:E\to X$ and $\pi':E'\to X'$ vector bundles. Assume that there exists $\Psi\in\Iso(E',E)$ that descends to $f$. We define $\Psi^*:\GL(E)\to\GL(E')$ by $\Psi^*(x_0,x_1,\Phi):=\big(f^{-1}(x_0),f^{-1}(x_1),\Psi_{f^{-1}(x_1)}^{-1}\Phi\Psi_{f^{-1}(x_0)}\big)$. For $\Phi\in\T(E)$ we define $\Psi^*\Phi:\Pi X'\to\GL(E')$ by $(\Psi^*\Phi)(a'):=\Psi^*(\Phi f_*a')$. 
\begin{lemma}\label{le:Psi * Phi} We have $\Psi^*\Phi\in\T(E')$. 
\end{lemma}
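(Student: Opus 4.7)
The plan is to unpack the definition of $\T(E')$ and verify the three things it demands of $\Psi^*\Phi$: that it covers the identity on $X'$, that it is a groupoid morphism, and that it is continuous. All three amount to essentially formal consequences of the corresponding properties of the three pieces $f_*$, $\Phi$, and $\Psi^*$ out of which $\Psi^*\Phi$ is assembled; there is no real geometric content.

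First I would check that $\Psi^*\Phi$ descends to $\id_{X'\times X'}$. A class $a'\in\Pi X'$ running from $x_0'$ to $x_1'$ pushes forward under $f$ (since $f$ is continuous) to a class $f_*a'\in\Pi X$ from $f(x_0')$ to $f(x_1')$; applying $\Phi$ produces an element of $\GL(E)$ over $(f(x_0'),f(x_1'))$; then by definition $\Psi^*$ sends this to an element over $(f^{-1}f(x_0'),f^{-1}f(x_1'))=(x_0',x_1')$, as required.

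Second I would check functoriality. It suffices to check that $\Psi^*:\GL(E)\to\GL(E')$ is itself a functor, since $f_*$ is a functor (immediate from continuity of $f$) and $\Phi$ is one by hypothesis. On identities, $\Psi^*(x,x,\id_{E_x})=\bigl(f^{-1}(x),f^{-1}(x),\Psi_{f^{-1}(x)}^{-1}\Psi_{f^{-1}(x)}\bigr)=\id_{f^{-1}(x)}$. On composable pairs $(x_0,x_1,A)$ and $(x_1,x_2,B)$, whose composition is $(x_0,x_2,BA)$, the inner factors $\Psi_{f^{-1}(x_1)}\Psi_{f^{-1}(x_1)}^{-1}$ inserted between $A$ and $B$ produce exactly $\Psi^*(x_1,x_2,B)\circ\Psi^*(x_0,x_1,A)$.

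Third I would verify continuity. Writing $\Psi^*\Phi$ as the composite
\[
\Pi X'\xrightarrow{f_*}\Pi X\xrightarrow{\Phi}\GL(E)\xrightarrow{\Psi^*}\GL(E'),
\]
continuity of $f_*$ follows from continuity of $f$ together with the fact that post-composition with a continuous map is continuous in the compact-open topology on path spaces and descends to $\Pi X'\to\Pi X$; continuity of $\Phi$ is given; and continuity of $\Psi^*$ reduces, in local trivializations over pairs of open sets $U'\subseteq X'$ and $f(U')\subseteq X$ on which $E'$ and $E$ are trivial, to the continuity of inversion and multiplication in $\GL$ together with continuity of $\Psi$, $\Psi^{-1}$, and $f^{-1}$. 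The main (and only) obstacle is this last continuity check, but it is strictly formal once one works in simultaneous local trivializations of $E'$ and $E$ via $\Psi$, where $\Psi^*$ takes the shape of conjugation by a continuously varying matrix precomposed with $f^{-1}$.
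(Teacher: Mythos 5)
Your proposal is correct and follows the same structure as the paper's own proof: you verify the three required properties (descends to the identity, groupoid morphism, continuity) by viewing $\Psi^*\Phi$ as the composite $\Psi^*\circ\Phi\circ f_*$ of three morphisms of topological groupoids. The only minor deviation is in how you argue continuity of $\Psi^*:\GL(E)\to\GL(E')$, where you work in local trivializations and use continuity of inversion and multiplication in $\GL$, while the paper cites the exponential-law statement from its Lemma \ref{le:compact open}(\ref{le:compact open:exp}); this is a small technical variation, not a different method.
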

\begin{proof}[Proof of Lemma \ref{le:Psi * Phi}]\setcounter{claim}{0} It follows from the definitions that $\Psi^*\Phi$ descends to the identity on $X'$. Furthermore, the map $f_*:\Pi X'\to\Pi X$ is a morphism of topological groupoids. Since $\Phi\in\T(E)$, the same holds for $\Phi$. Finally, it follows from the definitions that $\Psi^*:\GL(E)\to\GL(E')$ is a morphism of groupoids. It follows from Lemma \ref{le:compact open}(\ref{le:compact open:exp}) that it is continuous. It follows that the map $\Pi X'\ni a'\mapsto\Psi^*(\Phi f_*a')\in\GL(E')$ is a flat transport. This proves Lemma \ref{le:Psi * Phi}.
\end{proof}
Let $(V,\om)$ be a symplectic vector space and $\ell\in\{\dim V/2,\ldots,\dim V\}$. We denote by $\Gr(\om,\ell)$ the set of $\om$-coisotropic subspaces of $V$ of dimension $\ell$, and equip it with the natural smooth structure. Let $W_0\sub W$ be a coisotropic subspace of dimension $\ell$. We define the \emph{framed coisotropic Grassmannian} $\Gr(\om,W_0)$ to be the set of all pairs $(W,\Phi)$, where $W\sub V$ is an coisotropic subspace and $\Phi\in\Iso(\om_{W_0},\om_W)$. This set is naturally equipped with a smooth structure. 
\begin{lemma}\label{le:bundle} The maps $\Aut(\om)\to\Gr(\om,\ell)$, $\Psi\mapsto \Psi W$, and $\Aut(\om)\to \Gr(\om,W_0)$, $\Psi\mapsto (\Psi W_0,\Psi_{W_0})$, are smooth (locally trivial) fiber bundles. 
\end{lemma}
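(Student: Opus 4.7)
The plan is to exhibit both target spaces as homogeneous spaces of the Lie group $G:=\Aut(\om)$ and to invoke the standard theorem that for any closed subgroup $H\sub G$ the quotient map $G\to G/H$ is a principal $H$-bundle, hence a smooth locally trivial fiber bundle. Let $G$ act on $\Gr(\om,\ell)$ by $\Psi\cdot W:=\Psi W$, and on $\Gr(\om,W_0)$ by $\Psi\cdot(W,\Phi):=(\Psi W,\Psi_W\circ\Phi)$; both actions are smooth, and the two maps in the statement are precisely the orbit maps through $W_0$ and through $(W_0,\id_{(W_0)_\om})$, respectively. Their stabilizers are closed in $G$, hence closed Lie subgroups, so once the two actions are known to be transitive the lemma follows.

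Transitivity on $\Gr(\om,\ell)$ is a direct application of Witt's extension theorem: any two coisotropic subspaces of $V$ of dimension $\ell$ are related by an element of $\Aut(\om)$, because their symplectic reductions have a common dimension and their symplectic radicals are isotropic of a common dimension. For $\Gr(\om,W_0)$ I argue in two stages. Given $(W,\Phi)\in\Gr(\om,W_0)$, I first use the previous step to produce $\Psi_1\in G$ with $\Psi_1W_0=W$. The composition $\si:=\Phi\circ(\Psi_1)_{W_0}^{-1}$ lies in $\Aut(W_\om,\om_W)$, so it suffices to find $\Psi_2\in G$ with $\Psi_2W=W$ and $(\Psi_2)_W=\si$; then $\Psi_2\Psi_1\cdot(W_0,\id)=(W,\Phi)$. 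To build $\Psi_2$, I choose a symplectic decomposition $V=W^\om\oplus K\oplus L$, where $K\sub W$ is a symplectic complement of $W^\om$ inside $W$ (hence canonically symplectically isomorphic to $W_\om$), and $L$ is a Lagrangian complement to $W^\om$ inside the symplectic subspace $(K)^\om=W^\om\oplus L$. Transporting $\si$ to an automorphism of $K$ via $K\iso W_\om$, I let $\Psi_2$ act as $\si$ on $K$ and as the identity on $W^\om\oplus L$; this map is symplectic, preserves $W=W^\om\oplus K$, and induces $\si$ on the symplectic reduction.

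Combining the two stages gives transitivity for the second action as well, and the lemma follows from the principal-bundle theorem for homogeneous spaces. The main step is the transitivity argument for the framed Grassmannian: the underlying coisotropic factor is handled by Witt's theorem, but the framing factor requires the explicit construction of $\Psi_2$ above, exploiting the symplectic decomposition of $V$ induced by $W$. Everything else is routine bookkeeping, with no further symplectic input beyond Witt's theorem.
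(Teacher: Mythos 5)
Your proof is correct and follows essentially the same route as the paper: exhibit both spaces as homogeneous spaces of $\Aut(\om)$, prove transitivity (the paper's Lemma \ref{le:transitive}), and invoke the standard closed-subgroup/homogeneous-space theorem. Your explicit $\Psi_2$ built on $V=K\oplus K^\om$ with $K$ a maximal symplectic subspace of $W$ matches the paper's construction with $V_0:=K$ and $\Psi(v_0+v_1):=f^{-1}\Phi f v_0+v_1$; the further Lagrangian split of $K^\om$ into $W^\om\oplus L$ is harmless but unnecessary.
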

For the proof of Lemma \ref{le:bundle} we need the following. The group $\Iso(\om)$ acts naturally on $\Gr(\om,\ell)$, and it acts on $\Gr(\om,W_0)$ by $\Psi(W,\Phi):=\big(\Psi W,\Psi_W\Phi\big)$. These actions are smooth. 
\begin{lemma}\label{le:transitive} They are transitive.
\end{lemma}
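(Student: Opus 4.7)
The plan is to prove both transitivity statements using the standard normal form for coisotropic subspaces, together with a symplectic splitting argument. For the action on $\Gr(\om,\ell)$, let $W, W' \in \Gr(\om,\ell)$. Their symplectic complements $W^\om$ and $W'^\om$ are isotropic subspaces of the common dimension $\dim V - \ell$. By Witt's extension theorem for symplectic vector spaces, $\Aut(\om)$ acts transitively on isotropic subspaces of any fixed dimension, so there exists $\Psi \in \Aut(\om)$ with $\Psi W^\om = W'^\om$. Since $\Psi$ is symplectic, $(\Psi W)^\om = \Psi(W^\om) = W'^\om$, and passing to symplectic complements once more yields $\Psi W = W'$.

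For the action on $\Gr(\om,W_0)$, let $(W,\Phi), (W',\Phi') \in \Gr(\om,W_0)$. By the first part I choose $\Psi_1, \Psi_2 \in \Aut(\om)$ with $\Psi_1 W_0 = W$ and $\Psi_2 W_0 = W'$. Replacing $(W,\Phi)$ and $(W',\Phi')$ by $\Psi_1^{-1}(W,\Phi)$ and $\Psi_2^{-1}(W',\Phi')$, respectively, the problem reduces to connecting two framed pairs of the form $(W_0, B)$ and $(W_0, B')$ with $B, B' \in \Sp((W_0)_\om, \om_{W_0})$. Since $\Psi_0(W_0, B) = (W_0, (\Psi_0)_{W_0} B)$ for any $\Psi_0$ in the stabilizer of $W_0$, this in turn reduces to showing that the reduction homomorphism
\[ R \colon \mathrm{Stab}_{\Aut(\om)}(W_0) \to \Sp((W_0)_\om, \om_{W_0}), \qquad \Psi_0 \mapsto (\Psi_0)_{W_0}, \]
is surjective, for then I can pick $\Psi_0$ with $R(\Psi_0) = B' B^{-1}$.

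To prove surjectivity of $R$, I choose a symplectic basis $e_1, \ldots, e_n, f_1, \ldots, f_n$ of $V$ adapted to $W_0$, meaning $W_0^\om = \mathrm{span}(e_1, \ldots, e_k)$ with $k := \dim V - \dim W_0$; such a basis exists because $W_0^\om$ is isotropic. Automatically $W_0 = \mathrm{span}(e_1, \ldots, e_n, f_{k+1}, \ldots, f_n)$, and $U := \mathrm{span}(e_{k+1}, \ldots, e_n, f_{k+1}, \ldots, f_n)$ is a symplectic subspace of $V$ with $V = U \oplus U^\om$ and $W_0 = W_0^\om \oplus U$, and the canonical projection $U \to (W_0)_\om$ is a symplectic isomorphism. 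Given $A \in \Sp((W_0)_\om, \om_{W_0})$, I define $\Psi_0$ to act by (the lift of) $A$ on $U$ and by the identity on $U^\om$. Since $\om$ splits as an orthogonal direct sum along $V = U \oplus U^\om$ and $\Psi_0$ is symplectic on each summand, $\Psi_0 \in \Aut(\om)$; it preserves $W_0 = W_0^\om \oplus U$ (since $W_0^\om \sub U^\om$ is fixed and $A(U) = U$) and induces $A$ on $(W_0)_\om$ by construction.

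The main obstacle is the construction of the adapted splitting $V = U \oplus U^\om$ compatible with $W_0 = W_0^\om \oplus U$, which is handled by the symplectic normal form for isotropic subspaces. Once this splitting is available, everything else reduces to direct verification using that the symplectic form decomposes orthogonally along $V = U \oplus U^\om$.
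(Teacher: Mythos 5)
Your proposal is correct and uses essentially the same strategy as the paper: transitivity on $\Gr(\om,\ell)$ via Witt extension for isotropic subspaces, then reduction to the case $W=W_0$ and surjectivity of the reduction homomorphism $\mathrm{Stab}(W_0)\to\Sp\big((W_0)_\om,\om_{W_0}\big)$, proved by splitting $V$ along a symplectic complement $U\sub W_0$ of $W_0^\om$ and extending the lift of $A$ by the identity on $U^\om$. The paper phrases the last step with an abstractly chosen maximal symplectic subspace of $W_0$ rather than an explicit adapted basis, but the argument is the same.
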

\begin{proof}[Proof of Lemma \ref{le:transitive}]\setcounter{claim}{0} Transitivity of the first action follows by an elementary argument. Let $(W,\Phi)\in\Gr(\om,W_0)$. Assume first that $W=W_0$. We choose a maximal symplectic subspace $V_0\sub W$, and define $f:V_0\to W_\om$, $fv_0:=v_0+W^\om$. Then $f\in\Iso(\om|_{V_0},\om_W)$, and hence we may define $\Psi:V=V_0\oplus V_0^\om\to V$ by $\Psi(v_0+v_1):=f^{-1}\Phi fv_0+v_1$. This map has the required properties.

For a general $W$ we choose $\Psi'\in\Iso(\om)$ such that $\Psi'W_0=W$. By what we just proved there exists $\Psi''\in\Iso(\om)$ such that $\Psi''W_0=W_0$ and $\Psi''_{W_0}={\Psi'}_{W_0}^{-1}\Phi$. The map $\Psi:=\Psi'\Psi''$ has the required properties. This proves Lemma \ref{le:transitive}.
\end{proof}
\begin{proof}[Proof of Lemma \ref{le:bundle}]\setcounter{claim}{0} If a Lie group $G$ acts smoothly on a manifold $X$ and $x\in X$, then the stabilizer $H$ of $x$ is a closed subgroup, and hence the map $G\to G/H$, $g\mapsto gH$, is a smooth fiber bundle. If the action is transitive then the map $G/H\to X$, $gH\mapsto gx$, is a diffeomorphism. Lemma \ref{le:bundle} follows from this and Lemma \ref{le:transitive}. 
\end{proof}
Let $f$ be a homeomorphism between two topological manifolds $X$ and $X'$, and $(\pi,E,\om)$ and $(\pi',E',\om')$ be symplectic vector bundles over $X$ and $X'$ respectively. Assume that there exists $F\in\wt\Iso\big(E',\om';E,\om\big)$. 
\begin{lemma}\label{le:Psi' F} The following statements hold.
\begin{enui}
\item\label{le:Psi' F:bij} The map $F^*:\Coi(E,\om)\to\Coi(E',\om')$ defined by $F^*(W,\Phi):=(F^{-1}W,F^*\Phi)$, is a bijection.
\item\label{le:Psi' F:W}Let $W\sub E$ be an $\om$-coisotropic subbundle and $\Psi\in\T(E,\om)$ be a transport that leaves $W$ invariant. Then the transport $\Psi':=F^*\Psi\in\T(E',\om')$ leaves the $\om'$-coisotropic subbundle $W':=F^{-1}W\sub E'$ invariant and $\Psi'_{W'}=F^*(\Psi_W)$. 
\end{enui}
\end{lemma}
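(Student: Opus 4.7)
The lemma is a functoriality statement: a fiberwise symplectic bundle isomorphism $F$ intertwines all of the constructions entering the definition of $\Coi(E,\om)$ as well as the symplectic quotient by an invariant subbundle. My strategy is to reduce both parts to Lemma \ref{le:Psi * Phi} by first descending $F$ to the appropriate symplectic quotient and then invoking functoriality of the pullback of flat transports.

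\textbf{Part (i).} I will first check that $W':=F^{-1}W$ is $\om'$-coisotropic. Since $F$ is fiberwise symplectic, $(W')^{\om'}_{x'}=F_{x'}^{-1}(W^\om_{f(x')})\sub F_{x'}^{-1}(W_{f(x')})=W'_{x'}$ for every $x'\in X'$, so $W'$ contains its symplectic complement. Next, $F$ restricts to a bundle isomorphism $F|_{W'}\colon W'\to W$ sending $(W')^{\om'}$ onto $W^\om$, and therefore descends fiberwise to a symplectic bundle isomorphism $\BAR F\colon((W')_{\om'},\om'_{W'})\to(W_\om,\om_W)$. The expression $F^*\Phi$ is then nothing but the pullback of $\Phi$ along $\BAR F$ in the sense of Lemma \ref{le:Psi * Phi}, hence $F^*\Phi\in\T((W')_{\om'},\om'_{W'})$ and $(W',F^*\Phi)\in\Coi(E',\om')$. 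Bijectivity of $F^*\colon\Coi(E,\om)\to\Coi(E',\om')$ follows because $(F^{-1})^*$ is a two-sided inverse, as one reads off from associativity of bundle composition together with the obvious identity $\BAR{F^{-1}}=\BAR F^{-1}$.

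\textbf{Part (ii).} I will verify invariance of $W'=F^{-1}W$ under $\Psi':=F^*\Psi$ by a direct computation: for any path class $[z]\in\Pi X'$,
\[\Psi'([z])W'_{z(0)}=F_{z(1)}^{-1}\Psi([f\circ z])F_{z(0)}F_{z(0)}^{-1}W_{f(z(0))}=F_{z(1)}^{-1}W_{f(z(1))}=W'_{z(1)},\]
using that $\Psi$ preserves $W$. To identify $\Psi'_{W'}$ with $\BAR F^*(\Psi_W)$, it suffices to check equality on each $[z]$, which reduces to the fiberwise identity $\big(F_{z(1)}^{-1}\Psi([f\circ z])F_{z(0)}\big)_{W'}=\BAR F_{z(1)}^{-1}(\Psi([f\circ z]))_W\BAR F_{z(0)}$. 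This in turn is immediate from the fact that $F$ intertwines the quotient projections $W\to W_\om$ and $W'\to(W')_{\om'}$, which is built into the definition of $\BAR F$.

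\textbf{Main obstacle.} There is no serious obstacle; the whole argument is functorial and verifiable pointwise on fibers. The only care required is bookkeeping of base-point relabeling along $f$ together with a clean separation between $F$ acting on total spaces and $\BAR F$ acting on the symplectic quotients.
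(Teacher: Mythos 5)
Your proof is correct, and since the paper itself only remarks that "the statements follow from straight-forward arguments," you have essentially supplied the intended details: descending $F$ to the quotient isomorphism $\BAR F$, invoking the functorial pullback of flat transports (Lemma \ref{le:Psi * Phi}), and verifying the fiberwise compatibility $\bigl(F_{z(1)}^{-1}\Psi([f\circ z])F_{z(0)}\bigr)_{W'}=\BAR F_{z(1)}^{-1}(\Psi([f\circ z]))_W\BAR F_{z(0)}$. The one small caveat is notational: the paper is slightly loose about whether $f$ or $f^{-1}$ is the covered base map for $F\in\wt\Iso(E',\om';E,\om)$, and your formulas inherit this ambiguity, but the structural argument is unaffected.
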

\begin{proof}[Proof of Lemma \ref{le:Psi' F}]\setcounter{claim}{0} The statements follow from straight-forward arguments.
\end{proof}
\begin{rmk}[Naturality for one-dimensional Maslov map]\label{rmk:m Phi Phi 0} Let $X$ and $X'$ be closed oriented curves, $(E,\om)$ and $(E',\om')$ symplectic vectors bundle over $X$ and $X'$, respectively, $\Phi,\Phi_0\in\T(E,\om)$, and $\Psi\in\Iso\big(E',\om';E,\om\big)$. Assume that $\Phi_0$ is regular. Then $m_{X',\om'}(\Psi^*\Phi,\Psi^*\Phi_0)=m_{X,\om}(\Phi,\Phi_0)$. This follows from Proposition \ref{prop:rho}(\ref{prop:rho:nat}). 
\end{rmk}
\begin{proof}[Proof of Theorem \ref{thm:m X V om}]\setcounter{claim}{0} Without loss of generality, we may assume that $X$ is connected. 

To prove {\bf statement (\ref{thm:m X V om:Psi})}, assume first that $X=\R/\Z$ and there exists a symplectic vector space $(V,\Om)$ such that $E=\R/\Z\x V$ and $\om$ is constantly equal to $\Om$. Let $(W,\Phi)\in\Coi(\R/\Z\x V,\om)$. It follows from Lemma \ref{le:bundle} that there exists a path $\wt\Psi\in C([0,1],\Aut\Om)$ such that $\wt\Psi(0)=\id_V$ and $\wt\Psi(s)W_{0+\Z}=W_{s+\Z}$ and $\wt\Psi(s)_{W_{0+\Z}}=\Phi\big(\big[[0,1]\ni t\mapsto st+\Z\big]\big)$, for every $s\in[0,1]$. By an elementary argument there exists a unique transport $\Psi\in\T(E,\om)$ satisfying $\Psi\big(\big[[0,1]\ni t\mapsto st+\Z\big]\big)=\wt\Psi(s)$, for every $s\in[0,1]$. This is a lift of $(W,\Phi)$, as required.

In the general situation, we choose a homeomorphism $f:\R/\Z\to X$ and a symplectic vector space $(V,\Om)$ of dimension $\rank E$. Since $\Aut\Om$ is connected, there exists $F\in\Iso\big(\R/\Z\x V,\Om;E,\om\big)$ that descends to $f$. Statement (\ref{thm:m X V om:Psi}) follows now from what we already proved and Lemma \ref{le:Psi' F}.

To prove {\bf statement (\ref{thm:m X V om:Psi Psi'})}, we choose a symplectic vector space $(V,\Om)$ of dimension $\rank E$. Without loss of generality, we may assume that $E=X\x V$, $\om$ is constantly equal to $\Om$, and $\Psi\const\id$. (To see this, we choose $z_0\in X$ and we define $F\in\Iso(X\x V,\Om;E,\om)$ by $F_{z_1}:=\Psi([z])$, for $z_1\in X$, where $z\in C([0,1],X)$ is a path such that $z(i)=z_i$, for $i=0,1$. By regularity of $\Psi$ the map $F$ is well-defined. The claimed equality is a consequence of the equality $m_{X,\om}(F^*\Psi_0,\id)=m_{X,\om}(F^*\Psi_1,\id)$, the fact $F^*\Psi\const\id$, and Remark \ref{rmk:m Phi Phi 0}.)

Let $\Psi_0,\Psi_1\in\T(X\x V,\om)$ be lifts of $(W,\Phi)$. We choose a path $z\in C([0,1],X)$ such that $z(0)=z(1)$ and the map $S^1\iso [0,1]/\{0,1\}\ni [t]\mapsto z(t)\in X$ has degree one. We define $\pi:\Aut\om\to\Gr(\om,W_{z(0)})$ by $\pi(F):=\big(FW_{z(0)},F_{W_{z(0)}}\big)$, and $\wt\Psi_i:[0,1]\to\Aut\om$ by $\wt\Psi_i(t):= \Psi_i([z|_{[0,t]}])$, for $i=0,1$. Then $\pi\circ\wt\Psi_0(t)=\big(W_{z(t)},\Phi([z|_{[0,t]}])\big)=\pi\circ\wt\Psi_1(t)$, for every $t\in[0,1]$, and $\wt\Psi_0(0)=\id_V=\wt\Psi_1(0)$. Therefore, Lemma \ref{le:bundle} implies that there exists $\wt\Psi\in C\big([0,1]\x[0,1],\Aut\om\big)$ such that $\wt\Psi(i,\cdot)=\wt\Psi_i$, for $i=0,1$, $\wt\Psi(s,0)=\id_V$, and $\pi\circ\wt\Psi(s,t)=\big(W_{z(t)},\Phi([z|_{[0,t]}])\big)$, for every $s,t\in[0,1]$. Therefore, the hypotheses of Proposition \ref{prop:homotopy} are satisfied with $x(s,t):=z(t)$ and $\Psi:=\wt\Psi$. By the assertion of that proposition, we have $m_\om(\wt\Psi_0)=m_\om(\wt\Psi_1)$. Since $m_{X,\om}(\Psi_i,\id)=m_\Om(\wt\Psi_i)$, for $i=0,1$, it follows that $m_{X,\om}(\Psi_0,\id)=m_{X,\om}(\Psi_1,\id)$. This proves statement (\ref{thm:m X V om:Psi Psi'}).

{\bf Statements (\ref{thm:m X V om:Pi},\ref{thm:m X V om:X})} are proved similarly to statement (\ref{thm:m X V om:Psi}).

This completes the proof of Theorem \ref{thm:m X V om}. 
\end{proof}
\begin{lemma}\label{le:m Phi Phi 1} Let $\Si$ be a compact connected oriented surface with non-empty boundary, $(E,\om)$ a symplectic vector bundle over $\Si$, $\Phi,\Phi'\in\T(E,\om)$ regular transports, and $\Psi\in\T((E,\om)|_{\d\Si})$. Then $m_{\d\Si,\om|_{\d\Si}}(\Psi,\Phi|_{\d\Si})=m_{\d\Si,\om|_{\d\Si}}(\Psi,\Phi'|_{\d\Si})$. 
\end{lemma}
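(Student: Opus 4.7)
The plan is to reduce to a trivial bundle, recast the difference of the two Maslov indices as a winding number, and then recognize the sum over boundary components as a pairing with $[\d\Si]\in H_1(\Si;\Z)$, which vanishes because $\d\Si$ bounds $\Si$. First I would apply Lemma \ref{le:Psi' F} and Remark \ref{rmk:m Phi Phi 0}, together with the trivialization trick from the proof of Theorem \ref{thm:m X V om}(\ref{thm:m X V om:Psi Psi'}), to reduce to the case $(E,\om)=(\Si\x V,\Om)$ with $\Phi\const\id$. Regularity of $\Phi'$ on $\Si$ then produces a continuous map $g\Colon\Si\to\Aut\Om$, normalized to $g(x_0)=\id$ at a basepoint, such that $\Phi'([z])=g(z(1))g(z(0))^{-1}$ for every path $z$ in $\Si$. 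Fix a component $X\sub\d\Si$ parametrized by a degree-one loop $z\in C([0,1],X)$, and set $\alpha(t):=\Psi([z|_{[0,t]}])$ and $h(t):=g(z(0))g(z(t))^{-1}$. Unwinding the definition of $m_\om$, one has $\Phi'|_X([z|_{[0,t]}])^{-1}\Psi([z|_{[0,t]}])=h(t)\alpha(t)$ and $h$ is a loop based at $\id$.

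The main technical step is the winding-additivity identity
\[\al\bigl(\rho_\Om\circ(h\cdot\alpha)\bigr)=\al(\rho_\Om\circ h)+\al(\rho_\Om\circ\alpha),\]
which gives $m_{X,\om|_X}(\Psi,\Phi'|_X)-m_{X,\om|_X}(\Psi,\Phi|_X)=2\al(\rho_\Om\circ h)$. I would derive it from the observation that the ``multiplicativity defect'' map $\mu\Colon\Aut\Om\x\Aut\Om\to S^1$, $(A,B)\mapsto\rho_\Om(AB)\rho_\Om(A)^{-1}\rho_\Om(B)^{-1}$, represents the zero class in $H^1(\Aut\Om\x\Aut\Om;\Z)$: indeed $\mu$ is identically $1$ on $\U(n)\x\U(n)$, since $\rho_\Om|_{\U(n)}=\det_\C$ is multiplicative, and $\U(n)\hookrightarrow\Sp(2n,\R)$ is a homotopy equivalence. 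Hence $\mu$ admits a lift $\wt\mu\Colon\Aut\Om\x\Aut\Om\to\R$, which one can normalize to vanish on $\{\id\}\x\Aut\Om$ and on $\Aut\Om\x\{\id\}$ (where $\mu\equiv1$); evaluating $\wt\mu$ at the two endpoints of $t\mapsto(h(t),\alpha(t))$, both of which lie in $\{\id\}\x\Aut\Om$, then delivers the additivity. A parallel nullhomotopy argument (constants and inverses produce no winding under $\rho_\Om$) yields $\al(\rho_\Om\circ h)=-\al(\rho_\Om\circ(g\circ z))$, so
\[m_{X,\om|_X}(\Psi,\Phi'|_X)-m_{X,\om|_X}(\Psi,\Phi|_X)=-2\al(\rho_\Om\circ g|_X).\]

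Summing this identity over the connected components of $\d\Si$ rewrites the right-hand side as $-2\bigl\langle g^{*}[\rho_\Om],[\d\Si]\bigr\rangle_\Si$, where $[\rho_\Om]\in H^1(\Aut\Om;\Z)$ is the class represented by the Salamon-Zehnder map. Since $\d\Si$ is the boundary of the compact oriented surface $\Si$, its class in $H_1(\Si;\Z)$ vanishes, so the pairing is zero. The chief obstacle is the winding-additivity identity above; the concluding homological vanishing is then essentially formal.
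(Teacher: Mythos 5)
Your proof is correct and follows essentially the same route as the paper: the two ingredients are an additivity identity $m_\om(\Phi\Psi)=m_\om(\Phi)+m_\om(\Psi)$ (the paper's Lemma \ref{le:m om Phi Psi}) and the vanishing of the degree on $\d\Si$ of a map to $S^1$ that extends over $\Si$ (the paper's Lemma \ref{le:m Phi Psi T}, which is exactly your pairing $\langle g^*[\rho_\Om],[\d\Si]\rangle_\Si=0$). The only cosmetic difference is that the paper proves the additivity by a direct homotopy of the pointwise product $\Phi\Psi$ to the concatenation $\Phi\#\Psi$, whereas you derive it from a nullhomotopy of the multiplicativity defect of $\rho_\Om$ on $\Aut\Om\times\Aut\Om$; the former is a bit more elementary, but both arguments are sound.
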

For the proof of Lemma \ref{le:m Phi Phi 1} we need the following.
\begin{lemma}\label{le:m om Phi Psi} Let $(V,\om)$ be a symplectic vector space, and $\Phi,\Psi\in C([0,1],\Aut\om)$ be such that $\Phi(0)=\Phi(1)=\id$ and $\Psi(0)=\id$. Then $m_\om(\Phi\Psi)=m_\om(\Phi)+m_\om(\Psi)$.  
\end{lemma}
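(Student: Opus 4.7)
The plan is to reduce the pointwise product to a concatenation of paths, and then use the fact that the winding map is additive under concatenation. Here are the key steps in order.

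First, I would show that the pointwise product path $t\mapsto \Phi(t)\Psi(t)$ is homotopic, relative to its endpoints, to the concatenation $\Phi\#\Psi$ (first traverse $\Phi$ on $[0,\tfrac12]$ at double speed, then $\Psi$ on $[\tfrac12,1]$ at double speed). Note that both paths start at $\Phi(0)\Psi(0)=\id$ and end at $\Phi(1)\Psi(1)=\Psi(1)$, since $\Phi(1)=\id$. The explicit homotopy has the form $H(s,t):=\Phi(a(s,t))\Psi(b(s,t))$, where the reparametrization pair $(a,b)$ linearly interpolates between the diagonal $(t,t)$ (at $s=0$) and the ``L-shaped'' path going first along the $a$-axis to $(1,0)$ and then up to $(1,2t-1)$ (at $s=1$). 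Concretely, for $t\leq \tfrac12$ take $(a,b)=((1+s)t,(1-s)t)$, and for $t\geq \tfrac12$ take $(a,b)=(t+s(1-t),t-s(1-t))$; these match at $t=\tfrac12$ and satisfy $H(s,0)=\id$, $H(s,1)=\Psi(1)$ for all $s$. This is the standard Eckmann--Hilton style deformation in the topological group $\Aut\om$.

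Next, I would observe that $m_\om$ is a rel-endpoint homotopy invariant. Indeed, by Remark \ref{rmk:rho cont} the map $\rho_\om$ is continuous, so $\rho_\om\circ H\colon [0,1]^2\to S^1$ is continuous, with both $\rho_\om\circ H(s,0)$ and $\rho_\om\circ H(s,1)$ independent of $s$. A continuous lift to $\R$ therefore has $s$-independent endpoints, and so the winding $\alpha(\rho_\om\circ H(s,\cdot))$ is constant in $s$. Hence $m_\om(\Phi\Psi)=m_\om(\Phi\#\Psi)$.

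Finally, $\alpha$ is additive under concatenation of paths in $S^1$: for any two composable continuous paths $z_1,z_2\in C([0,1],S^1)$ one has $\alpha(z_1\#z_2)=\alpha(z_1)+\alpha(z_2)$, as one sees by choosing compatible lifts to $\R$ and telescoping the differences of endpoints. Applying this to $\rho_\om\circ(\Phi\#\Psi)=(\rho_\om\circ\Phi)\#(\rho_\om\circ\Psi)$ and multiplying by $2$ yields
\[
m_\om(\Phi\Psi)=m_\om(\Phi\#\Psi)=m_\om(\Phi)+m_\om(\Psi),
\]
as required. The only mildly delicate point is the explicit construction of the homotopy $H$ (and checking continuity at $t=\tfrac12$); everything else is formal.
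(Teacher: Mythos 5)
Your proof is correct and follows the same route as the paper: the paper's proof simply states that $\Phi\Psi$ is homotopic rel endpoints to the concatenation $\Phi\#\Psi$ and that the claim then follows, and you have supplied exactly the explicit homotopy and the additivity-of-winding argument that the paper leaves implicit.
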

\begin{proof}[Proof of Lemma \ref{le:m om Phi Psi}]\setcounter{claim}{0} By an elementary argument, the map $\Phi\Psi$ is homotopic with fixed end-points to the concatenation of $\Phi$ with $\Psi$. The statement follows from this. 
\end{proof}
\begin{lemma}\label{le:m Phi Psi T} Let $\Si$ be compact connected oriented surface with non-empty boundary, $(E,\om)$ a symplectic vector bundle over $\Si$, and $\Phi,\Psi\in\T(E,\om)$ be regular transports. Then $m_{\d\Si,\om|_{\d\Si}}(\Phi|_{\d\Si},\Psi|_{\d\Si})=0$. 
\end{lemma}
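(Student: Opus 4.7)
The plan is to reduce to a situation where the two regular transports differ by a single continuous ``gauge'' $f\colon\Si\to\Aut\Om$, and then to use that the boundary cycle of $\Si$ is null-homologous in $\Si$ in order to kill the total contribution. Because $\Si$ is a compact connected surface with non-empty boundary, it deformation retracts onto a graph, so any symplectic vector bundle over it is symplectically trivial. After choosing such a trivialisation and invoking Remark \ref{rmk:m Phi Phi 0}, I may assume $(E,\om)=(\Si\x V,\Om)$. Fixing a basepoint $x_0\in\Si$, I will then define
\[f\colon\Si\to\Aut\Om,\qquad f(x):=\Phi([z])^{-1}\Psi([z])\]
for any path $z$ from $x_0$ to $x$; the regularity of both $\Phi$ and $\Psi$ makes this independent of the choice of $z$, and a short argument in local trivialisations yields continuity of $f$.

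For each component $C\sub\d\Si$ I will fix a degree-one parametrisation $z_C\colon[0,1]\to C$ based at some $y_C\in C$ together with an auxiliary path $\ga$ from $x_0$ to $y_C$. Setting $A:=\Psi([\ga])$ and $B:=\Phi([\ga])^{-1}$, the concatenation rule for flat transports gives
\[\Lam_C(t):=\Psi([z_C|_{[0,t]}])^{-1}\Phi([z_C|_{[0,t]}])=A\,f(z_C(t))^{-1}\,B.\]
Since $\Aut\Om$ is path-connected, I can pick paths $A_s,B_s$ in $\Aut\Om$ from $\id$ to $A$ and $B$, and then $H(s,t):=A_s\,f(z_C(t))^{-1}\,B_s$ is a free homotopy of loops in $\Aut\Om$ (each $H(s,\cdot)$ closes up because $f(z_C(0))=f(z_C(1))$) from $t\mapsto f(z_C(t))^{-1}$ at $s=0$ to $\Lam_C$ at $s=1$.

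Composing with $\rho_\Om$ produces a free homotopy of loops in $S^1$, and since the winding number $\al$ is a free-homotopy invariant of maps $S^1\to S^1$, this will give $\al(\rho_\Om\circ\Lam_C)=\deg\big((\rho_\Om\circ f^{-1})|_C\big)$. Summing over all boundary components then yields
\[m_{\d\Si,\om|_{\d\Si}}(\Phi|_{\d\Si},\Psi|_{\d\Si})=2\sum_{C\in\Con(\d\Si)}\deg\big((\rho_\Om\circ f^{-1})|_C\big)=2\deg\big((\rho_\Om\circ f^{-1})|_{\d\Si}\big),\]
and the right-hand side vanishes because the continuous map $\rho_\Om\circ f^{-1}\colon\Si\to S^1$ extends across $\Si$, so its restriction to the boundary has total degree zero (equivalently, the image of $[\d\Si]$ in $H_1(\Si;\Z)$ is zero).

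The step requiring the most care is the passage from $\Lam_C$ to $t\mapsto f(z_C(t))^{-1}$: the two loops live in $\Aut\Om$ but are based at different points ($\id$ versus $f(y_C)^{-1}$), and the ``sandwiching'' elements $A$ and $B$ do not cancel through any conjugation identity, so the argument must rely on a free rather than pointed homotopy, combined with the fact that $\rho_\Om$ is only being used to extract a free-homotopy invariant. The remaining work is bookkeeping with the concatenation rule for flat transports and the standard observation that a continuous map to $S^1$ which extends across a compact oriented surface has zero total winding number along the boundary.
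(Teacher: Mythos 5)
Your proof is correct and follows essentially the same route as the paper's. Both reduce to the observation that the boundary is null-homologous in $\Si$, by building an $S^1$-valued function on all of $\Si$ (your $\rho_\Om\circ f^{-1}$, the paper's $f_{z_0}$) whose winding along each boundary component computes the corresponding Maslov contribution; the paper streamlines the bookkeeping by applying $\rho$ immediately, defining $f_{z_0}(z_1):=\rho_{\om_{z_0}}\big(\Psi([z])^{-1}\Phi([z])\big)$ directly (no trivialization, no auxiliary $\Aut\Om$-valued map), and replacing your explicit free homotopy $H(s,t)=A_s f(z_C(t))^{-1}B_s$ by the one-line remark that $f_{z_0}$ and $f_{z_0'}$ are homotopic as maps $\Si\to S^1$ when the basepoint is moved.
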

\begin{proof}[Proof of Lemma \ref{le:m Phi Psi T}]\setcounter{claim}{0} For $z_0\in\Si$ we define $f_{z_0}:\Si\to S^1$ by $f_{z_0}(z_1):=\rho_{\om_{z_0}}\big(\Psi([z])^{-1}\Phi([z])\big)$, for $z_1\in\Si$, where $z\in C([0,1],\Si)$ is a path such that $z(i)=z_i$, for $i=0,1$. By regularity of $\Phi$ and $\Psi$ this map is well-defined. Let $X$ be a connected component of $\d\Si$. Then for every $z_0\in X$ we have $\deg(f_{z_0}|_X)=m_{X,\om|_X}(\Phi|_X,\Psi|_X)$. Furthermore, for $z_0,z_0'\in\Si$ the maps $f_{z_0}$ and $f_{z_0'}$ are homotopic and hence $\deg(f_{z_0}|_X)=\deg(f_{z_0'}|_X)$. Let $z_0\in\Si$. It follows that $m_{\d\Si,\om|_{\d\Si}}(\Phi|_{\d\Si},\Psi|_{\d\Si})=\deg(f_{z_0}|_{\d\Si})=0$. This proves Lemma \ref{le:m Phi Psi T}.
\end{proof}

\begin{proof}[Proof of Lemma \ref{le:m Phi Phi 1}]\setcounter{claim}{0} Let $X$ be a connected component of $\d\Si$. We choose a path $z\in C([0,1],X)$ such that $z(0)=z(1)$ and the map $S^1\iso[0,1]/\{0,1\}\ni[t]\mapsto z(t)\in X$ has degree one. For $s\in[0,1]$ we define $z_s\in C([0,1],X)$ by $z_s(t):=z(st)$. Furthermore, we define $F\in C([0,1],\Aut(\om_{z(0)}))$ by $F(s):=\Phi'([z_s])^{-1}\Phi([z_s])$. By definition, we have $m_{X,\om|_X}(\Phi|_X,\Phi'|_X)=m_{\om_{z(0)}}(F)$. Therefore, using Lemma \ref{le:m om Phi Psi}, we obtain $m_{X,\om|_X}(\Psi,\Phi'|_X)=m_{X,\om|_X}(\Psi,\Phi|_X)+m_{X,\om|_X}(\Phi|_X,\Phi'|_X)$. The claimed equality follows now from Lemma \ref{le:m Phi Psi T}.
This proves Lemma \ref{le:m Phi Phi 1}.
\end{proof}
\begin{rmk}\label{rmk:T E om} Let $\Si$ be a compact connected oriented surface with non-empty boundary, and $(E,\om)$ a symplectic vector bundle over $\Si$. Then there exists a regular transport $\Phi\in\T(E,\om)$. To see this, we choose a symplectic vector space $(V,\Om)$ of dimension $\rank E$. Since $\Aut(\om)$ is connected and $\d\Si\neq\empty$, there exists $\Psi\in\Iso(\Si\x V,\Om;E,\om)$. We define $\Phi\in\T(E,\om)$ by $\Phi([z]):=\Psi_{z(1)}\Psi_{z(0)}^{-1}$. 
\end{rmk}
\begin{proof}[Proof of Theorem \ref{thm:m Si E om}]\setcounter{claim}{0} We show existence of the map $m_{\Si,E,\om}$. Let $(W,\Phi)\in\Coi(E,\om)$. By Remark \ref{rmk:T E om} we may choose a regular transport $\Psi_0\in\T(E,\om)$. By Theorem \ref{thm:m X V om}(\ref{thm:m X V om:Psi}) we may choose a lift $\Psi\in\T((E,\om)|_{\d\Si})$ of $(W,\Phi)$. We define $m_{\Si,E,\om}(W,\Phi):=m_{\d\Si,\om|_{\d\Si}}(\Psi,\Psi_0)$. By Theorem \ref{thm:m X V om}(\ref{thm:m X V om:Psi Psi'}) and Lemma \ref{le:m Phi Phi 1} this number does not depend on the choices of $\Psi$ and $\Psi_0$. %
Furthermore, the conditions (\ref{thm:m Si E om:boundary},\ref{thm:m Si E om:inv}) follow from the definition of $m_\Si$. 

To show uniqueness of the map $m_{\Si,E,\om}$, let $m_{\Si,E,\om}:\Coi(E,\om)\to\R$ be a map satisfying (\ref{thm:m Si E om:boundary},\ref{thm:m Si E om:inv}). Let $(W,\Phi)\in\Coi(E,\om)$. By Theorem \ref{thm:m X V om}(\ref{thm:m X V om:Psi}) and Remark \ref{rmk:T E om} we may choose a lift $\Psi\in\T((E,\om)|_{\d\Si})$ of $(W,\Phi)$ and a regular transport $\Psi_0\in\T(E,\om)$. By condition (\ref{thm:m Si E om:inv}) we have $m_{\Si,E,\om}(W,\Phi)=m_{\Si,E,\om}(E|_{\d\Si},\Psi)=m_{\d\Si,\om|_{\d\Si}}(\Psi,\Psi_0)$. Uniqueness follows.

This proves Theorem \ref{thm:m Si E om}.
\end{proof}
\section{Proofs of Theorems \ref{thm:Fix}, \ref{thm:non-emb}, \ref{thm:m 2}, and Proposition \ref{prop:F simply}}\label{sec:proofs Fix non-emb m 2 F simply} 
For the proof of these results, we need the following theorem, which summarizes some properties of the Maslov map. If $\Si\in\S$, $(M,\om)$ and $(M',\om')$ are symplectic manifolds, and $N\sub M$ and $N'\sub M'$ are coisotropic submanifolds, then there is a canonical bijection $\Phi_{M,M',\om,\om',N,N'}:\big[\Si,M;N,\om\big]\x\big[\Si,M';N',\om'\big]\to\big[\Si,M\x M';N\x N',\om\oplus\om'\big]$. If $X$ and $X'$ are sets and $f:X\to\R$ and $f':X'\to\R$ are maps then we define $f\oplus f':X\x X'\to\R$ by $f\oplus f'(x,x'):=f(x)+f'(x')$. Let $X$ be a manifold and $Y\sub X\wo\d X$ is a submanifold of codimension one. We define $X_Y$ to be the manifold with boundary obtained from $X$ by cutting along $Y$. (Note that if $X$ is orientable then $\d X_Y=\d X\disj Y\disj Y$.) There is a canonical map $f_{X,Y}:X_Y\to X$. If $\Si\in\S$, $(M,\om)$ is a symplectic manifold and $L\sub M$ is a Lagrangian submanifold then we denote by $m^L_{\Si,\om}$ the Lagrangian Maslov map (see the appendix, (\ref{eq:m L Si}). For a manifold $X$ we denote by $\sim_X$ the equivalence relation on $X$ given by $x\sim_X x'$ iff $x=x'$ or $x$ and $x'$ lie in the same connected component of $\d X$, and we denote by $\pi_X:X\to X/\!\!\sim_X$ the canonical projection. Let now $\Si,\Si'\in\S$, $f:\Si'\to\Si$ be an embedding (restricting an embedding of $\d\Si'$ into $\d\Si$), $M$ and $M'$ manifolds of the same dimension, $\om$ a symplectic form on $M$, $N\sub M$ a coisotropic submanifold and $\phi:M'\to M$ an embedding. We denote $\phi^*N:=\phi^{-1}(N)$. The map $\phi$ induces a map $\phi_*:\big[\Si',M';\phi^*(N,\om)\big]\to\big[\Si,M;N,\om\big]$. Recall the definition (\ref{eq:wt M wt om},\ref{eq:iota N wt N}). We define the map $\phi:\big\lan\D,M;N,\om\big\ran\to\big[\D,S^1;\wt M,\wt N\big]$ by $\phi(a):=[u,u(z_0)]$, where $u$ is an arbitrary representative of $a$ and $z_0\in S^1$ is any point. This map is well-defined. 
\begin{thm}[Properties of the Maslov map]\label{thm:m M om N} The following assertions hold.
\begin{enui}
\item\label{thm:m M om N:nat}(Naturality) Let $\Si,\Si',f,M,M',N$ and $\phi$ be as above. If $f$ is surjective and orientation preserving then $m_{\Si',\phi^*(\om,N)}=m_{\Si,\om,N}\circ\phi_*$.
\item\label{thm:m M om N:prod}(Product) If $\Si\in\S$, $(M,\om)$ and $(M',\om')$ are symplectic manifolds, and $N\sub M$ and $N'\sub M'$ are coisotropic submanifolds, then $m_{\Si,\om,N}\oplus m_{\Si,\om',N'}=m_{\Si,\om\oplus\om',N\x N'}\circ\Phi_{M,M',\om,\om',N,N'}$. 
\item\label{thm:m M om N:homotopy} If $u\in C([0,1]\x\Si,M)$ is an admissible homotopy then the map $[0,1]\ni t\mapsto m_{\Si,N}(u(t,\cdot))\in\R$ is constant. 
\item\label{thm:m M om N:weak homotopy} If $u\in C([0,1]\x\Si,M)$ is a weakly $(N,\om)$-admissible homotopy then the map $[0,1]\ni t\mapsto m_{\Si,N}(u(t,\cdot))\in\R$ is continuous. 
\item\label{thm:m M om N:split}(Splitting) Let $\Si\in\S$, $C\sub\Si\wo\d\Si$ a closed curve (possibly disconnected), $(M,\om)$ a symplectic manifold, $N\sub M$ a coisotropic submanifold, and $u\in C(\Si,M)$ be such that for every $C'\in\Con(\d\Si\cup C)$ there exists $F\in N_\om$ such that $u(C')\sub F$. Then $m_{\om,N}(u)=m_{\om,N}(u\circ f_{\Si,C})$. 
\item\label{thm:m M om N:reg}(Regular case) If $N$ is regular then $\im(m_{\Si,N})\sub\Z$. If $N$ is also orientable then $\im(m_{\Si,N})\sub2\Z$. 
\item\label{thm:m M om N:Lag}(Lagrangian case) Let $\Si\in\S$, $(M,\om)$ be a symplectic manifold and $L\sub M$ a Lagrangian submanifold. Then $m_{\Si,\om,L}=m^L_{\Si,\om}$. 
\item\label{thm:m M om N:M}(Removal of point) Let $\Si\in\S$ be such that $\d\Si\neq\empty$, $C\in\Con(\Si)$, $(M,\om)$ be a symplectic manifold, $N\sub M$ a coisotropic submanifold, and $u\in C(\Si,M;N,\om)$. Assume that $u$ maps $C$ to a point in $N$. We define $\wt u:\Si/\!\!\sim_C\to M$ by $\wt u([z]):=u(z)$. Then $m_{M,\om,N}(u)=m_{M,\om,N}(\wt u)$. 
\item\label{thm:m M om N:N om}(Chern class of quotient) Let $M,\om$ and $N$ be as above, with $N$ regular. Let $u\in C(\D,M;N,\om)$ be such that $u(\D)\sub N$. We define $u':\D/\!\!\sim_\D\iso S^2\to N_\om$ by $u'([z]):=\pi_N\circ u(z)$. Then $m_{M,\om,N}(u)=2c_1^{N_\om,\om_N}(u')$. 
\item\label{thm:m M om N:wt M} Let $(M,\om)$ be a symplectic manifold and $N\sub M$ a regular coisotropic submanifold. Then $m_{\D,M,\om,N}=m_{\D,\wt M,\wt\om,\wt N}\circ\phi$. 
\end{enui}
\end{thm}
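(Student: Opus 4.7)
The plan is to compute both sides via the elementary description on page~\pageref{elementary} and match them through a common lift to symplectic automorphisms. Since $N$ is regular, Theorem~\ref{thm:m M om N}(\ref{thm:m M om N:weak homotopy},\ref{thm:m M om N:reg}) lets me view $m_{\D,M,\om,N}$ as a map on $\lan\D,M;N,\om\ran$, so it suffices to fix a representative $u$ of $a\in\lan\D,M;N,\om\ran$ and verify $\wt m_{\D,\om,N}(u)=\wt m_{\D,\wt\om,\wt N}(\wt u)$, where $\wt u(z):=(u(z),N_{u(1)})$ (note $\wt u(S^1)\sub\wt N$ since $u(S^1)$ lies in the leaf through $u(1)$). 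I choose a symplectic trivialization $T\colon\D\x\R^{2n}\to u^*TM$, set $W_0:=T_1^{-1}T_{u(1)}N$, and form the path $(W,\Phi):[0,1]\to\Gr(\om_0,W_0)$ as on page~\pageref{elementary}. Pick a lift $\Psi\in C([0,1],\Aut\om_0)$ with $\Psi(0)=\id$, $\Psi(s)W_0=W(s)$, and $\Psi(s)_{W_0}=\Phi(s)$ (Lemma~\ref{le:bundle}); then $\wt m_{\D,\om,N}(u)=m_{\om_0}(\Psi)$.

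Next, since the second coordinate of $\wt u$ is constant, $\wt u^*T\wt M\iso u^*TM\oplus(\D\x T_{N_{u(1)}}N_\om)$, and I use $\wt T:=T\oplus\id$ as the induced trivialization. In the regular case the isotropic leaves are the fibers of the projection $\pi_N\colon N\to N_\om$, and a short computation identifies the linear holonomy along a path $\gamma$ in a leaf from $x$ to $y$:
\[
\hol^{N,\om}_\gamma\;=\;\bar{d\pi_N}(y)^{-1}\circ\bar{d\pi_N}(x),
\]
where $\bar{d\pi_N}(x)\colon(T_xN)_\om\to T_{N_x}N_\om$ is the isomorphism induced by $d\pi_N(x)$. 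Substituting this for $F_s$ in the elementary formula, the boundary Lagrangian in the trivialization $\wt T$ becomes
\[
\wt L(s)\;=\;\bigl\{(w,\,A\,\Phi(s)^{-1}[w])\,\big|\,w\in W(s)\bigr\}\sub\R^{2n}\oplus T_{N_{u(1)}}N_\om,
\]
with $A:=\bar{d\pi_N}(u(1))\circ(T_1)_{W_0}\colon(W_0)_\om\to T_{N_{u(1)}}N_\om$ a fixed isomorphism and $[w]\in W(s)_\om$.

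Now define $\wt\Psi(s):=\Psi(s)\oplus\id\in\Aut(\om_0\oplus(-\om_N))$. The identities $\Psi(s)W_0=W(s)$ and $\Psi(s)_{W_0}=\Phi(s)$ imply $\wt\Psi(0)=\id$ and $\wt\Psi(s)\wt L(0)=\wt L(s)$. Because $\wt N$ is Lagrangian, the symplectic-quotient datum in the elementary description of $\wt m_{\D,\wt\om,\wt N}(\wt u)$ is trivial, so $\wt\Psi$ qualifies as a lift and $\wt m_{\D,\wt\om,\wt N}(\wt u)=m_{\om_0\oplus(-\om_N)}(\wt\Psi)$. Additivity of the Salamon-Zehnder map under direct sums (a property of Proposition~\ref{prop:rho}) together with $m_{-\om_N}(\id)=0$ yield $m_{\om_0\oplus(-\om_N)}(\Psi\oplus\id)=m_{\om_0}(\Psi)$, which equals $\wt m_{\D,\om,N}(u)$; passing to homotopy classes completes the proof.

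The main obstacle is the holonomy identity used in the second paragraph: verifying that for the regular isotropic foliation the linear holonomy factors through the natural differentials $\bar{d\pi_N}$. This is a standard fact for foliations whose leaves are fibers of a submersion and should be available from the appendix section on the linear holonomy of a foliation. The remaining ingredient, additivity of the Salamon-Zehnder index under direct sums, is a standard property of $\rho_\om$ that reduces to multiplicativity of $\rho$ in $S^1$.
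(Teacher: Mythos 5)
You only prove part (\ref{thm:m M om N:wt M}) of a ten-part theorem. Parts (\ref{thm:m M om N:nat})--(\ref{thm:m M om N:N om}) are not addressed at all. In the paper these are derived from the corresponding properties of the bundle-level Maslov map in Theorem~\ref{thm:m}: parts (\ref{thm:m M om N:nat},\ref{thm:m M om N:prod},\ref{thm:m M om N:homotopy},\ref{thm:m M om N:weak homotopy},\ref{thm:m M om N:split},\ref{thm:m M om N:Lag}) follow directly from Theorem~\ref{thm:m}(\ref{thm:m:nat},\ref{thm:m:sum},\ref{thm:m:homotopy},\ref{thm:m:weak homotopy},\ref{thm:m:split},\ref{thm:m:Lag}); part (\ref{thm:m M om N:reg}) uses the regularity of $\hol^{N,\om}$ together with Theorem~\ref{thm:m}(\ref{thm:m:reg}); part (\ref{thm:m M om N:M}) uses the ``full case'' and ``naturality'' parts of Theorem~\ref{thm:m}; and part (\ref{thm:m M om N:N om}) chains Theorem~\ref{thm:m}(\ref{thm:m:W om},\ref{thm:m:nat},\ref{thm:m:full}). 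None of that appears in your proposal, so as a proof of the stated theorem this has a large gap. Note also that you invoke parts (\ref{thm:m M om N:weak homotopy}) and (\ref{thm:m M om N:reg}) of the very theorem you are proving; that is legitimate provided they are established first, but since you do not establish them, your argument for (\ref{thm:m M om N:wt M}) rests on unproven premises.

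For the part you do treat, (\ref{thm:m M om N:wt M}), your argument is correct and takes a genuinely different route from the paper. The paper reduces it in one step to the abstract bundle-level Lagrangian embedding property, Theorem~\ref{thm:m}(\ref{thm:m:wt W}), applied to $(E,\om,W,\Phi):=u^*(TM,\om,TN,\hol^{N,\om})$ and $(V',\om'):=(T_{N_{u(1)}}(N_\om),(\om_N)_{N_{u(1)}})$, with $\Psi(z,v):=(z,(\pi_N)_*v)$. You instead unwind both sides via the elementary description on page~\pageref{elementary}: you choose a common trivialization, identify the linear holonomy of the regular isotropic foliation as $\bar{d\pi_N}(y)^{-1}\circ\bar{d\pi_N}(x)$ (this is indeed what Proposition~\ref{prop:hol} gives when the leaves are fibers of $\pi_N$), describe the boundary Lagrangian for $\wt u$ explicitly, show that $\wt\Psi:=\Psi\oplus\id$ is a valid lift in the Lagrangian setting (the quotient datum is trivial since $W_\om=0$), and finish with additivity of the Salamon--Zehnder index under direct sums (Proposition~\ref{prop:rho}(\ref{prop:rho:sum})). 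This is more computational and self-contained; the paper's route is shorter but leans on the machinery of Theorem~\ref{thm:m}. Your computation that $\wt\Psi(s)\wt L(0)=\wt L(s)$ via $[\Psi(s)w_0]=\Phi(s)[w_0]$ is the key step and is correct.
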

The proof of Theorem \ref{thm:m M om N} is given on page \pageref{proof:thm:m M om N}. The proof of Theorem \ref{thm:Fix} is based on the following result, which is due to P. Albers. 
\begin{thm}[\cite{Al}, Corollary 2.3] \label{thm:Lag} Let $(M,\om)$ be a bounded symplectic manifold, $L\sub M$ a closed monotone Lagrangian submanifold of minimal Maslov number $m(L)$, and $\phi\in\Ham(M,\om)$ be such that $L\pitchfork\phi(L)$. Then $|L\cap\phi(L)|\geq\sum_{i=\dim L-m(L)+2}^{m(L)-2}b_i(L,\Z_2)$. 
\end{thm}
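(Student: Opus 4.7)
The plan is to construct Lagrangian Floer homology $HF_*(L,\phi(L);\Z_2)$ for the pair $(L,\phi(L))$ and extract the bound from Oh's spectral sequence. Since the Floer chain complex $CF_*(L,\phi(L);\Z_2)$ is freely generated as a $\Z_2$-vector space by the transverse intersection points $L\cap\phi(L)$, we automatically have $|L\cap\phi(L)|\geq\dim HF_*(L,\phi(L);\Z_2)$, so it suffices to show that this Floer homology is well-defined and has total dimension at least $\sum_{i=\dim L-m(L)+2}^{m(L)-2}b_i(L,\Z_2)$.

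First I would set up the Floer differential, which counts modulo $2$ the isolated finite-energy pseudo-holomorphic strips $u\colon\R\x[0,1]\to M$ satisfying $u(\R\x\{0\})\sub L$ and $u(\R\x\{1\})\sub\phi(L)$, with respect to a generic $\om$-compatible time-dependent almost complex structure (used to achieve transversality). To prove $\d^2=0$ and independence of auxiliary data, one needs compactness of the relevant moduli spaces. Here the monotonicity hypothesis together with $m(L)\geq 2$ is essential: any holomorphic disk bubble with boundary on $L$ carries Maslov index a positive multiple of $m(L)$ and positive symplectic area proportional to it, and a standard dimension count shows disk bubbling occurs only in codimension $\geq 1$ in the compactified one-dimensional strip moduli (when $m(L)\geq 2$ the relevant boundary strata are not of codimension zero). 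The geometric boundedness of $(M,\om)$ supplies the $C^0$-compactness that prevents escape of Floer trajectories to infinity.

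Next I would invoke Oh's spectral sequence for monotone Lagrangian Floer homology: a spectral sequence with $E_1$-page $H_*(L;\Z_2)$, converging to $HF_*(L,L;\Z_2)$, whose differential $d_r$ shifts the $\Z/m(L)$-reduced grading by $1-rm(L)$. The crucial combinatorial observation is that for any $i$ in the range $\dim L-m(L)+2\leq i\leq m(L)-2$, both the potential source and potential target of any nonzero differential $d_r$ (for $r\geq 1$) would lie outside the interval $[0,\dim L]$ that supports $H_*(L;\Z_2)$. Hence every such $H_i(L;\Z_2)$-class is a permanent cycle and survives to $E_\infty$, giving the bound
\[\dim HF_*(L,L;\Z_2)\;\geq\;\sum_{i=\dim L-m(L)+2}^{m(L)-2}b_i(L,\Z_2).\]
Finally, since $\phi\in\Ham(M,\om)$, a standard continuation argument identifies $HF_*(L,\phi(L);\Z_2)\iso HF_*(L,L;\Z_2)$, and combining with the inequality above yields the claim.

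The genuinely hard part is the analytical foundation: Fredholm theory and transversality for the strip moduli with Lagrangian boundary conditions, the monotone bubbling analysis needed to justify $\d^2=0$ and invariance, and the construction of Oh's spectral sequence (including convergence and the grading computation). These are substantial undertakings; I would appeal to the literature of Floer, Oh, and Biran--Cornea rather than redo them.
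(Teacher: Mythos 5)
The paper does not prove this statement at all: it is quoted verbatim from Albers (\cite{Al}, Corollary 2.3), with only the remark that Albers' proof, written for closed $M$, carries over to geometrically bounded $(M,\om)$. So your proposal should be judged as an independent proof sketch, and as such it is essentially sound, but it follows a different route than Albers does. Albers obtains the bound from PSS-type comparison homomorphisms between $H_*(L;\Z_2)$ and $HF_*(L,\phi(L);\Z_2)$, which he shows are isomorphisms precisely in the degree window $\dim L-m(L)+2\leq i\leq m(L)-2$, so that each $H_i(L;\Z_2)$ in that range injects into the Floer homology; you instead run Oh's spectral sequence and observe that classes in that window can neither be hit by nor map under any $d_r$, $r\geq1$, since sources and targets would have degree $<0$ or $>\dim L$. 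Both arguments rest on the same degree-window phenomenon and on the same foundational package (monotone Floer theory, with geometric boundedness supplying the $C^0$-bounds), so your appeal to the literature for the analysis is the same kind of appeal the paper itself makes.

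Two small points you should make explicit. First, the estimate is vacuous unless $m(L)-2\geq\dim L-m(L)+2$, i.e.\ $m(L)\geq\dim L/2+2$; in particular $m(L)\geq 2$ (and $\geq 3$ once $\dim L\geq1$) is automatic in the nontrivial case, which is what legitimizes your construction of monotone Floer homology and also sidesteps the well-known $m(L)=2$ obstruction, where $\d^2$ equals the count of Maslov-two disks through a point and vanishes for the pair $(L,\phi(L))$ only because the counts on the two boundary Lagrangians cancel. Your phrase ``when $m(L)\geq2$ the relevant boundary strata are not of codimension zero'' glosses over exactly this case, so either invoke the cancellation or note that the nontrivial range forces $m(L)\geq3$. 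Second, surviving to $E_\infty$ bounds $\dim HF$ only via the associated graded of the filtration, so you should note that with $\Z_2$-coefficients and finitely many generators the filtration is bounded and the spectral sequence converges; this is standard but is the step that turns ``permanent cycles'' into the stated inequality.
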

Note that in \cite{Al}, Corollary 2.3, it is assumed that $M$ is closed. However, the proof of the result carries over to the case in which $(M,\om)$ is bounded.

\begin{proof}[Proof of Theorem \ref{thm:Fix}]\setcounter{claim}{0} Without loss of generality we may assume that $N$ is connected. Since $N$ is regular, there exists a unique smooth structure $\A_{N,\om}$ on the set of isotropic leaves $N_\om$ such that the canonical projection $\pi_N:N\to N_\om$ is a submersion. (See \cite{ZiLeafwise}, Lemma 15.) We define $\wt M,\wt\om,\iota_N$ and $\wt N$ as in (\ref{eq:wt M wt om},\ref{eq:iota N wt N}), and $\wt\phi:=\phi\x\id_{N_\om}:\wt M\to\wt M$. Then $\wt M$ is closed, the map $\iota_N:N\to\wt M$ is an embedding, and its image $\wt N$ is a closed Lagrangian submanifold, see \cite{ZiLeafwise}, Lemma 8. By the same lemma, $\wt\phi(\wt N)\pitchfork\wt N$. We denote by $R^{N,\om}$ the isotropic leaf relation on $N$. By Ehresmann's fibration theorem the map $\pi_N$ is a smooth (locally trivial) fiber bundle. (See \cite{Eh}, the proposition on p.~31.) Hence the hypotheses of Proposition \ref{prop:phi} with $(X,Y,\sim,\iota,\pi,k):=(M,N,R^{N,\om},\iota_N,\pi_N,2)$ are satisfied. Therefore, by the statement of this result and by Theorem \ref{thm:m M om N}(\ref{thm:m M om N:wt M}) the Lagrangian $\wt N$ is monotone and $m(\wt N,\wt\om)=m(N,\om)$. Therefore, the hypotheses of Theorem \ref{thm:Lag} are satisfied with $M,\om$ replaced by $\wt M,\wt\om$, and $L:=\wt N$. Inequality (\ref{eq:Fix phi N}) follows from the statement of this theorem and the fact $|\Fix(\phi,N)|=|\wt N\cap\wt\phi(\wt N)|$, see \cite{ZiLeafwise}, Lemma 8. This proves  Theorem \ref{thm:Fix}.
\end{proof}

For the proof of Theorem \ref{thm:non-emb} we need the following. Let $(M,\om)$ and $(M',\om')$ be presymplectic manifolds such that $\dim M+\corank\om=\dim M'+\corank\om'$. Assume that there exists a presymplectic embedding $\phi$ of $(M',\om')$ into $(M,\om)$. Then $N:=\phi(M')\sub M$ is a coisotropic submanifold (see \cite{ZiLeafwise}). Furthermore, $(M',\om')$ is regular if and only if $N$ is regular. 
\begin{prop}\label{prop:M'} Assume that $\corank\om=0$, $(M',\om')$ is regular and for every isotropic leaf $F\sub M'$ every loop $u\in C(S^1,F)$ is contractible in $M'$. If there exists a constant $c\in\R$ such that $2c_1^{M,\om}=c[\om]$ on $[S^2,M]$ and $2c_1^{M'_{\om'},\om'_{M'}}=c[\om']$ on $[S^2,M'_{\om'}]$, then $m_{\D,N,\om}(u)=c\int_\D u^*\om$, for every $u\in C^\infty(\D,M;N,\om)$. Furthermore, if $M$ is connected then
\begin{equation}\label{eq:mu N om 2 gcd}m(N,\om)=2\gcd\big(c_1(M,\om),c_1(M'_{\om'},\om'_{M'})\big).\end{equation} 
\end{prop}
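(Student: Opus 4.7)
The plan for the first assertion is to contract $u|_{S^1}$ inside $N$ so as to reduce the Maslov computation to a sum of a sphere in $M$ and a disk contained in $N$, then to evaluate each piece via the splitting axiom and the two Chern-class hypotheses. Given $u \in C^\infty(\D, M; N, \om)$ with $u(S^1)$ contained in an isotropic leaf $F \subset N$ (identified with a leaf of $M'$ via the presymplectic embedding), the hypothesis that loops in $F$ are contractible in $M'$ furnishes a smooth $v \in C^\infty(\D, N)$ with $v|_{S^1} = u|_{S^1}$, hence $v \in C^\infty(\D, M; N, \om)$. Gluing $u$ and $v$ along $S^1$ and smoothing near the equator produces $w: S^2 \to M$. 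Applying Theorem \ref{thm:m M om N}(\ref{thm:m M om N:split}) with $\Si = S^2$ cut along its equator (so $\Si_C = \D \sqcup \BAR\D$) and using that reversing the surface orientation flips the sign of the Maslov index, one obtains
\[m_{S^2, \om}(w) = m_{\D, \om, N}(u) - m_{\D, \om, N}(v).\]

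I would next evaluate both sides. The hypothesis $2c_1^{M,\om} = c[\om]$ gives $m_{S^2,\om}(w) = 2\lan c_1(M,\om), [w]\ran = c\int_{S^2} w^*\om = c\big(\int_\D u^*\om - \int_\D v^*\om\big)$. Since $v(\D) \subset N$, Theorem \ref{thm:m M om N}(\ref{thm:m M om N:N om}) yields $m_{\D, \om, N}(v) = 2c_1^{N_\om, \om_N}([v'])$, where $v': \D/\!\!\sim_\D \iso S^2 \to N_\om$ is induced by $\pi_N \circ v$; transferring to $M'_{\om'}$ along the canonical symplectomorphism $N_\om \iso M'_{\om'}$ induced by the embedding, and using the hypothesis on $M'_{\om'}$ together with $\pi_N^*\om_N = \om|_N$, this equals $c\int_\D v^*\om$. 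Substituting yields $m_{\D, \om, N}(u) = c\int_\D u^*\om$, proving the first assertion.

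For (\ref{eq:mu N om 2 gcd}), the first assertion identifies $\im m_{\D, N, \om}$ with $\big\{c\int_\D u^*\om : u \in C^\infty(\D, M; N, \om)\big\}$. The decomposition above exhibits each element as a sphere area in $M$ plus a disk area in $N$; the two hypotheses on $c_1$ then place these summands in $2c_1(M,\om)\Z$ and $2c_1(M'_{\om'}, \om'_{M'})\Z$ respectively, so $\im m_{\D, N, \om} \subseteq 2\gcd\big(c_1(M,\om), c_1(M'_{\om'}, \om'_{M'})\big)\Z$. Conversely, spheres $f: S^2 \to M$ viewed as admissible disks with constant boundary realize every element of $2c_1(M,\om)\Z$ via Theorem \ref{thm:m M om N}(\ref{thm:m M om N:M}); and for any $g: S^2 \to N_\om \iso M'_{\om'}$ I would lift $g$ composed with the quotient $\D \to \D/\!\!\sim_\D \iso S^2$ through the fibration $\pi_N: N \to N_\om$ (using simple-connectedness of $\D$ and regularity of $N$) to a disk $v: \D \to N$ with $v(S^1)$ in a single leaf, so Theorem \ref{thm:m M om N}(\ref{thm:m M om N:N om}) realizes every element of $2c_1(M'_{\om'}, \om'_{M'})\Z$. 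Arbitrary $\Z$-combinations are then obtained by bubbling such a sphere in $M$ onto the interior of such a disk $v$ at a common basepoint (possible by connectedness of $M$), yielding $2\gcd \in \im m_{\D, N, \om}$ and the stated formula.

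The main obstacle will be the bubbling step: one must verify that the bubbled disk is honestly admissible (its boundary remains in a single leaf, which holds since the bubble is inserted at an interior point) and that its symplectic area equals the intended sum (which follows from area additivity, up to vanishing interpolation terms that cannot shift the Maslov value out of the discrete subgroup $2\gcd\Z$ established above). The smoothing of $w$ near the equator and the orientation bookkeeping in the splitting step are standard but require care; the lift of $g$ along $\pi_N$ is immediate from simple-connectedness of $\D$ once $\pi_N$ is known to be a fiber bundle, which follows from regularity of $N$.
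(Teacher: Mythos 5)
Your approach matches the paper's essentially step for step. For the monotonicity statement you use the same decomposition: contract $u|_{S^1}$ inside $N$ (via the contractibility hypothesis on loops in leaves and the homotopy lifting property of $\pi_{M'}$), glue with $u$ to form a sphere, apply Theorem~\ref{thm:m M om N}(\ref{thm:m M om N:split}) to write $m_{\D,\om,N}(u)$ as $2c_1^{M,\om}$ of the glued sphere minus $m_{\D,\om,N}$ of the $N$-valued disk, then use (\ref{thm:m M om N:N om}) and the two $c_1$-hypotheses together with $\pi_N^*\om_N=\om|_N$. For (\ref{eq:mu N om 2 gcd}) both you and the paper establish $m\big([\D,M;N,\om]\big)= 2c_1^{M,\om}([S^2,M])+2c_1^{M'_{\om'},\om'_{M'}}([S^2,M'_{\om'}])$, and realize each summand separately via (\ref{thm:m M om N:M}) (spheres collapsed onto a basepoint in $N$) and via lifting spheres in $N_\om\iso M'_{\om'}$ through the fibration $\pi_N$. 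Where you go further than the written paper is the final ``$\cont$'' inclusion: the paper simply asserts that it follows after realizing each factor separately, whereas you spell out the needed ``bubbling'' of a sphere onto an interior point of an $N$-valued disk and note that the boundary condition is preserved and the splitting axiom gives additivity of the index. This is a genuine (if small) detail that the paper leaves implicit — your argument makes the $A+B$-realization explicit rather than only $A\cup B$. Your appeal to discreteness to absorb interpolation error in the area is a bit informal; it would be cleaner to cut along a small interior circle and apply (\ref{thm:m M om N:split}) directly, which gives exact additivity with no error term, but this is a stylistic point rather than a gap.
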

\begin{proof}[Proof of Proposition \ref{prop:M'}]\setcounter{claim}{0} To prove the first statement, assume that there exists a constant $c\in\R$ such that $2c_1^{M,\om}=c[\om]$ on $[S^2,M]$ and $2c_1^{M'_{\om'},\om'_{M'}}=c[\om']$ on $[S^2,M']$. Let $a\in[\D,M;N,\om]$. We choose a smooth representative $u\in a$. Then $\phi^{-1}\circ u|_{S^1}$ is a continuous loop in $M'^{\om'}_{\phi^{-1}\circ u(1)}$, and hence by assumption it is contractible in $M'$. Hence there exists $v\in C(\D,M')$ such that $v|_{S^1}=\phi^{-1}\circ u|_{S^1}$. Smoothing the map $\phi\circ v$ out, we obtain a map $w\in C^\infty(\D,N)$ such that $w|_{S^1}=u|_{S^1}$. We denote by $\bar \D$ the disk with the reversed orientation and by $u\# w:\D\#\bar\D\to M$ the connected sum of $u$ and $w$. We have
\begin{eqnarray}\nn m_{M,\om,N}(a)&=&m\big(u^*(TM,\om),u|_{S^1}^*(TN,\hol^{N,\om})\big)\\
\nn&=&2c_1^{M,\om}\big((u\#w)^*(TM,\om\big)-m\big(w^*(TM,\om),w|_{S^1}^*(TN,\hol^{N,\om})\big)
\end{eqnarray}
The first statement follows from this. 

To prove the second statement, assume that $M$ is connected. We claim that 
\begin{equation}\label{eq:m D M N om}m\big(\big[\D,M;N,\om\big]\big)= 2c_1^{M,\om}([S^2,M])+2c_1^{M'_{\om'},\om'_{M'}}([S^2,M'_{\om'}]).\end{equation}
In order to show that the inclusion ``$\sub$'' in (\ref{eq:m D M N om}) holds, let $a\in\big[\D,M;N,\om\big]$. We choose a representative $u\in C(\D,M;N,\om)$ of $a$. By assumption the map $\pi_1(N^\om_{u(1)})\to\pi_1(E)$ vanishes. Hence there exists $\wt u\in C(\D,N)$ such that $\wt u|_{S^1}=u|_{S^1}$. We denote by $\BAR\D$ the disk with the opposite orientation, and define $v$ to be the connected sum $u\#\wt u:\D\#\BAR\D\iso S^2\to M$. It follows from Theorem \ref{thm:m M om N}(\ref{thm:m M om N:split}) that $m_{M,\om,N}(a)=2c_1^{M,\om}(v)-m_{M,\om,N}(\wt u)$. 
We define $u':\D/\!\!\sim_\D\iso S^2\to N_\om$ by $u'([z]):=\pi_N\circ \wt u(z)$. By Theorem \ref{thm:m M om N}(\ref{thm:m M om N:N om}) we have $m_{M,\om,N}(\wt u)=2c_1^{N_\om,\om_N}(u')$. The inclusion ``$\sub$'' in (\ref{eq:m D M N om}) follows. 

To prove the inclusion ``$\cont$'', observe that $c_1^{M,\om}([S^2,M])=c_1^{M,\om}([\D/\!\!\sim_\D,M])$ and $c_1^{M'_{\om'},\om'_{M'}}([S^2,M'_{\om'}])=c_1^{M'_{\om'},\om'_{M'}}([\D/\!\!\sim_\D,M'_{\om'}])$, since $\D/\!\!\sim$ is homeomorphic to $S^2$. Let $a\in [\D/\!\!\sim_\D,M]$. Since by assumption $M$ is connected, there exists a representative $u\in C(\D/\!\!\sim_\D,M)$ of $a$ such that $u([1])\in N$. It follows from Theorem \ref{thm:m M om N}(\ref{thm:m M om N:M}) that $m_{M,\om,N}(u\circ\pi_\D)=2c_1^{M,\om}(u)$. It follows that $2c_1^{M,\om}([S^2,M])\sub m_{M,\om,N}\big(\big[\D,M;N,\om\big]\big)$. 

Let now $a'\in [\D/\!\!\sim_\D,M'_{\om'}]$. We choose a representative $u'\in C(\D/\!\!\sim_\D,M'_{\om'})$ of $a'$. We claim that there exists a map $v:\D\to M'$ such that $\pi_{M'}\circ v=u'$. To see this, we define $h':[0,1]\x S^1\to M'$ by $h'(r,z):=u'(rz)$, and we choose $x_0\in \pi_N^{-1}(u'(0))\sub N$. By the homotopy lifting property there exists a map $h:[0,1]\x S^1\to M'$ such that $\pi_{M'}\circ h=h'$ and $h(0,z)=x_0$, for every $z\in S^1$. We define $v:\D\to M'$ by $v(0):=x_0$ and $v(z):=h(|z|,z/|z|)$, for every $z\neq0$. This map has the required properties. This proves the claim. 

We define $\phi':M'_{\om'}\to N_\om$ to be the unique map satisfying $\pi_N\circ\phi=\phi'\circ\pi_{M'}$. Then $\phi'\in\Iso(\om'_{M'},\om_N)$. Theorem \ref{thm:m M om N}(\ref{thm:m M om N:N om}) implies that $m_{M,\om}(\phi\circ v)=2c_1^{N_\om,\om_N}(\phi'\circ u')$. Furthermore, by Theorem \ref{thm:m M om N}(\ref{thm:m M om N:nat}) we have $c_1^{N_\om,\om_N}(\phi'\circ u')=c_1^{M'_{\om'},\om'_{M'}}(u')$. It follows that $2c_1^{M',\om'}([S^2,M'])\sub m_{M,\om,N}\big(\big[\D,M;N,\om\big]\big)$. The inclusion ``$\cont$'' in (\ref{eq:m D M N om}) follows. This proves (\ref{eq:m D M N om}). Since $M$ is connected, we have $c_1^{M,\om}([S^2,M])=c_1(M,\om)\Z$ and $c_1^{M'_{\om'},\om'_{M'}}([S^2,M'_{\om'}])=c_1(M'_{\om'},\om'_{M'})\Z$ (with the convention $\infty\Z=\{0\}$). Combining this with (\ref{eq:m D M N om}), the second statement follows. 

This completes the proof of Proposition \ref{prop:M'}.
\end{proof}
\begin{proof}[Proof of Theorem \ref{thm:non-emb}]\setcounter{claim}{0} Let $(M,\om)$ be a connected symplectic manifold and $(M',\om')$ a regular connected presymplectic manifold. We define 
\[\mu:=2\gcd\big(c_1(M,\om),c_1(M'_{\om'},\om'_{M'})\big).\] 
Assume that $\dim M'+\corank\om'=\dim M$ and there exists an embedding $\phi$ of $(M',\om')$ into $(M,\om)$. It follows that $N:=\phi(M')\sub M$ is a regular coisotropic submanifold (see \cite{ZiLeafwise}). Furthermore, if there exists a constant $c\in\R$ such that $2c_1^{M,\om}=c[\om]$ on $[S^2,M]$ and $2c_1^{M'_{\om'},\om'_{M'}}=c[\om']$ on $[S^2,M'_{\om'}]$ then Proposition \ref{prop:M'} implies that the coisotropic submanifold $N:=\phi(M')\sub M$ is monotone and $m(N,\om)=\mu$. Hence the statement of Theorem \ref{thm:non-emb} follows from Theorem \ref{thm:Fix}. 
\end{proof}
For the proof of Proposition \ref{prop:F simply}, we need the following remarks.
\begin{rmk}\label{rmk:min Chern} Let $(M,\om)$ be a connected symplectic manifold. Then $c_1^{M,\om}([S^2,M])=c_1(M,\om)\Z$, if $c_1(M,\om)<\infty$, and $c_1^{M,\om}([S^2,M])=\{0\}$, otherwise. To see this, we choose a point $x_0\in M$. Then the composition of the forgetful map $\pi_2(M,x_0)\to[S^2,M]$ with the map $c_1^{M,\om}:[S^2,M]\to\Z$ is a group homomorphism. The statement follows from this. 
\end{rmk}
\begin{proof}[Proof of Proposition \ref{prop:F simply}]\setcounter{claim}{0} Let $M,\om,M',\om'$ and $F$ be as in the hypothesis. Using Remark \ref{rmk:min Chern}, the statement of Proposition \ref{prop:F simply} is a consequence of the following.
\begin{claim}\label{claim:a} For every $a'\in[S^2,M'_{\om'}]$ there exists $a\in[S^2,M]$ such that $\lan[\om'_{M'}],a'\ran=\lan[\om],a\ran$ and $c_1^{M'_{\om'},\om'_{M'}}(a')=c_1^{M,\om}(a)$. 
\end{claim}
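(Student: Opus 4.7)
The strategy is to lift $a'$ to a disk in $M'$, cap it off inside an isotropic fiber using simple-connectedness, and push the resulting sphere into $M$ via the embedding $\phi:M'\to M$. Write $N := \phi(M')$ and let $\phi':M'_{\om'}\to N_\om$ be the induced symplectomorphism, characterized by $\pi_N\circ\phi = \phi'\circ\pi_{M'}$.

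Fix a smooth representative $u'\in C(S^2, M'_{\om'})$ of $a'$ and identify $S^2 \iso \D/\!\!\sim_\D$ via the quotient $\pi_\D:\D\to S^2$. Exactly as in the proof of Proposition \ref{prop:M'}, the homotopy lifting property for the fibration $\pi_{M'}:M'\to M'_{\om'}$ produces $v\in C(\D,M')$ with $\pi_{M'}\circ v = u'\circ\pi_\D$ and $v(S^1)$ contained in a single isotropic fiber $F_0$. By Ehresmann's theorem the submersion $\pi_{M'}$ is locally trivial, so $F_0$ is homeomorphic to the given simply-connected fiber $F$; hence the loop $v|_{S^1}:S^1\to F_0$ bounds a disk $\bar v:\D\to F_0$. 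Set
\[a := \phi\circ(v\#\bar v) \in [S^2, M],\]
where $v\#\bar v:\D\#\bar\D\iso S^2\to M'$ is the connected sum of disks.

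For the area equality, $\phi^*\om = \om'$ and $\om' = \pi_{M'}^*\om'_{M'}$ give
\[\int_{S^2}a^*\om \;=\; \int_\D v^*\om' + \int_{\bar\D}\bar v^*\om' \;=\; \int_\D(\pi_{M'}\circ v)^*\om'_{M'} + 0 \;=\; \lan[\om'_{M'}], a'\ran,\]
where the $\bar v$ integral vanishes because $\bar v$ lies in the isotropic leaf $F_0$. For the Chern equality, apply the splitting identity from the proof of Proposition \ref{prop:M'} to the pair of matching disks $\phi\circ v$ and $\phi\circ\bar v$, both landing in $N$; this expresses $2c_1^{M,\om}(a)$ in terms of $m_{M,\om,N}(\phi\circ v)$ and $m_{M,\om,N}(\phi\circ\bar v)$. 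Both disks lie entirely in $N$, so Theorem \ref{thm:m M om N}(\ref{thm:m M om N:N om}) rewrites each Maslov number as twice a Chern number of the induced quotient map to $N_\om$. The $\bar v$ contribution vanishes because $\pi_N\circ\phi\circ\bar v = \phi'\circ\pi_{M'}\circ\bar v$ is constant, while the $v$ contribution equals $2c_1^{N_\om,\om_N}(\phi'\circ u') = 2c_1^{M'_{\om'},\om'_{M'}}(a')$ by naturality of the Chern class under the symplectomorphism $\phi'$. Combining yields $c_1^{M,\om}(a) = c_1^{M'_{\om'},\om'_{M'}}(a')$.

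The main subtlety is transferring the simple-connectedness hypothesis from the assumed fiber $F$ to the particular fiber $F_0$ reached by $v|_{S^1}$; this relies on local triviality of $\pi_{M'}$ (Ehresmann's theorem) to identify all fibers homeomorphically with $F$. The remaining work is bookkeeping already packaged by Proposition \ref{prop:M'} and Theorem \ref{thm:m M om N}.
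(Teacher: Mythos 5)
Your proposal is correct. It achieves the same goal as the paper's argument but packages the lift differently: the paper invokes the long exact homotopy sequence for the fibration $\pi_{M'}\colon M'\to M'_{\om'}$, using $\pi_1(F)=0$ to deduce that $\pi_2(M')\to\pi_2(M'_{\om'})$ is surjective, and so lifts $a'$ directly to a sphere $u'\in C(S^2,M')$; it then sets $a:=[\phi\circ u']$ and computes $c_1^{M,\om}(a)$ by applying Theorem \ref{thm:m M om N}(\ref{thm:m M om N:M},\ref{thm:m M om N:nat},\ref{thm:m M om N:N om}) to the single map $u=\phi\circ u'\circ f\circ\pi_\D$. You instead unpack this: lift the disk $u'\circ\pi_\D$ to $v\colon\D\to M'$, cap off the boundary loop in the fiber $F_0$ using simple-connectedness, form the connected sum $v\#\bar v$, and then decompose $2c_1^{M,\om}(a)$ via the splitting property into the two Maslov numbers of the disk pieces, each of which you evaluate with (\ref{thm:m M om N:N om}). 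The paper's route is shorter because the direct sphere lift obviates the decomposition; your route is more explicit and re-uses the machinery already set up in the proof of Proposition \ref{prop:M'}, which is a reasonable trade.

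Two small remarks. First, your appeal to Ehresmann's theorem to deduce that $F_0$ is simply connected is slightly stronger than what you need, and in the generality of Proposition \ref{prop:F simply} (which does not assume $M'$ closed) properness of $\pi_{M'}$ is not available. What is available — and what the paper implicitly uses when it invokes the long exact sequence — is the homotopy lifting property, under which all fibers over a path-connected base are weakly homotopy equivalent, so $\pi_1(F_0)\iso\pi_1(F)=0$. Second, when you split $S^2=\D\#\bar\D$ along the equator, be slightly careful with orientations when citing the splitting identity: the two boundary circles inherit opposite orientations, so one gets a signed combination of Maslov numbers, not a plain sum. In your computation this is harmless because the $\bar v$ contribution vanishes, but it is worth stating.
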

\begin{pf}[Proof of Claim \ref{claim:a}] We choose an isotropic leaf $F\sub M'$ and an orientation preserving homeomorphism $f:\D/\sim_\D\to S^2$. Since $F$ is simply-connected, it follows from the long exact homotopy sequence for the fibration $\pi_{M'}:M'\to M'_{\om'}$ that there exists $u'\in C(S^2,M')$ such that $[\pi_{M'}\circ u']=a'$. We define $a:=[\phi\circ u']$. To see that $a$ has the required properties, we denote by $\pi_\D:\D\to\D/\sim_\D$ the canonical projection. Then $\lan[\om],a\ran=\lan[\om'],[u']\ran=\lan[\om'_{M'}],a'\ran$. We define $u:=\phi\circ u'\circ f\circ\pi_\D$. Since $N\sub M$ is a coisotropic submanifold, it follows from Theorem \ref{thm:m M om N}(\ref{thm:m M om N:M},\ref{thm:m M om N:nat}) that $m_{M,\om,N}(u)=m_{M,\om,N}(\phi\circ u'\circ f)=m_{M,\om,N}(\phi\circ u')=2c_1^{M,\om}(a)$. On the other hand, by Theorem \ref{thm:m M om N}(\ref{thm:m M om N:N om}) we have $m_{M,\om,N}(u)=2c_1^{N_\om,\om_N}(\pi_N\circ\phi\circ u'\circ f)$. We denote by $\pi_{M'}:M'\to M'_{\om'}$ the canonical projection. The map $M'_{\om'}\ni \pi_{M'}(x')\mapsto \pi_N\circ\phi(x')\in N_\om$ is a well-defined $(\om'_{M'},\om_N)$-isomorphism. Therefore, Theorem \ref{thm:m M om N}(\ref{thm:m M om N:nat}) implies that $c_1^{N_\om,\om_N}(\pi_N\circ\phi\circ u'\circ f)=c_1^{M'_{\om'},\om'_{M'}}(\pi_{M'}\circ u')=c_1^{M'_{\om'},\om'_{M'}}(a')$. It follows that $c_1^{M,\om}(a)=c_1^{M'_{\om'},\om'_{M'}}(a')$. This proves Claim \ref{claim:a} and completes the proof of Proposition \ref{prop:F simply}.
\end{pf}\end{proof} 
The proof of Theorem \ref{thm:m 2} is based on the following result, which is due to K. Fukaya.
\begin{thm}[\cite{Fu}, Theorem 12.2.]\label{thm:Fukaya} Let $(M,\om)$ be a symplectic manifold and $L\sub M$ a Lagrangian submanifold. Assume that $(M,\om)$ is convex at infinity and $L$ is closed, relatively spin, aspherical and displaceable in a Hamiltonian way. Then there exists $a\in[\D,S^1;M,L]$ such that $m_{\D,\om,L}(a)=2$. 
\end{thm}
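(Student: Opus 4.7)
The plan is to derive a contradiction by constructing Lagrangian Floer cohomology and showing that it is simultaneously zero (from displaceability) and nonzero (from asphericity of $L$). Suppose for contradiction that no homotopy class $a \in [\D, S^1; M, L]$ has $m_{\D,\om,L}(a) = 2$. Since $L$ is spin and hence orientable, the Lagrangian Maslov map takes values in $2\Z$, so the minimum positive Maslov number of $L$ satisfies $m(L) \geq 4$; the edge case where $L$ bounds no Maslov-positive disk at all is handled by the same argument and I suppress it here.

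First I would build the filtered $A_\infty$-algebra of $L$ in the sense of Fukaya-Oh-Ohta-Ono, using an $\om$-tame almost complex structure $J$. Moduli spaces of $J$-holomorphic disks with boundary on $L$, in a relative class $\beta$ of Maslov index $\mu(\beta)$, are compact modulo bubbling because $(M,\om)$ is convex at infinity (ruling out escape to infinity), and are coherently oriented because $L$ is spin. The curvature term $\mathfrak{m}_0$ has its leading Novikov contribution coming from Maslov-$2$ disks; under the assumption $m(L) \geq 4$ this leading term vanishes, and the higher-order corrections can be killed by a bounding cochain. Consequently the Floer cohomology $HF^*(L, L; \Lam)$ over the appropriate Novikov ring $\Lam$ is well-defined and Hamiltonian-invariant.

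Displaceability then yields $HF^*(L, L) \iso HF^*(L, \phi(L)) = 0$ for any $\phi \in \Ham(M,\om)$ that displaces $L$. On the other hand, Oh's spectral sequence has $E_2$-page $H^*(L;\Lam)$ converging to $HF^*(L, L)$, and its differentials $d_r$ shift cohomological degree by at least $m(L) - 1 \geq 3$, so the unit $1 \in H^0(L;\Lam)$ receives no incoming differentials. The contradiction will follow once one shows that the unit also survives as a nonzero class at $E_\infty$, i.e., is not a boundary.

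This survival of the unit is the main obstacle, and it is exactly where asphericity of $L$ must enter. Following Fukaya, I would construct an $A_\infty$-morphism from the Fukaya algebra of $L$ to the singular chain complex of the based loop space $\Omega L$, obtained by evaluating each holomorphic disk along its boundary loop. When $L$ is a $K(\pi_1(L), 1)$, the connected components of $\Omega L$ are in natural bijection with $\pi_1(L)$, and one arranges that the image of the unit detects a nontrivial class, essentially the augmentation into $\Z[\pi_1(L)]$. Making this evaluation map chain-level and compatible with the $A_\infty$-operations, and then verifying that the image of the unit is genuinely nonzero (so that $HF^0(L,L) \neq 0$, contradicting the vanishing) is the technical heart of the argument and the step that genuinely requires the full Fukaya-Oh-Ohta-Ono machinery.
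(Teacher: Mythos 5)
The paper gives no proof of this theorem: it is quoted from Fukaya (\cite{Fu}, Theorem 12.2) and used as a black box in the proof of Theorem \ref{thm:m 2}, so your proposal can only be measured against Fukaya's own argument, not against anything in the text. Judged that way, it is an outline with a genuine gap rather than a proof: the step you yourself label ``the technical heart'' --- constructing a chain-level evaluation from the filtered $A_\infty$-algebra of $L$ to chains on the loop space, making it compatible with the $A_\infty$-operations, and proving that the image of the unit is nonzero --- is exactly where the asphericity of $L$ enters, and it is precisely the content of Fukaya's theorem. Everything you do carry out (unobstructedness when no class of Maslov index $2$ exists, vanishing of Floer cohomology under Hamiltonian displacement, Oh's spectral sequence with first page $H^*(L;\Lam)$) is standard and does not by itself yield the conclusion; deferring the remaining step to ``the full Fukaya--Oh--Ohta--Ono machinery'' is a citation, not an argument.

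There is also a step that fails as written. You assert that, because the differentials shift degree by at least $m(L)-1\geq 3$, the unit in $H^0(L;\Lam)$ ``receives no incoming differentials''. Over the Novikov ring the grading is only cyclic (the Novikov variable carries degree, so the spectral sequence is effectively graded modulo $m(L)$); hence the degree-zero part of the pages is not just $H^0(L)$, and the unit can perfectly well be hit by differentials originating in groups of the form $H^{k\,m(L)-1}(L)$, which need not vanish. If that degree argument were valid, your proof would be finished at that point and asphericity would be irrelevant --- but the statement without asphericity is false: for instance the monotone Lagrangians $S^1\x S^{n-1}\sub\C^n$ are displaceable, orientable and spin, and have minimal Maslov number $n$. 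Your next sentence in effect concedes this, since ``survives, i.e.\ is not a boundary'' is the same thing as receiving no incoming differential; once the spurious degree claim is removed, the entire weight of the proof rests on the loop-space step, which the proposal names but does not supply.
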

\begin{proof}[Proof of Theorem \ref{thm:m 2}]\setcounter{claim}{0} Since $N$ is regular and orientable, by Theorem \ref{thm:m M om N}(\ref{thm:m M om N:reg}) we have $\im(m_{\D,\om,N})\sub2\Z$. Hence the statement follows from Theorem \ref{thm:Fukaya} applied with $M,\om$ replaced by $\wt M,\wt\om$ and $L:=\wt N$ (as in (\ref{eq:wt M wt om},\ref{eq:iota N wt N})), Propositions \ref{prop:phi} and Theorem \ref{thm:m M om N}(\ref{thm:m M om N:wt M}).  
\end{proof}
\section{Proof of Theorem \ref{thm:m M om N} (Properties of the Maslov map)}\label{sec:proof:thm:m M om N}
The proof of Theorem \ref{thm:m M om N} is based on the following. Let $X$ be a topological manifold. We define $\E^0_X\sub \E_X$ to be the subclass of all quadruples $(E,\om,W,\Phi)$ such that $W=E|_{\d\Si}$ and for every $x\in C([0,1],\d X)$ satisfying $x(0)=x(1)$ we have $\Phi([x])=\id$. Furthermore, we define $\E^L_X\sub \E_X$ to be the subclass of all quadruples $(E,\om,W,\Phi)$ such that $W\sub E|_{\d\Si}$ is Lagrangian. 

Let $X$ be a topological manifold and $Y\sub X\wo\d X$ a hypersurface (i.e. a (real) codimension one submanifold) without boundary. Assume that $Y$ is closed as a subset. Then cutting $X$ along $Y$ we obtain a manifold with boundary $X_Y$. We denote by $\pr_Y^X:X_Y\to X$ the natural map, and define $Y^X:=(\pr_Y^X)^{-1}(Y)\sub X_Y$. (Note that if $Y$ is co-orientable in $X$ then $Y^X$ consists of two copies of $Y$.) As an example, let $Y$ be a topological manifold. We define $X:=\R\x Y$. Then $X_Y=((-\infty,0]\x Y)\disj([0,\infty)\x Y)$ and $Y^X=(\{0\}\x Y)\disj(\{0\}\x Y)$. We define $m^L_\Si:\E^L_\Si\to \Z$ as in (\ref{eq:m L Si}) in the appendix.

Let $X$ be topological manifold, $Y\sub X$ a closed subset, $E\to X$ a real vector bundle and $\Phi:Y\x Y\to\GL(E)$ a morphism of topological groupoids whose composition with the canonical projection $\GL(E)\to X\x X$ is the identity. We denote by $X/Y$ the topological space obtained by collapsing $Y$ to a point. Furthermore, we define the equivalence relation $\sim_\Phi$ on $E$ by $(x,v)\sim_\Phi(x',v')$ iff $(x,v)=(x',v')$ or ($x,x'\in Y$ and $v'=\Phi_x^{x'}v$). We define $\pi_\Phi:E/\sim_\Phi\to X/Y$ by $\pi_\Phi([x,v]):=[x]$. Assume that there exists a pair $(U,r)$, where $U\sub X$ is an open neighborhood of $Y$ and $r\in C([0,1]\x U,U)$ is a strong deformation retraction to $Y$. Then by Lemma \ref{le:E Phi} below $(E_\Phi:=E/\sim_\Phi,\pi_\Phi)$ is a vector bundle. Let $k\in\N$ and $T:E^{\oplus k}\to\R$ be a tensor, such that $T(\Phi_x^{x'}v_1,\ldots,\Phi_x^{x'}v_k)=T(v_1,\ldots,v_k)$, for every $x,x'\in Y$ and $v_1,\ldots,v_k\in E_x$. We define $T_\Phi:E_\Phi^{\oplus k}\to\R$ by $T_\Phi\big([x,v_1],\ldots,[x,v_k]):=T_x(v_1,\ldots,v_k)$. By Lemma \ref{le:E Phi} this is a tensor. Let now $(E,\om,W,\Phi)\in\E_\Si$ and $C\sub \d\Si$ be a connected component. Assume that $W|_C=E_C$ and $\Phi|_C$ is regular. We define $\Psi:C\x C\to\GL(E)$ by $\Psi_{z_0}^{z_1}:=\Phi([z]):E_{z_0}\to E_{z_1}$, where $z\in C([0,1],C)$ is any path such that $z(i)=z_i$ for $i=0,1$. By regularity of $\Phi$ this map is well-defined. We denote $(E,\om,W,\Phi)/C:=\big(E_\Psi,\om_\Psi,W|_{\d\Si\wo C},\Phi|_{\d\Si\wo C}\big)\in\E_{\Si/C}$. Assume that $\Si=[0,1)\x S^1$, there exists a vector space $V$ such that $E=\Si\x V$, and $\om$ is constant. For every point $z_0\in S^1$ define $\Psi'_{z_0}:\Si'\x V\to E'$ by $\Psi'_{[t,z]}v:=[t,z,\Phi_{z_0}^zv]$. These maps induce on $E'$ the structure of a (trivial) vector bundle over $\Si'$. In the general case we equip $E'$ with the vector bundle structure that restricts to the structure of $E$ on $\Si\wo\d\Si$ and is given as above on collar neighborhoods of the components of the boundary. The form $\om$ induces a fiberwise symplectic form $\om'$ on $E'$. We denote by $\Coi(\X,(E,\om)|_{[0,1]\x\d\Si})$ the set of all pairs $(W,\Phi)$, where $W\sub E$ is an $\om$-coisotropic subbundle, and $\Phi\in\T(\X,W_\om,\om_W)$.

Let $X$ be a topological manifold and $\X\sub C([0,1],X)$ be an admissible subset. We call $\Phi\in\T(\X,E)$ \emph{regular} iff $\Phi([x])=\id$ for every $x\in \X$ satisfying $x(0)=x(1)$. For a symplectic vector bundle $(E,\om)$ over an oriented topological surface $\Si$ we denote by $c_1(E,\om)$ its first Chern number. 
\begin{thm}[Properties of the coisotropic Maslov map for bundles]\label{thm:m} The following statements hold.
\begin{enui}
\item\label{thm:m:nat}\emph{(Naturality)} If $\Si,\Si'\in\S$, $\big(E,\om,W,\Phi\big)\in\E_\Si$, $\big(E',\om',W',\Phi'\big)\in\E_{\Si'}$, and $\Psi\in\wt\Iso(\om,\om')$ is such that $\Psi|_{\d\Si}^*(W',\Phi')=(W,\Phi)$, then $m_\Si\big(E,\om,W,\Phi\big)=m_{\Si'}\big(E',\om',W',\Phi'\big)$. 
\item\label{thm:m:sum} \emph{(Direct sum)} For every $\Si\in\S$ and $\big(E,\om,W,\Phi\big),\big(E',\om',W',\Phi'\big)\in\E_\Si$ we have
\[m_\Si\big(E\oplus E',\om\oplus\om',W\oplus W',\Phi\oplus\Phi'\big)=m_\Si\big(E,\om,W,\Phi\big)+m_\Si\big(E',\om',W',\Phi'\big).\] 
\item\label{thm:m:homotopy}\emph{(Homotopy)} Let $\Si\in\S$, $(E,\om)$ be a symplectic vector bundle over $[0,1]\x\Si$, and $(W,\Phi)\in\Coi((E,\om)|_{[0,1]\x\d\Si})$. Then the map $[0,1]\ni t\mapsto m_\Si\big((E,\om)|_{\{t\}\x\Si},W|_{\{t\}\x\d\Si},\Phi|_{\Pi(\{t\}\x\d\Si)}\big)$ is constant. 
\item\label{thm:m:weak homotopy}\emph{(Weak homotopy)} Let $\Si\in\S$, $(E,\om)$ be a symplectic vector bundle over $[0,1]\x\Si$, $\X:=\big\{t\mapsto (s,z(t))\,\big|\,s\in[0,1],\,z\in C([0,1],\d\Si)\big\}$, and $(W,\Phi)\in\Coi(\X,(E,\om)|_{[0,1]\x\d\Si})$. Then the map $[0,1]\ni t\mapsto m_\Si\big((E,\om)|_{\{t\}\x\Si},W|_{\{t\}\x\d\Si},\Phi|_{\Pi(\{t\}\x\d\Si)}\big)$ is continuous.
\item\label{thm:m:split}\emph{(Splitting)} Let $\Si\in\S$, $C\sub\Si\wo\d\Si$ be a closed curve, $\big(E,\om,W,\Phi\big)\in\E_\Si$ and $(W',\Phi')\in\Coi((E,\om)_C)$. 
Then
\begin{equation}\label{eq:m Si E}m(E,\om,W,\Phi)=m\big({\pr_C^\Si}^*\big(E,\om,(W,\Phi)\disj(W',\Phi')\big)\big).\end{equation} 
\item\label{thm:m:Lag}\emph{(Lagrangian case)} If $\Si\in\S$ and $(E,\om,W)\in\E^L_\Si$ then $m_\Si(E,\om,W,0)=m^L_\Si(E,\om,W)$.
\item\label{thm:m:full}\emph{(Full case)} Let $\Si\in\S$ and $(E,\om,W,\Phi)\in\E_\Si$. Assume that there exists $C\in\Con(\d\Si)$ such that $W|_C=E|_C$ and $\Phi|_C$ is regular. Then $m(E,\om,W,\Phi)=m\big((E,\om,W,\Phi)/C\big)$. 
\item\label{thm:m:W om}\emph{(Quotient)} Let $(E,\om)$ a symplectic vector bundle over $\D$, $W\sub E$ a coisotropic subbundle, and $\Phi\in\T((W_\om,\om_W)|_{S^1})$. Then $m\big(E,\om,W|_{S^1},\Phi\big)=m\big(W_\om,\om_W,W_\om|_{S^1},\Phi\big)$. 
\item\label{thm:m:reg}\emph{(Regular case)} Let $\Si\in\S$ and $\big(E,\om,W,\Phi\big)\in\E_\Si$ be such that $\Phi$ is regular. Then $m_\Si(E,\om,W,\Phi)\in\Z$. Furthermore, if $W$ is orientable then this integer is even.
\item\label{thm:m:wt W}\emph{(Lagrangian embedding)} Let $(E,\om,W,\Phi)\in\E_\D$ and $(V',\om')$ a symplectic vector space. Assume that there exists a surjective homomorphism $\Psi:W\to S^1\x V'$ such that $\Psi^*\om'=\om$ and the following holds. Denoting by $\Psi_W:W_\om\to S^1\x V'$ the map induced by $\Psi$, we have $(\Psi_W)_{z(1)}\Phi([z])=(\Psi_W)_{z(0)}$, for every $z\in C([0,1],S^1)$. Then the following equality holds. We define $\wt E:=E\oplus (S^1\x V'),\wt\om:=\om\oplus(-\om')$, $\wt W:=\big\{(z,v,\Psi_zv)\,\big|\,(z,v)\in W\big\}\sub \wt E$. Then $m_\D(E,\om,W,\Phi)=m_\D(\wt E,\wt\om,\wt W,0)$.
\end{enui}
\end{thm}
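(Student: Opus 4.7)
The strategy is to compute both sides via the recipe of Theorem~\ref{thm:m Si E om} using compatible auxiliary data, and then to reduce the equality to the behaviour of the Salamon--Zehnder map under direct sums.

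First I would verify that $\wt W\sub\wt E|_{S^1}$ is Lagrangian with respect to $\wt\om$. The hypothesis $\Psi^*\om'=\om$ combined with surjectivity of $\Psi_z$ forces $\ker\Psi_z=W_z^\om$, so that $\Psi_W\colon W_\om\to V'$ is a fiberwise symplectic isomorphism; a dimension count then yields $\dim\wt W_z=\dim W_z=\tfrac12\dim\wt E_z$. Isotropy of $\wt W$ in $(\wt E,\wt\om)$ is immediate from the one-line identity
\[\wt\om_z\big((v,\Psi_zv),(u,\Psi_zu)\big)=\om_z(v,u)-\om'(\Psi_zv,\Psi_zu)=0.\]
Hence $\wt W$ is Lagrangian and $\wt W_{\wt\om}=0$, so the right hand side of the claimed equality is a legitimate element of $\E_\D$ whose only possible transport datum is the zero one.

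Next I would pick a regular transport $\Phi_0\in\T(E,\om)$ using Remark~\ref{rmk:T E om} and a lift $\Theta\in\T((E,\om)|_{S^1})$ of $(W,\Phi)$ using Theorem~\ref{thm:m X V om}(\ref{thm:m X V om:Psi}), and then set
\[\wt\Theta:=\Theta\oplus\id_{S^1\x V'},\qquad \wt\Phi_0:=\Phi_0\oplus\id_{\D\x V'}.\]
Regularity of $\wt\Phi_0$ is immediate. The key check is that $\wt\Theta$ is a lift of $(\wt W,0)$; since $\wt W_{\wt\om}=0$ this reduces to the invariance $\wt\Theta([z])\wt W_{z(0)}=\wt W_{z(1)}$, which in turn unwinds to $\Psi_{z(1)}\Theta([z])v=\Psi_{z(0)}v$ for every $v\in W_{z(0)}$. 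This last identity is exactly the hypothesis $(\Psi_W)_{z(1)}\Phi([z])=(\Psi_W)_{z(0)}$ combined with the lift condition $\Theta([z])_{W_{z(0)}}=\Phi([z])$.

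By the definition of $m_\D$ via Theorem~\ref{thm:m Si E om} and (\ref{eq:m om Phi Phi'}), picking a path $z\in C([0,1],S^1)$ of degree one, the two Maslov indices become
\[m_\D(E,\om,W,\Phi)=m_{\om_{z(0)}}(A),\qquad m_\D(\wt E,\wt\om,\wt W,0)=m_{\wt\om_{z(0)}}\big(A\oplus\id_{V'}\big),\]
where $A\in C([0,1],\Aut(\om_{z(0)}))$ is given by $A(t):=\Phi_0^{-1}([z|_{[0,t]}])\,\Theta([z|_{[0,t]}])$. It therefore only remains to show that the Salamon--Zehnder Maslov index of a path of the form $A\oplus\id_{V'}$ equals that of $A$. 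I expect this to follow directly from the multiplicativity of the Salamon--Zehnder map $\rho$ over direct sums, as recorded in Proposition~\ref{prop:rho}, together with $\rho_{-\om'}(\id)=1$. This direct-sum property of $\rho$ (with its attendant sign conventions under $\om'\mapsto -\om'$) is the main and only nontrivial step; independence of the answer from the choices of $\Phi_0$ and $\Theta$ is already ensured by Theorem~\ref{thm:m X V om}(\ref{thm:m X V om:Psi Psi'}) and Lemma~\ref{le:m Phi Phi 1}.
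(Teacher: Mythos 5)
Your proposal addresses only assertion~(\ref{thm:m:wt W}) of the ten-part theorem; the other nine properties (Naturality, Direct sum, Homotopy, Weak homotopy, Splitting, Lagrangian case, Full case, Quotient, Regular case) are not treated, so as a proof of the full statement there is a large gap. In particular, the most delicate parts of the paper's proof --- the Homotopy property, which rests on Proposition~\ref{prop:homotopy} and hence on the identity $\rho_\om(\Psi)=\pm\rho_{\om_W}(\Psi_W)$ of Proposition~\ref{prop:rho om Psi}; the Splitting property, which requires a cut-and-paste argument with Remark~\ref{rmk:m C V om or} and a first-Chern-number computation; and the Lagrangian and Regular cases --- are entirely missing.

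For the one part you do prove, the argument is correct and essentially the same route the paper takes: the paper also chooses a regular transport $\Phi_0$ and a lift $\Psi$ of $(W,\Phi)$, forms $\wt\Psi:=\Psi\oplus\Phi'_0$ and $\wt\Phi_0:=\Phi_0\oplus\Phi'_0$ with $\Phi'_0\const\id$, verifies $\wt\Psi$ is a lift of $(\wt W,0)$, and then reduces to the assertion $m_{X,\wt\om}(\wt\Psi,\wt\Phi_0)=m_{X,\om}(\Psi,\Phi_0)$, stated as Lemma~\ref{le:wt W}. The paper dismisses that lemma as a ``straight-forward argument''; you correctly supply the missing justification, namely the direct-sum multiplicativity of the Salamon--Zehnder map (Proposition~\ref{prop:rho}(\ref{prop:rho:sum})) applied to the loop $A(t)\oplus\id_{V'}$ together with $\rho_{-\om'}(\id_{V'})=1$. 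Your checks that $\wt W$ is Lagrangian (via $\ker\Psi_z=W_z^\om$ and the dimension count) and that $\wt\Theta$ lifts $(\wt W,0)$ are also correct and somewhat more explicit than the paper's, and the appeal to Theorem~\ref{thm:m X V om}(\ref{thm:m X V om:Psi Psi'}) and Lemma~\ref{le:m Phi Phi 1} for well-definedness is appropriate. So assertion~(\ref{thm:m:wt W}) is handled adequately and along the paper's lines; what is missing is everything else.
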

This result is proved in Section \ref{subsec:proof:thm:m} (page \pageref{proof:thm:m}). The trickiest part is the proof of property (\ref{thm:m:homotopy}). It is based on the invariance under homotopy of $m_{\d\Si,\om}:\Coi(\d\Si\x V,\om)\to\R$, where $(V,\om)$ is a symplectic vector space. This follows from Proposition \ref{prop:homotopy}.

\begin{proof}[Proof of Theorem \ref{thm:m M om N}]\setcounter{claim}{0} \label{proof:thm:m M om N} Statements (\ref{thm:m M om N:nat},\ref{thm:m M om N:prod},\ref{thm:m M om N:homotopy},\ref{thm:m M om N:weak homotopy},\ref{thm:m M om N:split},\ref{thm:m M om N:Lag}) follow from Theorem \ref{thm:m}(\ref{thm:m:nat},\ref{thm:m:sum},\ref{thm:m:homotopy},\ref{thm:m:weak homotopy},\ref{thm:m:split},\ref{thm:m:Lag}). 

To prove {\bf statement (\ref{thm:m M om N:reg})}, assume that $N$ is regular. Let $u\in C(\Si,M;N,\om)$. We choose a symplectic vector space $(V,\Om)$ of dimension $\dim M$, and $\Psi\in\Iso\big(\Si\x V,\Om;u^*(TM,\om)\big)$. We define $(W,\Phi):=\Psi\circ u|_{\d\Si}^*(TN,\hol^{N,\om})$. It follows from regularity of $N$ that $\hol^{N,\om}$ is regular. (See \cite{ZiLeafwise}, Lemma 15.) Hence (\ref{thm:m M om N:reg}) follows from Theorem \ref{thm:m}(\ref{thm:m:reg}). 

We prove {\bf assertion (\ref{thm:m M om N:M})}. We define 
\begin{eqnarray}\nn&(E',\om'):=u^*(TM,\om),\,(W',\Phi'):=u|_{\d\Si\wo C}*(TN,\hol^{N,\om})\disj\big(C\x T_{x_0}M,\id_{T_{x_0}M}\big),&\\
\nn&(\wt E,\wt\om):=\wt u^*(TM,\om),\,(\wt W,\wt\Phi):=\wt u|_{\d\Si\wo C}^*(TN,\hol^{N,\om}).&\end{eqnarray}
We have $m_{M,\om,N}(\wt u)=m(\wt E,\wt\om,\wt W,\wt\Phi)$. Furthermore, since $u|_C^*(TN,\hol^{N,\om})=\big(C\x T_{x_0}N,\id_{T_{x_0}N}\big)$, it follows from the definitions that $m_{M,\om,N}(u)=m(E',\om',W',\Phi')$. On the other hand, the map $\wt E\ni([z],v)\mapsto [z,v]\in E/\Phi'|_C$ is an $(\wt\om,\om'/\Phi'|_C)$-isomorphism that is the identity outside the point $C\in \wt\Si:=\Si/C$, and hence carries $(\wt W,\wt\Phi)$ to $(W,\Phi')/C$. Therefore, Theorem \ref{thm:m}(\ref{thm:m:nat},\ref{thm:m:full}) imply that $m(\wt E,\wt\om,\wt W,\wt\Phi)=m\big((E',\om',W',\Phi')/C\big)=m(E',\om',W',\Phi')$. It follows that $m_{M,\om,N}(\wt u)=m_{M,\om,N}(u)$. This proves (\ref{thm:m M om N:M}).

We prove {\bf assertion (\ref{thm:m M om N:N om})}. We have
\begin{eqnarray}\nn m_{M,\om,N}(u)&=&m\big(u^*(TM,\om),u|_{S^1}^*(TN,\hol^{N,\om})\big)\\
\nn&=&m\Big(u^*\big((TN)_\om,\om_{TN}\big),u|_{S^1}^*\big((TN)_\om,\hol^{N,\om}\big)\Big)\\
\nn&=&m\Big(u^*\big(T(N_\om),\om_N\big),S^1\x T_{u(1)}N_\om,\id_{T_{u(1)}N_\om}\Big)\\
\label{eq:2 c 1}&=&2c_1\big({u'}^*(T(N_\om),\om_N)\big)=2c_1^{N_\om,\om_N}(u').
\end{eqnarray}
Here in the second equality we used Theorem \ref{thm:m}(\ref{thm:m:W om}), in the third equality we used Theorem \ref{thm:m}(\ref{thm:m:nat}), and in the forth equality we used Theorem \ref{thm:m}(\ref{thm:m:full}). Assertion (\ref{thm:m M om N:N om}) follows from this. 

We prove {\bf assertion (\ref{thm:m M om N:wt M})}. Let $a\in[\D,S^1;M,N_\om]$. We choose a representative $u\in C(\D,M;N,\om)$ of $a$. The claimed equality follows from Theorem \ref{thm:m}(\ref{thm:m:wt W}) with $(E,\om,W,\Phi):=u^*(TM,\om,TN,\hol^{N,\om})$ and $(V',\om'):=\big(T_{N_{u(1)}}(N_\om),(\om_N)_{N_{u(1)}}\big)$, using the map $\Psi:u|_{S^1}^*TN\to S^1\x V'$ given by $\Psi(z,v):=(z,(\pi_N)_*v)$.

This proves assertion (\ref{thm:m M om N:wt M}) and completes the proof of Theorem \ref{thm:m M om N}.
\end{proof}

For the proof of Theorem \ref{thm:m}(\ref{thm:m:sum}) we need the following.
\begin{rmk}\label{rmk:m C om} Let $X$ be a compact oriented curve, $(E,\om)$ and $(E',\om')$ symplectic vector bundles over $X$, $\Phi,\Psi\in\T(E,\om)$ and $\Phi',\Psi'\in\T(E',\om')$, with $\Psi$ and $\Psi'$ regular. Then $m_{X,\om\oplus\om'}(\Phi\oplus\Phi',\Psi\oplus\Psi')=m_{X,\om}(\Phi,\Psi)+m_{X,\om'}(\Phi',\Psi')$. This follows from Proposition \ref{prop:rho}(\ref{prop:rho:sum}).%
\end{rmk}
For the proof of Theorem \ref{thm:m}(\ref{thm:m:homotopy},\ref{thm:m:weak homotopy}) we need the following. Let $X$ be a closed oriented curve. We denote by $\pi:[0,1]\x X\to X$ the canonical projection. For $s\in[0,1]$ we denote by $\iota_s:\{s\}\x X\to [0,1]\x X$ the inclusion. We define $\X:=\big\{t\mapsto(s,z(t))\,\big|\,s\in[0,1],\,z\in C([0,1],X)\big\}$. Let $(E,\om)$ be a symplectic vector bundle over $X$. 
\begin{lemma}\label{le:m C om cont} Let $W\sub\pi^*E$ be an $\pi^*\om$-coisotropic subbundle, and $\Phi_0\in\T(\pi^*(E,\om))$ a regular transport. The following assertions hold.
\begin{enui}
\item\label{le:m C om cont:Pi} Let $\Phi\in\T\big(\Pi([0,1]\x X),W_\om,\om_W\big)$. Assume that $\Psi\in\T\big(\Pi([0,1]\x X),E,\om\big)$ is such that $\iota_s^*\Psi$ is a lift of $\iota_s^*\Phi$, for every $s\in[0,1]$. Then the map $[0,1]\ni s\mapsto m_{X,\om}\big(\iota_s^*\Psi,\iota_s^*\Phi_0\big)\in\R$ is constant. 
\item\label{le:m C om cont:X} Let $\Phi\in\T(\X,W_\om,\om_W)$. Assume that $\Psi\in\T(\X,E,\om)$ is such that $\iota_s^*\Psi$ is a lift of $\iota_s^*\Phi$, for every $s\in[0,1]$. Then the map $[0,1]\ni s\mapsto m_{X,\om}\big(\iota_s^*\Psi,\iota_s^*\Phi_0\big)\in\R$ is continuous. 
\end{enui}
\end{lemma}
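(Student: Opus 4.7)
My plan is to reduce both assertions to studying one continuous function $F\colon[0,1]^2\to S^1$ whose winding computes the Maslov index. Fix a degree-one path $z\in C([0,1],X)$ with $z(0)=z(1)=:z_0$, and for each $s\in[0,1]$ set
\[\Theta_s(t):=\Phi_0\big([\iota_s\circ z|_{[0,t]}]\big)^{-1}\Psi\big([\iota_s\circ z|_{[0,t]}]\big)\in\Aut(\om_{z_0}),\]
so that by definition $m_{X,\om}(\iota_s^*\Psi,\iota_s^*\Phi_0)=m_{\om_{z_0}}(\Theta_s)=2\al(\rho_{\om_{z_0}}\circ\Theta_s)$. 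Since $\iota_s\circ z|_{[0,t]}$ lies in $\X$, the expression makes sense in both assertions. The paths $\iota_s\circ z|_{[0,t]}$ vary continuously in the compact-open topology as $(s,t)$ varies, so by continuity of the transports $\Psi,\Phi_0$ and by Remark \ref{rmk:rho cont}, the map $F(s,t):=\rho_{\om_{z_0}}(\Theta_s(t))$ is continuous on $[0,1]^2$, with $F(s,0)=1$ for every $s$. Lifting $F$ to a continuous $\wt F\colon[0,1]^2\to\R$ normalized by $\wt F(s,0)=0$, one gets $m_{X,\om}(\iota_s^*\Psi,\iota_s^*\Phi_0)=2\wt F(s,1)$, and continuity of $\wt F(\cdot,1)$ in $s$ already proves assertion (\ref{le:m C om cont:X}).

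For assertion (\ref{le:m C om cont:Pi}), the additional ingredient is that $\Psi$ is defined on all of $\Pi([0,1]\x X)$, hence transports across slices. I will fix $s_0\in[0,1]$ and, for each $s\in[0,1]$, take $\ga_s\in C([0,1],[0,1]\x X)$ given by $\ga_s(r):=(s_0+r(s-s_0),z_0)$. Since $[0,1]\x X$ is a cylinder (because $X\iso S^1$), both loops $\iota_s\circ z$ and $\ga_s^{-1}\#(\iota_{s_0}\circ z)\#\ga_s$ based at $(s,z_0)$ represent generators of the abelian group $\pi_1([0,1]\x X,(s,z_0))\iso\Z$, and are therefore homotopic with fixed end-points. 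Applying $\Psi$ and using regularity of $\Phi_0$ (which forces $\Phi_0([\iota_s\circ z])=\Phi_0([\iota_{s_0}\circ z])=\id$) yields $\Theta_s(1)=\Psi([\ga_s])^{-1}\Theta_{s_0}(1)\Psi([\ga_s])$. By conjugation invariance of $\rho_{\om_{z_0}}$ (a consequence of Proposition \ref{prop:rho}(\ref{prop:rho:nat})), this gives $F(s,1)=F(s_0,1)$ for all $s$. Hence the continuous function $\wt F(\cdot,1)$ takes values in the discrete coset $\wt F(s_0,1)+\Z$ and must be constant, finishing (\ref{le:m C om cont:Pi}).

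The main obstacle is the homotopy identification in the cylinder: one must verify that $\iota_s\circ z$ and $\ga_s^{-1}\#(\iota_{s_0}\circ z)\#\ga_s$ coincide in $\pi_1$, which works precisely because the cylinder's fundamental group is abelian so that conjugation by $\ga_s$ does not change the based homotopy class. This lets one reduce the comparison of $\Theta_s(1)$ at different values of $s$ to a genuine conjugation in $\Aut(\om_{z_0})$, after which $\rho$-invariance closes the argument. This step is crucially unavailable for assertion (\ref{le:m C om cont:X}), since the transverse path $\ga_s$ does not belong to $\X$; there one can only argue continuity of $F(\cdot,1)$, and so continuity—not constancy—of the Maslov index is the best possible conclusion.
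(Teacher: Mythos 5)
Your proof is correct and takes a genuinely different route from the paper. The paper disposes of part~(\ref{le:m C om cont:Pi}) by citing Proposition~\ref{prop:homotopy}, whose engine is Proposition~\ref{prop:rho om Psi}, the relation $\rho_\om(\Psi)=\pm\rho_{\om_W}(\Psi_W)$ for a $W$-preserving $\Psi$: there one descends to the coisotropic quotient and conjugates the quotient transport $\Phi$ along a transverse path, then pins down both end-values of $\rho$ only up to a sign. You instead conjugate the \emph{ambient} transport by $\Psi([\ga_s])$ directly inside $\Aut(\om_{z_0})$, which together with $\Theta_s(0)=\id$ fixes both endpoints of $F(\cdot,t)$ exactly, so no $\rho_\om\leftrightarrow\rho_{\om_W}$ comparison and hence no Proposition~\ref{prop:rho om Psi} is needed. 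Notice that in fact your argument never uses the hypothesis that $\iota_s^*\Psi$ is a lift of $\iota_s^*\Phi$: flatness of $\Psi$ over $\Pi([0,1]\x X)$ and regularity of $\Phi_0$ already suffice, so you prove a slightly stronger statement. The trade-off is generality elsewhere: Proposition~\ref{prop:homotopy} is also the workhorse in the proof of Theorem~\ref{thm:m X V om}(\ref{thm:m X V om:Psi Psi'}), where the two-parameter family of automorphisms is \emph{not} a flat transport over the cylinder and the transverse-conjugation trick is unavailable, so the paper's formulation cannot simply be replaced by yours globally. Two small remarks: with the concatenation convention $\Phi([a\#b])=\Phi([b])\Phi([a])$ your identity should read $\Theta_s(1)=\Psi([\ga_s])\,\Theta_{s_0}(1)\,\Psi([\ga_s])^{-1}$ — the sides of the inverse are swapped — but this is harmless for the conjugation-invariance step; and "both represent generators of $\Z$" should be sharpened to "both have degree $+1$ under the projection to $X$," since two generators could a priori differ by a sign.
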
 
For the proof of Lemma \ref{le:m C om cont} we need the following.
\begin{prop}\label{prop:homotopy} Let $(V,\om)$ be a symplectic vector space, $X$ a topological manifold, $(W,\Phi)\in\Coi\big(X\x V,\om\big)$, $x\in C([0,1]\x[0,1],X)$ and $\Psi\in C([0,1]\x[0,1],\Aut(\om))$ be such that $x(s,0)=x(s,1)$, $\Psi(s,t)W_{x(s,0)}=W_{x(s,t)}$ and $\Psi(s,t)_{W_{x(s,0)}}=\Phi([x(s,\cdot)|_{[0,t]}])$, for $s,t\in[0,1]$. Then the map $[0,1]\ni s\mapsto m_\om(\Psi(s,\cdot))\in\R$ is constant. 
\end{prop}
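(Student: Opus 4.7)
The plan is to reduce the statement to showing that two simple winding numbers vanish, and then to compute each using Proposition \ref{prop:rho om Psi}. It suffices to prove $m_\om(\Psi(0,\cdot)) = m_\om(\Psi(1,\cdot))$, since the same argument applied to the subrectangle $[s,s']\x[0,1]$ yields the equality for arbitrary $s,s'\in[0,1]$. I will set $f : [0,1]\x[0,1] \to S^1$, $f(s,t) := \rho_\om(\Psi(s,t))$, which is continuous by Remark \ref{rmk:rho cont}, and pick a continuous lift $\wt f : [0,1]\x[0,1] \to \R$. Then $m_\om(\Psi(s,\cdot)) = 2(\wt f(s,1) - \wt f(s,0))$, and since $[0,1]\x[0,1]$ is contractible, the total winding of $f$ around $\d\big([0,1]\x[0,1]\big)$ is zero, yielding
\[
\tfrac{1}{2}\big(m_\om(\Psi(1,\cdot))-m_\om(\Psi(0,\cdot))\big) = \al\big(s\mapsto f(s,1)\big) - \al\big(s\mapsto f(s,0)\big).
\]
Hence the task reduces to showing that the two paths $s\mapsto f(s,0)$ and $s\mapsto f(s,1)$ are constant in $S^1$.

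The bottom edge will be immediate: evaluating the hypotheses at $t=0$ shows that $\Psi(s,0)$ preserves $W_{x(s,0)}$ with induced automorphism $\Psi(s,0)_{W_{x(s,0)}} = \Phi\big([\text{constant path at }x(s,0)]\big) = \id$. Applying Proposition \ref{prop:rho om Psi} with $\Psi_W = \id$ then gives $f(s,0) = \pm\rho_{\om_{W_{x(s,0)}}}(\id) = 1\in S^1$, independent of $s$.

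For the top edge, the hypotheses at $t=1$ show $\Psi(s,1)$ preserves $W_{x(s,1)} = W_{x(s,0)}$ with induced map $\Phi_s := \Phi([x(s,\cdot)])$, so Proposition \ref{prop:rho om Psi} gives $f(s,1) = \pm\rho_{\om_{W_{x(s,0)}}}(\Phi_s)$. I will choose a continuous family of paths $\ga_s \in C([0,1],X)$ from $x(0,0)$ to $x(s,0)$ with $\ga_0$ constant at $x(0,0)$, for instance $\ga_s(r) := x(rs,0)$, and set $T_s := \Phi([\ga_s]) : (W_{x(0,0)})_\om \to (W_{x(s,0)})_\om$. The conjugated loop $L_s := \ga_s\#x(s,\cdot)\#\BAR{\ga_s}$ is based at $x(0,0)$ and depends continuously on $s$, so its homotopy class in the discrete set $\pi_1(X,x(0,0))$ is constant and equal to $[L_0]=[x(0,\cdot)]$. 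Applying $\Phi$ gives $T_s^{-1}\Phi_sT_s = \Phi([L_s]) = \Phi_0$, and then naturality of $\rho$ (Proposition \ref{prop:rho}(\ref{prop:rho:nat})) implies $\rho_{\om_{W_{x(s,0)}}}(\Phi_s) = \rho_{\om_{W_{x(0,0)}}}(\Phi_0)$, independent of $s$. The sign in $f(s,1) = \pm\rho_{\om_{W_{x(s,0)}}}(\Phi_s)$ is locally constant by continuity of $f(\cdot,1)$ and thus constant on the connected interval $[0,1]$, so $f(\cdot,1)$ is constant in $S^1$ as required.

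The main obstacle will be making the conjugation argument for $\Phi_s$ fully rigorous: one must verify continuity of the family $L_s$ in the based loop space $\Omega(X,x(0,0))$, invoke discreteness of $\pi_1(X,x(0,0))$ to conclude the homotopy class is independent of $s$, and pin down that the sign ambiguity in Proposition \ref{prop:rho om Psi} does not jump as the base coisotropic $W_{x(s,0)}$ varies with $s$. Once these points are handled, the rectangle identity combines the top and bottom computations to give $m_\om(\Psi(1,\cdot)) = m_\om(\Psi(0,\cdot))$, and the analogous argument on $[s,s']\x[0,1]$ delivers the full constancy statement.
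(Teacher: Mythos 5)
Your proposal is correct and follows essentially the same route as the paper: you reduce constancy of $m_\om(\Psi(s,\cdot))$ to showing the two edge functions $s\mapsto\rho_\om(\Psi(s,0))$ and $s\mapsto\rho_\om(\Psi(s,1))$ are constant, handle the bottom edge via Proposition~\ref{prop:rho om Psi} (giving values in $\{\pm1\}$), and handle the top edge by conjugating the loop $x(s,\cdot)$ back to $x(0,\cdot)$ along $\ga_s$ and invoking naturality of $\rho$ together with a continuity-of-sign argument. The only cosmetic differences are that you argue $[L_s]$ is constant via path-connectedness in the based loop space while the paper explicitly writes down the conjugate loop $\wt x$ and invokes the square lemma, and that continuity of $\rho_\om$ on a fixed $\Aut(\om)$ is better cited from Proposition~\ref{prop:rho} itself rather than Remark~\ref{rmk:rho cont} (which is the bundle version).
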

For the proof of this result, we need the following.
\begin{prop}\label{prop:rho om Psi}Let $W\sub V$ be a coisotropic subspace and $\Psi\in\Iso(\om)$ be such that $\Psi W=W$. Then 
\begin{equation}\label{eq:rho om Psi pm}\rho_\om(\Psi)=\pm\rho_{\om_W}(\Psi_W).\end{equation}
Furthermore, if $\det(\Psi|_W)>0$ then $\rho_\om(\Psi)=\rho_{\om_W}(\Psi_W)$.
\end{prop}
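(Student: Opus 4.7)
The plan is to deform $\Psi$ within $\Aut(\om)$ to a block-diagonal $\Psi^0$ preserving three distinguished subspaces, argue that $\rho_\om$ is constant along the deformation, and then evaluate $\rho_\om(\Psi^0)$ by multiplicativity under symplectic direct sums.

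First I would choose a symplectic subspace $V_0 \sub W$ with $W = W^\om \oplus V_0$ (so $V_0 \iso W_\om$ symplectically), and a Lagrangian complement $L$ of $W^\om$ inside $V_0^\om$. This yields the $\om$-orthogonal symplectic decomposition $V = (W^\om \oplus L) \oplus V_0$, in which $W^\om$ and $L$ are dual Lagrangians of the symplectic subspace $W^\om \oplus L$. Since $\Psi$ is symplectic with $\Psi W = W$, it automatically satisfies $\Psi W^\om = W^\om$, and the map induced on $V_0 \iso W_\om$ by $\Psi|_W$ is exactly $\Psi_W$. With respect to the ordered decomposition $V = W^\om \oplus V_0 \oplus L$, the matrix of $\Psi$ is block upper-triangular with diagonal blocks $A := \Psi|_{W^\om}$, $B := \Psi_W$ (via $V_0 \iso W_\om$), and $(A^{-1})^T$ on $L$, where the transpose is with respect to the $\om$-pairing $W^\om \x L \to \R$.

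Next I would set $\Psi^t$ to be the matrix obtained from $\Psi$ by scaling all three off-diagonal blocks by $t\in[0,1]$. The symplectic constraints on these blocks are linear and homogeneous, so every $\Psi^t$ lies in $\Aut(\om)$; in particular $\Psi^1=\Psi$ and $\Psi^0=A\oplus B\oplus (A^{-1})^T$ leaves each of the three subspaces $W^\om$, $V_0$, $L$ invariant. Since $\Psi^t$ is upper-triangular with $t$-independent diagonal, its eigenvalue list is $t$-independent; invoking the characterization of $\rho_\om$ from Proposition \ref{prop:rho} (the determinant of the unitary factor of the symplectic polar decomposition, which depends only on the spectrum), one concludes that $t\mapsto\rho_\om(\Psi^t)$ is constant. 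Hence $\rho_\om(\Psi)=\rho_\om(\Psi^0)$.

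Then I would apply multiplicativity of $\rho_\om$ under symplectic direct sum (Proposition \ref{prop:rho}, cf.\ Remark \ref{rmk:m C om}) to the splitting $V=(W^\om\oplus L)\oplus V_0$:
\[ \rho_\om(\Psi^0) = \rho_{\om|_{W^\om \oplus L}}\bigl(A \oplus (A^{-1})^T\bigr)\cdot \rho_{\om|_{V_0}}(B). \]
Under the symplectic isomorphism $V_0\iso W_\om$, the second factor equals $\rho_{\om_W}(\Psi_W)$. For the first, perform the polar decomposition $A=P_A Q_A$ with $Q_A$ orthogonal with respect to an inner product on $W^\om$ whose dual is the chosen inner product on $L$; then $A\oplus (A^{-1})^T=(P_A\oplus(P_A^{-1})^T)(Q_A\oplus Q_A)$ realizes the symplectic polar decomposition, with $Q_A\oplus Q_A$ unitary in the compatible complex structure $W^\om\oplus L\iso\C^{\dim W^\om}$ and $\det_\C(Q_A\oplus Q_A)=\det_\R(Q_A)=\mathrm{sign}(\det A)$. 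Since $B\in\Aut(\om|_{V_0})$ forces $\det B=1$, we obtain $\mathrm{sign}(\det A)=\mathrm{sign}(\det\Psi|_W)$, yielding both the $\pm$ identity and the sign refinement.

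The main obstacle is the constancy $\rho_\om(\Psi^t)=\rho_\om(\Psi^0)$ along the off-diagonal scaling. While it is intuitive that a block-upper-triangular deformation with fixed diagonal should not alter the Salamon-Zehnder index, a rigorous argument must invoke a concrete formula for $\rho_\om$---either via the polar decomposition (whose unitary factor should remain in the same conjugacy class) or infinitesimally, by computing $\tfrac{d}{dt}\rho_\om(\Psi^t)$ from the Lie-algebra description of $\rho_\om$ in Proposition \ref{prop:rho}. If neither route is immediate, a fallback is induction on $\codim W=\dim W^\om$, peeling off one null direction at a time and reducing to the case $\dim W^\om=1$, where the block-triangular family is a one-parameter deformation amenable to direct computation.
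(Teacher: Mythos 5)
Your overall strategy — split $V$ into a symplectic piece and two dual isotropics, deform $\Psi$ to a block-diagonal $\Psi^0$ through symplectic automorphisms with constant spectrum, and conclude by naturality plus the product formula plus the transversal-Lagrangian case — is precisely the strategy of the paper's proof. Your decomposition $V = W^\om \oplus V_0 \oplus L$ is equivalent to the paper's $V = (W\cap W')\oplus W^\om \oplus W'^\om$ (take $W' := V_0\oplus L$), and your reduction of the sign to $\det(\Psi|_{W^\om})$ via complementary Lagrangians is the content of the paper's Lemma \ref{le:rho om Psi}.

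However, there is a concrete error in the deformation step. You claim that scaling all three off-diagonal blocks by $t$ keeps $\Psi^t$ in $\Aut(\om)$ because ``the symplectic constraints on these blocks are linear and homogeneous.'' They are not. Write $\Psi(v_1,v_2,v_3)=(Av_1+Pv_2+Qv_3,\,Bv_2+Rv_3,\,Dv_3)$ with respect to $W^\om\oplus V_0\oplus L$. Since $\om$ pairs $W^\om$ with $L$, is symplectic on $V_0$, and vanishes on all other pairings, the constraint coming from $\om(\Psi v_3,\Psi w_3)=0$ reads
\[
\om(Qv_3,Dw_3)+\om(Dv_3,Qw_3)+\om(Rv_3,Rw_3)=0,
\]
which is linear in $Q$ but \emph{quadratic} in $R$. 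Replacing $(P,Q,R)$ by $(tP,tQ,tR)$ turns this into $t\cdot(\text{linear in }Q)+t^2\cdot(\text{quadratic in }R)=0$, which does not follow from the $t=1$ identity; in general $\Psi^t\notin\Aut(\om)$ for $0<t<1$. The paper's deformation (\ref{eq:Psi t}) scales the ``long-range'' block $Q\colon L\to W^\om$ by $t^2$ while scaling $P$ and $R$ by $t$, precisely so that every symplectic constraint picks up a single overall power of $t$ and is preserved; equivalently, $\Psi^t$ is conjugation of $\Psi$ by the dilation $t\,\id_{W^\om}\oplus\id_{V_0}\oplus t^{-1}\id_{L}\in\Aut(\om)$ for $t>0$, extended continuously to $t=0$. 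You also leave the constancy of $t\mapsto\rho_\om(\Psi^t)$ as an acknowledged obstacle; your heuristic that $\rho_\om$ ``depends only on the spectrum'' is not quite accurate (the Salamon--Zehnder map also involves symplectic signatures of eigenspaces), and the paper closes this gap via the explicit formula of Lemma \ref{le:rho om m -}, which shows $\rho_\om(\Psi^t)$ takes values in a finite set determined by $\si(\Psi^0)$, whence constancy by continuity.
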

For the proof of Proposition \ref{prop:rho om Psi} we need the following. 
\begin{lemma}\label{le:rho om Psi} Let $(V,\om)$ be a symplectic vector space, $W,W'\sub V$ Lagrangian subspaces and $\Psi\in\Iso(\om)$. Assume that $W+W'=V$, $\Psi W=W$ and $\Psi W'=W'$. Then $\rho_\om(\Psi)=\pm1$. If also $\det(\Psi|_W)>0$ then $\rho_\om(\Psi)=1$. 
\end{lemma}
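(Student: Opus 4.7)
The strategy is to put $\Psi$ into a block-diagonal normal form adapted to the pair $(W,W')$ via naturality of $\rho$, perform a Cartan decomposition, and exploit the value of $\rho_\om$ on the unitary factor.

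First, choose a basis $e_1,\ldots,e_n$ of $W$ and let $f_1,\ldots,f_n\in W'$ be its $\om$-dual basis, that is $\om(e_i,f_j)=\delta_{ij}$; this exists because the Lagrangian conditions together with $W+W'=V$ force $V=W\oplus W'$ with $\om$ restricting to a nondegenerate pairing $W\x W'\to\R$. Assembling these bases yields a symplectic isomorphism $\Phi\in\Iso(\om_0,\om)$ from $(\R^{2n},\om_0)$ to $(V,\om)$ sending $\R^n\x\{0\}$ to $W$ and $\{0\}\x\R^n$ to $W'$. By Proposition \ref{prop:rho}(\ref{prop:rho:nat}) applied to $\Phi$, we may henceforth assume $(V,\om)=(\R^{2n},\om_0)$, $W=\R^n\x\{0\}$, $W'=\{0\}\x\R^n$. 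Then the invariance conditions $\Psi W=W$, $\Psi W'=W'$ together with $\Psi^T J_0\Psi=J_0$ force
\[\Psi=\begin{pmatrix}A & 0 \\ 0 & (A^T)^{-1}\end{pmatrix},\qquad A:=\Psi|_W\in\GL(n,\R).\]

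Next, apply the real polar decomposition $A=OS$ with $O\in\O(n)$ and $S=(A^TA)^{1/2}$ positive definite symmetric, and define
\[\Psi_O:=\begin{pmatrix}O & 0 \\ 0 & O\end{pmatrix},\qquad \Psi_S:=\begin{pmatrix}S & 0 \\ 0 & S^{-1}\end{pmatrix}.\]
Using $(A^T)^{-1}=OS^{-1}$, one checks $\Psi=\Psi_O\Psi_S$. The factor $\Psi_O$ commutes with $J_0=\bigl(\begin{smallmatrix}0 & -I \\ I & 0\end{smallmatrix}\bigr)$ and so lies in $\U(n)\sub\Sp(2n)$; the factor $\Psi_S=\exp(X)$ with $X=\bigl(\begin{smallmatrix}\log S & 0 \\ 0 & -\log S\end{smallmatrix}\bigr)$ is symmetric and satisfies $XJ_0+J_0X=0$, making $\Psi_S$ positive definite symmetric and symplectic. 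Hence $\Psi=\Psi_O\Psi_S$ is the standard Cartan decomposition of $\Psi$ with respect to $\U(n)\sub\Sp(2n)$.

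Finally, invoke the formula $\rho_{\om_0}(UP)=\det_\C(U)$ for the Cartan decomposition, which I expect to be among the clauses of Proposition \ref{prop:rho}. This gives $\rho_{\om_0}(\Psi)=\det_\C(\Psi_O)$. Under the standard identification $(\R^{2n},J_0)\iso\C^n$ via $(x,y)\mapsto x+iy$, the block matrix $\Psi_O$ acts on $\C^n$ as the real matrix $O$, so $\det_\C(\Psi_O)=\det_\R(O)=\det(A)/\det(S)=\mathrm{sign}(\det A)\in\{\pm1\}$, and this equals $+1$ exactly when $\det(\Psi|_W)>0$. The main obstacle is justifying the Cartan-decomposition formula used in the last step. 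Should Proposition \ref{prop:rho} supply only naturality and direct-sum, the required identity $\rho_{\om_0}(\Psi_O\exp(tX))=\det_\C(\Psi_O)$ for all $t\in[0,1]$ follows from a short continuity argument: the path $\exp(tX)=\bigl(\begin{smallmatrix}S^t & 0 \\ 0 & S^{-t}\end{smallmatrix}\bigr)$ has only positive real eigenvalues throughout, so it contributes no mean rotation and $\rho_{\om_0}$ remains constant along the deformation.
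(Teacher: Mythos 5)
Your overall strategy — normalize via a symplectic basis so that $(V,\om)=(\R^{2n},\om_0)$, $W=\R^n\x\{0\}$, $W'=\{0\}\x\R^n$, write $\Psi=\operatorname{diag}\bigl(A,(A^T)^{-1}\bigr)$, and then try to read off $\rho_{\om_0}(\Psi)$ from the orthogonal factor $O$ of the polar decomposition $A=OS$ — is a genuinely different route from the paper's, which normalizes to $V=W\oplus W^*$, invokes the explicit eigenvalue formula for $\rho$ (Lemma \ref{le:rho om m -}), and proves $m_+(\om,\Psi,\lam)=m_+(\om,\Psi,\bar\lam)$ by a duality argument, plus a parity count of negative eigenvalues for the second assertion. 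The setup through the block form of $\Psi$ and the identification $\det_\C(\Psi_O)=\operatorname{sgn}(\det A)$ is fine.

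The gap is in the last step, and it is real. The identity $\rho_{\om_0}(UP)=\det_\C(U)$ for the polar (Cartan) decomposition is not one of the clauses of Proposition \ref{prop:rho} (which supplies only Naturality, Direct sum, Determinant, Normalization), and it is \emph{false} in general: for $\Psi=\operatorname{diag}(2,1/2)\,R_\theta\in\Sp(2)$ with small $\theta\neq0$, the eigenvalues of $\Psi$ are real, positive, off the unit circle, so $\rho_{\om_0}(\Psi)=1$ (Normalization, or Lemma \ref{le:rho om m -}), whereas the unitary factor of $\Psi$ is $R_\theta$, with $\det_\C=e^{i\theta}\neq\pm1$. The fallback ``continuity'' argument does not repair this: $\rho_{\om_0}$ is not multiplicative, and along the path $\Psi_O\exp(tX)$ the relevant spectral data are those of the \emph{product} (the factors $\Psi_O$ and $\exp(tX)$ do not commute, so eigenvalues of $\Psi_O\exp(tX)$ are not products of eigenvalues of the factors). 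The assertion that ``$\exp(tX)$ has only positive real eigenvalues so it contributes no mean rotation'' therefore has no force. The one way to rescue the continuity idea — observe that $t\mapsto\rho_{\om_0}(\Psi_O\exp(tX))$ is continuous and stays in the discrete set $\{\pm1\}$, hence is constant, and evaluates at $t=0$ to $\det(O)=\operatorname{sgn}(\det A)$ — is circular: the fact that $\rho_{\om_0}$ of such block matrices lies in $\{\pm1\}$ is precisely the first assertion of the lemma, for which you have supplied no independent argument. That first assertion is the crux, and it is exactly where the paper's duality argument (or equivalently a direct analysis of the Hermitian form on the generalized eigenspaces, which is split on each of the two Lagrangian blocks) is needed. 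Once $\rho_\om(\Psi)\in\{\pm1\}$ is in hand, the second assertion does follow from a connectedness argument on $\GL(n,\R)$ (check $A=I$ and $A=\operatorname{diag}(-1,1,\dots,1)$), which is close in spirit to what you attempted.
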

Let $V$ and $W$ be real vector spaces and $\Psi\in\Hom(V,W)$. We denote by $\Psi^\C:V^\C\to W^\C$ the complex linear extension. If $V=W$ and this space has dimension $n$ then for every $\lam\in\C$ we denote $E_\Psi^\lam:=\ker((\lam-\Psi^\C)^n)\sub V^\C$. Let now $(V,\om)$ be a symplectic vector space, $\Psi\in\Iso(\om)$ and $\lam\in S^1\wo\{\pm1\}$. We define 
\[m_+(\om,\Psi,\lam):=\max\big\{\dim_\C W\,\big|\,W\sub E_\Psi^\lam\textrm{ complex subspace, }\Im\om(\bar v,v)>0,\,\forall v\in W\big\}.\]
The following remarks are used in the proof of Lemma \ref{le:rho om Psi}.
\begin{rmk}\label{rmk:-om} We have $m_+(-\om,\Psi,\bar\lam)=m_+(\om,\Psi,\lam)$. This follows, since the map $E_\Psi^\lam\to E_\Psi^{\bar\lam}$, $v\mapsto\bar v$, is a real isomorphism. 
\end{rmk}
\begin{rmk} \label{rmk:m +} If $V'$ is another vector space and $\Phi\in\Iso(V',V)$ then $m_+\big(\Phi^*\om,\Phi^{-1}\Psi\Phi,\lam\big)=m_+(\om,\Psi,\lam)$. This follows from the fact $\Phi E_{\Phi^{-1}\Psi\Phi}^\lam=E_\Psi^\lam$. \end{rmk}
We define $\om^*$ to be the symplectic form on $V^*$ defined by $\om^*(\phi,\psi):=\phi(w)$, where $w\in V$ is determined by $\om(w,\cdot)=\psi$. 
\begin{rmk}\label{rmk:om *} The map $\Psi^{-*}:=(\Psi^*)^{-1}$ is $\om^*$-symplectic, and the map $\om_\#:V\to V^*$ defined by $\om_\#v:=\om(v,\cdot)$ satisfies $\om_\#^*(\om^*,\Psi^{-*})=(\om,\Psi)$. 
\end{rmk}
Let now $W$ be a finite dimensional vector space. We define the canonical symplectic form $\om^W$ on $V:=W\oplus W^*$ by $\om^W\big((v,\phi),(v',\phi')\big):=\phi'(v)-\phi(v')$. Furthermore, we denote by $\iota^W:W\to W^{**}$ the canonical isomorphism, and define the map $\Phi_W:V\to V^*$ by $\Phi_W(v,\phi):=(\phi,\iota^Wv)$.
\begin{rmk}\label{rmk:W W *} We have $\Phi_W^*\om^{W^*}=\om^W$. Furthermore, if $\Psi\in\Aut(\om^W)$ is such that $\Psi W=W$ and $\Psi W^*=W^*$ then $\Phi_W^{-1}\Psi^{-*}\Phi_W=\Psi$. This follows from the fact $\Psi|_{W^*}=\Psi|_W^{-*}$. 
\end{rmk}
\begin{rmk}\label{rmk:det} Let $V$ be a real vector space and $\Phi\in\End(V)$ be such that $\det\Phi>0$. For $\lam\in\C$ we denote by $m(\Phi,\lam)\in\N\cup\{0\}$ the algebraic multiplicity over $\C$ of $\lam$ as an eigenvalue of $\Phi$. Then $\sum_{\lam\in(-\infty,0)}m(\Phi,\lam)$ is even. \end{rmk} 
\begin{proof}[Proof of Lemma \ref{le:rho om Psi}]\setcounter{claim}{0} We define the map $\Phi:V=W\oplus W'\to W\oplus W^*$ by $\Phi(w,w'):=(w,-(\om^\#w')|_W)$. Then the tuple $(\wt V,\wt W,\wt W',\wt\om,\wt\Psi):=\big(W\oplus W^*,W,W^*,\om^W,\Phi\Psi\Phi^{-1}\big)$ satisfies $\wt W+\wt W'=\wt V$, $\wt\Psi\wt W=W$, $\wt\Psi\wt W'=\wt W'$ and $\det\Psi|_W=\det\wt\Psi|_{\wt W}$. Hence by (Naturality) for $\rho$, we may assume without loss of generality that $V=W\oplus W^*,W'=W^*$ and $\om=\om^W$. 

Let $\lam\in S^1\wo\{\pm1\}$. Remarks \ref{rmk:W W *} and \ref{rmk:m +} imply that $m_+(\om,\Psi,\lam)=m_+(\om^{W^*},\Psi^{-*},\lam)$. On the other hand, $\om^{W^*}=-(\om^W)^*$, hence by Remarks \ref{rmk:-om}, \ref{rmk:m +} and \ref{rmk:om *}, we obtain $m_+(\om^{W^*},\Psi^{-*},\lam)=m_+(\om^W,\Psi,\bar\lam)=m_+(\om,\Psi,\bar\lam)$. It follows that $m_+(\om,\Psi,\lam)=m_+(\om,\Psi,\bar\lam)$, and therefore by Lemma \ref{le:rho om m -} $\rho_\om(\Psi)=\pm1$. 

Assume now also that $\det(\Psi|_W)>0$. We have $\Psi|_{W^*}=\Psi|_W^{-*}$ and $\det\Psi|_W^{-*}=(\det\Psi|_W)^{-1}>0$. Hence by Remark \ref{rmk:om *}, 
\[\sum_{\lam\in(-\infty,0)}m(\Psi,\lam)=\sum_{\lam\in(-\infty,0)}m(\Psi|_W,\lam)+\sum_{\lam\in(-\infty,0)}m(\Psi|_{W^*},\lam)\in4\Z.\]
It follows now from Remark \ref{rmk:rho - om} that $\rho_\om(\Psi)=1$. This proves Lemma \ref{le:rho om Psi}.
\end{proof}
\begin{proof}[Proof of Proposition \ref{prop:rho om Psi}]\setcounter{claim}{0} Assume first that there exists a coisotropic subspace $W'\sub V$ such that 
\begin{equation}\label{eq:dim W'}\dim W'=\dim W,\quad W+{W'}^\om=V,\quad \Psi W'=W'.\end{equation}
We define $U:=W\cap W'$. Since $\Psi W=W$, we have $\Psi W^\om=W^\om$, and since $\Psi W'=W'$, we have $\Psi {W'}^\om={W'}^\om$. Furthermore, by an elementary argument, we have
\begin{equation}\label{eq:U om W}U^\om=W^\om+{W'}^\om.\end{equation}
It follows that $\Psi U^\om=U^\om$ and hence $\Psi U=U$. 
\begin{claim}\label{claim:iso} The map 
\begin{equation}\label{eq:U W om W}U\to W_\om=W/W^\om,\qquad v\mapsto [v]\end{equation} 
is bijective.
\end{claim}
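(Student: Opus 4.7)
The plan is to prove injectivity and surjectivity of the map $U \to W_\om$, $v\mapsto[v]$ separately, using only the three properties of $W'$ in (\ref{eq:dim W'}) (specifically $\dim W'=\dim W$ and $W+(W')^\om=V$) together with the symplectic orthogonal identity $(A+B)^\om=A^\om\cap B^\om$. The $\Psi$-invariance of $U$, while relevant later in Proposition \ref{prop:rho om Psi}, will play no role here.

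For injectivity, I observe that the kernel of $U\to W/W^\om$ equals $U\cap W^\om = W\cap W'\cap W^\om$, which is contained in $W'\cap W^\om$. Taking the symplectic orthogonal of both sides of the hypothesis $W+(W')^\om=V$ gives $W^\om\cap W'=\{0\}$, so the kernel is zero.

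For surjectivity, the goal is $U+W^\om=W$. I will first establish the ``dual'' decomposition $W^\om+W'=V$. Using the dimension formula applied to $W+(W')^\om=V$, combined with $\dim(W')^\om=2n-\dim W'=2n-\dim W=\dim W^\om$, one gets
\[\dim(W\cap(W')^\om)=\dim W+\dim(W')^\om-2n=0,\]
so $W\cap(W')^\om=\{0\}$; taking orthogonals flips this to $W^\om+W'=V$. Now, given $w\in W$, decompose $w=w_0+w'$ with $w_0\in W^\om$ and $w'\in W'$. Since $W^\om\subset W$ and $w\in W$, automatically $w'=w-w_0\in W$, hence $w'\in W\cap W'=U$. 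This gives $w\in U+W^\om$, proving $W\subset U+W^\om$; the reverse inclusion is obvious.

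I do not anticipate any serious obstacle: the only subtlety is recognizing that the single hypothesis $W+(W')^\om=V$, once paired with the dimension equality $\dim W'=\dim W$, is automatically equivalent to the symmetric pair of vanishing intersections $W^\om\cap W'=\{0\}$ and $W\cap(W')^\om=\{0\}$, which feed injectivity and surjectivity respectively. The rest is pure linear algebra.
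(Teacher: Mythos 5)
Your proof is correct. The injectivity step is essentially the paper's: both identify the kernel with $W'\cap W^\om$ and kill it by taking the symplectic orthogonal of the hypothesis $W+(W')^\om=V$. For surjectivity you take a genuinely different route. The paper starts from the identity $U^\om=W^\om+(W')^\om$ established just before the claim, estimates $\dim U^\om\leq\dim W^\om+\dim(W')^\om$, and turns this into the inequality $\dim U+\dim W^\om\geq\dim W$, which together with $U\cap W^\om=\{0\}$ and the inclusions $U,W^\om\sub W$ forces $U+W^\om=W$. You instead upgrade the hypothesis to a direct sum (a dimension count gives $W\cap(W')^\om=\{0\}$), pass to the symplectic orthogonal to obtain the dual splitting $V=W^\om\oplus W'$, and decompose any $w\in W$ along this splitting, observing that the coisotropy inclusion $W^\om\sub W$ forces the $W'$-component into $U=W\cap W'$. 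Your version is somewhat more explicit --- it exhibits the decomposition of $W$ directly rather than inferring it from a dimension bound --- and it avoids the identity $U^\om=W^\om+(W')^\om$ that the paper's argument relies on. Both are sound and elementary; the substance is the same.
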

\begin{proof}[Proof of Claim \ref{claim:iso}] By an elementary argument, we have
\begin{equation}\nn W'\cap W^\om=({W'}^\om+W)^\om=\{0\}.\end{equation}
Here in the second equality we used the facts ${{W'}^\om}^\om=W'$ and ${W'}^\om+W=V$. It follows that 
\begin{equation}\label{eq:U W om}U\cap W^\om=W'\cap W^\om=\{0\}
\end{equation}
\begin{claim}\label{claim:U W om} We have
\begin{equation}\label{eq:U +}U+W^\om=W.
\end{equation}
\end{claim}
\begin{proof}[Proof of Claim \ref{claim:U W om}] By (\ref{eq:U W om}) and the facts $U\sub W$ and $W^\om\sub W$, it suffices to show that 
\begin{equation}\label{eq:dim U}\dim U+\dim W^\om\geq\dim W.\end{equation}
To see this inequality, observe that (\ref{eq:U om W}) implies
\begin{eqnarray}\nn\dim V-\dim U&=&\dim U^\om\\
\nn&\leq&\dim W^\om+\dim {W'}^\om\\
\nn&=&\dim W^\om+\dim V-\dim W'.\end{eqnarray}
Since by (\ref{eq:dim W'}) we have $\dim W=\dim W'$, inequality (\ref{eq:dim U}) follows. This proves Claim \ref{claim:U W om}.
\end{proof}
Claim \ref{claim:iso} follows from (\ref{eq:U W om}) and Claim \ref{claim:U W om}.
\end{proof}
Claim \ref{claim:iso} implies that the map (\ref{eq:U W om W}) is a linear symplectic isomorphism. It follows that $U$ and hence $U^\om$ are symplectic subspaces of $V$. Since they are invariant under $\Psi$, the (Product) property in Proposition \ref{prop:rho} implies that 
\begin{equation}\label{eq:rho om Psi}\rho_\om(\Psi)=\rho_{\om|_U}(\Psi|_U)\rho_{\om|_{U^\om}}(\Psi|_{U^\om}).
\end{equation}
Furthermore, the (Naturality) property in Proposition \ref{prop:rho} implies that 
\begin{equation}\label{eq:rho om Psi U}\rho_{\om|U}(\Psi|_U)=\rho_{\om_W}(\Psi_W).
\end{equation}
Since $W^\om$ and ${W'}^\om$ are complementary Lagrangian subspaces of $U^\om$ that are invariant under $\Psi$, it follows from Lemma \ref{le:rho om Psi} that $\rho_{\om|_{U^\om}}(\Psi|_{U^\om})=\pm1$. Combining this with (\ref{eq:rho om Psi}) and (\ref{eq:rho om Psi U}), equality (\ref{eq:rho om Psi pm}) follows.

Assume now that $\det\Psi|_W>0$. Since $\Psi|_W=\Psi|_U\oplus\Psi|_{W^\om}$ and $\Psi|_U\in\Iso(\om|_U)$, it follows that $\det\Psi|_{W^\om}>0$. Hence Lemma \ref{le:rho om Psi} implies that $\rho_{\om|_{U^\om}}(\Psi|_{U^\om})=1$. Combining this with (\ref{eq:rho om Psi}) and (\ref{eq:rho om Psi U}), we obtain $\rho_\om(\Psi)=1$. 

Consider now the general case, in which we do not assume that a subspace $W'\sub V$ satisfying (\ref{eq:dim W'}) exists. We choose a coisotropic subspace $W'\sub V$ such that $\dim W'=\dim W$ and $W+{W'}^\om=V$, and denote $V_0:=W\cap W'$, $V_1:=W^\om$ and $V_2:={W'}^\om$. As in the proof of Claim (\ref{claim:iso}) it follows that $V$ is the direct sum of the $V_i$'s. We define $P_i:V\to V_i$ to be the linear projection along the subspace $\oplus_{j\neq i}V_j$, and we denote $\Psi_{ij}:=P_i\Psi|_{V_j}$, for $i,j=0,1,2$. We fix $t\in\R$ and define, using the splitting $V=\oplus_{i=0,1,2}V_i$, 
\begin{equation}\label{eq:Psi t}\Psi^t:= \left(\begin{array}{ccc}
\Psi_{00} & 0 & t\Psi_{02}\\
t\Psi_{10} & \Psi_{11} & t^2\Psi_{12} \\
0 & 0 & \Psi_{22}
\end{array}\right):V\to V.
\end{equation}
\begin{claim}\label{claim:Psi Psi 1} We have $\Psi^1=\Psi$. 
\end{claim}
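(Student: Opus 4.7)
The plan is to show that with respect to the splitting $V=V_0\oplus V_1\oplus V_2$, the matrix of $\Psi$ already has the three off-diagonal zero blocks that appear in (\ref{eq:Psi t}) at $t=1$, namely $\Psi_{01}=\Psi_{20}=\Psi_{21}=0$. Once this is verified, comparison with (\ref{eq:Psi t}) at $t=1$ yields $\Psi^1=\Psi$ immediately.

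First I would handle the two vanishings coming from invariance of $V_1$. Since $V_1=W^\om$ and $\Psi W=W$, the standard fact that a symplectic automorphism preserves the symplectic complement of an invariant subspace gives $\Psi V_1=V_1$. Consequently $P_0\Psi|_{V_1}=0$ and $P_2\Psi|_{V_1}=0$, i.e.\ $\Psi_{01}=\Psi_{21}=0$.

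The remaining equality $\Psi_{20}=0$ is equivalent to $\Psi V_0\sub V_0\oplus V_1$, and since $V_0\sub W$ and $\Psi W=W$, it is enough to prove that $W=V_0\oplus V_1$. This is the one step that requires a small calculation. From $W+{W'}^\om=V$ and ${W'}^\om\sub W'$ (coisotropicity of $W'$) we get $W+W'=V$, hence by inclusion-exclusion $\dim(W\cap W')=\dim W+\dim W'-\dim V$. Using $\dim W'=\dim W$ and $\dim V_1=\dim W^\om=\dim V-\dim W$, this rearranges to $\dim V_0+\dim V_1=\dim W$. On the other hand, as was observed in the proof of (\ref{eq:U W om}) (which used only $W+{W'}^\om=V$, not invariance of $W'$), one has $W'\cap W^\om=({W'}^\om+W)^\om=\{0\}$, so $V_0\cap V_1\sub W'\cap W^\om=\{0\}$. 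Together with the dimension count this yields $W=V_0\oplus V_1$, as required.

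The main obstacle, if any, is simply to notice that the argument for $W=V_0\oplus V_1$ from the special case carries over verbatim to the general case, because it only uses the transversality condition $W+{W'}^\om=V$ and the equality $\dim W'=\dim W$, and does not use the invariance $\Psi W'=W'$ (which is precisely the hypothesis we have dropped). Everything else in the claim is a direct consequence of $\Psi W=W$ via the two paragraphs above.
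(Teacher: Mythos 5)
Your proof is correct and takes essentially the same route as the paper: both derive $\Psi_{01}=\Psi_{21}=0$ from the invariance $\Psi W^\om=W^\om$ (which you note follows from $\Psi W=W$), and $\Psi_{20}=0$ from $\Psi W=W$ together with $W=V_0\oplus V_1$. You spell out more carefully why $W=V_0\oplus V_1$ and why that fact only uses the transversality $W+{W'}^\om=V$ and $\dim W'=\dim W$ (not $\Psi W'=W'$), a point the paper handles earlier by referring back to the proof of Claim~\ref{claim:iso}.
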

\begin{proof}[Proof of Claim \ref{claim:Psi Psi 1}] Since $\Psi W=W$, we have $\Psi_{20}=0$, and since $\Psi W^\om=W^\om$, we have $\Psi_{01}=\Psi_{21}=0$. Hence $\Psi$ has the form (\ref{eq:Psi t}) with $t=1$. This proves Claim \ref{claim:Psi Psi 1}. 
\end{proof}
\begin{claim}\label{claim:Psi t om} The map $\Psi^t$ is an $\om$-symplectic. 
\end{claim}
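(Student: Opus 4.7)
The plan is to verify $(\Psi^t)^*\om=\om$ by writing out both sides in block form with respect to the splitting $V=V_0\oplus V_1\oplus V_2$ and checking that the constraints forced by the hypothesis $\Psi=\Psi^1\in\Iso(\om)$ are homogeneous in $t$ under the scaling prescribed in (\ref{eq:Psi t}).

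First I would determine the matrix of $\om$ in this splitting. Since $V_0$ maps isomorphically onto $W_\om$ by Claim \ref{claim:iso} with the pushforward form matching the induced symplectic form, the restriction $\Om_0:=\om|_{V_0}$ is nondegenerate. The summands $V_1=W^\om$ and $V_2={W'}^\om$ are isotropic (they sit inside the coisotropic $W,W'$), and the pairings $\om(V_0,V_1)$, $\om(V_0,V_2)$ vanish because $V_0\sub W\cap W'$ while $V_1\perp_\om W$ and $V_2\perp_\om W'$. The pairing $B\colon V_1\x V_2\to\R$ induced by $\om$ is nondegenerate, since its kernel in $V_1$ is $V_2^\om\cap V_1=W'\cap W^\om$, which is trivial as already observed in the proof. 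Hence the matrix of $\om$ has the form $\Om=\bigl(\begin{smallmatrix}\Om_0&0&0\\0&0&B\\0&-B^T&0\end{smallmatrix}\bigr)$.

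The condition $\Psi^*\om=\om$ at $t=1$, expressed in block form using the vanishings $\Psi_{01}=\Psi_{20}=\Psi_{21}=0$ (which come from $\Psi W=W$ and $\Psi W^\om=W^\om$ as noted in the proof of Claim \ref{claim:Psi Psi 1}), reduces to four independent equations: $\Psi_{00}^T\Om_0\Psi_{00}=\Om_0$, $\Psi_{11}^T B\Psi_{22}=B$, $\Psi_{00}^T\Om_0\Psi_{02}+\Psi_{10}^T B\Psi_{22}=0$, and $\Psi_{02}^T\Om_0\Psi_{02}+\Psi_{12}^T B\Psi_{22}-\Psi_{22}^T B^T\Psi_{12}=0$; the remaining block equations are transposes of these. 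Substituting $\Psi_{02}\to t\Psi_{02}$, $\Psi_{10}\to t\Psi_{10}$, $\Psi_{12}\to t^2\Psi_{12}$, the first two equations are unchanged, each term of the third picks up exactly one factor of $t$, and each term of the fourth picks up exactly $t^2$. Since zero is scale-invariant, all four equations persist for every $t\in\R$, yielding $(\Psi^t)^*\om=\om$.

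The computation is pure bookkeeping; the essential content is that the exponents $1,1,2$ in (\ref{eq:Psi t}) have been chosen precisely so that each symplecticness constraint is $t$-homogeneous, and thus vanishes identically once it does at $t=1$. The only subtlety, if any, is correctly identifying which three blocks of $\Psi$ vanish, so that the mixed constraints involve only the three off-diagonal entries that are being scaled.
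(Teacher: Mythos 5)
Correct, and essentially the paper's own argument: both rest on the observation that the scaling in (\ref{eq:Psi t}) makes each block constraint of $(\Psi^t)^*\om=\om$ homogeneous in $t$, so that once it holds at $t=1$ it holds for all $t$. The paper does the same computation as a direct bilinear expansion of $\om(\Psi^t v,\Psi^t w)$, isolating the coefficients of $t$ and $t^2$ — which are exactly your block equations $(1,3)$ and $(3,3)$ — rather than writing out the matrix of $\om$ first, but the content is identical.
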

\begin{proof}[Proof of Claim \ref{claim:Psi t om}] Since $\Psi$ is symplectic, we have for $v_0\in V_0$, $w_2\in V_2$,
\begin{equation}\label{eq:om v 0 w 2} 0=\om(v_0,w_2)=\om(\Psi v_0,\Psi w_2)=\om\big(\Psi_{00}v_0,\Psi_{02}w_2\big)+\om\big(\Psi_{10}v_0,\Psi_{22}w_2\big).\end{equation}
Furthermore, for $v_2,w_2\in V_2$,
\begin{equation}\label{eq:om v 2 w 2}0=\om(v_2,w_2)=\om\big(\Psi_{02}v_2,\Psi_{02}w_2\big)+\om\big(\Psi_{12}v_2,\Psi_{22}w_2\big)+\om(\Psi_{22}v_2,\Psi_{12}w_2\big).
\end{equation}
Hence, for every $v=v_0+v_1+v_2,$ $w=w_0+w_1+w_2\in V=V_0\oplus V_1\oplus V_2$, 
\begin{eqnarray}\nn \om(\Psi^tv,\Psi^tw)&=&\om\big(\Psi_{00}v_0,\Psi_{00}w_0\big)+\om\big(\Psi_{11}v_1,\Psi_{22}w_2\big)+\om\big(\Psi_{22}v_2,\Psi_{11}w_1\big) + \\
\nn&&t\Big(\om\big(\Psi_{00}v_0,\Psi_{02}w_2\big) + \om\big(\Psi_{02}v_2,\Psi_{00}w_0\big) + \\
\nn&&\phantom{t\Big(}\om\big(\Psi_{10}v_0,\Psi_{22}w_2\big) + \om\big(\Psi_{22}v_2,\Psi_{10}w_0\big)\Big) + \\
\nn&& + t^2\Big(\Psi_{02}v_2,\Psi_{02}w_2\big) + \om\big(\Psi_{12}v_2,\Psi_{22}w_2\big) + \om\big(\Psi_{22}v_2,\Psi_{12}w_2\big)\Big)\\
\nn&=&\om(\Psi^1 v,\Psi^1 w) + (t-1)(0 - 0) + (t^2 - 1)0\\
\nn&=& \om(v,w).
\end{eqnarray}
Here in the second equality we used equalities (\ref{eq:om v 0 w 2}) and (\ref{eq:om v 2 w 2}), and in the last equality we used Claim \ref{claim:Psi Psi 1} and the fact that $\Psi$ is symplectic. This proves Claim \ref{claim:Psi t om}.
\end{proof}
\begin{claim}\label{claim:rho Psi t} We have 
\[\rho_\om(\Psi^t)=\rho_\om(\Psi^0).\]
\end{claim}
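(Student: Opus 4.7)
Plan. I will prove Claim \ref{claim:rho Psi t} by showing that both the characteristic polynomial of $\Psi^t$ and the signature data on which $\rho_\om$ depends are independent of $t$, and then invoking continuity of $\rho_\om$.

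First, a determinant computation. With respect to the grouping $V = (V_0 \oplus V_1) \oplus V_2$, the matrix $\Psi^t$ in (\ref{eq:Psi t}) is block upper-triangular, since its $(3,1)$ and $(3,2)$ blocks vanish, with diagonal blocks $\begin{pmatrix} \Psi_{00} & 0 \\ t\Psi_{10} & \Psi_{11}\end{pmatrix}$ and $\Psi_{22}$; the first of these is itself block lower-triangular because the $(1,2)$ block vanishes. Expanding the determinant gives
\[
\det(\lambda \cdot \id_V - \Psi^t) = \det(\lambda \cdot \id_{V_0} - \Psi_{00})\,\det(\lambda \cdot \id_{V_1} - \Psi_{11})\,\det(\lambda \cdot \id_{V_2} - \Psi_{22}),
\]
which is independent of $t$. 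In particular the eigenvalues of $\Psi^t$, with algebraic multiplicities, do not move.

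Next, for each $\lambda \in S^1 \setminus \{\pm 1\}$, I will show that the integer $m_+(\om,\Psi^t,\lambda)$ is constant in $t$. The generalized eigenspace $E_{\Psi^t}^\lambda$ is the image of the Riesz projection $\frac{1}{2\pi i}\oint_{\gamma_\lambda}(\zeta\cdot\id-(\Psi^t)^\C)^{-1}\,d\zeta$ along a small loop $\gamma_\lambda$ enclosing $\lambda$ and no other eigenvalue. By Step one the spectrum of $\Psi^t$ does not move, so a single $\gamma_\lambda$ works uniformly in $t$, hence $t \mapsto E_{\Psi^t}^\lambda$ is continuous. Therefore $m_+(\om,\Psi^t,\lambda)$, being an integer-valued continuous function of $t$, is constant. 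Combined with continuity of $\rho_\om$ from Remark \ref{rmk:rho cont} and the characterization of $\rho_\om$ in terms of eigenvalues and the $m_+$-invariants (which underlies Lemma \ref{le:rho om m -}), this will yield $\rho_\om(\Psi^t) = \rho_\om(\Psi^0)$ for every $t \in \R$.

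The main obstacle is the precise appeal to the spectral characterization of the Salamon-Zehnder map; one must also account for contributions from real eigenvalues, in particular $\pm 1$. However, since the Jordan type at each eigenvalue is determined by ranks of the operators $(\Psi^t - \lambda\cdot\id)^k$ for $k \ge 1$, which depend continuously on $t$ and take integer values, the Jordan type is locally constant along the family, so those contributions are also forced to be $t$-independent. Thus the proof reduces to collecting the discrete invariants and invoking the $\rho_\om$-formula from the appendix.
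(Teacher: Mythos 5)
Your opening determinant computation is the same as the paper's, and gives the key fact that $\si(\Psi^t)=\si(\Psi^0)$ with matching algebraic multiplicities. After that, however, your route diverges from the paper's and has two genuine gaps.

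First, the paper does \emph{not} attempt to show that the individual invariants $m_+(\om,\Psi^t,\lam)$ or $m_-(\Psi^t)$ are constant in $t$. Instead, it observes that formula (\ref{eq:rho om Psi -}), together with the fact that the spectrum is $t$-independent and the exponents are bounded by $\dim V$, forces $\rho_\om(\Psi^t)$ to lie in a \emph{fixed finite} subset $S\sub S^1$ for every $t$; continuity of $t\mapsto\rho_\om(\Psi^t)$ then immediately gives constancy. This cleverly sidesteps any need to control how the Krein-signature invariants themselves vary.

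Second, your direct attempt to show $m_+(\om,\Psi^t,\lam)$ is constant does not go through as written. Continuity of $t\mapsto E^\lam_{\Psi^t}$ (which your Riesz-projection argument does establish) is not enough: the maximal dimension of a subspace on which a continuously varying Hermitian form is definite can jump if the form degenerates. What saves the day in the Salamon--Zehnder theory is that for symplectic $\Psi$ and $\lam\in S^1\wo\{\pm1\}$ the Krein form $i\om(\bar v,v)$ is \emph{nondegenerate} on $E^\Psi_\lam$, so its signature is locally constant; but you never state or use this nondegeneracy, and without it your ``integer-valued continuous function'' claim is unjustified. Similarly, your final paragraph asserts that Jordan type is locally constant because the ranks of $(\Psi^t-\lam\,\id)^k$ are ``continuous and integer-valued'' — but rank is only lower semicontinuous and Jordan type can jump in a continuous family (e.g.\ $\begin{pmatrix}0 & t\\ 0 & 0\end{pmatrix}$), so that argument is false. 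Fortunately for you it is also unnecessary: $m_-(\Psi)$ in (\ref{eq:rho om Psi -}) depends only on the algebraic multiplicities $\dim_\C E^{\Psi}_\lam$ at negative real $\lam$, which are already fixed by the characteristic polynomial. Both issues vanish once you switch to the paper's finite-set trick.
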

\begin{proof}[Proof of Claim \ref{claim:rho Psi t}] We denote by $\si(\Phi)$ the set of eigenvalues of an endomorphism $\Phi$ of any vector space. We define 
\[S:=\big\{\pm\Pi_{\lam\in\si(\Psi^0)\cap S^1}\lam^{m_\lam}\,\big|\,m_\lam\in\{0,\ldots,\dim V\},\textrm{ for }\lam\in\si(\Psi^0)\big\}\sub S^1.\]
The block form (\ref{eq:Psi t}) implies that $\det(\lam\one-\Psi^t)=\det(\lam\one-\Psi^0)$. Hence $\si(\Psi^t)=\si(\Psi^0)$. Therefore, by the formula (\ref{eq:rho om Psi -}) of Lemma \ref{le:rho om m -} we have
\[f(t):=\rho_\om(\Psi^t)\in S. \]
By Proposition \ref{prop:rho} the map $\rho_\om:\Iso(\om)\to S^1$ is continuous, so the same holds for the map $f:\R\to S$. Since the set $S$ is finite, it follows that $f$ is constant. This proves Claim \ref{claim:rho Psi t}.
\end{proof}

Since $W'=V_0\oplus V_2$ and $\Psi^0$ leaves the subspaces $V_i$ invariant, we have $\Psi^0W'=W'$. Therefore, by what we already proved, $\rho_\om(\Psi^0)=\pm\rho_{\om_W}(\Psi^0_W)$. Combining this with Claims \ref{claim:Psi Psi 1} and \ref{claim:rho Psi t} and the fact $\Psi^0_W=\Psi_W$, we get $\rho_\om(\Psi)=\pm\rho_{\om_W}(\Psi_W)$. Similarly, if $\det\Psi|_W>0$ then it follows that $\rho_\om(\Psi)=\rho_{\om_W}(\Psi_W)$. This proves Proposition \ref{prop:rho om Psi}. 
\end{proof}

\begin{proof}[Proof of Proposition \ref{prop:homotopy}]\setcounter{claim}{0} Consider the map $f:[0,1]\x[0,1]\to S^1\sub \C$, $f(s,t):=\Phi([x(s,\cdot)|_{[0,t]})$. Let $s\in[0,1]$. Proposition \ref{prop:rho om Psi} implies that $\rho_\om(\Psi(s,0))=\pm f(s,0)=\pm 1$. Since $\Psi(s,1)W_{x(s,1)}=W_{x(s,1)}$ and $\Psi(s,1)_{W_{x(s,1)}}=\Phi([x(s,\cdot)])$, Proposition \ref{prop:rho om Psi} implies that $\rho_\om(\Psi(s,1))=\pm f(s,1)$. We define $\wt x:[0,1]\to X$ to be the concatenation of the paths $[0,1]\ni t\mapsto x(s(1-t),0)$, $x(0,\cdot)$ and $[0,1]\ni t\mapsto x(st,0)$. Then $\wt x$ is homotopic with fixed endpoints to $x(s,\cdot)$, and therefore $\Phi([x(s,\cdot)])=\Phi([\wt x])=\Phi([x'])\Phi([x(0,\cdot)])\Phi([x'])^{-1}$. By naturality of $\rho$, it follows that $f(s,1)=f(0,1)$, and hence $\rho_\om(\Psi(s,1))=\pm f(0,1)$. Combining this with the equality $\rho_\om(\Psi(s,0))=\pm1$, it follows that the map $[0,1]\ni s\mapsto m(\Psi(s,\cdot))$ is constant. This proves Proposition \ref{prop:homotopy}.
\end{proof}

\begin{proof}[Proof of Lemma \ref{le:m C om cont}]\setcounter{claim}{0} {\bf Statement (\ref{le:m C om cont:Pi})} follows from Proposition \ref{prop:homotopy}, and {\bf statement (\ref{le:m C om cont:X})} follows from an elementary argument. This proves Lemma \ref{le:m C om cont}.
\end{proof}

For the proof of Theorem \ref{thm:m}(\ref{thm:m:split}) we need the following remark. We denote by $\om_0$ the standard symplectic form on $\R^{2n}$, and by $\Sp(2n)=\Aut(\om_0)$ the linear symplectic group. We identify $S^1\iso\R/\Z$. 
\begin{rmk}\label{rmk:m 0} We define $m_0:C(S^1,\Sp(2n))\to\Z$ by $m_0(\Psi):=m_{\om_0}\big([0,1]\ni t\mapsto \Psi(t+\Z)\in\Sp(2n)\big)/2\in\Z$. This map equals the usual Maslov index of $\Psi$, as defined for example axiomatically in the book \cite{MS}. To see this, note that on $\U(n)=\Sp(2n)\cap\O(2n)$, $m_0$ agrees with the map $\wt m_0$ constructed in the proof of Theorem 2.29 in that book. Furthermore, $\Sp(2n)$ deformation retracts onto $\U(n)$ (see Proposition 2.22 in \cite{MS}). Since $m_0$ and $\wt m_0$ are invariant under homotopy, the statement follows. 
\end{rmk}

\begin{lemma}\label{le:Phi Psi} Let $X$ be a topological space, $Y\sub X$, $(E,\om)$ a symplectic vector bundle over $X$, $\Phi:Y\x Y\to\GL(\om)$ a morphism of topological groupoids whose composition with the canonical projection $\GL(\om)\to X\x X$ is the identity, and $(V,\Om)$ a symplectic vector space of dimension $\rank E$. If there is a homeomorphism $f:[0,1]\x Y\to X$ such that $f(0,x)\in Y$, for every $x\in Y$, then there exists $\Psi\in\Iso\big(X\x V,\Om;E,\om\big)$ such that $\Phi_x^{x'}\Psi^x=\Psi^{x'}$, for every $x,x'\in Y$. 
\end{lemma}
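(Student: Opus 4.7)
The plan is to construct $\Psi$ on $Y$ directly from the groupoid morphism $\Phi$ by parallel transport from a basepoint, and then extend it across $X$ using the collar structure provided by $f$ together with the homotopy invariance of symplectic vector bundles over a cylinder.

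First dispose of the trivial case $Y = \empty$, in which $X = \empty$ and there is nothing to prove. Assume $Y \neq \empty$, fix a basepoint $y_0 \in Y$, and choose any $\Psi_{y_0} \in \Iso(V,\Om;E_{y_0},\om_{y_0})$, which exists since the two symplectic vector spaces have the same dimension. For each $y \in Y$ set $\Psi_y := \Phi_{y_0}^y \Psi_{y_0}$. The continuity of $\Phi$ as a morphism of topological groupoids makes $y \mapsto \Psi_y$ continuous, each $\Psi_y$ is fiberwise symplectic as a composition of symplectic isomorphisms, and the cocycle identity $\Phi_{y_0}^{y'} = \Phi_y^{y'}\Phi_{y_0}^y$ supplies the required intertwining $\Phi_y^{y'}\Psi_y = \Psi_{y'}$ for all $y,y' \in Y$. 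This defines $\Psi$ on all of $Y$, with no connectedness hypothesis on $Y$ needed, since the pair groupoid $Y\x Y$ connects any two points.

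Next, use $f$ to transfer the extension problem to the cylinder $[0,1]\x Y$. The natural reading of the collar hypothesis is that $f(0,y) = y$ for every $y \in Y$, and after precomposing $f$ with a suitable homeomorphism of $[0,1]\x Y$ one may assume this is the case. Pulling back, $(f^*E,f^*\om)$ is a symplectic vector bundle on $[0,1]\x Y$, and the already-constructed $\Psi$ furnishes a symplectic trivialization of its restriction to $\{0\}\x Y$. It therefore suffices to extend this trivialization to a global symplectic trivialization of $(f^*E,f^*\om)$ over $[0,1]\x Y$, since transporting back by $f$ then produces the desired $\Psi \in \Iso(X \x V,\Om;E,\om)$, which agrees on $Y$ with what was built in the first step.

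The extension step is the main obstacle. It amounts to the standard homotopy-invariance statement for symplectic vector bundles: any symplectic vector bundle $(F,\si)$ over $[0,1]\x Y$ with $Y$ paracompact (automatic in the paper's applications, where $Y$ will be a component of the boundary of a compact surface) is symplectically isomorphic to the pullback $\pi^*(F|_{\{0\}\x Y})$ via the projection $\pi:[0,1]\x Y \to Y$, and the isomorphism can be chosen to be the identity on $\{0\}\x Y$. This follows from the classical homotopy theorem for real vector bundles applied to the underlying real bundle, combined with the path-connectedness of $\Sp(V,\Om) = \Aut(\Om)$, which deformation retracts onto $\U(n)$; path-connectedness lets one deform any real bundle isomorphism into a symplectic one, and a partition-of-unity patching across a cover of $Y$ by opens over which $F$ admits symplectic local trivializations (on a neighborhood of $\{0\}\x Y$ of the form $[0,\al)\x U$) produces the global isomorphism. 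Composing $\pi^*\Psi$ with this isomorphism and pushing forward by $f$ yields the trivialization required by the lemma.
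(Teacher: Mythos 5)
Your proof is correct and takes essentially the same approach as the paper: both first trivialize $E|_Y$ by transporting a fixed symplectic isomorphism $\Psi_0\in\Iso(V,\Om;E_{x_0},\om_{x_0})$ through the groupoid $\Phi$, and then extend across the collar $X\iso[0,1]\x Y$ using homotopy invariance of symplectic vector bundles over a cylinder (the paper's Lemma \ref{le:E Psi}, quoted from Husemoller). The paper merely packages the two steps into a single explicit formula $\Psi^x:=\wt\Psi^{f^{-1}(x)}\Phi_{x_0}^{\pr\circ f^{-1}(x)}\Psi_0$, but the underlying argument is identical.
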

\begin{proof}[Proof of Lemma \ref{le:Phi Psi}]\setcounter{claim}{0} Assume without loss of generality that $Y\neq\empty$. We choose a homeomorphism $f:[0,1]\x Y\to X$ as above, a point $x_0\in Y$, and $\Psi_0\in\Iso\big(V,\Om;(E,\om)|_{x_0}\big)$. We denote by $\pr:[0,1]\x Y\to Y$ the canonical projection. By Lemma \ref{le:E Psi} there exists $\wt\Psi\in\Iso\big(\pr^*f^*(E,\om),f^*(E,\om)\big)$ such that $\wt\Psi|_{\{0\}\x Y}=\id$. We define $\Psi:X\x V\to E$ by $\Psi^x:=\wt\Psi^{f^{-1}(x)}\Phi_{x_0}^{\pr\circ f^{-1}(x)}\Psi_0:V\to  E_x$, for $x\in X$. This map has the required properties. This proves Lemma \ref{le:Phi Psi}.
\end{proof}

The next remark will be used in the proof of Theorem \ref{thm:m}(\ref{thm:m:split}). Let $X$ be a closed oriented curve. We denote by $\BAR{X}$ the curve $X$ with the opposite orientation.
\begin{rmk}\label{rmk:m C V om or} Let $(E,\om)$ a symplectic vector bundle over $X$ and $\Phi,\Psi\in\T(E,\om)$, with $\Phi$ regular. Then $m_{\BAR{X},\om}(\Psi,\Phi)=-m_{X,\om}(\Psi,\Phi)$. This follows from directly from the definition.
\end{rmk}

For the proof of Theorem \ref{thm:m}(\ref{thm:m:W om}) we need the following. 
\begin{lemma}\label{le:m W Phi} Let $X$ be a closed oriented curve, $(E,\om)$ a symplectic vector bundle over $X$, $W\sub E$ an $\om$-coisotropic subbundle, and $\Phi,\Psi\in\T(E,\om)$. Assume that $\Phi$ and $\Psi$ leave $W$ invariant, and that $\Phi$ is regular. Then $m_{X,\om}(\Psi,\Phi)=m_{X,\om_W}(\Psi_W,\Phi_W)$. 
\end{lemma}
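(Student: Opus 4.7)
The plan is to unravel both sides using the definition of the one-dimensional Maslov index via the Salamon-Zehnder map, and then apply Proposition \ref{prop:rho om Psi} pointwise along the relevant path in $\Aut(\om)$, with careful attention to the sign ambiguity.

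First I would fix a path $z\in C([0,1],X)$ with $z(0)=z(1)$ whose induced map $S^1\to X$ has degree one, and define $\Lam(t):=\Phi([z|_{[0,t]}])^{-1}\Psi([z|_{[0,t]}])\in\Aut(\om_{z(0)})$ and $\Lam'(t):=\Phi_W([z|_{[0,t]}])^{-1}\Psi_W([z|_{[0,t]}])\in\Aut((\om_W)_{z(0)})$. Since $\Phi,\Psi$ leave $W$ invariant, each $\Lam(t)$ preserves $W_{z(0)}$, and by functoriality of $(\cdot)_W$ we have $\Lam(t)_{W_{z(0)}}=\Lam'(t)$ for every $t\in[0,1]$. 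By definition, $m_{X,\om}(\Psi,\Phi)=2\al(\rho_{\om_{z(0)}}\circ\Lam)$ and $m_{X,\om_W}(\Psi_W,\Phi_W)=2\al((\rho_{\om_W})_{z(0)}\circ\Lam')$, so it suffices to prove the identity $\rho_{\om_{z(0)}}(\Lam(t))=\rho_{(\om_W)_{z(0)}}(\Lam'(t))$ for all $t\in[0,1]$ (i.e.\ with the positive sign in Proposition \ref{prop:rho om Psi}).

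Next I would establish the sign. Since $\Lam(t)\in\Aut(\om_{z(0)})$ preserves $W_{z(0)}$, it also preserves $W_{z(0)}^\om$, so $\Lam(t)|_{W_{z(0)}}$ has a block-triangular form with diagonal blocks $\Lam(t)_{W_{z(0)}}=\Lam'(t)$ and $\Lam(t)|_{W_{z(0)}^\om}$; hence $\det(\Lam(t)|_{W_{z(0)}})=\det(\Lam'(t))\cdot\det(\Lam(t)|_{W_{z(0)}^\om})$. Because $\Lam'(t)$ is $\om_W$-symplectic, its determinant equals $+1$, so $\det(\Lam(t)|_{W_{z(0)}})=\det(\Lam(t)|_{W_{z(0)}^\om})$. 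The map $t\mapsto\det(\Lam(t)|_{W_{z(0)}^\om})$ is a continuous, nowhere-zero real function of $t$ (invertibility of $\Lam(t)$) that takes the value $+1$ at $t=0$ (because $\Lam(0)=\id$ as $\Phi,\Psi$ are groupoid morphisms and $z|_{[0,0]}$ is the constant path). Therefore it is positive on $[0,1]$, and so $\det(\Lam(t)|_{W_{z(0)}})>0$ throughout.

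By the second (positive-sign) part of Proposition \ref{prop:rho om Psi}, this positivity yields $\rho_{\om_{z(0)}}(\Lam(t))=\rho_{(\om_W)_{z(0)}}(\Lam(t)_{W_{z(0)}})=\rho_{(\om_W)_{z(0)}}(\Lam'(t))$ for every $t\in[0,1]$. Passing to the winding map $\al$ of these loops on $S^1$ (both starting at $1\in S^1$ since $\Lam(0)$ and $\Lam'(0)$ are identities) and multiplying by $2$, we conclude $m_{X,\om}(\Psi,\Phi)=m_{X,\om_W}(\Psi_W,\Phi_W)$. The main obstacle I anticipate is precisely the sign issue addressed in the second paragraph: had we only the $\pm$ identity from Proposition \ref{prop:rho om Psi}, the two sides could differ by a pointwise sign that jumps along the path, but the argument above shows the determinant stays positive by continuity from the initial condition $\Lam(0)=\id$, which locks in the correct branch.
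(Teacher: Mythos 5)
Your proposal is correct and follows essentially the same approach as the paper's proof: choose a degree-one loop $z$, form the path $\Lam(t)=\Phi([z|_{[0,t]}])^{-1}\Psi([z|_{[0,t]}])$ of automorphisms preserving $W_{z(0)}$, invoke Proposition~\ref{prop:rho om Psi} with the positive sign, and resolve the sign by continuity from $\Lam(0)=\id$. The paper's version is terser (it states $\det F(s)>0$ where it presumably means $\det(F(s)|_{W_{z(0)}})>0$, as required by Proposition~\ref{prop:rho om Psi}'s hypothesis); your block-triangular aside relating $\det(\Lam(t)|_W)$ to $\det(\Lam(t)|_{W^\om})$ is correct but not needed, since continuity of $t\mapsto\det(\Lam(t)|_{W_{z(0)}})$ with value $1$ at $t=0$ already suffices.
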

\begin{proof}[Proof of Lemma \ref{le:m W Phi}]\setcounter{claim}{0} Without loss of generality we may assume that $X$ is connected. We choose $z\in C([0,1],X)$ such that $z(0)=z(1)$ and the map $S^1\iso[0,1]/\{0,1\}\ni [t]\mapsto z(t)\in X$ has degree one. For $s\in[0,1]$ we define $z_s\in C([0,1],X)$ by $z_s(t):=z(st)$. We define $F:[0,1]\to\Aut\om_{z(0)}$ by $F(s):=\Phi([z_s])^{-1}\Psi([z_s])$. Since $F(0)=\id$ and $F$ is continuous. It follows that $\det F(s)>0$, for every $s\in[0,1]$. Hence Proposition \ref{prop:rho om Psi} implies that $\rho_\om(F(s))=\rho_{\om_W}(F(s)_{W_{z(0)}})$, for every $s\in[0,1]$. The statement of Lemma \ref{le:m W Phi} follows.
\end{proof}

For the proof of Theorem \ref{thm:m}(\ref{thm:m:reg}) we need the following. 
\begin{lemma}\label{le:X W} Let $X$ be a closed curve, $(E,\om)$ a vector bundle over $X$, and $\Phi,\Psi\in\T(E,\om)$, with $\Phi$ regular. Assume that there exists a coisotropic subbundle $W\sub E$ that is invariant under $\Psi$, such that $\Psi_W$ is regular. Then $m_{X,\om}(\Psi,\Phi)\in\Z$. Furthermore, if there is an orientable such $W$ then $m_{X,\om}(\Psi,\Phi)\in2\Z$.
\end{lemma}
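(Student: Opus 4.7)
The plan is to unfold the definition of $m_{X,\om}(\Psi,\Phi)$ into the Salamon-Zehnder index of an explicit path of symplectic automorphisms and then read off the desired integrality from the endpoint values of $\rho_\om$. Without loss of generality $X$ is connected (the general case reduces to this by summing over components), so choose a path $z\in C([0,1],X)$ with $z(0)=z(1)$ inducing a degree-one map $S^1\to X$, and set
\[\wt\Psi(t):=\Phi([z|_{[0,t]}])^{-1}\Psi([z|_{[0,t]}])\in\Aut(\om_{z(0)}).\]
By the definitions of $m_{X,\om}$ and $m_{\om_{z(0)}}$ one has $m_{X,\om}(\Psi,\Phi)=m_{\om_{z(0)}}(\wt\Psi)=2\alpha(\rho_\om\circ\wt\Psi)$. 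Regularity of $\Phi$ forces $\Phi([z])=\id$, so $\wt\Psi(1)=\Psi([z])$ while $\wt\Psi(0)=\id$; in particular $\rho_\om(\wt\Psi(0))=1$.

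Next I will evaluate $\rho_\om(\wt\Psi(1))$. Since $\Psi$ preserves $W$ and $z$ is a loop, $\Psi([z])$ leaves the coisotropic subspace $W_{z(0)}\sub E_{z(0)}$ invariant, and by regularity of $\Psi_W$ the induced quotient automorphism $(\Psi([z]))_{W_{z(0)}}=\Psi_W([z])$ equals the identity on $(W_{z(0)})_\om$. Applying Proposition \ref{prop:rho om Psi} then yields $\rho_\om(\wt\Psi(1))=\pm\rho_{\om_W}(\id)=\pm 1$. The continuous $S^1$-valued path $\rho_\om\circ\wt\Psi$ therefore travels from $1$ to $\pm 1$, so its winding number $\alpha(\rho_\om\circ\wt\Psi)$ lies in $\tfrac{1}{2}\Z$, and the first assertion $m_{X,\om}(\Psi,\Phi)\in\Z$ follows.

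For the orientable case, my strategy is to upgrade the sign $\pm$ to $+$ via the second clause of Proposition \ref{prop:rho om Psi}, which requires $\det\bigl(\Psi([z])|_{W_{z(0)}}\bigr)>0$. Fix a continuous orientation $o$ of the bundle $W$. Since $\Psi$ preserves $W$, the map $\Psi([z|_{[0,t]}])$ restricts to a continuous family of isomorphisms $W_{z(0)}\to W_{z(t)}$, so $t\mapsto \Psi([z|_{[0,t]}])(o_{z(0)})$ is a continuous family of orientations on $W_{z(t)}$ that coincides with $o_{z(0)}$ at $t=0$; by discreteness of the orientation set on each fiber this family must agree with $o_{z(t)}$ for all $t$, and taking $t=1$ gives $\Psi([z])(o_{z(0)})=o_{z(0)}$, i.e.~positivity of the determinant. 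Hence $\rho_\om(\wt\Psi(1))=\rho_{\om_W}(\id)=1$, the winding number lies in $\Z$, and $m_{X,\om}(\Psi,\Phi)\in 2\Z$. The only nontrivial ingredient is this discrete-orientation argument in the orientable case; the remainder of the proof is a direct unfolding of the definitions, powered by Proposition \ref{prop:rho om Psi}.
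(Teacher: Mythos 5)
Your proof is correct and follows essentially the same route as the paper: unfold $m_{X,\om}(\Psi,\Phi)$ via a degree-one loop, use $\Phi([z])=\id$ and $\Psi_W([z])=\id$ to reduce to $\rho_\om(\Psi([z]))=\pm1$ via Proposition~\ref{prop:rho om Psi}, and then in the orientable case promote the sign to $+$ by tracking the orientation along the loop to get $\det\Psi([z])>0$. Your discrete-orientation argument is the same connectedness argument the paper phrases via the set $S\subseteq[0,1]$ being nonempty, open, and closed.
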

\begin{proof}[Proof of Lemma \ref{le:X W}]\setcounter{claim}{0} Let $X,E,\om,\Phi,\Psi$ and $W$ be as in the hypothesis. We choose a path $z\in C([0,1],X)$ such that $z(0)=z(1)$ and the map $S^1\iso[0,1]/\{0,1\}\ni[t]\mapsto z(t)\in X$ has degree one. By our regularity assumptions, we have $\Phi([z])=\id$ and $\Psi_W([z])=\id$. Hence by the first assertion of Proposition \ref{prop:rho om Psi}, we have $\rho_\om\big(\Phi([z])^{-1}\Psi([z])\big)=\pm\rho_{\om_W}(\Psi_W([z]))=\pm1\in S^1$. It follows that $m_{X,\om}(\Psi,\Phi)\in\Z$. 

To prove the second assertion, for $s\in[0,1]$ we define $z_s\in C([0,1],X)$ by $z_s(t):=z(st)$. We define $S$ to be the set of all $s\in[0,1]$ such that $\Psi([z_s])$ maps the orientation of $W_{z(0)}$ to the orientation of $W_{z(s)}$. This set is non-empty, since $0\in S$, open and closed. It follows that $S=[0,1]$, and therefore $\det\Psi([z])>0$. Therefore, by the second assertion of Proposition \ref{prop:rho om Psi} we have $\rho_\om\big(\Phi([z])^{-1}\Psi([z])\big)=\rho_{\om_{W_{z(0)}}}(\Psi([z])_{W_{z(0)}})=1\in S^1$. It follows that $m_{X,\om}(\Psi,\Phi)\in2\Z$. 

This proves Lemma \ref{le:X W}.

\end{proof}

For the proof of Theorem \ref{thm:m}(\ref{thm:m:wt W}) we need the following lemma. Let $X$ be a closed curve, $(E,\om)$ a symplectic vector bundle over $X$, $(V',\om')$ a symplectic vector space, $W\sub E$ an $\om$-coisotropic subbundle, $\Phi_0\in\T(E,\om)$ a regular transport, and $F:W\to X\x V'$ a surjective homomorphism such that $F^*\om'=\om$. We denote by $F_W:W_\om\to E'$ the map induced by $F$. We define $\Phi\in\T(W_\om,\om_W)$ by $\Phi([z]):=(F_W)_{z(1)}^{-1}(F_W)_{z(0)}$, $\wt E:=E\oplus (S^1\x V')$, $\wt\om:=\om\oplus(-\om')$, and $\wt W:=\big\{(z,v,F v)\,\big|\,z\in C,\,v\in W\big\}$. Then $\wt W$ is an $\wt\om$-Lagrangian subbundle of $\wt E$. Furthermore, we define $\Phi'_0\in\T\big(X\x V',\om'\big)$ to be the trivial transport $\Phi'_0\const \id$, and $\wt\Phi_0:=\Phi_0\oplus\Phi'_0\in\T(\wt E,\wt\om)$. Let $\Psi\in\T(E,\om)$ be a lift of $(W,\Phi)$. We define $\wt\Psi:=\Psi\oplus\Phi'_0\in\T(\wt E,\wt\om)$. Then $\wt\Psi$ is a lift of $(\wt W,0)$. 
\begin{lemma}\label{le:wt W} We have $m_{X,\wt\om}(\wt\Psi,\wt\Phi_0)=m_{X,\om}(\Psi,\Phi_0)$. 
\end{lemma}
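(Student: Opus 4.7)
The plan is to reduce the equality to the direct-sum additivity of the pair-of-transports Maslov index recorded in Remark \ref{rmk:m C om}, together with the vanishing of the Maslov index contributed by the trivial $V'$-factor.

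By construction $\wt E = E\oplus(X\x V')$, $\wt\om = \om\oplus(-\om')$, $\wt\Psi = \Psi\oplus\Phi'_0$ and $\wt\Phi_0 = \Phi_0\oplus\Phi'_0$. Both ``second arguments'' $\Phi_0$ and $\Phi'_0\const\id$ are regular (the former by hypothesis, the latter because it is the trivial transport on $X\x V'$), so Remark \ref{rmk:m C om} applies to the symplectic bundles $(E,\om)$ and $(X\x V',-\om')$ and yields
\[
m_{X,\wt\om}(\wt\Psi,\wt\Phi_0) \;=\; m_{X,\om}(\Psi,\Phi_0) \;+\; m_{X,-\om'}(\Phi'_0,\Phi'_0).
\]

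It then remains to verify that $m_{X,-\om'}(\Phi'_0,\Phi'_0) = 0$. Unrolling the definition (\ref{eq:m om Phi Phi'}), for any degree-one loop $z\in C([0,1],X)$ the associated path in $\Aut(-\om')$ is the constant path $[0,1]\ni t\mapsto \Phi'_0([z|_{[0,t]}])^{-1}\Phi'_0([z|_{[0,t]}]) = \id_{V'}$, whose Salamon-Zehnder index is $2\al(\rho_{-\om'}\circ\id_{V'}) = 2\al(1) = 0$. Combining the two displayed observations yields the claimed equality.

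I do not foresee any genuine obstacle: the lemma is essentially a bookkeeping statement saying that the trivial $V'$-factor contributes nothing to the Salamon-Zehnder winding, and this is already built into the (Product) property of $\rho_\om$ (Proposition \ref{prop:rho}). Should one wish to avoid appealing to Remark \ref{rmk:m C om} entirely, the same argument runs by hand: the path of automorphisms on $\wt E_{z(0)}$ associated to $(\wt\Psi,\wt\Phi_0)$ splits as $\Theta(t)\oplus\id_{V'}$, with $\Theta(t):=\Phi_0([z|_{[0,t]}])^{-1}\Psi([z|_{[0,t]}])$ the corresponding path for $(\Psi,\Phi_0)$, and the multiplicativity of $\rho$ under direct sums shows that the winding numbers of $\rho_{\wt\om}\circ(\Theta\oplus\id_{V'})$ and $\rho_\om\circ\Theta$ agree.
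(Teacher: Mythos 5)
Your argument is correct and is essentially the "straight-forward argument" the paper leaves to the reader: the direct-sum additivity of Remark \ref{rmk:m C om} (with second arguments $\Phi_0$ and $\Phi'_0\const\id$, both regular) splits off a term $m_{X,-\om'}(\Phi'_0,\Phi'_0)$, and this vanishes because the associated path of automorphisms is the constant identity path. No gap.
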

\begin{proof}[Proof of Lemma \ref{le:wt W}]\setcounter{claim}{0} This follows from a straight-forward argument. 
\end{proof}

\subsection{Proof of Theorem \ref{thm:m} (Properties of the coisotropic Maslov map for bundles)}\label{subsec:proof:thm:m}
\begin{proof}[Proof of Theorem \ref{thm:m}]\setcounter{claim}{0}\label{proof:thm:m} {\bf Assertion (\ref{thm:m:nat})} follows directly from the definitions and {\bf assertion (\ref{thm:m:sum})} from Remark \ref{rmk:m C om}. {\bf Assertions (\ref{thm:m:homotopy},\ref{thm:m:weak homotopy})} follow from Lemma \ref{le:m C om cont} and Theorem \ref{thm:m X V om}(\ref{thm:m X V om:Pi},\ref{thm:m X V om:X}). 

To prove {\bf statement (\ref{thm:m:split})}, let $\Si,X,E,\om,W,\Phi,W'$ and $\Phi'$ be as in the hypothesis. Without loss of generality, we may assume that $\Si$ is connected and $X\neq\empty$. We choose a symplectic vector space $(V,\Om)$ of dimension $\rank E$. Assume first that $\d\Si\neq\empty$. We choose $\Psi\in\Iso\big(\Si\x V,\Om;E,\om\big)$. We define $\wt\Psi:=(\pr_X^\Si)^*\Psi\in\Iso\big(\Si_X\x V,\Om;(\pr_X^\Si)^*(E,\om)\big)$. 
\begin{claim}\label{claim:m C Si} We have $m_{X^\Si,\Om}\big(\wt\Psi^*{\pr_X^\Si}^*(W',\Phi')\big)=0$.
\end{claim}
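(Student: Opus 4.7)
The plan is to exploit the two-sidedness of the cut: since $X$ lies in the interior of the orientable surface $\Si$, the preimage $X^\Si$ under $\pr_X^\Si$ decomposes into two canonical copies of $X$ with opposite induced orientations, and the pulled-back coisotropic data agrees on the two copies after this identification. Additivity of the Maslov map over connected components combined with orientation reversal then yields the cancellation.

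\textbf{Step 1 (Two-sided splitting).} Each connected component of $X=C\sub\Si\wo\d\Si$ is co-oriented in the oriented surface $\Si$, so cutting along it produces precisely two boundary components, each canonically homeomorphic to the component via $\pr_X^\Si$. Hence $X^\Si=X_+\disj X_-$, where $\pr_X^\Si|_{X_+}\Colon X_+\to X$ is an orientation-preserving homeomorphism and $\pr_X^\Si|_{X_-}\Colon X_-\to X$ is orientation-reversing; equivalently, $X_+\iso X$ and $X_-\iso\BAR{X}$ as oriented curves.

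\textbf{Step 2 (Identifying the pulled-back data).} Set $(W'',\Phi''):=(\Psi|_X)^*(W',\Phi')\in\Coi(X\x V,\Om)$. Since $\wt\Psi=(\pr_X^\Si)^*\Psi$, naturality of pullback (Lemma \ref{le:Psi' F}) gives
\[
\wt\Psi^*\pr_X^{\Si*}(W',\Phi')\big|_{X_\pm}=\big(\pr_X^\Si|_{X_\pm}\big)^*(W'',\Phi'').
\]
Thus, under the respective homeomorphisms from Step 1, both restrictions are identified with the single pair $(W'',\Phi'')$ on $X$.

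\textbf{Step 3 (Cancellation).} The one-dimensional Maslov number is additive over connected components, so
\[
m_{X^\Si,\Om}\big(\wt\Psi^*\pr_X^{\Si*}(W',\Phi')\big)=m_{X_+,\Om}(W'',\Phi'')+m_{X_-,\Om}(W'',\Phi'').
\]
By Remark \ref{rmk:m C V om or}, reversing the base orientation negates $m_{\cdot,\Om}$ (this extends from pairs of transports to coisotropic pairs via the definition in terms of a lift and a regular reference transport, both being pulled back along the same orientation-reversing map). Combined with Step 1 this yields
\[
m_{X_-,\Om}(W'',\Phi'')=m_{\BAR{X},\Om}(W'',\Phi'')=-m_{X_+,\Om}(W'',\Phi''),
\]
and the two contributions cancel, giving the claim.

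The only real work is Step 1–2: tracking how the cut, the trivialization $\wt\Psi$, and the pullback of $(W',\Phi')$ interact on the two sides of $X$. I expect this to be routine bookkeeping rather than a genuine obstacle; no new ideas beyond additivity over components and the orientation-reversal identity are needed.
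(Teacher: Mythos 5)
Your proof is correct and takes essentially the same approach as the paper's: decompose $X^\Si$ into the two copies of (each component of) $X$ with opposite induced orientations, observe that the pulled-back coisotropic pairs match under the natural identification, and cancel using Remark~\ref{rmk:m C V om or} together with naturality. The paper phrases this via a single map $f:X_1\to X_2$ and invokes Theorem~\ref{thm:m}(\ref{thm:m:nat}), while you phrase it via $X_\pm\iso X,\BAR X$ and Lemma~\ref{le:Psi' F}, but the content is the same.
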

\begin{proof}[Proof of Claim \ref{claim:m C Si}] Without loss of generality we may assume that $X$ is connected. We denote by $X_1$ and $X_2$ the two connected components of ${\pr_X^\Si}^{-1}(X)\sub\Si_X$. We denote by $f:X_1\to X_2$ the unique map such that $\pr_X^{\d\Si}|_{X_1}=\pr_X^{\d\Si}\circ f$. Furthermore, for $i=1,2$ we define $(W_i,\Phi_i):=\wt\Psi^*{\pr_X^\Si}^*(W',\Phi')|_{X_i}$. Then $f^*(W_2,\Phi_2)=(W_1,\Phi_1)$. Furthermore, the canonical orientation of $X_1$ (induced by the orientation of $\Si_X$) is opposite to the pullback under $f$ of the canonical orientation of $X_2$. Therefore, Claim \ref{claim:m C Si} follows from Remark \ref{rmk:m C V om or} and statement (\ref{thm:m:nat}). 
\end{proof}
Using Claim \ref{claim:m C Si}, it follows that
\begin{eqnarray}\nn &&m\big((\pr_X^\Si)^*\big(E,\om,(W,\Phi)\disj(W',\Phi')\big)\big)\\
\nn&&=m_{\d\Si_X,\Om}\big(\wt\Psi^*{\pr_X^\Si}^*\big((W,\Phi)\disj(W',\Phi')\big)\big)\\
\nn &&=m_{\d\Si,\Om}(\Psi^*(W,\Phi)).
\end{eqnarray}
Since $m_\Om(\Psi^*(W,\Phi))=m_\Si(E,\om,W,\Phi)$, equality (\ref{eq:m Si E}) follows. 

Assume now that $\d\Si=\empty$. We choose a connected closed curve $X'\sub\Si\wo X$, such that $\Si_{X'}$ is disconnected. We denote by $X_1$ and $X_2$ the connected components of $\Si_{X'}$, and by $f:X_1\to X_2$ the canonical map. We also choose a trivialization $\wt\Psi\in\Iso\big(\Si_{X'}\x \R^{2n},\om_0;E,\om\big)$. We define $\Psi:X_1\to\Aut(\om_0)$ by $\Psi(z):=\wt\Psi_z^{-1}\wt\Psi_{f(z)}$. Furthermore, we define $m_0$ as in Remark \ref{rmk:m 0}. As explained in the proof of Theorem 2.69 in \cite{MS}, we have $c_1(E,\om)=m_0(\Psi)$. (That theorem is stated for smooth surfaces, however, the proof carries over to topological surfaces.) We define $\Phi''\in\T((E,\om)|_{X'})$ to be the unique transport such that ${\pr_{X'}^\Si}^*\Phi''([z])=\wt\Psi_{z(1)}\wt\Psi_{z(0)}^{-1}$, for $z\in C([0,1],X_1)$. It follows that 
\begin{equation}\label{eq:wt Psi pr}\wt\Psi^*{\pr_{X'}^\Si}^*\Phi''([z])=\left\{\begin{array}{ll}\id_{\R^{2n}},&\textrm{for }z\in C([0,1],X_1),\\
\Psi(f^{-1}(z(1)))^{-1}\Psi(f^{-1}(z(0))),&\textrm{for }z\in C([0,1],X_2).
\end{array}\right.\end{equation}
We choose $z\in C([0,1],X_2)$ such that the map $z(0)=z(1)$ and the map $S^1\iso\R/\Z\ni t+\Z\mapsto z(t)\in X_2$ is an orientation reversing homeomorphism (with respect to the orientation on $X_2$ induced by the orientation of $\Si_{X'}$). Equality (\ref{eq:wt Psi pr}) implies that $m_{{\pr_{X'}^\Si}^{-1}(X'),\om_0}\big(\wt\Psi^*{\pr_{X'}^\Si}^*\Phi''|_{X_1}\big)=0$ and 
\begin{eqnarray}\nn m_{{\pr_{X'}^\Si}^{-1}(X'),\om_0}\big(\wt\Psi^*{\pr_{X'}^\Si}^*\Phi''|_{X_2}\big)&=&-2m_0\big((\Psi\circ f^{-1}\circ z)^{-1}\Psi(f^{-1}(z(0)))\big)\\
\nn&=&-2m_0\big((\Psi\circ f^{-1}\circ z)^{-1}\big)\\
\nn&=&2m_0(\Psi).\end{eqnarray}
It follows that 
\begin{equation}\label{eq:m pr C'}m\big({\pr_{X'}^\Si}^*\big(E,\om,E|_{X'},\Phi''\big)\big)=m_{{\pr_{X'}^\Si}^{-1}(X'),\om_0}\big(\wt\Psi^*{\pr_{X'}^\Si}^*\Phi''\big)=2m_0(\Psi).\end{equation}
On the other hand, using the canonical identifications $(\Si_{X'})_X\iso\Si_{X\disj X'}=(\Si_X)_{X'}$, by what we already proved, we have
\begin{eqnarray}\nn m\big({\pr_{X'}^\Si}^*\big(E,\om,E|_{X'},\Phi''\big)\big)&=&m\big({\pr_{X\disj X'}^\Si}^*\big(E,\om,(W',\Phi')\disj(E|_{X'},\Phi'')\big)\big)\\
\nn&=&m\big({\pr_X^\Si}^*(E,\om,W',\Phi')\big).
\end{eqnarray}
Combining this with (\ref{eq:m pr C'}) and the fact $m_0(\Psi)=c_1(E,\om)$, equality (\ref{eq:m Si E}) follows. This proves statement (\ref{thm:m:split}).

We prove {\bf (\ref{thm:m:Lag})}. For each natural number $n$ we denote by $\om_0$ and $J_0$ the standard symplectic form and complex structure on $\R^{2n}$, and by $R(n)$ the set of all totally real subspaces of $\R^{2n}$, and for $W_0\in R(n)$ we define $\rho_{W_0}:R(n)\to S^1$ by $\rho_{W_0}(W):=\det(\Psi)^2/|\det(\Psi)|^2$, where $\Psi:\C^n=\R^{2n}\to\C^n$ is a complex linear map such that $\Psi W_0=W$. For a closed oriented topological curve $X$ and a map $W\in C(X,R(n))$ we define $m^\real(W):=\deg\big(\d\Si\ni z\mapsto \rho_{W_0}(W(z))\in S^1\big)$, where $W_0\in R(n)$ is arbitrary. Let $\Si\in\S$. We define $\E^\real_\Si$ and $m^\real_\Si:\E^\real_\Si\to\Z$ as in the paragraph before (\ref{eq:m L Si}) in the appendix. Let $(E,\om,W)\in\E^L_\Si$. We denote $2n:=\rank E$. We choose $\Psi\in\Iso\big(\Si\x \R^{2n},\om_0;E,\om\big)$, and define $W':\d\Si\to R(n)$ by $W'(z):=\Psi|_{\d\Si}^{-1}W_z$ and view this also as a subbundle of $\d\Si\x\R^{2n}$. By \cite{MS}, the (Isomorphism) condition in Theorem C.3.5. and the (Trivial bundle) property in Theorem C.3.6., we have 
\[m^L_\Si(E,\om,W)=m^\real_\Si(E,\Psi_*J_0,W)=m_\Si\big(\Si\x\R^{2n},J_0,W'\big)=m(W').\]
On the other hand, $m_\Si(E,\om,W,0)=m_\Si(\Si\x\R^{2n},\om_0,W',0)=m_{\d\Si,\om_0}(W',0)$. Therefore, (\ref{thm:m:Lag}) is a consequence of the following claim. 
\begin{claim}\label{claim:wt m n} We have $m(W')=m_{\d\Si,\om_0}(W',0)$. 
\end{claim}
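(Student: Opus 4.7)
Without loss of generality $\d\Si=X$ is a single circle. Fix a degree-one loop $z\in C([0,1],X)$ with $z(0)=z(1)$ and set $W_0:=W'(z(0))\in R(n)$. Unfolding the definitions gives $m(W')=\alpha(\rho_{W_0}\circ W'\circ z)$. Since $W'$ is Lagrangian, the symplectic quotient bundle $W'_{\om_0}$ is zero, so the pair $(W',0)$ lies in $\Coi(X\x\R^{2n},\om_0)$. By Theorem \ref{thm:m X V om}(\ref{thm:m X V om:Psi}) there is a lift $\tilde\Psi\in\T(X\x\R^{2n},\om_0)$ of $(W',0)$; taking the trivial regular transport $\id$ as the reference, the definition yields $m_{\d\Si,\om_0}(W',0)=m_{\om_0}(\tilde\Psi,\id)=2\alpha(\rho_{\om_0}\circ\Psi_*)$, where $\Psi_*(t):=\tilde\Psi([z|_{[0,t]}])\in\Sp(2n)$ is a symplectic path with $\Psi_*(0)=\id$ and $\Psi_*(t)W_0=W'(z(t))$.

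The plan is to replace $\tilde\Psi$ by a unitary lift. Since the Lagrangian Grassmannian equals $\U(n)/\O(n)$ and the map $\U(n)\to\U(n)/\O(n)$ is a principal $\O(n)$-bundle, it has the path-lifting property; hence there exists a continuous path $U:[0,1]\to\U(n)$ with $U(0)=\id$ and $U(t)W_0=W'(z(t))$. The associated symplectic transport $\Psi^U\in\T(X\x\R^{2n},\om_0)$, characterized by $\Psi^U([z|_{[0,t]}])=U(t)$, is again a lift of $(W',0)$: the one nontrivial condition $\Psi^U([z])W'_{z(0)}=W'_{z(1)}$ holds by construction, and the quotient condition is vacuous in the Lagrangian setting. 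Theorem \ref{thm:m X V om}(\ref{thm:m X V om:Psi Psi'}) therefore gives $\alpha(\rho_{\om_0}\circ\Psi_*)=\alpha(\rho_{\om_0}\circ U)$. On $\U(n)$ the Salamon--Zehnder map restricts to $\rho_{\om_0}=\det_{\C}$ (Proposition \ref{prop:rho}), while the complex-linear isomorphism $U(t):\C^n\to\C^n$ satisfies $U(t)W_0=W'(z(t))$, so the definition of $\rho_{W_0}$ yields $\rho_{W_0}(W'(z(t)))=(\det_{\C}U(t))^2/|\det_{\C}U(t)|^2=\rho_{\om_0}(U(t))^2$. Taking winding numbers, $\alpha(\rho_{W_0}\circ W'\circ z)=2\alpha(\rho_{\om_0}\circ U)=2\alpha(\rho_{\om_0}\circ\Psi_*)$, i.e.\ $m(W')=m_{\d\Si,\om_0}(W',0)$.

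The main obstacle is verifying that both $\tilde\Psi$ and $\Psi^U$ qualify as lifts of $(W',0)$, so that the invariance result Theorem \ref{thm:m X V om}(\ref{thm:m X V om:Psi Psi'}) legitimately bridges the two. The Lagrangian hypothesis makes this clean, since $W'_{\om_0}=0$ reduces the lift condition to the single requirement that $\Psi([z])$ carries $W'_{z(0)}$ to $W'_{z(1)}$, which the unitary path satisfies by its very construction. Without this simplification one would need a polar-type deformation in $\Sp(2n)/\U(n)\simeq P/\O(n)$ (with $P$ the parabolic stabilizer of $\R^n$) to move a generic symplectic lift into $\U(n)$ while carefully controlling endpoints and Salamon--Zehnder winding, and the argument would become considerably more delicate.
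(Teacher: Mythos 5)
Your proof is correct and takes essentially the same route as the paper: both reduce to a single boundary circle, lift the Lagrangian path $W'\circ z$ to a unitary path $U$ via $\U(n)\to\U(n)/\O(n)$, and compare $\rho_{W_0}(W'(z(t)))=\det_\C(U(t))^2=\rho_{\om_0}(U(t))^2$ via the Determinant property of the Salamon--Zehnder map, so that passing to winding numbers gives the factor of two. The only cosmetic difference is that you make the well-definedness of the lift explicit by first invoking an arbitrary lift $\tilde\Psi$ and then switching to $\Psi^U$ via Theorem \ref{thm:m X V om}(\ref{thm:m X V om:Psi Psi'}), whereas the paper passes from $m_{\d\Si,\om_0}(W',0)$ to $m_{\om_0}(\wt\Psi)$ directly by appealing to the definition.
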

\begin{proof}[Proof of Claim \ref{claim:wt m n}] We fix a connected component $X$ of $\d\Si$. We choose a path $z\in C([0,1],X)$ such that the map $S^1\iso\R/\Z\ni t+\Z\mapsto z(t)\in X$ has degree one. We denote by $\U(n)\sub\C^{n\x n}$ and $\O(n)\sub\R^{n\x n}$ the unitary and orthogonal groups. Note that $\Gr(\om_0,n)$ is the Grassmannian of Lagrangian subspaces of $\R^{2n}$. For every $W_0\in\Gr(\om_0,n)$ the map $\U(n)/\O(n)\ni\Psi\O(n)\mapsto \Psi W_0\in\Gr(\om_0,n)$ is a well-defined diffeomorphism. Since the map $\U(n)\to\U(n)/\O(n)$ is a smooth fiber bundle, setting $W_0:=W'_{z(0)}$, it follows that there exists a path $\wt\Psi\in C([0,1],\U(n))$ such that $\wt\Psi(0)=\id$ and $\wt\Psi(t)W'_{z(0)}=W'_{z(t)}$, for every $t\in[0,1]$. By definition, we have $m_{\d\Si,\om_0}(W',0)=m(\wt\Psi)=2\al\big([0,1]\ni t\mapsto \rho_{\om_0}(\wt\Psi(t))\big)$. Let $t\in[0,1]$. Since $\Psi(t)\in\U(n)$, by the Determinant property of $\rho_{\om_0}$ we have $\rho_{\om_0}(\Psi(t)))^2=\det(\Psi(t))^2=\rho_{W'_{z(0)}}(W'_{z(t)})$. Claim \ref{claim:wt m n} follows.
\end{proof}

We prove {\bf (\ref{thm:m:full})}. Let $\Si,E,\om,W,\Phi$ and $X$ be as in the hypothesis. Without loss of generality we may assume that $\Si$ is connected. Assume first also that $\Si$ is homeomorphic to $[0,1]\x X$. We choose a symplectic vector space $(V,\Om)$ of dimension $\rank E$. Since $\Phi$ is regular, we may apply Lemma \ref{le:Phi Psi} with $(X,Y):=(\Si,X)$ and $\Phi$ replaced by the map $X\x X\ni(z_0,z_1)\mapsto\Phi([z])$, where $z\in C([0,1],X)$ is an arbitrary path satisfying $z(i)=z_i$, for $i=0,1$. It follows that there exists $\Psi\in \Iso\big(\Si\x V,\Om;E,\om\big)$ such that $\Psi^{z(1)}=\Phi([z])\Psi^{z(0)}$, for every $z\in C([0,1],X)$. We define $\Psi':\Si/X\x V\to E/\Phi$ by $\Psi'_{[z]}v:=[z,\Psi_zv]$. It follows that
\begin{eqnarray*}m(E,\om,W,\Phi)&=&m_\Om(\Psi^*(W,\Phi))\\
&=&m_\Om\big({\Psi'}^*(W,\Phi)|_{\d\Si\wo X}\big)\\
&=&m\big((E,\om,W,\Phi)/X\big). 
\end{eqnarray*}
Here in the second step we used the fact that $\Psi^*\Phi|_C([z])=\id:V\to V$, for every $z\in C([0,1],X)$. This proves the statement if $\Si$ is homeomorphic to $[0,1]\x X$.

In the general case we choose a curve $X'\sub \Si$ such that $\Si_{X'}$ is the disjoint union of two surfaces $\Si_0$ and $\Si_1$, such that $\Si_1$ is homeomorphic to $[0,1]\x X$. By statement (\ref{thm:m:split}) we have 
\begin{eqnarray}\nn m(E,\om,W,\Phi)&=&m\big((E,\om,W,\Phi)^{\Phi'}\big)\\
\label{eq:m Phi Phi'}&=&m\big((E,\om,W,\Phi)^{\Phi'}|_{\Si_0}\big)+m\big((E,\om,W,\Phi)^{\Phi'}|_{\Si_1}\big). 
\end{eqnarray} 
By what we already proved, we have 
\begin{equation}\label{eq:m / Phi}m\big((E,\om,W,\Phi)^{\Phi'}|_{\Si_1}\big)=m\big((E,\om,W,\Phi)^{\Phi'}|_{\Si_1}/X\big).\end{equation} 
Using again statement (\ref{thm:m:split}), we have $m\big((E,\om,W,\Phi)^{\Phi'}|_{\Si_0}\big)+m\big((E,\om,W,\Phi)^{\Phi'}|_{\Si_1}/X\big)=m\big((E,\om,W,\Phi)/X\big)$. Combining this with (\ref{eq:m Phi Phi'},\ref{eq:m / Phi}), statement (\ref{thm:m:full}) follows. 

We prove {\bf assertion (\ref{thm:m:W om})}. We choose a symplectic vector space $(V,\Om)$ of dimension $\rank E$ and a coisotropic subspace $W^0\sub V$ of dimension $\rank W$. 
\begin{claim}\label{claim:Psi W} There exists $\Psi\in\Iso\big(\D\x V,\Om;E,\om\big)$ such that $\Psi(\D\x W^0)=W$. 
\end{claim}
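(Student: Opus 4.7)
\begin{pf}[Proof sketch of Claim \ref{claim:Psi W}]
The plan is to reduce the claim to a lifting problem through the fibration of Lemma \ref{le:bundle}, using contractibility of $\D$ twice.

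First, I would exploit the fact that $\D$ is contractible. Since $\Aut(\Om)$ is connected, every symplectic vector bundle over a contractible base is symplectically trivial, so one may choose an auxiliary trivialization $\Psi_0\in\Iso\big(\D\x V,\Om;E,\om\big)$. Pulling back $W$ through $\Psi_0$ yields an $\Om$-coisotropic subbundle $W':=\Psi_0^{-1}(W)\sub\D\x V$ of rank $\dim W^0$. This is encoded by a continuous map $f:\D\to\Gr(\Om,\dim W^0)$, $f(z):=W'_z$.

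Next I would lift $f$ through the fibration $\pi:\Aut(\Om)\to\Gr(\Om,\dim W^0)$, $\pi(\Phi):=\Phi W^0$, provided by Lemma \ref{le:bundle}. Since $\D$ is contractible (in particular a CW-complex with trivial homotopy groups) and $\pi$ is a locally trivial fiber bundle, the homotopy lifting property supplies a continuous map $A:\D\to\Aut(\Om)$ with $\pi\circ A=f$, i.e.\ $A(z)W^0=W'_z$ for every $z\in\D$. Concretely, one fixes any point $z_0\in\D$, lifts $f(z_0)$ to some $A_0\in\Aut(\Om)$ via transitivity (Lemma \ref{le:transitive}), and then uses a contraction of $\D$ to $z_0$ together with the homotopy lifting property of $\pi$ to extend $A_0$ to the required $A$.

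Finally, I would define $\Psi:\D\x V\to E$ by $\Psi(z,v):=\Psi_0(z)A(z)v$. Because $A(z)\in\Aut(\Om)$, this is fiberwise symplectic, and since both $\Psi_0$ and each $A(z)$ are linear isomorphisms, $\Psi\in\Iso(\D\x V,\Om;E,\om)$. By construction $\Psi(\D\x W^0)_z=\Psi_0(z)A(z)W^0=\Psi_0(z)W'_z=W_z$, so $\Psi(\D\x W^0)=W$ as required. The only nontrivial input is the lifting step, which is where Lemma \ref{le:bundle} does the real work; everything else is formal manipulation.
\end{pf}
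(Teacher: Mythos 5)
Your proof is correct and takes essentially the same route as the paper: both choose an auxiliary symplectic trivialization, pull back $W$ to a coisotropic subbundle of the trivial bundle, invoke Lemma \ref{le:bundle} to get the fiber bundle $\Aut(\Om)\to\Gr(\Om,W^0)$, apply the homotopy lifting property (via contractibility of $\D$) to obtain a lift $f\in C(\D,\Aut(\Om))$, and set $\Psi:=\wt\Psi\, f$. You just spell out the lifting step in slightly more detail than the paper does.
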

\begin{proof}[Proof of Claim \ref{claim:Psi W}] We choose an arbitrary $\wt\Psi\in\Iso\big(\D\x V,\Om;E,\om\big)$ and define $\wt W:=\wt\Psi^{-1}W\sub\D\x V$. We choose a map $f_0\in\Aut(\Om)$ such that $f_0W^0=\wt W_0$. It follows from Lemma \ref{le:bundle} and the homotopy lifting property for smooth fiber bundles that there exists $f\in C(\D,\Aut(\Om))$ such that $f(z)W^0=\wt W_z$, for every $z\in\D$. We define $\Psi:=\wt\Psi f$. Claim \ref{claim:Psi W} follows. 
\end{proof}
We choose $\Psi$ as in Claim \ref{claim:Psi W}. The assertion (\ref{thm:m:W om}) follows from Lemma \ref{le:m W Phi} with $\om,W,\Phi$ replaced by $\Om,W^0,\Psi^*\Phi$. {\bf Assertion (\ref{thm:m:reg})} follows from Lemma \ref{le:X W}.

We prove {\bf assertion (\ref{thm:m:wt W})}. We choose a symplectic vector space $(V,\Om)$ of dimension $\rank E$ and  $f\in\Iso\big(\D\x V,\Om;E,\om\big)$. Then the hypotheses of Lemma \ref{le:wt W} are satisfied with $X:=S^1$ and $\om,W,\Phi$ replaced by $\Om,f|_{S^1}^*(W,\Phi)$. We denote $\widehat{W}:=\big\{\big(z,fv,\Psi fv\big)\,\big|\,(z,v)\in W\big\}$. By the conclusion of Lemma \ref{le:wt W} we have $m_{S^1,\Om}\big(f|_{S^1}^*(W,\Phi)\big)=m_{S^1,\Om\oplus\om'}(\widehat{W},0)$. Combining this with Lemma \ref{le:m Phi Psi T}, it follows that $m(E,\om,W,\Phi)=m_{S^1,\om}(W,\Phi)=m_{S^1,\om\oplus\om'}(\wt W,0)$. This proves assertion (\ref{thm:m:wt W}) and completes the proof of Theorem \ref{thm:m}.
\end{proof}

\appendix

\section{Auxiliary results}\label{sec:aux}
The following result was used in Section \ref{sec:main}. 
\begin{lemma}\label{le:al} The winding map $\al:C([0,1],\R/\Z)\to\R$ is continuous.
\end{lemma}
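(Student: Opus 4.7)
The plan is to exploit that the quotient map $q: \R \to \R/\Z$ is a covering map, so paths in $\R/\Z$ admit continuous lifts, and that $C([0,1], \R/\Z)$ with its compact-open topology coincides with the uniform topology since $[0,1]$ is compact. I will show continuity at an arbitrary $z_0 \in C([0,1], \R/\Z)$; fix once and for all a lift $\tilde{z}_0 \in C([0,1], \R)$ of $z_0$.

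Given $\eps \in (0, 1/2)$, consider the uniform neighborhood $U_\eps := \{z \in C([0,1],\R/\Z) : \sup_{t \in [0,1]} d_{\R/\Z}(z(t), z_0(t)) < \eps\}$ of $z_0$. For each $z \in U_\eps$ I will define a lift $\tilde{z}: [0,1] \to \R$ by declaring $\tilde{z}(t)$ to be the unique real number in the open interval $(\tilde{z}_0(t) - 1/2, \tilde{z}_0(t) + 1/2)$ satisfying $\tilde{z}(t) + \Z = z(t)$; such a number exists and is unique because $q$ restricted to any open interval of length less than $1$ is a homeomorphism onto its image, and the hypothesis $d_{\R/\Z}(z(t), z_0(t)) < \eps < 1/2$ ensures that $z(t)$ lies in the image of the restricted $q$.

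The key step is to verify that this pointwise construction yields a continuous function $\tilde{z}$. For this, fix $t_0 \in [0,1]$ and pick an open interval $I \subset \R$ of length less than $1$ containing both $\tilde{z}_0(t_0)$ and $\tilde{z}(t_0)$; continuity of $\tilde{z}_0$ together with the uniform bound $|\tilde{z}(t) - \tilde{z}_0(t)| < 1/2$ guarantees that for $t$ in some neighborhood of $t_0$ the real number $\tilde{z}(t)$ again lies in $I$, so on that neighborhood $\tilde{z}$ agrees with $(q|_I)^{-1} \circ z$, which is continuous. With $\tilde{z}$ a bona fide lift of $z$, the definition of the winding map gives
\[
|\al(z) - \al(z_0)| = \big|(\tilde{z}(1)-\tilde{z}(0)) - (\tilde{z}_0(1) - \tilde{z}_0(0))\big| \le |\tilde{z}(1) - \tilde{z}_0(1)| + |\tilde{z}(0) - \tilde{z}_0(0)| < 2\eps,
\]
which proves continuity at $z_0$. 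The only subtle point is the coherent choice of a continuous branch of lift valid on an entire neighborhood of $z_0$, but this is handled uniformly by the half-integer-interval prescription above, so no serious obstacle arises.
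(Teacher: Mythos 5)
Your proof is correct and follows essentially the same strategy as the paper: both pass to the uniform metric on $C([0,1],\R/\Z)$ and define, on the ball of radius $1/2$ about $z_0$, a continuous choice of lift via the prescription $\tilde z(t)\in(\tilde z_0(t)-1/2,\tilde z_0(t)+1/2)$. The paper then concludes by composing with the continuous evaluation map $\tilde z\mapsto\tilde z(1)-\tilde z(0)$, whereas you bound $|\al(z)-\al(z_0)|<2\eps$ directly; this is a cosmetic difference only.
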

\begin{proof}[Proof of Lemma \ref{le:al}]\setcounter{claim}{0} We denote by $d$ the standard metric on $\R/\Z$. By Lemma \ref{le:compact open}(\ref{le:compact open:d}) $C([0,1],\R/\Z)$ is metrized by the metric $d'$ defined as in (\ref{eq:d'}). Let $z_0\in C([0,1],\R/\Z)$. We denote by $\pi:\R\to\R/\Z$ the canonical projection. We choose a path $\wt z_0\in C([0,1],\R)$ such that $\pi\circ\wt z_0=z_0$. We define the map $\phi:B^{d'}_{1/2}(z_0)\to C([0,1],\R)$ by defining $\phi(z)(t)$ to be the unique point in $\big(\wt z_0(t)-1/2,\wt z_0(t)+1/2\big)$ such that $\pi(\phi(z)(t))=z(t)$. This map is continuous. Furthermore, by Lemma \ref{le:compact open}(\ref{le:compact open:ev}) the map $C([0,1],\R)\to\R$ given by $\wt z\mapsto \wt z(1)-\wt z(0)$ is continuous. Since $\al|_{B^{d'}_{1/2}(z_0)}$ is the composition of $\phi$ with this map, it is continuous. It follows that $\al$ is continuous. This proves Lemma \ref{le:al}. 
\end{proof}
The next result was used in Section \ref{sec:main} for the definition of the map $m_{C,\om}:\T\big(C\x V,\om\big)\to\R$. 
\begin{prop}\label{prop:rho}[D. A. Salamon and E. Zehnder, Theorem 3.1. in \cite{SZ}] There is a unique collection of continuous mappings $\rho_\om:\Iso(\om)\to S^1$ (one for every symplectic vector space $(V,\om)$) satisfying the following conditions:
\begin{enui}\item\label{prop:rho:nat}(Naturality:) If $(V,\om)$ and $(V',\om')$ are symplectic vector spaces, $\Phi\in\Iso(\om,\om')$ and $\Psi\in\Iso(\om)$ then $\rho_{\om'}(\Phi\Psi\Phi^{-1})=\rho_\om(\Psi)$.
\item\label{prop:rho:sum} (Direct sum:) If $(V,\om)$ and $(V',\om')$ are symplectic vector spaces and $\Phi\in\Iso(\om)$ and $\Phi'\in\Iso(\om')$ then $\rho_{\om\oplus\om'}(\Phi\oplus\Phi')=\rho_\om(\Phi)\rho_{\om'}(\Phi')$. 
\item\label{prop:rho:det} (Determinant:) If $\Phi\in\Sp(2n)\cap\OO(2n)$ then 
$\rho_{\om_0}(\Phi)=\det(X+iY)$, where $X,Y\in\R^{n\x n}$ are such that 
\begin{equation}\nn\label{eq:Phi}\Phi=\left(\begin{array}{cc}X & -Y\\
Y & X
\end{array}\right).\end{equation}
\item\label{prop:rho:norm} (Normalization:) If $\Phi\in\Iso(\om)$ has no eigenvalue on the unit circle then $\rho_\om(\Phi)=\pm1$. 
\end{enui}
\end{prop}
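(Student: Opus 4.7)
The plan is to establish uniqueness first, then construct $\rho_\om$ explicitly via the spectral decomposition of symplectic automorphisms; the only serious work is in verifying continuity of the constructed map.

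For uniqueness, fix $\Phi\in\Iso(\om)$. The characteristic polynomial of $\Phi$ is reciprocal with real coefficients, so its roots are symmetric under both $\lam\mapsto\bar\lam$ and $\lam\mapsto\lam^{-1}$. Grouping generalized eigenspaces accordingly yields an $\om$-orthogonal $\Phi$-invariant splitting $V=\bigoplus_j V_j$ into symplectic subspaces, each $V_j$ corresponding either to a conjugate pair $\{\lam,\bar\lam\}\sub S^1\wo\{\pm 1\}$, to a reciprocal-conjugate quadruple off $S^1$, to a real pair $\{\lam,\lam^{-1}\}$ off $S^1$, or to a $\pm 1$-block. By Direct sum, $\rho_\om(\Phi)=\prod_j\rho_\om(\Phi|_{V_j})$. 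Normalization forces $\rho_\om(\Phi|_{V_j})\in\{\pm 1\}$ on off-$S^1$ blocks. On a block with $\lam\in S^1\wo\{\pm 1\}$, one can conjugate within $\Aut(\om)$ to place $\Phi|_{V_j}$ inside the standard $\U(n)=\Sp(2n)\cap\OO(2n)$, whereupon Determinant forces the value to $\lam^{m_+(\om,\Phi,\lam)}\bar\lam^{m_+(\om,\Phi,\bar\lam)}$, with $m_+$ as defined just before Lemma \ref{le:rho om Psi}. The residual signs on the off-$S^1$ and $\pm 1$-blocks are then pinned down by continuity, propagated along paths into the generic locus of automorphisms with only simple eigenvalues in $S^1\wo\{\pm 1\}$, where connectedness of each signed stratum fixes them uniquely.

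For existence, I would define $\rho_\om(\Phi)$ by the explicit formula dictated by this uniqueness analysis. Axioms (\ref{prop:rho:nat}) and (\ref{prop:rho:sum}) follow because the Krein invariants $m_+$ are conjugation-invariant and additive under $\om$-orthogonal direct sums. Axiom (\ref{prop:rho:det}) is verified by direct computation on $\Sp(2n)\cap\OO(2n)\iso\U(n)$, where $\Phi$ commutes with the standard complex structure, all eigenvalues lie on $S^1$, and the Krein-signature count reproduces the complex determinant $\det(X+iY)$. Axiom (\ref{prop:rho:norm}) is automatic since the product over $S^1$-eigenvalues is then empty, leaving only the sign from the off-$S^1$ blocks.

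The main obstacle is continuity of $\rho_\om$ on $\Iso(\om)$. The spectral data (eigenvalues, generalized eigenspaces, Krein indices) are not continuous in $\Phi$: two $S^1$-eigenvalues of opposite Krein sign can coalesce and bifurcate off $S^1$ into a reciprocal-conjugate quadruple (a Krein collision), and eigenvalues may cross $\pm 1$. Continuity must therefore be verified by local analysis around each such bifurcation. The Krein-Gelfand-Lidskii constraint dictates that only opposite-Krein-sign $S^1$-pairs can leave the circle, so the merging pair contributes $\lam^k\bar\lam^k\to 1$ as the eigenvalues approach $S^1$, matching the $+1$ value assigned by Normalization to the resulting off-$S^1$ block. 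An analogous but more delicate local analysis near $\lam=\pm 1$ handles the remaining sign bookkeeping and completes the continuity argument.
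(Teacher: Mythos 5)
The paper does not prove this proposition; it cites it directly as Theorem 3.1 of Salamon--Zehnder \cite{SZ}, and the only related ingredient given is Lemma \ref{le:rho om m -}, which states (again by citation) the explicit spectral formula $\rho_\om(\Psi)=(-1)^{m_-(\Psi)}\prod_{\lam\in S^1\wo\{\pm1\}}\lam^{m_+(\om,\Psi,\lam)}$. So there is no in-paper proof to compare against. Your outline is in substance the Salamon--Zehnder construction: reduce by Direct sum to generalized-eigenvalue blocks, read off the value from Determinant on the unit-circle blocks and from Normalization off $S^1$, and glue via continuity using the Krein-collision mechanism. That is the right strategy, and your identification of the formula as a Krein-type invariant matches Lemma \ref{le:rho om m -}.

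Two points in your write-up need more care if this were to stand as a proof. First, the claim that one can ``conjugate within $\Aut(\om)$ to place $\Phi|_{V_j}$ inside the standard $\U(n)$'' for an $S^1\wo\{\pm1\}$-block is false in general: if $\Phi|_{V_j}$ has a nontrivial Jordan block it is not conjugate to any unitary matrix. The correct move is the one you invoke at the end anyway: work first on the open dense stratum of automorphisms with simple eigenvalues (where each block \emph{is} conjugate into $\U(1)\times\U(1)$ or a hyperbolic/loxodromic normal form), pin the value down there by Naturality plus Determinant plus Normalization, and then extend by continuity, using the Krein collision analysis to see that the formula extends continuously across the non-generic strata. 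Second, the passage ``An analogous but more delicate local analysis near $\lam=\pm1$\ldots'' is where the $(-1)^{m_-(\Psi)}$ factor actually lives; without doing that analysis the sign on the $\pm1$-blocks and on the negative-real-eigenvalue blocks is not determined, and this is genuinely where Normalization says only $\pm1$ and something more (continuity from the generic stratum, or the explicit computation on the normal form for a $-1$-block) must fix the sign. You have flagged it rather than resolved it. Neither issue is a wrong idea; both are gaps that the actual Salamon--Zehnder argument fills in, and your outline otherwise tracks their proof.
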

The maps $\rho_\om$ in the collection of this proposition are called Salamon-Zehnder maps. 

The next lemma was used in the proof of Proposition \ref{prop:rho om Psi}. We fix $\lam\in\C$ and denote 
\[E^\Psi_\lam:=\ker\Big(\big(\lam\,\id-\Psi\big)^{\dim V}:V\otimes\C\to V\otimes\C\Big).\] 
(If $\lam$ is an eigenvalue of $\Psi$ then this is the generalized eigenspace of $\lam$, otherwise it is $\{0\}$.) We fix $\lam\in S^1\wo\{\pm1\}$ and define We define 
\[m_+(\om,\Psi,\lam):=\max\big\{\dim_\C W\,\big|\,W\sub E^\Psi_\lam\textrm{ complex subspace:}\,i\om(\bar v,v)<0,\forall 0\neq v\in W\big\},\]
and we denote $m_-(\Psi):=\frac12\sum_{\lam\in(-\infty,0)}\dim_\C E^\Psi_\lam$.
\begin{lemma}\label{le:rho om m -} The number $m_-(\Psi)$ is an integer, and 
\begin{equation}\label{eq:rho om Psi -}\rho_{\om}(\Psi)=(-1)^{m_-(\Psi)}\prod_{\lam \in S^1\wo\{\pm1\}}\lam^{m_+(\om,\Psi,\lam)}.\end{equation}
\end{lemma}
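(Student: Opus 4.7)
My approach is to invoke the uniqueness part of Proposition \ref{prop:rho}. Denote by $f_\om(\Psi)$ the right-hand side of (\ref{eq:rho om Psi -}). I will show that $m_-(\Psi)\in\Z$, so that $(-1)^{m_-(\Psi)}$ is unambiguous, and that the collection $\{f_\om\}$ satisfies the four characterizing properties of $\rho$, forcing $f_\om=\rho_\om$.

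Integrality of $m_-(\Psi)$ is a consequence of symplectic spectral theory. The characteristic polynomial of a symplectic automorphism is reciprocal, so $\dim_\C E^\Psi_\lam=\dim_\C E^\Psi_{1/\lam}$ for every $\lam\neq 0$, and the complex bilinear extension of $\om$ pairs $E^\Psi_\lam$ with $E^\Psi_\mu$ nondegenerately only when $\lam\mu=1$. Hence the real generalized eigenspace at $-1$ is $\om$-symplectic and therefore even dimensional, while each $\lam\in(-\infty,-1)$ pairs bijectively with $1/\lam\in(-1,0)$ of equal multiplicity. Summing over negative eigenvalues gives an even total, so $m_-(\Psi)\in\Z$.

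For the formula, naturality and the direct-sum property are immediate: generalized eigenspaces and the sesquilinear form $v\mapsto i\om(\bar v,v)$, and hence both $m_-$ and each $m_+(\om,\Psi,\lam)$, are preserved by conjugation in $\Iso(\om,\om')$ and additive under $\oplus$. Normalization is also clear: with no eigenvalue on $S^1$, every $m_+(\om,\Psi,\lam)$ vanishes and $f_\om(\Psi)=(-1)^{m_-(\Psi)}=\pm1$. For the determinant axiom, a matrix $\Phi\in\Sp(2n)\cap\OO(2n)$ identifies with a unitary $X+iY\in\U(n)$ under $\R^{2n}\iso\C^n$; after diagonalizing and invoking additivity and naturality, the check reduces to a single rotation $R_\theta\in\Sp(2)$, and a direct computation of $\om_0(\bar v_+,v_+)$ for $v_+=(1,-i)^T\in E^{R_\theta}_{e^{i\theta}}$ shows that the positive-Krein direction sits in $E^{R_\theta}_{e^{i\theta}}$, contributing $e^{i\theta}$ to the product, matching $\det(X+iY)=\prod_j e^{i\theta_j}$.

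The main obstacle will be establishing continuity of $f_\om$ on all of $\Iso(\om)$. The quantities $m_\pm(\om,\Psi,\cdot)$ are only lower semi-continuous in $\Psi$, since generalized eigenspaces can collide on $S^1$ and then split off. I plan to reduce, via continuity of roots of the characteristic polynomial, to a neighbourhood of a single collision of two eigenvalues on $S^1\wo\{\pm1\}$ of opposite Krein sign: when these collide and split into a hyperbolic quadruple off $S^1$, the corresponding factors merge continuously in $\prod\lam^{m_+}$, while any change in $m_-$ occurs only when eigenvalues cross through $-1$ or $0$, producing an even number of new negative eigenvalues and hence preserving the parity of $m_-$. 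Once continuity is in hand, uniqueness in Proposition \ref{prop:rho} forces $f_\om=\rho_\om$, which is the claimed identity.
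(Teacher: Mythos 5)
The paper simply cites the proof of Theorem~3.1 in Salamon--Zehnder, where $\rho_\om$ is \emph{constructed} via this very formula and then shown to satisfy the characterizing axioms; your plan—take $f_\om$ as given, verify the five axioms (including continuity), and invoke uniqueness—is therefore essentially a reconstruction of the cited argument rather than a new route, and your treatment of integrality, naturality, direct sum, normalization, and the determinant axiom is sound.

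The gap is in the continuity argument, which you correctly identify as the crux but then sketch incorrectly. You claim that ``any change in $m_-$ occurs only when eigenvalues cross through $-1$ or $0$, producing an even number of new negative eigenvalues and hence preserving the parity of $m_-$.'' This fails in the case you most need: when a conjugate pair $\lam,\bar\lam\in S^1\wo\{\pm1\}$ of opposite Krein sign collides at $-1$ and then separates into a real pair $\mu,1/\mu<0$, the contribution to $m_-(\Psi)$ jumps from $0$ to $1$ — an \emph{odd} change — so the sign $(-1)^{m_-(\Psi)}$ flips. The expression $f_\om(\Psi)$ is nevertheless continuous, but only because the product term flips sign simultaneously: the factor $\lam^{m_+(\lam)}\bar\lam^{m_+(\bar\lam)}$ tends to $-1$ as $\lam\to-1$, and once the eigenvalue leaves $S^1\wo\{\pm1\}$ that factor drops to $1$. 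Your decomposition into two independent mechanisms (collisions on $S^1\wo\{\pm1\}$ preserving the product, parity of $m_-$ preserved separately) therefore does not work; the two factors interact and must be analyzed jointly at $-1$. Without spelling out this cancellation (and the analogous easy check at $+1$, and the case of negative real pairs leaving $(-\infty,0)$ into a complex quadruple), the continuity step — and with it the appeal to uniqueness — is not established.
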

\begin{proof}[Proof of Lemma \ref{le:rho om m -}]\setcounter{claim}{0} This follows from the proof of Theorem 3.1. in \cite{SZ}.
\end{proof}
\begin{rmk}\label{rmk:rho - om} By Remark \ref{rmk:-om} we have 
\begin{equation}\label{eq:rho - om}\rho_{-\om}(\Psi)=\BAR{\rho_\om(\Psi)}.\end{equation}
\end{rmk}
The following lemma was used in Section \ref{sec:main}.
\begin{lemma}\label{le:m om mu} Let $(M,\om,G,\mu)$ be a Hamiltonian $G$-manifold, and let $\Si\in\S$.  Assume that the action of $G$ on $N:=\mu^{-1}(0)$ is free. Then $m_{\Si,\om,\mu}=m_{\Si,\om,N}$. 
\end{lemma}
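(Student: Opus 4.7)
\begin{pf} The strategy is to show that the path of symplectic automorphisms in the Gaio-Salamon definition of $m_{\Si,\om,\mu}$ is, once transported through a symplectic trivialization of $u^*(TM,\om)$, a lift (in the sense of Theorem \ref{thm:m X V om}(\ref{thm:m X V om:Psi})) of the framed coisotropic bundle $u|_{\d\Si}^*(TN,\hol^{N,\om})$. Granting this, the equality of the two maps will follow from Theorem \ref{thm:m X V om}(\ref{thm:m X V om:Psi Psi'}) together with Lemma \ref{le:m Phi Phi 1}. The key geometric input is that freeness of the $G$-action on $N=\mu^{-1}(0)$ yields $(T_xN)^\om = T_x(G\cdot x)$ for every $x\in N$, so the isotropic leaves of $N$ are exactly the $G$-orbits. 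I would first reduce to the case of a connected $\Si$ with $\d\Si\neq\empty$; the closed case is routine, both sides reducing to $2\lan c_1(M,\om),a\ran$ (for $m_{\Si,\om,\mu}$ via the Determinant property of $\rho_\om$ applied to a symplectic trivialization of $u^*TM$).

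Fix a representative $u\in a$, a symplectic vector space $(V,\Om)$ of dimension $\dim M$, a trivialization $\Psi\in\Iso(\Si\x V,\Om;u^*(TM,\om))$, and for each component $X\sub\d\Si$ a basepoint $z_X\in X$. Set $W_X := \Psi_{z_X}^{-1}T_{u(z_X)}N\sub V$, define the coisotropic subbundle $W\sub\d\Si\x V$ by $W(z) := \Psi_z^{-1}T_{u(z)}N$, and let $\wt\Phi\in\T(W_\Om,\Om_W)$ be obtained by conjugating $u|_{\d\Si}^*\hol^{N,\om}$ through $\Psi$. By the definition of $m_{\Si,\om,N}$, combined with the construction in the proof of Theorem \ref{thm:m Si E om}, one has $\wt m_{\Si,\om,N}(u) = m_{\d\Si,\Om}(\Theta,\id)$ for any lift $\Theta\in\T(\d\Si\x V,\Om)$ of $(W,\wt\Phi)$, where $\id$ denotes the trivial (regular) transport on $\d\Si\x V$. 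By freeness of the $G$-action, for each $X$ there is a unique $g_X\in C(X,G)$ with $g_X(z_X)=e$ and $u(z)=g_X(z)\cdot u(z_X)$ for every $z\in X$. Set $\Xi_X(z) := \Psi_z^{-1}\circ(g_X(z)\cdot)\circ\Psi_{z_X}\in\Aut(\Om)$, and let $\Theta$ be the flat transport on $\d\Si\x V$ whose monodromy along a degree-one loop in $X$ based at $z_X$ equals $\Xi_X$. Comparing definitions, $m_{\Si,\om,\mu}(a) = m_{\d\Si,\Om}(\Theta,\id)$, so it suffices to verify that this particular $\Theta$ is a lift of $(W,\wt\Phi)$.

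Two properties have to be checked. First, $\Xi_X(z)W_X = W(z)$ is immediate from $G$-invariance of $\mu$, which gives $(g_X(z)\cdot)T_{u(z_X)}N = T_{u(z)}N$. Second, the induced map $\Xi_X(z)_{W_X}:(W_X)_\Om\to W(z)_\Om$ must equal the $\Psi$-conjugate of $\hol^{N,\om}$ along the arc from $u(z_X)$ to $u(z)$; this is the main obstacle. It reduces to the following assertion about Hamiltonian $G$-manifolds: for any path $x(t)=h(t)x_0$ lying in a $G$-orbit inside $\mu^{-1}(0)$, the linear holonomy $\hol^{N,\om}_x:(T_{x_0}N)_\om\to(T_{x(1)}N)_\om$ of the isotropic foliation is the map induced by the differential $(h(1)h(0)^{-1})_*:T_{x_0}M\to T_{x(1)}M$. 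I expect this identity to follow directly from the defining characterization of $\hol^{N,\om}$ in (\ref{eq:hol F}): $G$-invariance of $\om$ and of $N$ ensures that the transport induced on $TN/(TN)^\om$ by the $G$-action is a flat, $\om_W$-preserving transport along $G$-orbits that is the identity at $t=0$, and these properties characterize $\hol^{N,\om}$ uniquely. Once this linear-holonomy identification is established, $\Theta$ is the required lift, and Theorem \ref{thm:m X V om}(\ref{thm:m X V om:Psi Psi'}) together with Lemma \ref{le:m Phi Phi 1} yields the desired equality $m_{\Si,\om,\mu}(a) = m_{\Si,\om,N}(a)$. \end{pf}
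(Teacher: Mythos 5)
Your overall strategy matches the paper's: trivialize $u^*(TM,\om)$, transport the $G$-action through the trivialization to get a candidate lift $\Theta$ of the framed coisotropic bundle $u|_{\d\Si}^*(TN,\hol^{N,\om})$, observe that $m_{\Si,\om,\mu}(a)$ is by definition $m_{\d\Si,\Om}(\Theta,\id)$, and conclude via Theorem \ref{thm:m X V om}(\ref{thm:m X V om:Psi Psi'}) and Lemma \ref{le:m Phi Phi 1}. The reduction to the holonomy identity in your step (ii) is exactly where the paper's Claim \ref{claim:Phi} sits.

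However, your justification of that identity has a genuine gap. You assert that $\hol^{N,\om}$ is ``uniquely characterized'' by being a flat, $\om_W$-preserving transport along $G$-orbits that is the identity at $t=0$, and that the $G$-induced transport shares these properties. No such characterization is stated or proved in the paper, and as formulated it is false: the trivial transport (identity on every homotopy class) is also flat, $\om_W$-preserving, and identity at $t=0$, but it is not the leaf holonomy unless that holonomy happens to be trivial. The holonomy $\hol^\F$ is defined constructively via Proposition \ref{prop:hol}, not axiomatically, so one must plug into that construction. The paper does precisely this: it chooses a smooth map $f:TN_\om\to N$ with $f(0)=u(1)$ and $\pr_N df(0)=\id$, defines $\wt f(t,v):=g(z(t))f(v)$, observes that $\wt f$ satisfies conditions (\ref{eq:u y 0},\ref{eq:u t y},\ref{eq:u t},\ref{eq:d u a}) (using that $g(z(t))$ preserves $N$ and maps leaves to leaves), and then reads off $\hol^{N,\om}([u\circ z|_{[0,t]}])=\pr_N d\wt f(t,\cdot)(0)=(g(z(t))\cdot)_{T_{u(1)}}$ from the definition. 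You need this explicit computation; the uniqueness shortcut you invoke does not exist.
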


\begin{proof}[Proof of Lemma \ref{le:m om mu}]\setcounter{claim}{0} Let $a\in\big[\D,M;N,\om\big]$. We choose a representative $u\in C(\D,M)$ of $a$, and define $g\in C(S^1,G)$ to be the unique map satisfying $u(z)=g(z)u(1)$, for every $z\in S^1$. We choose a continuous symplectic trivialization $\Psi:\D\x\R^{2n}\to u^*TM$. We define $z\in C([0,1],S^1)$ By $z(t):=e^{2\pi it}$ and $\wt\Psi:[0,1]\to\Sp(2n)$ by $\wt\Psi(t):=\Psi_{z(t)}^{-1}g(z(t))\Psi_1$. It follows that $m_\GS(u)=m(\wt\Psi)$. We define the coisotropic subbundle $W\sub \R^{2n}$ by $W_z:=\Psi_z^{-1}T_{u(z)}N$, and $\Phi:=\Psi|_{S^1}^*\hol^{N,\om}$.  We have $\wt\Psi_{z(t)}W_1=W_{z(t)}$, for every $t\in[0,1]$. Lemma \ref{le:m om mu} follows now from the following claim.
\begin{claim}\label{claim:Phi} We have $\wt\Psi(t)_{W_1}=\Phi([z|_{[0,t]}])$, for every $t\in[0,1]$. 
\end{claim}
\begin{pf}[Proof of Claim \ref{claim:Phi}] We choose a smooth map $f:TN_\om\to N$ such that $f(0)=u(1)$ and $\pr_Ndf(0)=\id$. We define $\wt f:[0,1]\x TN_\om\to N$ by $\wt f(t,v):=g(z(t))f(v)$. It follows that $\hol^{N,\om}([u\circ z|_{[0,t]}])=\pr_Nd\wt f(t,\cdot)(0)=(g(t)\cdot)_{T_{u(1)}}$, for every $t\in[0,1]$. This implies that $\Phi([z|_{[0,t]}])=(\Psi_{z(t)}^{-1})_{T_{u\circ z(t)}N}(g(t)\cdot)_{T_{u(1)}}(\Psi_1)_{W_1}=\wt\Psi(t)_{W_1}$. This proves Claim \ref{claim:Phi}.
\end{pf}
\end{proof}
The following result was used in Section \ref{sec:main}. Let $(M,\om)$ be a closed connected symplectic manifold. Assume that there exists $a\in\R$ such that $[\om]=2ac_1(TM,\om)$ on $[S^2,M]$, and that $\T^k$ acts on $M$ with moment map $\mu$. The mixed action-Maslov index is a homomorphism $I:\pi_1(\Ham(M,\om))\to\R$, where homotopy is taken with respect to the $C^\infty$-topology on $\Ham(M,\om)$. It is defined as follows (see \cite{EP}). Let $A\in\pi_1(Ham(M,\om))$. We choose a representative $\phi\in C^\infty(S^1,\Ham(M,\om))$ of $A$. By Floer-theory there exists $u\in C^\infty(\D,M)$ such that $\phi_z\circ u(1)=u(z)$, for every $z\in S^1$. We define $m(u,\phi)\in\Z$ as follows. We choose a symplectic vector space $(V,\Om)$ of dimension $\dim M$, and a trivialization $\Psi\in\Iso\big(\D\x V,\Om;u^*(TM,\om)\big)$. We define $m(u,\phi):=m\big(S^1\ni z\mapsto \Psi_z^{-1}d\phi_z(u(1))\Psi_1\in\Aut(\Om)\big)$. By a standard homotopy argument this number does not depend on the choices of $V,\Om$ and $\Psi$. We define $I(A):=\int_\D u^*\om-\int_{S^1}F\circ u d\th-a m(u,\phi)$, where $\th\in S^1$ denotes the angular coordinate, and $F\in C^\infty([0,1]\x M,\R)$ is the unique map whose flow is $\phi$ and that satisfies $\int_M F(t,\cdot)\om^{\wedge n}=0$, for every $t\in[0,1]$. This number does not depend on the choice of $\phi$ and $u$ (see \cite{EP}). 

The exponential map $\exp:\tt\to\T$ induces an isomorphism $\Hom(\pi_1(\T),\R)\iso\tt^*$. The action of $\T$ on $M$ induces a homomorphism $\Hom(\pi_1(\Ham(M,\om)),\R)\to\Hom(\pi_1(\T),\R)$. We define $\xi_\spec\in\tt^*$ to be the image of $-I$ under the composition of these two maps, and the special fiber $N:=\mu^{-1}(\xi_\spec\sub M$.
\begin{lemma}\label{le:special} If $\TT$ acts freely on $N$ then for every $u\in C^\infty(\D,M;N,\om)$ we have $\int_\D u^*\om=am_{\D,N,\om}([u])$. 
\end{lemma}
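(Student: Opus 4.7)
The strategy is to express both sides of the claimed identity in terms of the mixed action-Maslov homomorphism $I$ and then invoke the defining relation for $\xi_{\spec}$.

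Since $\TT$ acts freely on $N$, the isotropic leaves of $N$ coincide with the $\TT$-orbits. Hence for $u \in C^\infty(\D, M; N, \om)$ the restriction $u|_{S^1}$ lies in a single orbit $\TT \cdot u(1)$, and there is a unique continuous loop $g \colon S^1 \to \TT$ with $g(1) = e$ and $u(z) = g(z) \cdot u(1)$ for every $z \in S^1$. After an $(N,\om)$-admissible homotopy inside the class $[u]$, we may take the standard representative $g(e^{2\pi i t}) = \exp(t\gamma)$ for some $\gamma \in \ker(\exp) = \pi_1(\TT) \subset \mathfrak{t}$. The smooth Hamiltonian loop $\phi_t := \exp(t\gamma)\cdot \in \Ham(M,\om)$ then has class $A_\gamma \in \pi_1(\Ham(M,\om))$, and by construction $\phi_t \circ u(1) = u(e^{2\pi i t})$, so $(u,\phi)$ is an admissible pair for computing $I(A_\gamma)$.

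Next I would evaluate each term in the defining formula
\[ I(A_\gamma) = \int_\D u^* \om - \int_{S^1} F\circ u\, d\theta - a\cdot m(u,\phi). \]
Since $\phi$ is generated by the time-independent vector field $X_\gamma$, the normalized Hamiltonian is $F(t,x) = \langle \mu(x),\gamma\rangle - c_\gamma$, where $c_\gamma$ is the constant enforcing $\int_M F(t,\cdot)\om^{\wedge n}=0$. Because $u(S^1)\sub N=\mu^{-1}(\xi_{\spec})$ and $\mu$ is $\TT$-invariant, $\langle \mu\circ u,\gamma\rangle \equiv \langle \xi_{\spec},\gamma\rangle$ on $S^1$, so $\int_{S^1} F\circ u\, d\theta = \langle \xi_{\spec},\gamma\rangle - c_\gamma$. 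Lemma \ref{le:m om mu} identifies $m(u,\phi) = m_{\D,\om,\mu}([u]) = m_{\D,N,\om}([u])$.

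Finally, the very definition of $\xi_{\spec}$ as the image of $-I$ under the composition $\Hom(\pi_1(\Ham(M,\om)),\R)\to \Hom(\pi_1(\TT),\R)\iso \mathfrak{t}^*$ yields $I(A_\gamma) = -\langle \xi_{\spec},\gamma\rangle$. Substituting into the displayed formula and cancelling the two occurrences of $\langle \xi_{\spec},\gamma\rangle$ gives
\[ \int_\D u^*\om = a\cdot m_{\D,N,\om}([u]) - c_\gamma, \]
and the lemma follows once $c_\gamma=0$, which holds under the standing convention that $\mu$ is normalized so that $\int_M \mu\, \om^{\wedge n}=0$ in $\mathfrak{t}^*$; this is the convention implicit in the definition of $\xi_{\spec}$, since otherwise $\xi_{\spec}$ would have to be modified by a compensating shift. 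The main technical obstacle is reconciling the various normalizations (of $\mu$, of $F$, and of the angular form $d\theta$) so that the two $\langle \xi_{\spec},\gamma\rangle$-terms cancel and no residual constant survives; once these are aligned, the conceptual content is simply that, by construction, $-\xi_{\spec}$ is the restriction of the mixed action-Maslov homomorphism to the torus loops, and any disk $u$ with boundary on $N$ realizes this restriction explicitly term-by-term.
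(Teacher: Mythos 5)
Your proof is correct and follows essentially the same route as the paper's: evaluate the mixed action-Maslov homomorphism $I$ on the torus loop $\phi_t=\exp(t\gamma)\cdot$ via the disk $u$, use $u(S^1)\sub\mu^{-1}(\xi_\spec)$ to compute the action term, replace $m(u,\phi)$ by $m_{\D,N,\om}([u])$ via Lemma \ref{le:m om mu}, and close with the defining relation $\langle\xi_\spec,\gamma\rangle=-I(A_\gamma)$. Where you go beyond the paper is in explicitly tracking the normalization constant $c_\gamma$ (the $\om^{\wedge n}$-average of $\langle\mu,\gamma\rangle$): the paper's proof tacitly identifies the normalized Hamiltonian generating $\phi$ with $\langle\mu,\xi\rangle$, which is valid only when $\mu$ has zero $\om^{\wedge n}$-average. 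Flagging this is worthwhile; note, however, that $\xi_\spec$ is defined intrinsically from $I$ and does \emph{not} depend on the normalization of $\mu$ --- what the convention actually pins down is the fiber $N=\mu^{-1}(\xi_\spec)$, which for a shifted moment map $\mu+c$ would no longer be the intrinsic special fiber of Entov-Polterovich, and the identity would then acquire exactly the defect $-c_\gamma$ you isolated.
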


\begin{proof}[Proof of Lemma \ref{le:special}]\setcounter{claim}{0} Let $a\in\big\lan\D,M;N,\om\big\ran$. We choose a representative $\wt u\in C(\D,M)$ of $a$. We define $\wt g\in C(S^1,G)$ by $\wt u(z)=\wt g(z)\wt u(1)$. Let $\xi\in\Ga\sub\tt$. By definition we have 
\begin{equation}\label{eq:lan mu xi ran}I(\lan\mu,\xi\ran)=-\A_{\lan\mu,\xi\ran}(u)+\frac a2m(u,\phi_{\lan\mu,\xi\ran}).\end{equation}
Furthermore, $\int_{S^1}\lan\mu\circ u(e^{2\pi i t}),\xi\ran dt=\lan\mu\circ u(1),\xi\ran=-I\circ \phi_{\#}([S^1\iso\R/\Z\ni t+\Z\mapsto \exp(t\xi)])=-I(\phi_{\lan\mu,\xi\ran})$, since $u(1)\in\mu^{-1}(p_{\spec})$. 
Combining this with (\ref{eq:lan mu xi ran}), we obtain $\int_\D u^*\om=a m_{\D,\om,\mu}([u])=a m_{\D,N,\om}([u])$. Here in the last step we used Lemma \ref{le:m om mu}. This proves Lemma \ref{le:special}.
\end{proof}
In order to define the collection of maps $m^L_\Si$ occuring in Theorem \ref{thm:m}, for $\Si\in\S$ we define $\E^\real_\Si$ to be the class of all triples $(E,J,W)$, where $(E,J)$ is a complex vector bundle over $\Si$ and $W\sub E|_{\d\Si}$ is a totally real subbundle. By Theorem C.3.5. in the book \cite{MS} by D. McDuff and D. A. Salamon there exists a unique collection of maps $m^\real_\Si:\E^\real_\Si\to \Z$, where $\Si\in\S$, satisfying suitable (Isomorphism), (Direct sum), (Composition) and (Normalization) conditions. Let $\Si\in\S$. We define the map $m^L_\Si:\E^L_\Si\to\Z$ as follows. Let $(E,\om,W)\in\E^L$. We choose a fiberwise complex structure $J$ on $E$ that is $\om$-compatible, and we define 
\begin{equation}\label{eq:m L Si}m^L_\Si(E,\om,W):=m^\real_\Si(E,J,W)
\end{equation} 

This number is well-defined, i.e.~it does not depend on the choice of $J$. 

In the following we define the linear holonomy along a leaf in a foliation. This was used in Section \ref{sec:main}, in order to define the Maslov map. Let $M$ be a manifold and $\F$ a foliation on $M$, i.e. a maximal atlas of foliation charts. We denote by $T\F\sub TM$ and $N\F:=TM/T\F$ the tangent and normal bundles of $\F$, by $\pr^\F\Colon TM\to N\F$ the canonical projection, by $\F_x\sub M$ the leaf through a point $x\in M$, and by $R^\F:=\big\{(x,y)\in M\x M\,\big|\,y\in\F_x\big\}$ the leaf relation. For $x\in M$ we write $T_x\F:=(T\F)_x$ and $N_x\F:=(N\F)_x$. Let $F$ be a leaf of $\F$, $a\leq b$, and $x\in C([a,b],F)$. The \emph{linear holonomy of $\F$ along $x$} is the linear map $\hol^\F_x:N_{x(a)}\F\to N_{x(b)}\F$, whose definition is based on the following result. 
\begin{prop}\label{prop:hol} Let $M,\F,F,a,b$ and $x$ be as above, $N$ a manifold, and $y_0\in N$. Then the following statements hold.
\begin{enui}\item\label{prop:hol:u} For every linear map $T\Colon T_{y_0}N\to T_{x(a)}M$ there exists a map $u\in C([a,b]\x N,M)$ such that
\begin{eqnarray}\label{eq:u y 0}&u(\cdot,y_0)=x,&\\
 \label{eq:u t y}&u(t,y)\in\F_{u(a,y)},\,\forall t\in[a,b],\,y\in N,&\\
\label{eq:u t}&u(t,\cdot)\textrm{ is differentiable at }y_0,\,\forall t\in[a,b],&\\
\label{eq:d u a}&d(u(a,\cdot))(y_0)=T.& 
\end{eqnarray}
\item\label{prop:hol:u u'} Let $u,u'\in C([a,b]\x N,M)$ be maps satisfying (\ref{eq:u y 0},\ref{eq:u t y},\ref{eq:u t}), such that
\begin{equation}\label{eq:u u' a}\pr^\F d(u(a,\cdot))(y_0)=\pr^\F d(u'(a,\cdot))(y_0).\end{equation}
Then $\pr^\F d(u(b,\cdot))(y_0)=\pr^\F d(u'(b,\cdot))(y_0)$. 
\end{enui}
\end{prop}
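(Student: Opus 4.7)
Both parts should reduce to a local analysis in foliation charts along $x$. Since $x([a,b])$ is compact, I can choose finitely many foliation charts $\phi_i\colon U_i\to P_i\x Q_i\sub\R^p\x\R^q$ ($i=0,\ldots,k$; here $p=\dim\F$ and $q=\codim\F$) in which the leaves restrict to the slices $\phi_i^{-1}(P_i\x\{\eta\})$, together with a partition $a=t_0<\cdots<t_{k+1}=b$ such that $x([t_i,t_{i+1}])\sub U_i$. Since $x$ stays in a single leaf, $\phi_i\circ x$ has constant transverse component $\eta_i^0$ on $[t_i,t_{i+1}]$, and I write $\xi_i(t):=\pr_{P_i}\phi_i(x(t))$.

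For statement (\ref{prop:hol:u}), I would build $u$ inductively over the intervals $[t_0,t_1],\ldots,[t_0,t_{k+1}]$, repeatedly shrinking an open neighborhood of $y_0$ in $N$. Fix a chart on $N$ identifying a neighborhood of $y_0$ with an open subset of $\R^{\dim N}$ sending $y_0$ to $0$, and split $d\phi_0(x(a))\circ T=(T_\xi,T_\eta)$. On $[t_0,t_1]\x V_0$ with $V_0$ sufficiently small, set
\[u(t,y):=\phi_0^{-1}\big(\xi_0(t)+T_\xi(y),\,\eta_0^0+T_\eta(y)\big).\]
The transverse coordinate is independent of $t$, so each $u(\cdot,y)$ stays in a single plaque, hence a single leaf, giving (\ref{eq:u y 0}--\ref{eq:d u a}) on the first interval. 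Inductively, assume $u$ is defined on $[t_0,t_i]\x V_{i-1}$ with the required properties; shrink $V_{i-1}$ to $V_i$ so that $u(t_i,V_i)\sub U_i$ (possible by continuity of $u$ at $(t_i,y_0)$), and extend to $[t_i,t_{i+1}]\x V_i$ by
\[u(t,y):=\phi_i^{-1}\Big(\xi_i(t)+\pr_{P_i}\phi_i\big(u(t_i,y)\big)-\xi_i(t_i),\,\pr_{Q_i}\phi_i\big(u(t_i,y)\big)\Big).\]
At $t=t_i$ this reproduces $u(t_i,y)$, so the extension is continuous; it also keeps $u(\cdot,y_0)=x$ and keeps the transverse coordinate of $\phi_i\circ u(\cdot,y)$ constant, so (\ref{eq:u t y}) persists. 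Differentiability at $y_0$ for each fixed $t$ is inherited from that of $y\mapsto u(t_i,y)$.

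For statement (\ref{prop:hol:u u'}), the leaf-preservation condition (\ref{eq:u t y}) forces, within each chart $U_i$, the identity $\pr_{Q_i}\phi_i\circ u(t,y)=\pr_{Q_i}\phi_i\circ u(t_i,y)$ for all $t\in[t_i,t_{i+1}]$ and $y$ in a suitable neighborhood of $y_0$. Differentiating at $y_0$ gives $d(\pr_{Q_i}\phi_i\circ u(t,\cdot))(y_0)=d(\pr_{Q_i}\phi_i\circ u(t_i,\cdot))(y_0)$. Since $d(\pr_{Q_i}\phi_i)(x(t))$ has kernel exactly $T_{x(t)}\F$, it factors through an isomorphism $\bar\eta_t^{(i)}\colon N_{x(t)}\F\to\R^q$, and the identity rewrites as
\[\pr^\F du(t,\cdot)(y_0)=\big(\bar\eta_t^{(i)}\big)^{-1}\bar\eta_{t_i}^{(i)}\circ\pr^\F du(t_i,\cdot)(y_0),\]
with the same formula for $u'$. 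Subtracting, the difference $\pr^\F du(t,\cdot)(y_0)-\pr^\F du'(t,\cdot)(y_0)$ is transported by a common invertible linear map on each $[t_i,t_{i+1}]$, so if it vanishes at $t_0=a$ then it vanishes at $t_{k+1}=b$. As a byproduct, the composition of the chart maps $(\bar\eta_{t_{i+1}}^{(i)})^{-1}\bar\eta_{t_i}^{(i)}$ provides an intrinsic description of $\hol^\F_x$.

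The main obstacle is bookkeeping: I need to verify that the successive shrinkings leave a common non-empty neighborhood on which every chart formula applies simultaneously, and that the formula in chart $\phi_i$ really matches the one from chart $\phi_{i-1}$ at $t=t_i$. Both are immediate from the recipe, since the formula at each stage is defined directly in terms of the value $u(t_i,\cdot)$ produced at the previous stage; the conceptual content of (\ref{prop:hol:u u'}) --- that the transverse coordinate in a foliation chart is a leaf invariant --- is essentially tautological once the charts are fixed.
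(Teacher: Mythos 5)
Your overall strategy --- cover $x([a,b])$ by finitely many foliation charts, subdivide $[a,b]$, and argue chart by chart --- is a compactness-based variant of what the paper does via a connectedness argument (its Lemma \ref{le:R open}); the two implementations are equivalent for a one-parameter family. The construction in part (\ref{prop:hol:u}) is essentially sound, with one issue you acknowledge but do not resolve: the proposition requires $u$ defined on all of $[a,b]\times N$, whereas the successive shrinkings $V_0\supset V_1\supset\cdots$ only yield $u$ on $[a,b]\times V_{k+1}$ for a small neighborhood $V_{k+1}$ of $y_0$. The fix, used in the paper's reflexivity step, is to cut off $T$ by a bump function so that $u(a,\cdot)\equiv x(a)$ outside a compact subset of $N$, after which one can set $u(t,y):=x(t)$ away from $y_0$; you should say how you globalize, since conditions (\ref{eq:u y 0}) and (\ref{eq:u t y}) are required for all $y\in N$.

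The genuine gap is in part (\ref{prop:hol:u u'}), where you assert that (\ref{eq:u t y}) ``forces'' $\pr_{Q_i}\phi_i\circ u(t,y)=\pr_{Q_i}\phi_i\circ u(t_i,y)$ and characterize this as ``essentially tautological'' because ``the transverse coordinate in a foliation chart is a leaf invariant.'' This is false as stated: the transverse coordinate is constant on a \emph{plaque}, not on a leaf. A leaf may meet a chart $U_i$ in infinitely many plaques with distinct (even dense) transverse coordinates --- the Kronecker foliation of $\TT^2$ is the standard example --- and hypothesis (\ref{eq:u t y}) only keeps $u(\cdot,y)$ in a fixed leaf, not in a fixed plaque, so it does not by itself constrain the transverse coordinate as a function of $t$. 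To conclude constancy one must first show that $\pr_{Q_i}\phi_i(U_i\cap F)$ is \emph{countable}; this is the paper's Lemma \ref{le:const}, whose proof uses second countability of the leaf topology. One then observes that a continuous map from the connected interval $[t_i,t_{i+1}]$ into a countable subset of $\R^q$ is necessarily constant. This countability argument is precisely the non-trivial content that makes the linear holonomy well-defined, and your proposal glosses over it. The remainder of your part (\ref{prop:hol:u u'}) argument --- transporting the difference $\pr^\F du(t,\cdot)(y_0)-\pr^\F du'(t,\cdot)(y_0)$ by the common invertible maps $\big(\bar\eta_t^{(i)}\big)^{-1}\bar\eta_{t_i}^{(i)}$ --- is correct and matches the paper's Lemma \ref{le:U phi x} in content.
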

For the proof of Proposition \ref{prop:hol} we need the following lemmas.
\begin{lemma}\label{le:R open}Let $X$ be a connected topological space and $R\sub X\x X$ an equivalence relation on $X$. Assume that every equivalence class is open. Then $R=X\x X$. 
\end{lemma}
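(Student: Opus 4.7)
The plan is to show that every equivalence class equals all of $X$, from which $R = X \times X$ follows immediately. The key observation is that the equivalence classes of $R$ form a partition of $X$ into non-empty, pairwise disjoint subsets, each of which is open by hypothesis.

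First I would fix an arbitrary point $x \in X$ and consider its equivalence class $[x] := \{y \in X \mid (x,y) \in R\}$. By assumption $[x]$ is open. Next I would observe that the complement $X \setminus [x]$ is the union $\bigcup_{y \not\sim x} [y]$ of all other equivalence classes (which is indeed the complement because the classes partition $X$). Since each $[y]$ is open by hypothesis, the union $X \setminus [x]$ is open, hence $[x]$ is also closed in $X$.

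Thus $[x]$ is a non-empty clopen subset of the connected space $X$, so $[x] = X$. Since $x \in X$ was arbitrary, every pair $(x,y) \in X \times X$ satisfies $y \in [x]$, i.e.\ $(x,y) \in R$, giving $R = X \times X$. The argument is entirely elementary; there is no real obstacle beyond recognising that "open partition of a connected space" forces a single block.
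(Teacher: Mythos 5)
Your proof is correct, and it is exactly the standard elementary argument that the paper alludes to (the paper's proof consists solely of the sentence ``This follows from an elementary argument''). The clopen-partition-plus-connectedness reasoning is the right way to fill in that gap.
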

\begin{proof}[Proof of Lemma \ref{le:R open}] \setcounter{claim}{0}This follows from an elementary argument. 
\end{proof}
By a foliation chart we mean a pair $(U,\phi)$, where $U\sub M$ is an open subset and $\phi:U\to\R^n$ is a smooth chart satisfying $d\phi(x_0)T_{x_0}\F=\{0\}\x\R^k$, for every $x_0\in U$. We denote by $\pr_1:\R^n=\R^{n-k}\x\R^k\to \R^{n-k}$ and $\pr_2:\R^n\to\R^k$ the canonical projections.
\begin{lemma}\label{le:const} Let $F\sub M$ be a leaf of $\F$ and $(U,\phi)$ a foliation chart. Then the subset $\pr_1\circ\phi(U\cap F)\sub\R^{n-k}$ is at most countable.  
\end{lemma}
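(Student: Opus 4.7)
The plan is to prove this by exhibiting $F\cap U$ as an at most countable union of plaques, which is accomplished by showing that $F$ (in the leaf topology) admits a countable cover by plaques coming from a fixed countable collection of foliation charts. The key ingredients are (i) second-countability of $M$, (ii) second-countability of each plaque (being $\iso\R^k$), and (iii) connectedness of $F$ in the leaf topology.

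First, since foliation charts form a base for the topology of the second-countable manifold $M$, I would extract a countable family of foliation charts $(B_i,\phi_i)_{i\in\N}$ covering $M$, with $(B_0,\phi_0)=(U,\phi)$. Fix $x_0\in F\cap U$ and let $P_0\sub F$ be the plaque of $F$ in $U$ through $x_0$. Define $\mathcal P$ to be the smallest family of plaques (of $F$ in some $B_i$) containing $P_0$ and closed under overlap: if $P\in\mathcal P$ and $P'$ is a plaque of $F$ in some $B_j$ with $P\cap P'\neq\empty$, then $P'\in\mathcal P$. Setting $\mathcal P_0:=\{P_0\}$ and inductively adjoining all plaques overlapping members of $\mathcal P_{n}$ to obtain $\mathcal P_{n+1}$, we get $\mathcal P=\bigcup_n\mathcal P_n$.

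Next I would show that $\mathcal P$ is countable. The crucial observation is that for any plaque $P$ (diffeomorphic to an open subset of $\R^k$ via $\pr_2\circ\phi_i$, hence second countable) and any chart $B_j$, the open set $P\cap B_j\sub P$ has at most countably many connected components, each of which lies in a single plaque of $F$ in $B_j$. So $P$ meets at most countably many plaques in each $B_j$, and hence at most countably many plaques overall. By induction each $\mathcal P_n$ is countable, so $\mathcal P$ is.

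Then I would show $F=\bigcup\mathcal P$ using that $F$ is connected in its leaf topology. The set $F':=\bigcup\mathcal P$ is open in $F$ as a union of plaques (which are open in the leaf topology). Its complement $F\wo F'$ is also open: for $x\in F\wo F'$ pick any plaque $Q$ of $F$ in some $B_i$ through $x$; if $Q$ met $F'$ then by closure of $\mathcal P$ under overlap $Q$ would lie in $\mathcal P$, forcing $x\in F'$, contradiction. Since $x_0\in F'$ and $F$ is connected, $F'=F$. Finally, $F\cap U=\bigcup_{P\in\mathcal P}(P\cap U)$, and for each $P\in\mathcal P$ the set $P\cap U$ is open in $P\iso\R^k$, hence meets only countably many plaques of $F$ in $U$; since $\mathcal P$ is countable, so is the set of plaques of $F$ in $U$, which is in bijection with $\pr_1\circ\phi(U\cap F)$.

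The main obstacle is the interplay between the ambient topology on $M$ (for which plaques are not in general separated) and the leaf topology on $F$ (where plaques are the basic open sets): one must leverage connectedness with respect to the latter while doing countability bookkeeping with respect to the former, which is exactly what the plaque-chain argument accomplishes.
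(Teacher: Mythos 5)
Your argument is correct, but it takes a genuinely different (and more self-contained) route than the paper. The paper disposes of the lemma quickly by citing Molino (\cite{Mol}, Lemma 1.3, p.~11) for the fact that the leaf topology $\tau^\F_F$ is second countable; since $U\cap F$ is open in $\tau^\F_F$, it is a countable union of basic plaques, on each of which $\pr_1\circ\phi$ is constant by compatibility of foliation charts, and the conclusion is immediate. Your plaque-chain argument instead reproves this second countability from scratch: starting from a countable atlas of foliation charts $(B_i,\phi_i)$, you build the family $\mathcal P$ of plaques reachable from a base plaque $P_0$ by overlapping chains, verify $\mathcal P$ is countable (an open subset of a plaque, being second countable, has countably many components, each contained in a single plaque of a neighboring chart), and then use connectedness of $F$ in the leaf topology to conclude $F=\bigcup\mathcal P$. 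This is precisely the classical proof of Molino's lemma, so you have in effect inlined the citation. One step worth spelling out more explicitly is the claim that each connected component of $P\cap B_j$ lies in a single plaque of $B_j$: this is where compatibility of foliation charts is used (the transition map $\phi_j\circ\phi_i^{-1}$ carries plaques to plaques, so $\pr_1\circ\phi_j$ is locally constant on $P\cap B_j$); the same compatibility is also what you need, and implicitly invoke, in your final step identifying $\pr_1\circ\phi(U\cap F)$ with the set of plaques of $F$ in $U$. With that noted, your proof is sound; the paper's is shorter because it outsources the hardest step to the literature.
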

\begin{proof}[Proof of Lemma \ref{le:const}]\setcounter{claim}{0} Let $(M,\F)$ be a foliated manifold. By definition, the leaf topology on $F$ is the topology $\tau^\F_F$ generated by the sets $\phi^{-1}(\{0\}\x\R^k)$, where $(U,\phi)\in\F$ is such that $\phi^{-1}(\{0\}\x\R^k)\sub F$. It is second countable, see for example Lemma 1.3. on p. 11 in the book \cite{Mol}. It follows that there exists a countable collection of surjective foliation charts $\phi_i:U_i\to\R^n$ $(i\in\N)$, such that $\big(\phi_i^{-1}(\{0\}\x\R^k)\big)_{i\in\N}$ is a basis for $\tau^\F_F$. Let $(U,\phi)\in\F$. Then $U\cap F\in\tau^\F_F$, and therefore there exists a subset $S\sub\N$ such that $U\cap F=\bigcup_{i\in S}U_i$. For each $i\in S$ compatibility of $\phi$ and $\phi_i$ implies that $\phi^\xi$ is constant on $U_i$. It follows that $\phi^\xi(U\cap F)\sub\R^{n-k}$ is at most countable. The statement of Lemma \ref{le:const} follows from this.
\end{proof}
\begin{lemma} \label{le:U phi x} Let $M,\F,F,a,b,N$ and $y_0$ be as above, and $u\in C\big([0,1]\x[a,b]\x N,F\big)$ be such that (\ref{eq:u t},\ref{eq:u u' a}) hold and $u(s,i,y_0)=u(0,i,y_0)$, for every $s\in[0,1]$ and $i=0,1$. If there exists a surjective foliation chart $(U,\phi)$ such that $u(s,t,y_0)\in U$ for every $s\in[0,1]$ and $t\in[a,b]$, then $\pr^\F d(u(0,b,\cdot))(y_0)=\pr^\F d(u(1,b,\cdot))(y_0)$. 
\end{lemma}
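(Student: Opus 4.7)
The plan is to work in the foliation chart $(U,\phi)$ and reduce the statement to a transverse-coordinate calculation. Writing $\phi:U\to\R^{n-k}\x\R^k$ so that $d\phi(x)(T_x\F)=\{0\}\x\R^k$, decompose $\phi\circ u(s,t,y)=(A(s,t,y),B(s,t,y))$ wherever defined. Under this identification $\pr^\F$ corresponds to the projection onto the first factor, so the goal reduces to showing $\partial_yA(0,b,\cdot)(y_0)=\partial_yA(1,b,\cdot)(y_0)$.

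First I will use a tube-lemma argument. The set $\{(s,t,y)\,|\,u(s,t,y)\in U\}$ is open in $[0,1]\x[a,b]\x N$ and contains the compact slice $[0,1]\x[a,b]\x\{y_0\}$, so there is an open neighborhood $V$ of $y_0$ in $N$ with $u([0,1]\x[a,b]\x V)\sub U$. Hence $A$ is well defined and continuous on $[0,1]\x[a,b]\x V$.

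The crux, and the only nontrivial step, is to show that for each fixed $(s,y)\in[0,1]\x V$ the function $t\mapsto A(s,t,y)$ is constant on $[a,b]$. By the leafwise hypothesis (the natural homotopy-parameter version of (\ref{eq:u t y})), the path $t\mapsto u(s,t,y)$ lies in the single leaf $\F_{u(s,a,y)}$ as well as in $U$, so $A(s,\cdot,y)$ takes values in $\pr_1\circ\phi(U\cap\F_{u(s,a,y)})$. This set is at most countable by Lemma \ref{le:const}, and every countable subset of $\R^{n-k}$ is totally disconnected (any countable metric space is), so a continuous map from the connected interval $[a,b]$ into it must be constant.

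Consequently $A(s,b,y)=A(s,a,y)$ on $[0,1]\x V$. Differentiating in $y$ at $y_0$ using the differentiability hypothesis (\ref{eq:u t}) gives $\pr^\F d(u(s,b,\cdot))(y_0)=\pr^\F d(u(s,a,\cdot))(y_0)$ for every $s\in[0,1]$; the comparison of left-hand sides at different $s$-values makes sense because the assumption $u(s,b,y_0)=u(0,b,y_0)$ pins the target $N_{u(s,b,y_0)}\F$. The right-hand side is independent of $s$ by the adapted version of (\ref{eq:u u' a}), hence so is the left-hand side, yielding the claimed equality at $s=0,1$. The main obstacle is isolated in the constancy-in-$t$ claim above, which rests on Lemma \ref{le:const} together with the elementary topological fact about continuous maps into countable sets; everything else is routine chart bookkeeping.
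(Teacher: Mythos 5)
Your proof is correct and follows essentially the same route as the paper: work in the chart, apply the tube lemma around $[0,1]\times[a,b]\times\{y_0\}$, use Lemma~\ref{le:const} and connectedness to show $\pr_1\circ\phi\circ u$ is constant in $t$, then differentiate at $y_0$ and invoke~(\ref{eq:u u' a}). The only cosmetic difference is that the paper delegates the constancy-in-$t$ step to Lemma~\ref{le:pr 1 d phi} (and phrases the chart comparison via $\Psi_{x_0}$ from Remark~\ref{rmk:Psi x 0}), whereas you reprove that step inline; the underlying mathematics is identical.
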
 
For the proof of Lemma \ref{le:U phi x} we need the following.
\begin{lemma}\label{le:pr 1 d phi} Let $M,\F,a,b,N$ and $y_0$ be as above, $(U,\phi)$ a foliation chart, and $u\in C\big([a,b]\x N,U\big)$ be such that (\ref{eq:u y 0},\ref{eq:u t y},\ref{eq:u t}) hold. Then $\pr_1d\phi d(u(t,\cdot))(y_0)=\pr_1d\phi d(u(a,\cdot))(y_0)$, for every $t\in[a,b]$. 
\end{lemma}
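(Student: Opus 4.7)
\begin{pf}[Plan of proof.]
The plan is to show that for every fixed $y\in N$ the map $t\mapsto\pr_1\circ\phi\circ u(t,y)$ is constant on $[a,b]$, and then differentiate this pointwise identity at $y_0$.

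First I would fix $y\in N$ and observe that, by hypothesis (\ref{eq:u t y}), the continuous path $[a,b]\ni t\mapsto u(t,y)\in U$ takes values in the single leaf $F_y:=\F_{u(a,y)}$ of $\F$. Composing with $\pr_1\circ\phi$ yields a continuous map $[a,b]\to\R^{n-k}$ whose image lies in the subset $\pr_1\circ\phi(U\cap F_y)$, which by Lemma \ref{le:const} is at most countable. Since $[a,b]$ is connected, its image under a continuous map into an at-most-countable subset of $\R^{n-k}$ must be a single point. Hence $\pr_1\circ\phi\circ u(t,y)=\pr_1\circ\phi\circ u(a,y)$ for every $t\in[a,b]$.

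Since $y\in N$ was arbitrary, the two functions $y\mapsto\pr_1\circ\phi\circ u(t,y)$ and $y\mapsto\pr_1\circ\phi\circ u(a,y)$ coincide on all of $N$. By (\ref{eq:u t}), both $u(t,\cdot)$ and $u(a,\cdot)$ are differentiable at $y_0$, so the chain rule applied to the smooth map $\pr_1\circ\phi$ yields differentiability of both compositions at $y_0$ with equal differentials. Unpacking the chain rule then gives $\pr_1\,d\phi\,d(u(t,\cdot))(y_0)=\pr_1\,d\phi\,d(u(a,\cdot))(y_0)$, which is the claimed identity.

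The only nontrivial ingredient is the countability statement of Lemma \ref{le:const}; once it is invoked, the rest is a combination of connectedness of $[a,b]$ and the chain rule at a single point. I do not anticipate any substantial obstacle beyond citing Lemma \ref{le:const} cleanly.
\end{pf}
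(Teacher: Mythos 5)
Your argument is correct and matches the paper's proof: invoke Lemma \ref{le:const} to see that the connected image $\pr_1\circ\phi\circ u([a,b]\times\{y\})$ is at most countable, hence a single point, and then differentiate the resulting pointwise identity at $y_0$. The only cosmetic difference is that the paper first restricts to a neighborhood $V$ of $y_0$ (a leftover from a context where $u$ did not already map into $U$), which is unnecessary here since the hypothesis gives $u\in C([a,b]\times N,U)$ globally, exactly as you use.
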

\begin{proof}[Proof of Lemma \ref{le:pr 1 d phi}]\setcounter{claim}{0} We choose an open neighborhood $V\sub N$ of $y_0$, such that $u([a,b]\x V)\sub U$ and $u'([a,b]\x V)\sub U$. Let $y\in V$. Since $u([a,b]\x\{y\})=\{\F_{u(a,y)}\}$ and $\pr_1\circ\phi\circ u([a,b]\x\{y\})\sub\R^{n-k}$ is connected, it follows from Lemma \ref{le:const} that $\pr_1\circ\phi\circ u(t,y)=\pr_1\circ\phi\circ u(a,y)$, for every $t\in[a,b]$. The statement of Lemma \ref{le:pr 1 d phi} follows from this.
\end{proof}
\begin{rmk}\label{rmk:Psi x 0} Let $(U,\phi)$ be a foliation chart and $x_0\in U$. Then there exists a unique linear isomorphism $\Psi_{x_0}\Colon \R^{n-k}\to N_{x_0}\F=T_{x_0}M/T_{x_0}\F$ satisfying $\pr^\F_{x_0}=\Psi_{x_0}\pr_1d\phi(x_0)$. To see this, observe that the map $\pr_1d\phi(x_0)\Colon T_{x_0}M\to\R^{n-k}$ is surjective and has kernel $T_{x_0}\F$. 
\end{rmk}

\begin{proof}[Proof of Lemma \ref{le:U phi x}]\setcounter{claim}{0} For $x_0\in U$ we define define $\Psi_{x_0}$ as in Remark \ref{rmk:Psi x 0}. It follows from Lemma \ref{le:pr 1 d phi} that
\[\pr^\F d(u(b,\cdot))(y_0)=\Psi_{x(b)}\pr_1d\phi d(u(a,\cdot))(y_0)=\Psi_{x(b)}\Psi_{x(a)}^{-1}\pr^\F d(u(a,\cdot))(y_0),\]
and $\pr^\F d(u'(b,\cdot))(y_0)=\Psi_{x(b)}\Psi_{x(a)}^{-1}\pr^\F d(u'(a,\cdot))(y_0)$. Using equality (\ref{eq:u u' a}), the conclusion of (\ref{prop:hol:u u'}) follows. This proves Lemma \ref{le:U phi x}.
\end{proof}
We will use the following notations and conventions. Let $a,b\in\R$. If $a\leq b$ then we equip the interval $[a,b]$ with the positive orientation. If $a>b$ then we define $[a,b]:=[b,a]$ and equip this interval with the negative orientation. We call $a$ the initial point and $b$ the end-point of $I$. Let $I$ and $I'$ be closed oriented intervals. We define the equivalence relation $\sim$ on $I\disj I'$ by $t\sim t'$ iff $t=t'$ or $t$ is the end-point of $I$ and $t'$ is the initial point of $I'$. Furthermore, we define the connected sum $I\#I'$ to be the oriented topological one-manifold $(I\disj I')/\sim$. Let now $X$ and $Y$ be sets, and $u:I\x Y\to X$ and $u':I'\x Y\to X$ maps. We define the concatenation $u\# u':(I\#I')\x Y\to X$ by $u\#u'([t],y):=u(t,y)$, if $t\in I$, and $u\#u'([t],y):=u'(t,y)$, otherwise. If $r>0$ and $t\in[a,b]$ then we denote $\bar B_r(t):=[t-r,t+r]\cap[a,b]$. 

\begin{proof}[Proof of Proposition \ref{prop:hol}]\setcounter{claim}{0} We prove {\bf statement (\ref{prop:hol:u})}. We define $\wt R$ to be the set of all $(t_1,t_2)\in[a,b]\x[a,b]$ such that for every linear map $T:T_{y_0}N\to T_{x(t_1)}M$ there exists a map $u\in C\big([t_1,t_2]\x N,M\big)$ such that $u(\cdot,y_0)=x|_{[t_1,t_2]}$, $u(t,y)\in\F_{u(t_1,y)}$, for every $t\in[t_1,t_2]$ and $y\in N$, $u(t,\cdot)$ is differentiable at $y_0$, for every $t\in[t_1,t_2]$, and $d(u(t_1,\cdot))(y_0)=T$. This is a relation on $[a,b]$. 
\begin{claim}\label{claim:wt R} The relation $\wt R$ is reflexiv and transitive. 
\end{claim}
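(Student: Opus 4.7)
For $t \in [a, b]$, the interval $[t, t] = \{t\}$ is a single point, so a required map $u \in C(\{t\} \times N, M)$ is just a continuous function $N \to M$, condition (\ref{eq:u t y}) is vacuous, and (\ref{eq:u y 0}) reduces to $u(t, y_0) = x(t)$. Given a linear map $T : T_{y_0} N \to T_{x(t)} M$, I would pick charts of $M$ around $x(t)$ and of $N$ around $y_0$ and set $u(t, \cdot)$ to be the map that sends $y_0$ to $x(t)$ with differential $T$ (via the chart composition), extended continuously on the rest of $N$. This gives $(t, t) \in \wt R$.

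\textbf{Transitivity, main plan.} Suppose $(t_1, t_2), (t_2, t_3) \in \wt R$ and let $T : T_{y_0} N \to T_{x(t_1)} M$. I would first apply the defining property of $\wt R$ at $(t_1, t_2)$ with $T$ to get $u_1 \in C([t_1, t_2] \times N, M)$ satisfying (\ref{eq:u y 0},\ref{eq:u t y},\ref{eq:u t},\ref{eq:d u a}) with $d(u_1(t_1, \cdot))(y_0) = T$, then apply it at $(t_2, t_3)$ with $T' := d(u_1(t_2, \cdot))(y_0) : T_{y_0}N \to T_{x(t_2)}M$ to obtain $u_2 \in C([t_2, t_3] \times N, M)$ with $d(u_2(t_2, \cdot))(y_0) = T'$. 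The sought-after map $u : [t_1, t_3] \times N \to M$ will be built as the concatenation $u_1 \# \tilde u_2$, where $\tilde u_2$ is a modification of $u_2$ satisfying the pointwise matching condition $\tilde u_2(t_2, \cdot) = u_1(t_2, \cdot)$ on all of $N$.

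\textbf{Main obstacle and resolution.} Constructing $\tilde u_2$ is the heart of the argument: the defining property of $\wt R$ at $(t_2, t_3)$ only guarantees that $u_2(t_2, \cdot)$ matches $u_1(t_2, \cdot)$ to first order at $y_0$, whereas continuity of the concatenation at $t_2$ demands pointwise equality on all of $N$. To resolve this I would fix a surjective foliation chart $(U, \phi)$ around $x(t_2)$ and, in its product form $\phi : U \to \R^{n-k} \times \R^k$, define
\[\phi(\tilde u_2(t, y)) := \big(\pr_1 \phi(u_1(t_2, y)),\, \pr_2 \phi(u_2(t, y)) - \pr_2 \phi(u_2(t_2, y)) + \pr_2 \phi(u_1(t_2, y))\big)\]
on the set in $[t_2, t_3] \times V$ (where $V$ is a small neighborhood of $y_0$) where all relevant quantities lie in $U$. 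By Lemma \ref{le:pr 1 d phi} the transverse coordinate is constant in $t$, so $\tilde u_2(\cdot, y)$ lies in the leaf $\F_{u_1(t_2, y)}$; by construction $\tilde u_2(t_2, y) = u_1(t_2, y)$, and $\tilde u_2(t, y_0) = x(t)$ is preserved because the corrections vanish at $y_0$; differentiability at $y_0$ with the correct derivative survives because the two corrections cancel to first order in $y$ (using that $u_2(t_2, \cdot)$ and $u_1(t_2, \cdot)$ have the same differential $T'$ at $y_0$). Away from a slightly larger neighborhood of $y_0$ I would take the constant-in-$t$ path $\tilde u_2(t, y) := u_1(t_2, y)$, which trivially satisfies the leafwise condition, and interpolate between the two definitions via a cutoff in $y$. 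For intervals long enough that $x([t_2, t_3])$ escapes $U$, one covers $x([t_2, t_3])$ by finitely many foliation charts and iterates the chart-by-chart modification. The resulting $\tilde u_2$ fulfills (\ref{eq:u y 0},\ref{eq:u t y},\ref{eq:u t}) on $[t_2, t_3] \times N$, and $u_1 \# \tilde u_2$ provides the desired extension, establishing $(t_1, t_3) \in \wt R$.
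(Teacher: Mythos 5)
Your proof of reflexivity is essentially identical to the paper's: both construct $u(t_1,\cdot)$ in local coordinates with a cutoff so that it has differential $T$ at $y_0$.

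On transitivity, you have actually spotted a real subtlety that the paper's proof does not explicitly address. The paper simply says: choose $u$ on $[t_1,t_2]$ and $v$ on $[t_2,t_3]$ as in the definition of $\wt R$ (with $v$ having initial differential $d(u(t_2,\cdot))(y_0)$), and then asserts that $u\# v$ satisfies the conditions. But, as you observe, the definition of $\wt R$ only controls $v(t_2,\cdot)$ at $y_0$ and its first-order jet there; it does not force $v(t_2,y)=u(t_2,y)$ pointwise. Without that, the concatenation $u\# v$ need not be continuous at $(t_2,y)$ for $y\neq y_0$, and moreover $v(t,y)\in\F_{v(t_2,y)}$ need not equal $\F_{u(t_1,y)}$, so even the leafwise condition fails. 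Your plan — modify $u_2$ to $\tilde u_2$ in a foliation chart so that $\tilde u_2(t_2,\cdot)=u_1(t_2,\cdot)$ pointwise, shifting the leaf coordinate by $\pr_2\phi(u_1(t_2,y))-\pr_2\phi(u_2(t_2,y))$ and using a cutoff in $y$ to go over to the constant map $u_1(t_2,\cdot)$ away from $y_0$ — is the right kind of repair, and the first-order-cancellation argument keeps differentiability at $y_0$.

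The one place your fix is not yet watertight is the covering-and-iteration step: your formula for $\tilde u_2(t,y)$ involves $u_2(t,y)$ for \emph{all} $t\in[t_2,t_3]$, and since $u_2(t,y_0)=x(t)$ can leave the chart $U$, a single chart cannot cover the full time interval near $y_0$; at the same time, confining the modification to a short window $[t_2,t_2+\delta]$ and then reverting to $u_2$ would break the leafwise condition, because the transverse coordinate is forced to stay $\pr_1\phi(u_1(t_2,y))$ on the whole interval once it has been shifted. So the iteration over charts is doing genuine work and needs to be carried out carefully, e.g.\ by propagating the $y$-dependent transverse shift from chart to chart along $x([t_2,t_3])$. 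In short: you have correctly diagnosed a gluing issue that the paper's terse proof elides, your repair strategy is sound, but the multi-chart patching argument needs to be spelled out before the proof is complete.
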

\begin{proof}[Proof of Claim \ref{claim:wt R}] To prove reflexivity, let $t_1\in[a,b]$. We show that $(t_1,t_1)\in\wt R$. Let $T:T_{y_0}N\to T_{x(t_1)}M$ be a linear map. We choose local parametrizations $\phi:\R^n\to N$ and $\psi:\R^m\to M$ such that $\phi(0)=y_0$ and $\psi(0)=x(t_1)$, and a smooth function $\rho:\R^n\to\R$ with compact support, such that $\rho=1$ in a neighborhood of $0$. We define $u:\{t_1\}\x N\to M$ by $u(t_0,y):=\psi\big(\rho\circ\phi^{-1}(y)d\psi(0)^{-1}Td\phi(0)\phi^{-1}(y)\big)$, if $y\in\phi(\R^n)$, and $u(t_1,y):=x(t_1)$, otherwise. This map satisfies the condition in the definition of $\wt R$ with $t_2=t_1$. This proves reflexivity.

To prove transitivity, let $t_1,t_2,t_3\in[a,b]$ be such that $(t_1,t_2)\in\wt R$ and $(t_2,t_3)\in\wt R$, and $T:T_{y_0}N\to T_{x(t_1)}M$ be a linear map. We choose $u$ as in the definition of $\wt R$, and $v$ as in this definition, with $t_1,t_2$ and $T$ replaced by $t_2,t_3$ and $d(u(t_2,\cdot))(y_0)$. Then the map $u\# v$ satisfies the conditions in the definition of $\wt R$, with $t_1,t_2$ replaced by $t_1,t_3$. It follows that $(t_1,t_3)\in\wt R$. This proves transitivity and completes the proof of Claim \ref{claim:wt R}.
\end{proof}
\begin{claim}\label{claim:t 1 open} For every $t_1\in[a,b]$ the set $S_{t_1}:=\big\{t_2\in[a,b]\,\big|\,(t_1,t_2)\in\wt R\big\}$ is open.
\end{claim}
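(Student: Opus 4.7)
The plan is to combine transitivity of $\wt R$ (Claim \ref{claim:wt R}) with a local existence statement. Given $t_2 \in S_{t_1}$, it suffices to produce an open neighborhood $V$ of $t_2$ in $[a,b]$ such that $(t_2, t_3) \in \wt R$ for every $t_3 \in V$; transitivity applied to the pairs $(t_1, t_2)$ and $(t_2, t_3)$ then gives $V \subseteq S_{t_1}$, proving openness.

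For the local statement, choose a surjective foliation chart $\phi : U \to \R^{n-k} \times \R^k$ with $x(t_2) \in U$. Continuity of $x$ yields $\eps > 0$ such that $V := \bar B_\eps(t_2) \cap [a,b]$ satisfies $x(V) \subseteq U$, and Lemma \ref{le:const} forces $\pr_1 \circ \phi \circ x$ to be constant on the connected set $V$ (its image is a connected subset of the at-most-countable set $\pr_1 \circ \phi(U \cap F)$), so every $x(t)$ with $t \in V$ lies in the plaque of $(U,\phi)$ through $x(t_2)$. Now fix $t_3 \in V$ and a linear map $T : T_{y_0} N \to T_{x(t_2)} M$. Using the reflexivity argument from the proof of Claim \ref{claim:wt R}, produce $v_0 \in C(N, M)$ satisfying $v_0(y_0) = x(t_2)$, $v_0$ differentiable at $y_0$ with $dv_0(y_0) = T$, and $v_0 \equiv x(t_2)$ outside a compact neighborhood $K$ of $y_0$ chosen small enough that $v_0(K) \subseteq U$. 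Pick a continuous cutoff $\chi : N \to [0,1]$ with $\chi \equiv 1$ near $y_0$ and support in the interior of $K$, and define $u : V \times N \to M$ by
\[ u(t, y) := \phi^{-1}\bigl(\phi(v_0(y)) + \bigl(0,\ \chi(y)(\pr_2 \phi(x(t)) - \pr_2 \phi(x(t_2)))\bigr)\bigr) \]
for $y \in K$, extended by $u(t, y) := v_0(y) = x(t_2)$ for $y \notin K$. Shrinking $K$ and $\eps$ if necessary guarantees the argument of $\phi^{-1}$ stays in $\phi(U)$, and the two formulas agree where $\chi = 0$, so $u$ is continuous on $V \times N$.

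To verify that $(t_2, t_3) \in \wt R$: the $\pr_1 \phi$-coordinate of $u(t, y)$ equals that of $v_0(y) = u(t_2, y)$, so both points lie in a common plaque and hence $u(t, y) \in \F_{u(t_2, y)}$, giving (\ref{eq:u t y}); since $\chi(y_0) = 1$ and $x(t)$ shares a plaque with $x(t_2)$, direct substitution yields $u(t, y_0) = \phi^{-1}(\pr_1\phi(x(t_2)), \pr_2\phi(x(t))) = \phi^{-1}(\pr_1\phi(x(t)), \pr_2\phi(x(t))) = x(t)$, giving (\ref{eq:u y 0}); differentiability of $u(t, \cdot)$ at $y_0$ (\ref{eq:u t}) follows from differentiability of $v_0$ and $\chi$ at $y_0$; and $u(t_2, \cdot) = v_0$ gives (\ref{eq:d u a}). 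The main subtlety, where the cutoff $\chi$ does most of the work, is arranging all four conditions simultaneously while keeping $u$ continuous and leaf-preserving off $K$; and the identity (\ref{eq:u y 0}) at $y_0$ would fail without the single-plaque property of $x|_V$ provided by Lemma \ref{le:const}.
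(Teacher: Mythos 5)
Your proof is correct, and the core technique is the same as the paper's: work in a surjective foliation chart $(U,\phi)$ around $x(t_2)$, observe via Lemma~\ref{le:const} that $\pr_1\circ\phi\circ x$ is constant on a small interval around $t_2$, and move points along the plaque by translating in the $\pr_2\circ\phi$-direction. Where you differ is in the decomposition. The paper appends a map $v(t,y):=\phi^{-1}\big(\phi\circ u(t_2,y)+\phi\circ x(t)-\phi\circ x(t_2)\big)$ on $[t_2,t_3]\x N$ to the given $u$ on $[t_1,t_2]\x N$ and shows the concatenation $u\# v$ exhibits $(t_1,t_3)\in\wt R$ directly, with no appeal to transitivity. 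You instead establish the genuinely local statement $(t_2,t_3)\in\wt R$ for all $t_3$ in a neighborhood of $t_2$, which requires rebuilding an initial germ $v_0$ with arbitrary prescribed differential $T$ at $y_0$ (as in the reflexivity argument) plus a cutoff, and then compose with $(t_1,t_2)\in\wt R$ by transitivity (Claim~\ref{claim:wt R}). The price is the extra cutoff machinery; the payoff is that your formula only applies $\phi$ to points of $v_0(N)$, which you arrange to lie in $U$, so the map $u$ you construct is manifestly well-defined on all of $V\x N$. By contrast, the paper's formula applies $\phi$ to $u(t_2,y)$, and $u(t_2,y)\in U$ is only guaranteed for $y$ near $y_0$; making the paper's $v$ continuous on all of $[t_2,t_3]\x N$ requires an argument very much like your cutoff, which the paper leaves implicit. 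So your variant is a bit longer but closes a small gap that the paper glosses over.
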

\begin{proof}[Proof of Claim \ref{claim:t 1 open}] Let $t_2\in S_{t_1}$. We choose a map $u\in C\big([t_1,t_2]\x N,M\big)$ as in the definition of $\wt R$ and a pair $(U,\phi)$, where $U\sub M$ is an open neighborhood of $x(t_2)$, and $\phi:U\to\R^m$ is a surjective foliation chart. We also choose a number $\eps>0$ so small that $x(\bar B_\eps(t_2))\sub U$. Let $t_3\in\bar B_\eps(t_2)$. We define $v:[t_2,t_3]\x N\to M$ by $v(t,y):=\phi^{-1}\big(\phi\circ u(t_2,y)+\phi\circ x(t)-\phi\circ x(t_2)\big)$. Since $x([t_2,t_3])\sub F$ and $\phi\circ x([t_2,t_3])\sub\R^{n-k}$ is connected, it follows from Lemma \ref{le:const} that $\pr_1\circ\phi\circ x([t_2,t_3])=\pr_1\circ\phi\circ x(t_2)$. Hence $v(t,y)\in\F_{u(t_2,y)}=\F_{u(t_1,y)}$, for every $t\in[t_2,t_3]$ and $y\in N$. It follows that the concatenation $u\#v$ satisfies the conditions in the definition of $\wt R$, with $t_1,t_2$ replaced by $t_2,t_3$. Therefore, $t_3\in S_{t_1}$. This proves that $S_{t_1}$ is open. This proves Claim \ref{claim:t 1 open}.
\end{proof}
We define $R:=\wt R\cap\big\{(t_1,t_2)\,\big|\,(t_2,t_1)\in\wt R\big\}$. By Claim \ref{claim:wt R} this is an equivalence relation on $[a,b]$. Let $t_1\in[a,b]$. We define $S^{t_1}:=\big\{t_2\in[a,b]\,\big|\,(t_2,t_1)\in\wt R\big\}$. Interchanging the roles of $t_1$ and $t_2$, Claim \ref{claim:t 1 open} implies that $S^{t_1}$ is open. The $R$-equivalence class of $t_1$ equals $S_{t_1}\cap S^{t_1}$ and hence is open. Therefore, by Lemma \ref{le:R open} $R=[a,b]\x[a,b]$. Statement (\ref{prop:hol:u}) follows. 

We prove {\bf assertion (\ref{prop:hol:u u'})}. Let $u$ and $u'$ be as in the hypothesis. We define $\wt R$ to be the set of all pairs $(t_1,t_2)\in[a,b]\x[a,b]$ such that 
\[\pr^\F d(u(t_1,\cdot))(y_0)=\pr^\F d(u'(t_1,\cdot))(y_0)\then\pr^\F d(u(t_2,\cdot))(y_0)=\pr^\F d(u'(t_2,\cdot))(y_0).\]
This is a reflexive and transitive relation on $[a,b]$. 
\begin{claim}\label{claim:S t 1} For every $t_1\in[a,b]$ the set $S_{t_1}:=\big\{t_2\in[a,b]\,\big|\,(t_1,t_2)\in\wt R\big\}$ is open.
\end{claim}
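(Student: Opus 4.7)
The plan is to exploit a foliation chart around $x(t_2)$ together with Lemma \ref{le:pr 1 d phi} to transport the desired equality of normal differentials from $t_2$ to all nearby times $t_3$, via the canonical isomorphisms $\Psi_{x_0}$ of Remark \ref{rmk:Psi x 0}.

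Fix $t_2 \in S_{t_1}$. First I would choose a foliation chart $(U,\phi)$ with $x(t_2)\in U$, and then use continuity of $x$, $u$ and $u'$, together with $u(t_2,y_0)=u'(t_2,y_0)=x(t_2)$, to pick $\eps>0$ and an open neighborhood $V\sub N$ of $y_0$ so that, writing $I:=\bar B_\eps(t_2)$, both $u(I\x V)\sub U$ and $u'(I\x V)\sub U$. Restricting $u$ and $u'$ to $I\x V$ preserves (\ref{eq:u t y}): by transitivity of the leaf relation, $u(t,y)\in\F_{u(a,y)}=\F_{u(t_2,y)}$ for $t\in I$, $y\in V$, and similarly for $u'$. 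Hence the hypotheses of Lemma \ref{le:pr 1 d phi} are satisfied on $I\x V$ with initial time $t_2$, for both maps.

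Now I would combine the conclusion of Lemma \ref{le:pr 1 d phi} with Remark \ref{rmk:Psi x 0}. For every $t_3\in I$ Lemma \ref{le:pr 1 d phi} yields $\pr_1 d\phi\,d(u(t_3,\cdot))(y_0)=\pr_1 d\phi\,d(u(t_2,\cdot))(y_0)$ and analogously for $u'$. Suppose that the hypothesis defining $(t_1,t_3)\in\wt R$ holds, i.e.\ $\pr^\F d(u(t_1,\cdot))(y_0)=\pr^\F d(u'(t_1,\cdot))(y_0)$. Since $t_2\in S_{t_1}$, this forces $\pr^\F d(u(t_2,\cdot))(y_0)=\pr^\F d(u'(t_2,\cdot))(y_0)$, and applying $\Psi_{x(t_2)}^{-1}$ (well-defined by Remark \ref{rmk:Psi x 0}) gives $\pr_1 d\phi\,d(u(t_2,\cdot))(y_0)=\pr_1 d\phi\,d(u'(t_2,\cdot))(y_0)$. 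The two applications of Lemma \ref{le:pr 1 d phi} then transfer this equality to time $t_3$, and applying $\Psi_{x(t_3)}$ gives $\pr^\F d(u(t_3,\cdot))(y_0)=\pr^\F d(u'(t_3,\cdot))(y_0)$. Hence $(t_1,t_3)\in\wt R$, so $I\sub S_{t_1}$, proving openness.

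The only genuine obstacle is the choice of $V$ and $\eps$ so that the restrictions of $u$ and $u'$ land in the chart domain $U$ while still meeting the foliation-confinement hypothesis of Lemma \ref{le:pr 1 d phi}; this is a routine continuity plus compactness argument on $\{t_2\}\x\{y_0\}$, combined with the transitivity observation that the leaf through $u(a,y)$ equals the leaf through $u(t_2,y)$, so restricting the initial time from $a$ to $t_2$ does no harm.
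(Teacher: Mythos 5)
Your proof is correct, and it actually establishes the literal statement more carefully than the paper's own proof of this claim. The paper's proof centers a foliation chart at $x(t_1)$ and invokes Lemma \ref{le:U phi x} to conclude only that $\bar B_\eps(t_1)\sub S_{t_1}$, i.e.\ that $t_1$ is an interior point of $S_{t_1}$; that is weaker than openness (though it suffices for the paper's subsequent application of Lemma \ref{le:R open}, since $\wt R$ is transitive, so for $t_2\in S_{t_1}$ one has $S_{t_2}\sub S_{t_1}$ and $S_{t_2}$ contains a neighborhood of $t_2$ by the same argument). You instead take an arbitrary $t_2\in S_{t_1}$, center the chart at $x(t_2)$, and unpack Lemma \ref{le:U phi x} into its constituents (Lemma \ref{le:pr 1 d phi} together with the isomorphisms $\Psi_{x_0}$ from Remark \ref{rmk:Psi x 0}); the implication structure of $\wt R$ and the membership $t_2\in S_{t_1}$ are used to bridge $t_1$ and $t_2$ before transporting the equality of normal differentials from $t_2$ to nearby $t_3$. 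Your handling of the domain restriction (the continuity/compactness choice of $V$ and $\eps$, and the observation $\F_{u(a,y)}=\F_{u(t_2,y)}$ so that (\ref{eq:u t y}) holds with initial time $t_2$) is right. Same ingredients as the paper, but the chart is centered at the correct point, so the argument directly yields a neighborhood of an arbitrary $t_2\in S_{t_1}$ inside $S_{t_1}$, which is openness as claimed.
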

\begin{proof}[Proof of Claim \ref{claim:S t 1}] We choose $\eps>0$ so small that there exists a surjective foliation chart $(U,\phi)$ such that $x(\bar B_\eps(t_1))\sub U$. Let $t_2\in\bar B_\eps(t_1)$. Lemma \ref{le:U phi x} with $a,b,u$ replaced by $t_1,t_2,u|_{[t_1,t_2]}$ implies that $t_2\in S_{t_1}$. This proves Claim \ref{claim:S t 1}.
\end{proof}
We define $R:=\wt R\cap\big\{(t_1,t_2)\,\big|\,(t_2,t_1)\in\wt R\big\}$. This is an equivalence relation on $[a,b]$. Let $t_1\in[a,b]$. We define $S^{t_1}:=\big\{t_2\in[a,b]\,\big|\,(t_2,t_1)\in\wt R\big\}$. Interchanging the roles of $t_1$ and $t_2$, Claim \ref{claim:S t 1} implies that $S^{t_1}$ is open. Since the $R$-equivalence class of $t_1$ equals $S_{t_1}\cap S^{t_1}$, the hypotheses of Lemma \ref{le:R open} are satisfied. It follows that $R=[a,b]\x[a,b]$. Assertion (\ref{prop:hol:u u'}) follows. 

This completes the proof of Proposition \ref{prop:hol}. 
\end{proof}
We define $N:=N_{x(a)}\F$ and $y_0:=0$, and we canonically identify $T_0\big(N_{x(a)}\F\big)=N_{x(a)}\F$. We choose a linear map $T:N_{x(a)}\F\to T_{x(a)}M$, such that $\pr^\F T=\id_{N_{x(a)}\F}$, and a map $u\in C^\infty\big([a,b]\x N_{x(a)}\F,M\big)$ such that (\ref{eq:u y 0},\ref{eq:u t y},\ref{eq:u t},\ref{eq:d u a}) hold. We define
\[\hol^\F_x:=\pr^\F d(u(b,\cdot))(0)\Colon N_{x(a)}\F(=T_0(N_{x(a)}\F))\to N_{x(b)}\F.\]
It follows from Proposition \ref{prop:hol} that this map is well-defined. Consider now the set
\[\X^\F:=\big\{x\in C([0,1],M)\,\big|\,\exists F:\textrm{ leaf of }\F:\,x([0,1])\sub F\big\}.\]
We define the map $\hol^\F:\X^\F\to\GL(N\F)$ by $\hol^\F(x):=\hol^\F_x$. 
\begin{rmk}\label{rmk:const concat} This map is a morphism of groupoids. To see this, observe that if $x:[a,b]\to M$ is constant then $\hol^\F_x=\id_{N_{x(a)}\F}$. Furthermore, If $a\leq b$ and $a'\leq b'$ are numbers, $x\in C([a,b],F)$ and $x'\in C([a',b'],M)$ are such that $x(b)=x'(a')$, then $\hol^\F_{x\# x'}=\hol^\F_{x'}\hol^\F_x$. These assertions follow immediately from the definition of the holonomy along a path. 
\end{rmk}
Denoting by $\bar x$ the map $x$ together with the reversed orientation of $[a,b]$, it follows from Remark \ref{rmk:const concat} that $\hol^\F_{\bar x}=(\hol^\F_x)^{-1}$. 
\begin{prop}\label{prop:hol transport} The map $\hol^\F$ is continuous with respect to the compact open topology on $\X^\F$.
\end{prop}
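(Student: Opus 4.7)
The plan is to verify continuity at an arbitrary $x_0\in\X^\F$ by decomposing the path into finitely many pieces that each sit inside a single foliation chart, and then using that within such a chart the linear holonomy is explicitly determined by (and depends continuously on) the two endpoints.

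First, using compactness of $x_0([0,1])$, I would choose a partition $0=s_0<s_1<\cdots<s_m=1$ together with surjective foliation charts $(U_i,\phi_i)$ satisfying $x_0([s_{i-1},s_i])\sub U_i$ and, shrinking the $U_i$ if necessary, a continuous trivialization $\tau_i$ of the normal bundle $N\F|_{U_i}$. By Lemma \ref{le:compact open}, the evaluations $\X^\F\ni x\mapsto x(s_i)\in M$ and the restrictions $\X^\F\ni x\mapsto x|_{[s_{i-1},s_i]}\in C([s_{i-1},s_i],M)$ are continuous in the compact-open topology. Since the set of maps $[s_{i-1},s_i]\to U_i$ is open in $C([s_{i-1},s_i],M)$, there is a neighbourhood $\mathcal{V}$ of $x_0$ in $\X^\F$ such that $x([s_{i-1},s_i])\sub U_i$ for every $x\in\mathcal{V}$ and every $i=1,\ldots,m$.

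Next, for $y\in U_i$ I would invoke the linear isomorphism $\Psi^i_y$ of Remark \ref{rmk:Psi x 0}, characterized by $\pr^\F_y=\Psi^i_y\,\pr_1d\phi_i(y)$; since $\phi_i$ is smooth, $y\mapsto\Psi^i_y$ is continuous. Repeating the computation in the proof of Lemma \ref{le:U phi x} with the canonical choice $T\Colon N_{x(s_{i-1})}\F\to T_{x(s_{i-1})}M$ satisfying $\pr^\F T=\id$, one obtains
\[\hol^\F_{x|_{[s_{i-1},s_i]}}=\Psi^i_{x(s_i)}\big(\Psi^i_{x(s_{i-1})}\big)^{-1},\]
for every $x\in\mathcal{V}$. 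Read through the trivialization $\tau_i$, the assignment $x\mapsto\hol^\F_{x|_{[s_{i-1},s_i]}}$ therefore factors as the continuous map $x\mapsto(x(s_{i-1}),x(s_i))\in U_i\x U_i$ followed by a continuous bundle map into the appropriate piece of $\Mor(\GL(N\F))$.

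Finally, by Remark \ref{rmk:const concat} we have $\hol^\F_x=\hol^\F_{x|_{[s_{m-1},s_m]}}\circ\cdots\circ\hol^\F_{x|_{[s_0,s_1]}}$, and composition of morphisms is continuous in the topological groupoid $\GL(N\F)$. Consequently $\hol^\F|_{\mathcal{V}}$ is continuous, and since $x_0$ was arbitrary the whole map $\hol^\F$ is continuous. The main subtlety will be to take proper care of the compatibility between the compact-open topology on $\X^\F$ and the fibred topology on $\Mor(\GL(N\F))$; this is handled by working in one trivialization $\tau_i$ at a time and then gluing across adjacent charts via continuity of the groupoid composition.
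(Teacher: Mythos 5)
Your proposal is correct and reaches the conclusion by the same key input as the paper (the explicit chart formula $\hol^\F_{x}=\Psi_{x(b)}\Psi_{x(a)}^{-1}$ for a path contained in one foliation chart, i.e.\ Remark \ref{rmk:hol F continuous}, together with the groupoid property of Remark \ref{rmk:const concat}), but the gluing step is organized differently. You build an explicit finite partition $0=s_0<\dots<s_m=1$ by compactness, shrink to a compact-open neighbourhood $\mathcal V$ on which each piece stays in its chart, and then exhibit $\hol^\F$ on $\mathcal V$ as a finite composition of maps that are visibly continuous. The paper instead avoids any explicit partition: it defines $R\sub[a,b]\x[a,b]$ to be the set of pairs $(t_1,t_2)$ at which $\hol^\F$ is continuous on the restriction $x|_{[t_1,t_2]}$, checks that $R$ is an equivalence relation with open classes (reflexivity and openness from Remark \ref{rmk:hol F continuous}, symmetry and transitivity from Remark \ref{rmk:const concat}), and concludes $R=[a,b]\x[a,b]$ by the connectedness argument Lemma \ref{le:R open}. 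The paper's route is slightly slicker and reuses machinery deployed repeatedly elsewhere in the appendix (Lemmas \ref{le:R open}, \ref{le:U phi x}, Proposition \ref{prop:hol}), whereas your route is more elementary and makes the finite-chain structure of the holonomy explicit, at the small cost of having to say carefully (as you flag at the end) how the compact-open topology on $\X^\F$ interacts with the fibred topology on $\Mor(\GL(N\F))$; your handling of that point via the local trivializations $\tau_i$ and continuity of groupoid composition is adequate.
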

\begin{rmk}\label{rmk:hol F continuous} If $x_0\in\X^\F$ is such that there exists a surjective foliation chart $(U,\phi)$ such that $x_0([a,b])\sub U$ then $\hol^\F$ is continuous at $x_0$. To see this, we define $\Psi_{x_0}$ as in Remark \ref{rmk:Psi x 0}. Let $x\in \X^\F$ be such that $x([a,b])\sub U$. It follows from Lemma \ref{le:pr 1 d phi} that $\hol^\F_x=\Psi_{x(b)}\Psi_{x(a)}^{-1}$. This depends continuously on $x$. 
\end{rmk}
\begin{proof}[Proof of Proposition \ref{prop:hol transport}]\setcounter{claim}{0} Let $x\in\X^\F$. We define $R$ to be the set of all pairs $(t_1,t_2)\in[a,b]\x[a,b]$ such that $\hol^\F$ is continuous at the restriction $x|_{[t_1,t_2]}\in\X^\F$. It follows from Remark \ref{rmk:hol F continuous} that $R$ is a reflexive relation. Remark \ref{rmk:const concat} implies that it is symmetric and transitive. Furthermore, Remarks \ref{rmk:const concat} and \ref{rmk:hol F continuous} imply that the $R$-equivalence classes are open. Therefore, by Lemma \ref{le:R open} we have $R=[a,b]\x[a,b]$. It follows that $\hol^\F$ is continuous at $x$. This proves Proposition \ref{prop:hol transport}.
\end{proof}
\begin{prop}\label{prop:hol homotopy} If $F$ is a leaf of $\F$ and $u\in C\big([0,1]\x[a,b],F\big)$ is such that $u(s,i)=u(0,i)$, for every $s\in[0,1]$ and $i=a,b$, then $\hol^\F_{u(0,\cdot)}=\hol^\F_{u(1,\cdot)}$. 
\end{prop}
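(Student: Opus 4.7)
The plan is to exploit connectedness of $[0,1]$: set $S := \{s \in [0,1] : \hol^\F_{u(s,\cdot)} = \hol^\F_{u(0,\cdot)}\}$ and show $S$ is non-empty (it contains $0$), closed, and open, whence $S = [0,1]$. Closedness is a direct consequence of Proposition \ref{prop:hol transport}: since the endpoints $u(s, a) = u(0,a)$ and $u(s, b) = u(0,b)$ are fixed, the continuous map $s \mapsto \hol^\F_{u(s,\cdot)}$ lands in the Hausdorff space $\Iso(N_{u(0,a)}\F, N_{u(0,b)}\F)$, and the preimage of the singleton $\{\hol^\F_{u(0,\cdot)}\}$ is thus closed.

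The content lies in the openness of $S$. Given $s_0 \in S$, use compactness of $\{s_0\}\times[a,b]$ together with continuity of $u$ to produce a $\delta > 0$ and a partition $a = a_0 < a_1 < \cdots < a_k = b$ such that, writing $I_\delta := (s_0-\delta, s_0+\delta)\cap[0,1]$, each image $u(I_\delta \times [a_i, a_{i+1}])$ is contained in the domain $U_i$ of a surjective foliation chart $(U_i, \phi_i)$. For each $s \in I_\delta$, the concatenation rule (Remark \ref{rmk:const concat}) together with the in-chart formula of Remark \ref{rmk:hol F continuous} (using the isomorphisms $\Psi^i_x$ from Remark \ref{rmk:Psi x 0}) expresses $\hol^\F_{u(s,\cdot)}$ as a telescoping product of the factors $\Psi^i_{u(s, a_{i+1})}(\Psi^i_{u(s, a_i)})^{-1}$ along $i = 0, \ldots, k-1$. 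The two outermost factors $\Psi^{k-1}_{u(s,b)}$ and $(\Psi^0_{u(s,a)})^{-1}$ are independent of $s$ by the fixed-endpoint hypothesis; each intermediate "gluing" factor takes the form $(\Psi^i_{u(s, a_i)})^{-1} \Psi^{i-1}_{u(s, a_i)}$ for $i = 1, \ldots, k-1$.

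The key step is to show constancy in $s$ of each such gluing factor. Consider the transverse path $\sigma_i(s) := u(s, a_i)$; by our choice of partition, $\sigma_i(I_\delta) \subset U_{i-1} \cap U_i$. The holonomy $\hol^\F_{\sigma_i|_{[s_0, s]}}$ can be computed by Remark \ref{rmk:hol F continuous} using \emph{either} chart, and equating the two resulting expressions $\Psi^{i-1}_{u(s,a_i)}(\Psi^{i-1}_{u(s_0,a_i)})^{-1}$ and $\Psi^i_{u(s,a_i)}(\Psi^i_{u(s_0,a_i)})^{-1}$ yields exactly the identity $(\Psi^i_{u(s, a_i)})^{-1} \Psi^{i-1}_{u(s, a_i)} = (\Psi^i_{u(s_0, a_i)})^{-1} \Psi^{i-1}_{u(s_0, a_i)}$. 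Hence $\hol^\F_{u(s,\cdot)} = \hol^\F_{u(s_0,\cdot)}$ for all $s \in I_\delta$, so $I_\delta \subset S$ and $S$ is open, completing the argument.

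I expect the main obstacle to be nothing conceptually deep but rather the set-up and book-keeping around the partition and the chart-dependent trivializations $\Psi^i_x$; in particular, one must carefully verify that the transverse paths $\sigma_i$ indeed lie in both adjacent chart domains (which follows immediately from the construction of the partition, since the image of the rectangle on each side of the line $t = a_i$ has been forced into the appropriate chart). Once the telescoping formula is assembled the conceptual heart of the proof is just the two-chart comparison encoded in Remark \ref{rmk:hol F continuous}.
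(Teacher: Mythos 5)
Your proof is correct, and it arrives at the result by a route that differs in its bookkeeping from the paper's. The paper's proof phrases the open--closed argument via an equivalence relation $R$ on $[0,1]$ built from the rectangle-boundary holonomy $\hol^\F_{u\circ x_{s_1,s_2,a,b}}$, and then establishes openness of the $R$-classes by a \emph{second}, nested open--closed argument on the $t$-interval (Lemma \ref{le:hol homotopy}), whose base case is Remark \ref{rmk:prop:hol homotopy} (small rectangle in a single chart has trivial boundary holonomy). You instead define $S=\{s:\hol^\F_{u(s,\cdot)}=\hol^\F_{u(0,\cdot)}\}$ directly, obtain closedness of $S$ from Proposition \ref{prop:hol transport} (a fact the paper's proof bypasses by using Lemma \ref{le:R open} on an open equivalence relation), and handle the $t$-direction once and for all by a Lebesgue-number partition $a=a_0<\cdots<a_k=b$ subordinate to a finite family of foliation charts. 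Your key step, the constancy in $s$ of the gluing factor $(\Psi^i_{u(s,a_i)})^{-1}\Psi^{i-1}_{u(s,a_i)}$, is obtained by computing the holonomy of the transverse path $\sigma_i(\cdot)=u(\cdot,a_i)$ in the two overlapping charts and equating the chart formulas of Remark \ref{rmk:hol F continuous}; this plays the same role that Lemma \ref{le:U phi x} and Remark \ref{rmk:prop:hol homotopy} play in the paper. The net effect: your version avoids the auxiliary Lemma \ref{le:hol homotopy} entirely at the cost of invoking Proposition \ref{prop:hol transport} and a slightly heavier partition set-up, while the paper's version keeps everything inside the open--closed framework and never needs to know that $\hol^\F$ is globally continuous. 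Both are sound; the underlying mechanism (local triviality of holonomy within a chart plus connectedness of $[0,1]$) is the same.

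One small remark: your closedness step is actually redundant. Your openness argument, applied at any $s_0\in[0,1]$ (not just $s_0\in S$), shows that $s\mapsto\hol^\F_{u(s,\cdot)}$ is locally constant, so $S$ and its complement are \emph{both} open, and connectedness alone finishes the job. This is precisely why the paper can work purely with the "all equivalence classes open" criterion of Lemma \ref{le:R open} and never needs Proposition \ref{prop:hol transport} here.
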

For the proof of Proposition \ref{prop:hol homotopy} we need the following. Let $s_1,s_2,t_1,t_2\in\R$. We define $x_{s_1,s_2,t_1,t_2}$ to be the concatenation of the paths $[s_1,s_2]\ni s\mapsto (s,t_1)\in\R^2$, $[t_1,t_2]\ni t\mapsto(s_2,t)\in\R^2$, $[s_2,s_1]\ni s\mapsto(s,t_2)\in\R^2$ and $[t_2,t_1]\ni t\mapsto(s_1,t)\in\R^2$. 
\begin{rmk}\label{rmk:prop:hol homotopy} Let $(U,\phi)$ be a surjective foliation chart and $u\in C\big([s_1,s_2]\x[t_1,t_2]\to U\cap F\big)$. Then $\hol^\F_{u\circ x_{s_1,s_2,t_1,t_2}}=\id_{N_{u(s_1,t_1)}M}$. This follows from Lemma \ref{le:U phi x} and the fact that $x_{s_1,s_2,t_1,t_2}$ is homotopic in $[s_1,s_2]\x[t_1,t_2]$ to a constant path.
\end{rmk}
Let $F$ be a leaf of $\F$, $a,b,c,d\in\R$, $s_1\in[a,b]$, and $u\in C\big([a,b]\x[c,d],F\big)$. We define $R_{s_1}$ to be the set of all pairs $(t_1,t_2)\in[c,d]\x[c,d]$ such that there exists $\eps>0$ such that $\hol^\F_{u\circ x_{s_1,s_2,t_1,t_2}}=\id_{N_{u(s_1,t_1)}M}$, for every $s_2\in\big[s_1-\eps,s_1+\eps\big]\cap[a,b]$. It follows from Remark \ref{rmk:const concat} that $R_{s_1}$ is an equivalence relation on $[c,d]$. 
\begin{lemma}\label{le:hol homotopy} We have $R_{s_1}=[c,d]\x[c,d]$. 
\end{lemma}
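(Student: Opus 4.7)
The plan is to show that every $R_{s_1}$-equivalence class is open in $[c,d]$; since $R_{s_1}$ is already known to be an equivalence relation on the connected space $[c,d]$, Lemma \ref{le:R open} will then yield $R_{s_1}=[c,d]\x[c,d]$.

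First I would fix $t_1\in[c,d]$ and produce a neighborhood $V$ of $t_1$ that lies in the class of $t_1$. Choose a surjective foliation chart $(U,\phi)$ around $u(s_1,t_1)$. By continuity of $u$ at $(s_1,t_1)$, there exist an $\eps>0$ and an open neighborhood $V\sub[c,d]$ of $t_1$ such that $u\big(\big([s_1-\eps,s_1+\eps]\cap[a,b]\big)\x V\big)\sub U$. Since $u$ already takes values in the leaf $F$, the image of this rectangle lies in $U\cap F$.

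Next, for any $t_2\in V$ and any $s_2\in[s_1-\eps,s_1+\eps]\cap[a,b]$, the restriction of $u$ to the (appropriately oriented) rectangle with corners $(s_1,t_1),(s_2,t_1),(s_2,t_2),(s_1,t_2)$ maps into $U\cap F$. Hence Remark \ref{rmk:prop:hol homotopy} applies (with the roles of $s_1,s_2,t_1,t_2$ in the remark played by the corresponding quantities here, regardless of ordering, since the definition of $[a,b]$ in this article allows $a>b$) and gives $\hol^\F_{u\circ x_{s_1,s_2,t_1,t_2}}=\id_{N_{u(s_1,t_1)}M}$. This holds uniformly over $s_2\in[s_1-\eps,s_1+\eps]\cap[a,b]$, which is exactly the condition for $(t_1,t_2)\in R_{s_1}$. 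Therefore $V$ is contained in the $R_{s_1}$-equivalence class of $t_1$, so that class is open.

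Applying Lemma \ref{le:R open} to the equivalence relation $R_{s_1}$ on the connected space $[c,d]$ now yields $R_{s_1}=[c,d]\x[c,d]$. The argument is essentially a continuity-plus-chart argument, and the only mild subtlety is keeping track of the orientations of the intervals $[s_1,s_2]$ and $[t_1,t_2]$ when $s_2<s_1$ or $t_2<t_1$; but since both Remark \ref{rmk:prop:hol homotopy} and the definition of $x_{s_1,s_2,t_1,t_2}$ are set up to accommodate arbitrary orderings, no additional case analysis is needed.
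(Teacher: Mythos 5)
Your proof is correct, and the overall strategy (reduce to openness of each $R_{s_1}$-equivalence class and invoke Lemma \ref{le:R open}) is the same as the paper's, but the openness argument is organized a little differently, in a way that is genuinely simpler. The paper takes an arbitrary $t_2$ already known to satisfy $(t_1,t_2)\in R_{s_1}$, places a foliation chart around $u(s_1,t_2)$, obtains $\hol^\F_{u\circ x_{s_1,s_2,t_2,t_3}}=\id$ from Remark \ref{rmk:prop:hol homotopy} for $t_3$ near $t_2$, and then concatenates (via Remark \ref{rmk:const concat}) with the known trivial holonomy around the $(t_1,t_2)$-rectangle to conclude $(t_1,t_3)\in R_{s_1}$. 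You instead place the chart around $u(s_1,t_1)$ and deduce directly that $(t_1,t_2)\in R_{s_1}$ for all $t_2$ in a neighborhood of $t_1$; combined with the fact that this holds for every $t_1$ and that $R_{s_1}$ is an equivalence relation, this already gives openness of every class. Your version dispenses with the concatenation step and never needs to invoke the hypothesis $(t_1,t_2)\in R_{s_1}$, so it is marginally shorter. One small expositional point: the sentence ``so that class is open'' glosses over the (standard) step that, because the argument applies to every $t_1$ and equivalent points have the same class, every class contains a neighborhood of each of its points; your opening sentence makes it clear you intend this, so it is not a gap, just something you might spell out.
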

\begin{proof}[Proof of Lemma \ref{le:hol homotopy}]\setcounter{claim}{0} By Lemma \ref{le:R open} it suffices to prove that for every $t_1\in[c,d]$ the $R_{s_1}$-equivalence class of $t_1$ is open. To see this, let $t_2\in[c,d]$ be such that $(t_1,t_2)\in R_{s_1}$. We choose $\eps_1:=\eps$ as in the definition of $R_{s_1}$, a surjective foliation chart $(U,\phi)$ such that $u(s_1,t_2)\in U$, and $\eps_2>0$ so small that $u\big(\big[s_1-\eps_2,s_1+\eps_2\big]\x\big[t_2-\eps_2,t_2+\eps_2\big]\big)\sub U$. We define $\eps:=\min\{\eps_1,\eps_2\}$. Let $s_2\in[s_1-\eps,s_1+\eps]$ and $t_3\in[t_2-\eps,t_2+\eps]$. It follows that $\hol^\F_{u\circ x_{s_1,s_2,t_1,t_2}}=\id_{N_{u(s_1,t_1)}M}$. Furthermore, by Remark \ref{rmk:prop:hol homotopy} we have $\hol^\F_{u\circ x_{s_1,s_2,t_2,t_3}}=\id_{N_{u(s_1,t_2)}M}$. Using Remark \ref{rmk:const concat}, it follows that $\hol^\F_{u\circ x_{s_1,s_2,t_1,t_2}}=\id_{N_{u(s_1,t_1)}M}$. Therefore, $(t_1,t_3)\in R_{s_1}$. This proves Lemma \ref{le:hol homotopy}.
\end{proof}
\begin{proof}[Proof of Proposition \ref{prop:hol homotopy}]\setcounter{claim}{0} We define $R:=\big\{(s_1,s_2)\in[0,1]\x[0,1]\,\big|\,\hol^\F_{u\circ x_{s_1,s_2,a,b}}=\id_{N_{u(s_1,a)}M}\big\}$. It follows from Remark \ref{rmk:const concat} that this is an equivalence relation on $[0,1]$. 
\begin{claim}\label{claim:R open} For every $s_1\in[0,1]$ the $R$-equivalence class of $s_1$ is open.
\end{claim}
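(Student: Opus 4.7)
The plan is to combine Lemma \ref{le:hol homotopy}, applied at the parameter $s_2$, with transitivity of the already-established equivalence relation $R$.

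Fix $s_1\in[0,1]$ and let $s_2\in[0,1]$ belong to the $R$-equivalence class of $s_1$, so $(s_1,s_2)\in R$. To show that class is open at $s_2$, it suffices to produce some $\eps>0$ such that $(s_1,s_3)\in R$ for every $s_3\in[s_2-\eps,s_2+\eps]\cap[0,1]$.

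First I would apply Lemma \ref{le:hol homotopy}, reading its statement with the free parameter ``$s_1$'' of the lemma replaced by our $s_2$, with the lemma's intervals $[a,b]$ and $[c,d]$ replaced respectively by our $[0,1]$ and our $[a,b]$, and with $(t_1,t_2)=(a,b)$. The lemma asserts $R_{s_2}=[a,b]\x[a,b]$, so in particular $(a,b)\in R_{s_2}$; by the definition of $R_{s_2}$ this supplies $\eps>0$ with the property that $\hol^\F_{u\circ x_{s_2,s_3,a,b}}=\id_{N_{u(s_2,a)}M}$ for every $s_3\in[s_2-\eps,s_2+\eps]\cap[0,1]$. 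Equivalently, $(s_2,s_3)\in R$ for every such $s_3$.

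Second I would invoke transitivity of $R$: from $(s_1,s_2)\in R$ (our standing hypothesis) and $(s_2,s_3)\in R$ (just established) we conclude $(s_1,s_3)\in R$, which is exactly the desired membership in the $R$-class of $s_1$. Thus the chosen $\eps$ exhibits an open neighborhood of $s_2$ inside that class, proving the claim.

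There is no real obstacle. The only point requiring attention is the renaming of variables when invoking Lemma \ref{le:hol homotopy} (since the lemma and the proposition use overlapping names $s_1,s_2,a,b$ for different objects); once that bookkeeping is done, the conclusion is immediate. Conceptually, Lemma \ref{le:hol homotopy} is precisely the statement that each $s\in[0,1]$ has a neighborhood lying in its own $R$-class, and the openness of an arbitrary $R$-class then falls out of transitivity.
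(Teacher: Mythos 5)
Your proposal is correct and follows essentially the same route as the paper: both obtain $(a,b)\in R_{s_2}$ from Lemma \ref{le:hol homotopy} to produce the $\eps$, and then pass from $(s_1,s_2)\in R$ and $(s_2,s_3)\in R$ to $(s_1,s_3)\in R$. The paper cites Remark \ref{rmk:const concat} for that final step, whereas you invoke transitivity of $R$; these are the same fact, since the paper establishes transitivity of $R$ by means of that remark.
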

\begin{proof}[Proof of Claim \ref{claim:R open}] Let $s_2\in[0,1]$ be such that $(s_1,s_2)\in R$. Thus we have $\hol^\F_{u\circ x_{s_1,s_2,a,b}}=\id_{N_{u(s_1,a)}M}$. By Lemma \ref{le:hol homotopy} we have $(a,b)\in R_{s_2}$. Hence there exists $\eps>0$ such that for $s_3\in[s_2-\eps,s_2+\eps]$ we have $\hol^\F_{u\circ x_{s_2,s_3,a,b}}=\id_{N_{u(s_2,a)}M}$. Let $s_3\in[s_2-\eps,s_2+\eps]$. Using Remark \ref{rmk:const concat}, it follows that $\hol^\F_{u\circ x_{s_1,s_3,a,b}}=\id_{N_{u(s_1,a)}M}$, i.e. $(s_1,s_3)\in R$. This proves Claim \ref{claim:R open}. \end{proof}
Claim \ref{claim:R open} and Lemma \ref{le:R open} imply that $R=[0,1]\x[0,1]$. Using Remark \ref{rmk:const concat}, the statement of Proposition \ref{prop:hol homotopy} follows.
\end{proof}
It follows from Proposition \ref{prop:hol homotopy} that the map $\hol^\F:\X^\F\to\GL(N\F)$ descends to a morphism of topological groupoids 
\begin{equation}\label{eq:hol F}\hol^\F:\X^\F/\!\!\sim_{\X^\F}\to\GL(N\F).\end{equation} 
(We use the same notation for this map.) We call this map the \emph{linear holonomy of $\F$}. 

The following result was used in the proof of Theorem \ref{thm:m M om N} in Section \ref{sec:proof:thm:m M om N}. Let $X$ be a topological manifold, $Y\sub X$, $E\to X$ a vector bundle, $\Phi:Y\x Y\to\GL(E)$ a morphism of topological groupoids whose composition with the canonical projection $\GL(E)\to X\x X$ is the identity, $k\in\N$ and $T:E^{\oplus k}\to\R$ be a (continuous) tensor. Assume that $T(\Phi_x^{x'}v_1,\ldots,\Phi_x^{x'}v_k)=T(v_1,\ldots,v_k)$, for every $x,x'\in Y$ and $v_1,\ldots,v_k\in E_x$. We define $\sim_\Phi$, $E_\Phi,\pi_\Phi$ and $T_\Phi$ as in Section \ref{sec:proof:thm:m M om N}.
\begin{lemma}\label{le:E Phi} Assume that there exists a continuous injective map $f:[0,1)\x Y\to X$ such that $f(\{0\}\x Y)=Y$. Then the pair $(E_\Phi,\pi_\Phi)$ is a vector bundle and $T_\Phi$ is continuous. 
\end{lemma}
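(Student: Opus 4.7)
The plan is to construct local trivializations of $E_\Phi$ and to deduce continuity of $T_\Phi$ from them. Denote by $q \colon X \to X/Y$ the canonical projection. Away from the collapsed point $[Y]$, the map $q$ restricts to a homeomorphism $X \setminus Y \to (X/Y) \setminus \{[Y]\}$, and $E_\Phi$ is canonically identified with $E|_{X \setminus Y}$ over this open set; local triviality there is inherited from $E$. The nontrivial task is to trivialize $E_\Phi$ in a neighborhood of $[Y]$.

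For this, fix $y_0 \in Y$ and define $\tau_Y \colon Y \times E_{y_0} \to E|_Y$ by $\tau_Y(y,v) := \Phi_{y_0}^y v$. Continuity of $\tau_Y$ follows from continuity of $\Phi$ as a morphism of topological groupoids, and the groupoid identity $\Phi_y^{y'}\Phi_{y_0}^y = \Phi_{y_0}^{y'}$ implies $\tau_Y(y,v) \sim_\Phi \tau_Y(y',v)$ for all $y,y' \in Y$ and $v \in E_{y_0}$; hence $\tau_Y$ descends to a linear isomorphism $E_{y_0} \to E_\Phi|_{[Y]}$. I would then extend $\tau_Y$ to a trivialization over a neighborhood of $Y$ by pulling back $E$ via the collar $f$ to a bundle on $[0,1) \times Y$ and applying an argument analogous to the one used in the proof of Lemma \ref{le:Phi Psi}, producing a continuous bundle isomorphism $\tilde \Psi \colon [0,1) \times Y \times E_{y_0} \to f^*E$ that restricts to $\tau_Y$ on $\{0\} \times Y$. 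Composition with the canonical map $f^*E \to E$ then yields a continuous map $\Psi \colon [0,1) \times Y \times E_{y_0} \to E|_{f([0,1) \times Y)}$ which is fiberwise bijective onto its image.

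By construction $\Psi(0,y,v)$ and $\Psi(0,y',v)$ are $\sim_\Phi$-equivalent for all $y, y' \in Y$ and $v \in E_{y_0}$, while on $(0,1) \times Y$ the map $\Psi$ is injective because $f$ is. Consequently $\Psi$ descends to a continuous bijection
\[
\bar\Psi \colon \bigl( ([0,1) \times Y)/(\{0\} \times Y) \bigr) \times E_{y_0} \longrightarrow \pi_\Phi^{-1}\bigl( q(f([0,1) \times Y)) \bigr).
\]
In the setting in which the lemma is applied, $f$ defines a collar embedding whose image is an open neighborhood of $Y$ in $X$, so the induced map $([0,1) \times Y)/(\{0\} \times Y) \to X/Y$ is an open topological embedding onto a neighborhood of $[Y]$, and $\bar\Psi$ is a homeomorphism. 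Identifying its domain with the corresponding neighborhood of $[Y]$ provides the required local trivialization of $E_\Phi$. Continuity of $T_\Phi$ then follows from the local trivialization: in this chart $T_\Phi$ coincides with $T$ expressed in the original trivialization of $E$, which is continuous by hypothesis and $\Phi$-invariant. The main obstacle in the argument is the verification that $\bar\Psi$ is a homeomorphism rather than only a continuous bijection; this reduces to the topological claim that $f$ gives rise to an open embedding near $Y$, which is the content of the collar-type hypothesis on $f$.
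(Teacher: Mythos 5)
Your proof takes essentially the same approach as the paper's. Both arguments trivialize $E_\Phi$ over $(X/Y)\setminus\{[Y]\}$ by the canonical identification with $E|_{X\setminus Y}$, and then near $[Y]$ construct a $\Phi$-equivariant trivialization $\Psi$ of $E$ over the collar image $U=f([0,1)\times Y)$: the paper writes $\Psi^x := \wt\Psi_{f^{-1}(x)}\Phi_{x_0}^{\pr\circ f^{-1}(x)}\Psi_0$ (using Lemma \ref{le:E Psi} to produce $\wt\Psi\in\Iso(\pr^*f^*E,f^*E)$ with $\wt\Psi|_{\{0\}\times Y}=\id$), which is exactly your ``extend $\tau_Y(y,v):=\Phi_{y_0}^yv$ along the collar via a product-structure isomorphism of $f^*E$'' step, just written as a single formula. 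The descent to $\Psi'([x],v):=[x,v]$ over $U/Y$ and the continuity of $T_\Phi$ in that chart are then identical. One remark: you correctly flag at the end that the argument needs $f$ to be a homeomorphism onto an open neighborhood of $Y$ (not merely continuous and injective, as literally stated) so that $\Psi$ is continuous and $q(U)$ is an open neighborhood of $[Y]$; the paper's own proof tacitly assumes this too (it writes $f^{-1}(x)$ and treats $U$ as a trivializing neighborhood), so this is not a defect of your argument but a point where the hypothesis as worded is slightly weaker than what both proofs actually use.
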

The next result is used in the proof of Lemmas \ref{le:E Phi} and \ref{le:Phi Psi}. 
\begin{lemma}\label{le:E Psi} Let $X$ be a paracompact topological space and $E\to [0,1)\x X$ a vector bundle. We denote by $\pr:[0,1)\x X\to X$ the canonical projection. Then there exists $\Psi\in\Iso(\pr^*E,E)$ such that $\Psi_{(0,x)}=\id_{E_{(0,x)}}$, for every $x\in X$. Furthermore, if $\om$ is a fiberwise symplectic structure on $E$ then there exists $\Psi$ as above that preserves $\om$. 
\end{lemma}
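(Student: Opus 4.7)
The plan is to construct $\Psi$ by first trivializing $E$ along the $[0,1)$-direction locally, then using paracompactness of $X$ to patch these local trivializations via a partition of unity on a neighborhood of $\{0\}\x X$, and finally extending by iteration to all of $[0,1)\x X$.

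For the local construction, given any $x_0\in X$, I would use local triviality of $E$ at $(0,x_0)$ and shrinking to produce an open neighborhood $U\ni x_0$, a number $\eps>0$, and a trivialization $\phi\Colon E|_{[0,\eps)\x U}\to[0,\eps)\x U\x V$; the fiberwise conjugation $\Psi_U(t,x,v):=\phi^{-1}\big(t,x,\phi_{(0,x)}v\big)$ then defines a local isomorphism $\pr^*E|_{[0,\eps)\x U}\to E|_{[0,\eps)\x U}$ satisfying $\Psi_U|_{\{0\}\x U}=\id$. Paracompactness of $X$ lets me refine the cover $\{U_{x_0}\}_{x_0 \in X}$ to a locally finite one $\{U_\al\}$ with subordinate partition of unity $\{\rho_\al\}$ and corresponding data $\eps_\al,\Psi_\al$. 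Choosing a continuous $f\Colon X\to(0,1)$ with $f(x)<\eps_\al$ whenever $x\in\operatorname{supp}\rho_\al$, the sum $\tilde\Psi:=\sum_\al\rho_\al\Psi_\al$ is a well-defined bundle morphism on $W_f:=\{(t,x)\Colon 0\le t<f(x)\}$ (on each fiber a convex combination of linear maps, hence linear), which equals $\id$ on $\{0\}\x X$ since $\sum\rho_\al=1$, and by openness of the isomorphism condition it remains an isomorphism on a smaller open set $W'\sub W_f$ of the form $\{(t,x)\Colon 0\le t<f_1(x)\}$ for some continuous $f_1\Colon X\to(0,1)$.

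To extend to all of $[0,1)\x X$, I would iterate. Fix a sequence $0=t_0<t_1<t_2<\cdots\to 1$; given $\Psi$ already constructed on an open neighborhood of $[0,t_n]\x X$, I would rerun the two-step argument above applied to the bundle $E|_{[t_n,1)\x X}$, using the boundary datum $\Psi|_{\{t_n\}\x X}$ in place of $\id$, to extend $\Psi$ to a neighborhood of $[0,t_{n+1}]\x X$. The resulting union covers all of $[0,1)\x X$.

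For the symplectic version, I would choose the local trivializations $\phi$ to be symplectic (possible by local symplectic triviality of symplectic vector bundles). The partition-of-unity patching does not preserve symplecticity, so the $\Psi$ thus produced is only a topological isomorphism, but $\Psi^*\om$ is then a fiberwise symplectic form on $\pr^*E$ agreeing with $\om_0:=\pr^*(\om|_{\{0\}\x X})$ on $\{0\}\x X$. A fiberwise Moser-type construction (valid pointwise because any two symplectic forms on a fixed finite-dimensional real vector space are $\GL$-equivalent, and the choice can be made continuously once they agree along a slice) furnishes a bundle automorphism $\Phi$ of $\pr^*E$ over the identity with $\Phi|_{\{0\}\x X}=\id$ and $\Phi^*\Psi^*\om=\om_0$, and then $\Psi\circ\Phi$ is the desired symplectic isomorphism. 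I expect the main obstacle to be the iteration step of the third paragraph: one must ensure that the inductively patched $\Psi$ remains a genuine isomorphism (not merely a morphism) at each stage and that the chosen neighborhoods $\{t<f_n(x)\}$ do indeed exhaust $[0,1)\x X$ in the limit, which requires careful control of the continuous functions $f_n$ relative to the sequence $t_n\to 1$.
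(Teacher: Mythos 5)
The paper's proof is essentially a two-line citation: it invokes Husemoller's Corollary (Part I, Chap.~3, 4.4) to obtain some $\wt\Psi\in\Iso(\pr^*E,E)$, and then observes that the normalization $\Psi^{(t,x)}:=\wt\Psi^{(t,x)}\big(\wt\Psi^{(0,x)}\big)^{-1}$ automatically satisfies $\Psi^{(0,x)}=\id$. The symplectic case is disposed of by noting that the same Husemoller argument goes through with symplectic local trivializations. Your proposal is a genuinely different route: a from-scratch construction of the isomorphism. In particular you miss the cheap normalization trick, which would have spared you from having to build the condition $\Psi|_{\{0\}\x X}=\id$ directly into the construction.

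However, there is a real gap in the iteration step, and it is fatal to the argument as written. The convex-combination patching only produces an isomorphism on a set of the form $\{(t,x): 0\le t<g(x)\}$, where the positive function $g$ encodes the uncontrolled ``reach'' of each local trivialization and of the openness-of-isomorphisms argument. Nothing prevents $\inf_X g$ from being $0$, so rerunning the argument from the slice $\{t_n\}\x X$ need not carry you to $\{t_{n+1}\}\x X$ for your pre-chosen sequence $t_n\to1$; iterating can therefore stall at $\sup_n t_n<1$ on parts of $X$ where the $g_n$ degenerate. The classical proof of Husemoller's result avoids this entirely by \emph{not} using a convex combination: it first shows (using compactness in the $I$-direction and a Lebesgue-number argument) that $E$ is trivial over $I\x U_\al$ for a locally finite cover $\{U_\al\}$ of $X$, and then patches by composing a sequence of bundle isomorphisms each supported over a single $U_\al$ and sliding along the graph of a partial sum of the partition of unity. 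That construction covers the entire $I$-direction in a single pass, with no exhaustion issue. Your symplectic correction via a fiberwise Moser/retraction argument is workable in principle but inherits the same difficulty: the linear interpolation $(1-s)\om_0+s\,\Psi^*\om$ is only guaranteed nondegenerate near $t=0$, so you again face an exhaustion problem; the paper sidesteps this by simply choosing the local trivializations symplectic from the start, so that the Husemoller argument already outputs a symplectic isomorphism.
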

\begin{proof}[Proof of Lemma \ref{le:E Psi}]\setcounter{claim}{0} To prove the first assertion, note that there exists $\wt\Psi\in\Iso(\pr^*E,E)$, see for example \cite{Hu}, Part I Chap. 3, 4.4 Corollary (p.~28). We define $\Psi^{(t,x)}:=\wt\Psi^{(t,x)}(\wt\Psi^{(0,x)})^{-1}$. The second assertion follows from a version of that corollary for symplectic vector bundles. This version is proved by the argument in \cite{Hu}, by choosing the local trivializations to be symplectic.
\end{proof}

\begin{proof}[Proof of Lemma \ref{le:E Phi}]\setcounter{claim}{0} To show that $(E_\Phi,\pi_\Phi)$ is a vector bundle, let $x_0\in X$. Assume that $x\not\in Y$. We denote by $n$ the rank of $E$ and choose a pair $(U,\Psi)$, where $U\sub X$ is an open neighborhood of $x_0$ and $\Psi:U\x\R^n\to E$ is a local trivialization. Then viewing $U\wo Y$ as a subset of $X/Y$ the map $(U\wo Y)\x\R^n\to E_\Phi$, $(x,v)\mapsto[x,v]$, is a local trivialization for $E_\Phi$ around the point $x_0$. Assume now that $x_0\in Y$. We denote $n:=\rank E$ and $U:=f([0,1)\x Y)\sub X$.
\begin{claim}\label{claim:Psi} There exists $\Psi\in\Iso\big(U\x\R^n,E|_U\big)$ such that $\Psi^{(0,x')}=\Phi_x^{x'}\Psi^{(0,x)}$, for $x,x'\in Y$.
\end{claim}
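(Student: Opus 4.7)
The plan is to build $\Psi$ in two steps: first construct a $\Phi$-equivariant trivialization of $E|_Y$, and then extend it across the collar $U$ via Lemma \ref{le:E Psi}.

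For the first step, I would pick any point $\wt x \in Y$ together with a linear isomorphism $\Psi_0 \in \Iso(\R^n, E_{\wt x})$, and define $\Psi^Y : Y \x \R^n \to E|_Y$ by $\Psi^Y_x := \Phi_{\wt x}^x \circ \Psi_0$. Continuity of $\Psi^Y$ in $x$ follows from the fact that $\Phi$ is a morphism of topological groupoids, so that $x \mapsto \Phi_{\wt x}^x$ is continuous. The cocycle identity $\Phi_x^{x'} \circ \Phi_{\wt x}^x = \Phi_{\wt x}^{x'}$ immediately gives $\Phi_x^{x'} \Psi^Y_x = \Psi^Y_{x'}$ for all $x, x' \in Y$, which is the required boundary equivariance. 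Thus $\Psi^Y$ is a continuous trivialization of $E|_Y$ that already satisfies the conclusion of the claim restricted to the slice $\{0\}\x Y$.

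For the second step, I would use $f$ as a collar of $Y$ in $U$: in the setup of the surrounding lemma, $f$ is a homeomorphism onto $U$, so pulling $E|_U$ back along $f$ yields a vector bundle $E' := f^*(E|_U)$ over $[0,1)\x Y$ whose restriction to $\{0\}\x Y$ is canonically identified with $E|_Y$. By Lemma \ref{le:E Psi}, there exists $\wt\Psi \in \Iso\big(\pr^*(E'|_{\{0\}\x Y}),\,E'\big)$ with $\wt\Psi^{(0,x)} = \id_{E'_{(0,x)}}$ for every $x \in Y$, where $\pr\colon [0,1)\x Y \to Y$ is the projection.

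Finally, I would combine these pieces by setting
\[
\Psi^{f(t,x)} v \;:=\; \wt\Psi^{(t,x)}\,\Psi^Y_x v, \qquad (t,x)\in[0,1)\x Y,\ v\in\R^n,
\]
and transport the definition back to $U$ using that $f$ is a bijection onto $U$. Fiberwise linearity and bijectivity are built in, continuity follows from the continuity of $\wt\Psi$, $\Psi^Y$ and $f^{-1}$, and the identity $\wt\Psi^{(0,x)} = \id$ reduces $\Psi^{(0,x)}$ to $\Psi^Y_x$, so the required boundary equivariance $\Psi^{(0,x')} = \Phi_x^{x'}\Psi^{(0,x)}$ is exactly the one proved in step one. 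The main point to watch is that $f$ is genuinely a homeomorphism onto $U$ (and not merely continuous and injective), since this is what makes the push-forward continuous; this is essentially the collar-neighborhood property assumed in the enclosing statement.
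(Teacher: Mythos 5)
Your proof is correct and takes essentially the same approach as the paper's: you first build the $\Phi$-equivariant trivialization of $E|_Y$ from a base point $\wt x$ and the groupoid structure, then extend it across the collar $U \iso [0,1)\x Y$ using a trivialization $\wt\Psi$ of $f^*E$ relative to the slice $\{0\}\x Y$, and compose; the paper's formula $\Psi^x := \wt\Psi_{f^{-1}(x)}\Phi_{x_0}^{\pr\circ f^{-1}(x)}\Psi_0$ is your formula $\Psi^{f(t,x)} = \wt\Psi^{(t,x)}\Phi_{\wt x}^x\Psi_0$ with $\wt x = x_0$. The only cosmetic difference is that you explicitly invoke Lemma \ref{le:E Psi} where the paper says ``by an elementary argument,'' and you are somewhat more careful about writing $\pr^*(E'|_{\{0\}\x Y})$ rather than the paper's slightly abusive $\pr^*E'$.
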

\begin{proof}[Proof of Claim \ref{claim:Psi}] We denote by $\pr:[0,1)\x Y\to Y$ the canonical projection, and choose $f$ as in the hypothesis. We denote $E':=f^*E\to[0,1)\x Y$. By an elementary argument there exists $\wt\Psi\in\Iso(\pr^*E',E')$ such that $\wt\Psi_{(0,x)}=\id_{E'_x}$, for every $x\in Y$. We denote $n:=\rank E$ and choose a point $x_0\in Y$ and $\Psi_0\in\Iso(\R^n,E_{x_0})$. We define $\Psi:U\x\R^n\to E$ by $\Psi^x:=\wt\Psi_{f^{-1}(x)}\Phi_{x_0}^{\pr\circ f^{-1}(x)}\Psi_0$, for $x\in U$. This map has the required properties. This proves Claim \ref{claim:Psi}.
\end{proof}
We choose a map $\Psi$ as in Claim \ref{claim:Psi}, and define $\Psi':U/Y\x\R^n\to E_\Phi$, $\Psi'([x],v):=[x,v]$. This is a local trivialization for $E_\Phi$ around $x_0$. Furthermore, the map $U/Y\x(\R^n)^k\to \R$, $([x],v_1,\ldots,v_k)\mapsto T_\Phi(\Psi'_{[x]}v_1,\ldots,\Psi'_{[x]}v_k)=T_x(\Psi_xv_1,\ldots,\Psi_xv_k)$ is continuous. It follows that $(E_\Phi,\pi_\Phi)$ is a vector bundle and $T_\Phi$ is continuous. This proves Lemma \ref{le:E Phi}.
\end{proof}

The following result was used in the proof of Theorem \ref{thm:Fix}. Let $X$ be a topological space, $Y\sub X$ a subset, and $\sim$ an equivalence relation on $Y$. We denote by $\iota:Y\to X$ the inclusion and by $\pi:Y\to Y/\!\!\sim$ the canonical projection. We fix a positive integer $k$, and we denote by $\bar B^k\sub\R^k$ and $S^{k-1}\sub \R^k$ the closed unit ball and the unit sphere. We define the map
\begin{eqnarray}\label{eq:hat phi}&\hat\phi:\big\{u\in C(\bar B^k,X)\,\big|\,u\textrm{ is }(S^{k-1},Y/\!\!\sim)\textrm{-compatible}\big\}\to C\big(\bar B^k,X\x Y/\!\!\sim\big),&\\
\label{eq:hat phi u}&\hat\phi(u):=\big(u,\pi\circ u(z_0)\big),&
\end{eqnarray}
where $z_0\in S^{k-1}$ is an arbitrary point and we view $\pi\circ u(z_0)$ as a constant map from $\bar B^k$ to $Y/\!\!\sim$. Note that $(S^{k-1},Y/\!\!\sim)$-compatibility of $u$ implies that the right hand side of (\ref{eq:hat phi u}) does not depend on the choice of $z_0$. Furthermore, the map $\hat\phi(u)$ is $\big(S^{k-1},\{\im(\iota,\pi)\}\big)$-compatible, and the $\big(S^{k-1},\{\im(\iota,\pi)\}\big)$-compatible homotopy class of this map is invariant under $(S^{k-1},Y/\!\!\sim)$-compatible homotopies. Hence $\hat\phi$ descends to a map
\begin{equation}\label{eq:phi}\phi:\big[\bar B^k,S^{k-1};X,Y/\!\!\sim\big]\to \big[\bar B^k,S^{k-1};X\x Y/\!\!\sim,\{\im(\iota,\pi)\}\big],
\end{equation}
defining $\phi([u]):=[\hat\phi(u)]$. Recall that a Serre fibration is a continuous map with the homotopy lifting property for all CW-complexes.
\begin{prop}\label{prop:phi} Let $X,Y,\sim,\iota,\pi$ and $k$ be as above. Assume that the map $\pi:Y\to Y/\!\!\sim$ is a Serre fibration. Then the map $\phi$ above is a bijection.
\end{prop}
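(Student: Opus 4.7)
The map $\phi$ is induced on $\pi_0$ by the inclusion of the subspace $\mathcal{A}'\sub \mathcal{B}$, where $\mathcal{B}$ is the space of all $(S^{k-1},\{\im(\iota,\pi)\})$-compatible maps $\bar B^k\to X\x Y/\!\!\sim$ and $\mathcal{A}'\sub\mathcal{B}$ consists of those pairs $(u,c)$ whose second component $c$ is a constant map. Via projection to the first factor, $\mathcal{A}'$ is canonically homeomorphic to the space of $(S^{k-1},Y/\!\!\sim)$-compatible maps $\bar B^k\to X$, and under this homeomorphism $\phi$ becomes the inclusion-induced map on $\pi_0$. Bijectivity thus reduces to two assertions: every $f\in\mathcal{B}$ may be deformed within $\mathcal{B}$ to an element of $\mathcal{A}'$, and any two elements of $\mathcal{A}'$ connected in $\mathcal{B}$ are already connected in $\mathcal{A}'$. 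Both rely on the Serre fibration hypothesis on $\pi$ to lift contractions of the second coordinate from $Y/\!\!\sim$ to $Y$, together with the cofibration property of $(\bar B^k,S^{k-1})$ to extend these lifts to the interior.

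For surjectivity, given $f=(f_1,f_2)\in\mathcal{B}$ and $z_0\in S^{k-1}$, I would contract $f_2$ to a constant by $H(s,z):=f_2((1-s)z+sz_0)$. Because $\pi$ is a Serre fibration, one can lift $H|_{[0,1]\x S^{k-1}}$ starting from the initial lift $f_1|_{S^{k-1}}:S^{k-1}\to Y$ (well-defined by admissibility of $f$), yielding $\tilde H:[0,1]\x S^{k-1}\to Y$. The cofibration $(\bar B^k,S^{k-1})$ then lets one extend $\tilde H$ to a homotopy $\widehat H:[0,1]\x\bar B^k\to X$ with $\widehat H(0,\cdot)=f_1$. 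The pair $(\widehat H,H)$ is an admissible homotopy in $\mathcal{B}$ from $f$ to the element $(\widehat H(1,\cdot),f_2(z_0))\in\mathcal{A}'$.

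For injectivity, let $F=(F_1,F_2)$ be an admissible homotopy between $(u_0,c_0)$ and $(u_1,c_1)$ in $\mathcal{A}'$, and set $H(s,t,z):=F_2(t,(1-s)z+sz_0)$. Then $H|_{s=1}$ is constant in $z$ while $H(\cdot,i,\cdot)=c_i$ for $i=0,1$. Setting $K:=[0,1]_t\x S^{k-1}$ and $A:=\{0,1\}_t\x S^{k-1}\sub K$, and regarding $s$ as homotopy parameter, the map $H:[0,1]_s\x K\to Y/\!\!\sim$ comes with an initial lift $F_1|_K:K\to Y$ at $s=0$ and with a constant-in-$s$ lift given by $u_0,u_1$ on $[0,1]_s\x A$; these match at the corners because $F_1(i,\cdot)=u_i$ and $\pi\circ u_i|_{S^{k-1}}=c_i$. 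The relative homotopy lifting property of the Serre fibration $\pi$, applied to the CW pair $(K,A)$, produces a lift $\tilde H:[0,1]_s\x K\to Y$ extending the given data. Extending further into the interior via the cofibration $(\bar B^k,S^{k-1})$ yields a map $G_1:[0,1]_s\x[0,1]_t\x \bar B^k\to X$, and $(G_1,H)$ is then a homotopy (in $s$) of admissible homotopies in $t$ from $F$ to a homotopy whose second component is constant in $z$ at every $t$. Evaluating at $s=1$ gives the desired $(S^{k-1},Y/\!\!\sim)$-compatible homotopy between $u_0$ and $u_1$.

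The main obstacle is the relative homotopy lifting step in the injectivity argument: one must lift $H$ on $[0,1]_s\x K$ simultaneously from the $s=0$ face and from the subcomplex $[0,1]_s\x A$, which is stronger than the ordinary one-parameter homotopy lifting property. This is precisely the right lifting property of a Serre fibration against the acyclic cofibration $(\{0\}\x K)\cup([0,1]\x A)\hookrightarrow [0,1]\x K$, a standard consequence of the Serre fibration hypothesis, provable either by model-categorical machinery or directly from the observation that the pair $([0,1]\x K, (\{0\}\x K)\cup([0,1]\x A))$ admits a strong deformation retraction onto the subspace. Everything else is either elementary or a routine invocation of the homotopy extension property of CW pairs.
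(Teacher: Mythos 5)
Your argument is correct and is essentially the same as the paper's, which constructs an explicit inverse $\psi$: your surjectivity step is the paper's Claim that $\phi\circ\psi=\id$ (contract the $Y/\!\!\sim$ coordinate to a point, lift the boundary via the Serre fibration, extend by the cofibration $(\bar B^k,S^{k-1})$), and your injectivity step is the paper's well-definedness claim for $\psi$, relying on the same relative lifting against $(\{0\}\x K)\cup([0,1]\x A)\hookrightarrow[0,1]\x K$ that the paper isolates as Remark~\ref{rmk:pi X X'}. Your $\pi_0$-of-an-inclusion packaging is slightly tidier conceptually, but the underlying constructions and the use of the Serre fibration hypothesis are identical.
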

\begin{rmk}\label{rmk:pi X X'} Let $\pi:X\to X'$ be a Serre fibration, $Y$ be a CW-complex, and $u_0,u_1,v:[0,1]\x Y\to X$ be continuous maps. Assume that 
\begin{equation}\label{eq:v i u i}v(i,\cdot)=u_i(0,\cdot),\,i=0,1,\end{equation} 
and there exists a continuous map $u':[0,1]\x[0,1]\x Y\to X'$ such that 
\begin{equation}\label{eq:pi u i}\pi\circ u_i=u'(i,\cdot,\cdot),\,i=0,1,\quad \pi\circ v=u'(\cdot,0,\cdot).\end{equation}
Then there exists a continuous map $u:[0,1]\x[0,1]\x Y\to X$ such that $\pi\circ u=u'$ and 
\[u(i,\cdot,\cdot)=u_i,\,i=0,1,\quad u(\cdot,0,\cdot)=v.\]
This follows from the homotopy lifting property for $\big(\pi,[0,1]\x Y\big)$, applied to the map $u'\circ(\phi\x\id_Y):[0,1]\x[0,1]\x Y\to X'$, where $\phi:[0,1]\x[0,1]\to [0,1]\x[0,1]$ is a homeomorphism that maps $[0,1]\x\{0\}$ to $\{0,1\}\x[0,1]\cup[0,1]\x\{0\}$. 
\end{rmk}

\begin{proof}[Proof of Proposition \ref{prop:phi}]\setcounter{claim}{0} Let $X,Y,\sim,\iota,\pi$ and $k$ be as in the hypothesis. We define the map 
\[\psi:\big[\bar B^k,S^{k-1};X\x Y/\!\!\sim,\{\im(\iota,\pi)\}\big]\to \big[\bar B^k,S^{k-1};X,Y/\!\!\sim\big]\]
as follows. Namely, let $\wt a\in\big[\bar B^k,S^{k-1};X\x Y/\!\!\sim,\{\im(\iota,\pi)\}\big]$. We fix a representative $(u,v'):\bar B^k\to X\x Y/\!\!\sim$ of $\wt a$.
\begin{claim}\label{claim:f B k} There exists a continuous map $f:\bar B^k\to X$ such that 
\begin{eqnarray}\label{eq:f z u}&f(z)=u(2z),\textrm{ if }|z|\leq\frac12,&\\
\label{eq:f z Y} &f(z)\in Y,\quad\pi\circ f(z)=v'\big((2/|z|-2)z\big),\quad\textrm{if }\frac12<|z|\leq1.&
\end{eqnarray}
\end{claim}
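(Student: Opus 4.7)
The plan is to reduce the existence of $f$ to the homotopy lifting property of the Serre fibration $\pi \colon Y \to Y/\!\!\sim$ against the CW-complex $S^{k-1}$. First I would record the compatibility data already available on the inner boundary $|z|=\tfrac12$: since $(u,v')$ represents an element of $\big[\bar B^k,S^{k-1};X\x Y/\!\!\sim,\{\im(\iota,\pi)\}\big]$, for every $\omega\in S^{k-1}$ there exists $y_\omega\in Y$ with $u(\omega)=\iota(y_\omega)$ and $v'(\omega)=\pi(y_\omega)$; identifying $Y$ with its image in $X$, this just says that $u|_{S^{k-1}}$ takes values in $Y$ and $\pi\circ u|_{S^{k-1}}=v'|_{S^{k-1}}$.

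Next I would set up the reparametrization of the outer annulus. Writing $z=r\omega$ with $r\in[\tfrac12,1]$ and $\omega\in S^{k-1}$ we have $(2/|z|-2)z=(2-2r)\omega$, so the prescribed value of $\pi\circ f$ on the annulus is the map $h\colon[0,1]\x S^{k-1}\to Y/\!\!\sim$ defined by $h(t,\omega):=v'\big((1-t)\omega\big)$ under the substitution $t=2r-1$. At $t=0$ we have $h(0,\omega)=v'(\omega)=\pi\circ u(\omega)$, so $u|_{S^{k-1}}\colon S^{k-1}\to Y$ is an initial lift of $h(0,\cdot)$. Since $\pi\colon Y\to Y/\!\!\sim$ is a Serre fibration and $S^{k-1}$ is a CW-complex, the homotopy lifting property produces a continuous map $H\colon[0,1]\x S^{k-1}\to Y$ with $\pi\circ H=h$ and $H(0,\cdot)=u|_{S^{k-1}}$.

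Finally I would assemble $f$ by
\[
f(z):=\begin{cases}u(2z),&|z|\le\tfrac12,\\ \iota\big(H(2|z|-1,\,z/|z|)\big),&\tfrac12\le|z|\le 1.\end{cases}
\]
The two pieces agree where $|z|=\tfrac12$: the inner formula gives $u(2z)=\iota(u(2z))$ with $|2z|=1$, while the outer formula gives $\iota\circ H(0,2z)=\iota\circ u(2z)$, so $f$ is well-defined and continuous. The inner prescription (\ref{eq:f z u}) is immediate. For the outer prescription (\ref{eq:f z Y}), with $z=r\omega$ and $\tfrac12<r\le1$ we have $f(z)=\iota\circ H(2r-1,\omega)\in Y$ and $\pi\circ f(z)=h(2r-1,\omega)=v'\big((2-2r)\omega\big)=v'\big((2/|z|-2)z\big)$.

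There is really no hard obstacle: once the annulus is viewed as the trace of a homotopy in $Y/\!\!\sim$ starting at $\pi\circ u|_{S^{k-1}}$ and ending at the constant $v'(0)$, the claim is just the covering homotopy property for Serre fibrations. The only points requiring mild care are the correct radial reparametrization $r\mapsto 2r-1$ and the verification that the inner and outer pieces glue continuously along the sphere $|z|=\tfrac12$, both of which are handled above.
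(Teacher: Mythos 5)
Your proposal is correct and takes essentially the same approach as the paper: you define the same homotopy on the outer annulus (your $h$ is the paper's $w'$, your $H$ is the paper's $w$), invoke the homotopy lifting property of the Serre fibration $\pi$ against $S^{k-1}$ with the same initial lift $u|_{S^{k-1}}$, and assemble $f$ by the identical piecewise formula. The only difference is that you spell out the gluing check at $|z|=\tfrac12$, which the paper leaves implicit.
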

\begin{proof}[Proof of Claim \ref{claim:f B k}] We define $w':[0,1]\x S^{k-1}\to Y/\!\!\sim$ by $w'(r,z):=v'((1-r)z)$. By $\big(S^{k-1},\{\im(\iota,\pi)\}\big)$-compatibility of $(u,v')$ we have $w'(0,z)=\pi\circ u(z)$, for every $z\in S^{k-1}$. Therefore, by the homotopy lifting property of $\pi$ there exists a continuous map $w:[0,1]\x S^{k-1}\to Y$ such that $\pi\circ w=w'$ and $w(0,z)=u(z)$, for every $z\in S^{k-1}$. We define 
\[f:\bar B^k\to X,\quad f(z):=\left\{\begin{array}{ll}
u(2z),&\textrm{if }|z|\leq\frac 12,\\
w\big(2|z|-1,z/|z|\big),&\textrm{if }\frac12< |z|\leq1.
\end{array}\right.\]
This proves Claim \ref{claim:f B k}.
\end{proof}
Note that if $f$ is as in Claim \ref{claim:f B k} then $\pi\circ f(z)=v'(0)$, for $z\in S^{k-1}$, and therefore the map $f$ is $(S^{k-1},Y/\!\!\sim)$-compatible.
\begin{claim}\label{claim:f u v'} If $(u_0,v_0')$ and $(u_1,v_1')$ are two representatives of $\wt a$ and $f_0,f_1:\bar B^k\to X$ are continuous maps satisfying (\ref{eq:f z u},\ref{eq:f z Y}) with $u,v',f$ replaced by $u_i,v'_i,f_i$, for $i=0,1$, then the maps $f_0$ and $f_1$ are homotopic compatibly with $(S^{k-1},Y/\!\!\sim)$. 
\end{claim}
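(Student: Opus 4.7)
The plan is to assemble $f_0$ and $f_1$ into a one-parameter family by carrying out the construction of Claim \ref{claim:f B k} with an extra parameter, using that $\pi\Colon Y\to Y/\!\!\sim$ is a Serre fibration to solve the required lifting problem. First I would choose an $\big(S^{k-1},\{\im(\iota,\pi)\}\big)$-compatible homotopy $(U,V')\Colon[0,1]\x\bar B^k\to X\x Y/\!\!\sim$ with $(U,V')(i,\cdot)=(u_i,v_i')$ for $i=0,1$; this exists because $(u_0,v_0')$ and $(u_1,v_1')$ represent the same class $\wt a$. I would also recall from the proof of Claim \ref{claim:f B k} that each $f_i$ on the outer annulus $1/2\leq\rho\leq 1$ has the form $f_i(\rho z)=w_i(2\rho-1,z)$, where $w_i\Colon[0,1]\x S^{k-1}\to Y$ satisfies $\pi\circ w_i(r,z)=v_i'((1-r)z)$ and $w_i(0,z)=u_i(z)$.

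The heart of the argument is to produce a two-parameter lift $W\Colon[0,1]\x[0,1]\x S^{k-1}\to Y$ satisfying
\[\pi\circ W(s,r,z)=V'(s,(1-r)z),\quad W(s,0,z)=U(s,z),\quad W(i,r,z)=w_i(r,z)\ (i=0,1).\]
This is exactly the situation covered by Remark \ref{rmk:pi X X'}, applied with the CW complex $S^{k-1}$, the Serre fibration $\pi\Colon Y\to Y/\!\!\sim$, and data $u_0:=w_0$, $u_1:=w_1$, $v(s,z):=U(s,z)$, $u'(s,r,z):=V'(s,(1-r)z)$. The compatibility equations (\ref{eq:v i u i}) and (\ref{eq:pi u i}) required by that remark are immediate from the defining identities for the $w_i$ and from $\big(S^{k-1},\{\im(\iota,\pi)\}\big)$-compatibility of $(U,V')$, so the remark furnishes the desired $W$.

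Finally I would define the candidate homotopy $F\Colon[0,1]\x\bar B^k\to X$ by $F(s,z):=U(s,2z)$ for $|z|\leq 1/2$ and $F(s,\rho z):=W(s,2\rho-1,z)$ for $\rho\in[1/2,1]$, $z\in S^{k-1}$. Continuity across $|z|=1/2$ follows from $W(s,0,z)=U(s,z)$; the equalities $F(i,\cdot)=f_i$ follow from $W(i,\cdot,\cdot)=w_i$ together with (\ref{eq:f z u},\ref{eq:f z Y}); and each slice $F(s,\cdot)$ is $(S^{k-1},Y/\!\!\sim)$-compatible because $F(s,z)=W(s,1,z)\in Y$ and $\pi\circ F(s,z)=V'(s,0)$ is independent of $z$ on $S^{k-1}$.

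The main obstacle will be purely organisational: correctly identifying the three faces of the square $[0,1]\x[0,1]$ on which the lift $W$ is prescribed and verifying that the prescriptions agree on the two corner edges $\{0,1\}\x\{0\}\x S^{k-1}$ where they overlap. Once this bookkeeping is in place, Remark \ref{rmk:pi X X'} does all the real work.
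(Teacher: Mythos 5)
Your proposal is correct and follows the same route as the paper's proof: define $w_i(t,z):=f_i\big(\tfrac{t+1}{2}z\big)$, feed $w_0,w_1$, the restriction of the connecting homotopy to $[0,1]\times S^{k-1}$, and $(s,t,z)\mapsto V'(s,(1-t)z)$ into Remark \ref{rmk:pi X X'}, and assemble the resulting lift with $U$ by the radial splitting at $|z|=1/2$. The only cosmetic difference is that you phrase the $w_i$ as "recalled" from Claim \ref{claim:f B k}, whereas the paper simply defines them by the same formula directly from the hypotheses (\ref{eq:f z u},\ref{eq:f z Y}) — the mathematics is identical.
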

\begin{proof}[Proof of Claim \ref{claim:f u v'}] Let $(u,v'):[0,1]\x\bar B^k\to X\x Y/\!\!\sim$ be continuous maps such that 
\begin{equation}\label{eq:}(u,v')(i,\cdot)=(u_i,v'_i),\quad (u,v')(\{s\}\x S^{k-1})\sub \im(\iota,\pi),\,\forall s\in[0,1].\end{equation} 
We define 
\begin{equation}\label{eq:w i}w_i:[0,1]\x S^{k-1}\to Y,\quad w_i(t,z):=f_i\big(\frac{t+1}2z\big),\end{equation}
 for $i=0,1$, and $v:=u|_{[0,1]\x S^{k-1}}$. Then the conditions of Remark \ref{rmk:pi X X'} with $Y:=S^{k-1}$ and $X,X',u_i$ replaced by $Y,Y',w_i$ are satisfied. To see this, note that (\ref{eq:v i u i}) follows from (\ref{eq:f z u}). We define 
\begin{equation}\label{eq:w'}w':[0,1]\x[0,1]\x S^{k-1}\to Y/\!\!\sim,\quad w'(s,t,\cdot):=v'\big(s,(1-t)z\big).\end{equation} 
Then for every $t\in[0,1]$, $z\in S^{k-1}$, we have
\begin{eqnarray}\nn \pi\circ w_i(t,z)&=&\pi\circ f_i\big(\frac{t+1}2z\big)\\ 
\nn&=&v_i'((1-t)z)\\
\nn&=&v'\big(i,(1-t)z\big)\\
\nn&=&w'(i,t,z).
\end{eqnarray}
Here in the first step we used (\ref{eq:f z u}) with $f,u$ replaced by $f_i,u_i$ and $z\in S^{k-1}_{1/2}$, and in the second step we used (\ref{eq:f z Y}) with $f,v'$ replaced by $f_i,v_i'$. So the second hypothesis of Remark \ref{rmk:pi X X'} is also satisfied, with $Y:=S^{k-1},u':=w'$ and $X,X',u_i$ replaced by $Y,Y',w_i$. It follows that there exists a continuous map $w:[0,1]\x[0,1]\x S^{k-1}\to Y$ such that 
\begin{equation}\label{eq:w i w i}\pi\circ w=w',\quad w(i,\cdot,\cdot)=w_i,\,i=0,1,\quad w(\cdot,0,\cdot)=v=u|_{[0,1]\x S^{k-1}}.\end{equation}
We define $f:[0,1]\x\bar B^k\to X$ by 
\[f(s,z):=\left\{\begin{array}{ll}u(s,2z),&\textrm{if }|z|\leq\frac12,\\
w\big(s,2|z|-1,z/|z|\big),&\textrm{if }\frac12<|z|\leq1.
\end{array}\right.\]
By the third equality in (\ref{eq:w i w i}) the map $f$ is continuous. Furthermore, (\ref{eq:f z u}) with $f,u$ replaced by $f_i,u_i$, (\ref{eq:w i}) and the second equality in (\ref{eq:w i w i}) imply that $f(i,z)=f_i(z)$, for $i=0,1$ and every $z\in\bar B^k$. Finally, let $s\in[0,1]$. Then by (\ref{eq:w'}) and the first equality in  (\ref{eq:w i w i}) we have, $\pi\circ f(s,z)=v'(s,0)$, for $z\in S^{k-1}$. Hence $f(s,\cdot)$ is $\big(S^{k-1},Y/\!\!\sim\big)$-compatible, and therefore $f$ is a $\big(S^{k-1},Y/\!\!\sim\big)$-compatible homotopy from $f_0$ to $f_1$. This proves Claim \ref{claim:f u v'}.
\end{proof}
We choose a map $f:\bar B^k\to X$ as in Claim \ref{claim:f B k} and define $\psi(\wt a)$ to be the $(S^{k-1},Y/\!\!\sim)$-compatible homotopy class of $f$. By Claim \ref{claim:f u v'} this definition does not depend on the choice of $f$. 
\begin{claim}\label{claim:phi psi} The maps $\phi$ and $\psi$ are inverses of each other.
\end{claim}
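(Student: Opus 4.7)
The plan is to establish both identities $\psi \circ \phi = \id$ and $\phi \circ \psi = \id$ by exhibiting an explicit compatible homotopy in each direction, built from the same inner-disk/outer-annulus decomposition that defines $\psi$.

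For $\psi \circ \phi = \id$, I would start from $[u]$ with $u(S^{k-1})$ contained in a single leaf $F$, so that $\phi([u]) = [(u, c)]$ with $c := \pi \circ u(z_0)$ a constant map. When computing $\psi([(u,c)])$, the function $w'(r, z) = c$ is constant, so one can take the simplest lift $w(r, z) := u(z)$ (which satisfies $\pi \circ w = c$ because $u(S^{k-1}) \subseteq F = \pi^{-1}(c)$), giving $f(z) = u(2z)$ on $|z| \leq 1/2$ and $f(z) = u(z/|z|)$ on $|z| > 1/2$. Setting $r_t := (1+t)/2$, the map
\[G_t(z) := \begin{cases} u(z/r_t) & |z| \leq r_t, \\ u(z/|z|) & r_t < |z| \leq 1, \end{cases}\]
provides a continuous homotopy with $G_0 = f$ and $G_1 = u$, and since $G_t(S^{k-1}) \subseteq u(S^{k-1}) \subseteq F$ for every $t$, it is $(S^{k-1}, Y/\!\!\sim)$-compatible, giving $[f] = [u]$.

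For $\phi \circ \psi = \id$, I would take $[(u, v')]$ arbitrary, use the fibration lift $w$ from the definition of $f := \psi([(u, v')])$, and observe that $\pi \circ f|_{S^{k-1}} = v'(0)$ implies $\phi([f]) = [(f, c_{v'(0)})]$. I propose the homotopy
\[F_t(z) := \begin{cases} \bigl( u(z/r_t),\; v'((1-t) z) \bigr) & |z| \leq r_t, \\ \bigl( w\bigl(t (|z| - r_t)/(1 - r_t),\; z/|z|\bigr),\; v'((1-t) z) \bigr) & r_t < |z| \leq 1, \end{cases}\]
with $r_t := 1 - t/2$. Direct substitution will give $F_0 = (u, v')$ and $F_1 = (f, c_{v'(0)})$; continuity across the seam $|z| = r_t$ follows from $w(0, z) = u(z)$; and on $S^{k-1}$ the relation $\pi \circ w(t, z) = v'((1-t) z)$ yields $(S^{k-1}, \{\im(\iota, \pi)\})$-compatibility throughout.

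The step I expect to be the main obstacle is continuity of $F$ at $t = 0$ along the degenerating outer annulus, since the denominator $1 - r_t = t/2$ vanishes. The key simplification should be the identity $t (|z| - r_t)/(1 - r_t) = 2|z| - 2 + t$, which tends to $0$ as $(t_n, z_n) \to (0, z_0)$ with $|z_n| > r_{t_n}$ (forcing $|z_0| = 1$); this will ensure $F_{t_n}(z_n) \to (w(0, z_0), v'(z_0)) = (u(z_0), v'(z_0)) = F_0(z_0)$. After handling this point the remaining verifications are routine, and nothing beyond the Serre fibration hypothesis on $\pi$ already invoked in Claim \ref{claim:f B k} will be needed.
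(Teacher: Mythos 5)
Your proposal is correct and follows essentially the same route as the paper: both directions reduce to exhibiting an explicit radial homotopy between the nested-disk composition and the original map, relying (implicitly, in the first direction) on the well-definedness of $\psi$ established in the preceding claim. The only cosmetic difference is in the $\phi\circ\psi=\id$ direction, where the paper uses the single formula $h(s,z)=\bigl(f((1-s/2)z),\,v'(sz)\bigr)$ — thereby avoiding the $0/0$ issue you flag, since it never re-expresses $f$ in terms of $u$ and $w$ — while your $F_t$ carries the two-piece structure explicitly; your simplification $t(|z|-r_t)/(1-r_t)=2|z|-2+t$ makes the two equivalent.
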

\begin{pf}[Proof of Claim \ref{claim:phi psi}] To see that $\psi\circ\phi=\id$ let $a\in\big[\bar B^k,S^{k-1};X,Y/\!\!\sim\big]$. We choose a representative $u$ of $a$, and define $f:\bar B^k\to X$ by
\[f(z):=\left\{\begin{array}{ll}u(2z),&\textrm{if }|z|\leq\frac12,\\
u(z/|z|),&\textrm{otherwise.}\end{array}\right.\] 
Note that $f$ is continuous and $\big(S^{k-1},Y/\!\!\sim\big)$-compatible. Note that $\big(S^{k-1},Y/\!\!\sim\big)$-compatible homotopy classes of $u$ and $f$ agree. The identity $\psi\circ\phi=\id$ follows now from the next claim. 
\begin{claim}\label{claim:f a} The $\big(S^{k-1},Y/\!\!\sim\big)$-compatible homotopy class of $f$ equals $\psi\circ\phi(a)$. 
\end{claim}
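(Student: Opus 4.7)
My plan is to verify Claim \ref{claim:f a} by a direct unpacking of the definitions of $\phi$ and $\psi$, observing that the particular map $f$ constructed in the argument for $\psi\circ\phi=\id$ is itself a valid lift in the sense of Claim \ref{claim:f B k}. Once that is established, the definition of $\psi$ (together with the well-definedness furnished by Claim \ref{claim:f u v'}) gives the conclusion immediately.

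More precisely, I would proceed as follows. First, I would fix a point $z_0\in S^{k-1}$ and record that $\phi(a)=[\hat\phi(u)]$, where $\hat\phi(u)=(u,v')$ and $v'\Colon \bar B^k\to Y/\!\!\sim$ is the constant map with value $\pi\circ u(z_0)$. Next, I would compute $\psi\circ\phi(a)$ by choosing a map $g\Colon\bar B^k\to X$ satisfying (\ref{eq:f z u}) and (\ref{eq:f z Y}) with $(u,v')$ as above; by Claim \ref{claim:f B k} such a $g$ exists, and by Claim \ref{claim:f u v'} the $(S^{k-1},Y/\!\!\sim)$-compatible homotopy class of $g$ is independent of the chosen representative of $\phi(a)$ and of the lifting $g$ itself.

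The key observation is that the $f$ constructed in the proof of the identity $\psi\circ\phi=\id$ is already such a valid choice of $g$. Indeed, for $|z|\leq \tfrac12$ we have $f(z)=u(2z)$ by definition, which is exactly (\ref{eq:f z u}). For $\tfrac12<|z|\leq 1$ we have $f(z)=u(z/|z|)$, and since $u$ is $(S^{k-1},Y/\!\!\sim)$-compatible its restriction to $S^{k-1}$ lands in $Y$ with $\pi\circ u|_{S^{k-1}}$ constantly equal to $\pi\circ u(z_0)$; hence $f(z)\in Y$ and
\[\pi\circ f(z)=\pi\circ u(z/|z|)=\pi\circ u(z_0)=v'\big((2/|z|-2)z\big),\]
which is exactly (\ref{eq:f z Y}). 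Therefore $f$ qualifies as a lift in the sense of Claim \ref{claim:f B k} for the representative $(u,v')$ of $\phi(a)$, and so $\psi\circ\phi(a)=[f]$ by the definition of $\psi$.

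There is no real obstacle here: the entire content of the claim is a book-keeping verification that the two ad hoc constructions -- the one used to witness surjectivity of $\psi\circ\phi$ on representatives and the one used to define $\psi$ on $\phi(a)$ -- can be taken to coincide, after which the independence of $\psi$ on the choice of lift (Claim \ref{claim:f u v'}) finishes the job. The only subtlety I would be careful about is to make the $(S^{k-1},Y/\!\!\sim)$-compatibility of $u$ play its role correctly, namely to ensure that $\pi\circ u$ is constant on $S^{k-1}$ so that $v'$ in the definition of $\hat\phi(u)$ is genuinely independent of the auxiliary point $z_0$ and so that $f$ on the outer annulus verifies (\ref{eq:f z Y}).
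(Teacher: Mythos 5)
Your proof is correct and takes the same approach as the paper: you observe that $f$ satisfies equations (\ref{eq:f z u}) and (\ref{eq:f z Y}) with $v':=\pi\circ u(z_0)$ the constant map, hence $f$ is a valid representative of $\psi\circ\phi(a)$ by the definition of $\psi$ and the well-definedness in Claim \ref{claim:f u v'}. You merely spell out the verification of (\ref{eq:f z Y}) on the annulus in more detail than the paper does.
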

\begin{proof}[Proof of Claim \ref{claim:f a}] By definition $\phi(a)=\big[u,\pi\circ u(z_0)\big]$, where $z_0\in S^{k-1}$ is an arbitrary point. Furthermore, equalities (\ref{eq:f z u},\ref{eq:f z Y}) are satisfied, with $v':=\pi\circ u(z_0)$. Hence $f$ represents $\psi\circ\phi(a)$. This proves Claim \ref{claim:f a}.
\end{proof}
To see that $\phi\circ\psi=\id$ let $\wt a\in\big[\bar B^k,S^{k-1};X\x Y/\!\!\sim,\{\im(\iota,\pi)\}\big]$. We choose a representative $(u,v')$ of $\wt a$ and a continuous map $f:\bar B^k\to X$ such that the conditions (\ref{eq:f z u},\ref{eq:f z Y}) hold. We fix $z_0\in S^{k-1}$. Then by definition the maps $\big(f,\pi\circ f(z_0)\big)$ and $\phi\circ\psi(\wt a)$ are  homotopic compatibly with $\big(S^{k-1},\{\im(\iota,\pi)\}\big)$. The identity $\phi\circ\psi=\id$ follows now from the next claim.
\begin{claim}\label{claim:f wt a} The $\big(S^{k-1},\{\im(\iota,\pi)\}\big)$-compatible homotopy class of $\big(f,\pi\circ f(z_0)\big)$ equals $\wt a$. 
\end{claim}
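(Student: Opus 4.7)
The plan is to construct an explicit $\big(S^{k-1}, \{\im(\iota, \pi)\}\big)$-compatible homotopy from $\big(f, \pi \circ f(z_0)\big)$ to $(u, v')$. I would first note that since $|z_0|=1$, plugging $z_0$ into (\ref{eq:f z Y}) yields $(2/|z_0|-2)z_0 = 0$, hence $\pi \circ f(z_0) = v'(0)$, so the source of the desired homotopy is the pair $\big(f, v'(0)\big)$ with $v'(0)$ viewed as a constant map on $\bar B^k$.

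The key idea is to shrink the outer annulus of $f$ back into the inner disk while simultaneously unfolding the constant $v'(0)$ into $v'$, using a single scaling parameter. Let $w:[0,1]\x S^{k-1}\to Y$ be the continuous lift from the proof of Claim \ref{claim:f B k} satisfying $\pi\circ w(r,z)=v'((1-r)z)$ and $w(0,z)=u(z)$. I would define $H:[0,1]\x\bar B^k\to X\x Y/\!\!\sim$ by $H(s,z):=\big(f_s(z),\,v'(sz)\big)$, where
\[
f_s(z) := \begin{cases} u\!\left(\dfrac{2z}{1+s}\right), & |z| \leq \dfrac{1+s}{2}, \\[6pt] w\!\left(2|z| - 1 - s,\, \dfrac{z}{|z|}\right), & \dfrac{1+s}{2} \leq |z| \leq 1. \end{cases}
\]

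Then I would verify three items. Continuity of $f_s$ across the seam $|z|=(1+s)/2$ follows from $w(0,\cdot)=u$; the endpoint identities $H(0,\cdot)=\big(f,v'(0)\big)$ and $H(1,z)=\big(u(z),v'(z)\big)$ are immediate (at $s=1$ the annulus collapses and the inner formula covers all of $\bar B^k$); and the crucial boundary condition is that for $|z|=1$ one has $H(s,z)=\big(w(1-s,z),\,v'(sz)\big)$, which lies in $\im(\iota,\pi)$ because the defining property of $w$ gives $\pi\circ w(1-s,z)=v'(sz)$.

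The main obstacle is coordinating the two deformations so that the boundary condition is preserved for every intermediate $s$. The trick is to use the same parameter $s$ in the lift argument $w(1-s,\cdot)$ and in the rescaling $v'(s\,\cdot)$; these are compatible under $\pi$ precisely because $w$ was built to lift the straight-line contraction of $v'|_{S^{k-1}}$ to $v'(0)$, so $\pi\circ w(1-s,z)=v'(sz)$ holds identically on $S^{k-1}$.
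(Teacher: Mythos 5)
Your proposal is correct, and the underlying idea — radially shrink the annular part of $f$ while simultaneously "unfolding" the constant $v'(0)$ into $v'$ — is the same as the paper's. However, the execution is rather heavier than necessary. The paper simply defines $h(s,z):=\big(f\big((1-\tfrac s2)z\big),v'(sz)\big)$, a single one-line formula valid for \emph{any} $f$ satisfying (\ref{eq:f z u},\ref{eq:f z Y}), and checks the endpoint and boundary conditions directly from those two properties: at $s=0$ and $|z|=1$, (\ref{eq:f z Y}) gives $\pi\circ f((1-\tfrac s2)z)=v'(sz)$, and at $s=1$ (\ref{eq:f z u}) gives $f(z/2)=u(z)$. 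No case analysis on $|z|$ is needed, and no extraneous data enters.

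You instead build an explicit piecewise family $f_s$ out of the lift $w$ from Claim \ref{claim:f B k}. This works, but has two drawbacks worth noting. First, at the point in the proof where this claim appears, $f$ is an \emph{arbitrary} continuous map satisfying (\ref{eq:f z u},\ref{eq:f z Y}); it was not declared to be the particular map constructed in Claim \ref{claim:f B k}. To make your argument apply to the $f$ that was actually fixed, you would need to reduce to that specific $f$ via Claim \ref{claim:f u v'} (any two valid $f$'s are $(S^{k-1},Y/\!\!\sim)$-compatibly homotopic, and this induces a $\big(S^{k-1},\{\im(\iota,\pi)\}\big)$-compatible homotopy between the corresponding pairs); alternatively, you could observe that for arbitrary $f$ one may set $w(r,z):=f\big(\tfrac{1+r}{2}z\big)$ on $[0,1]\x S^{k-1}$, and then (\ref{eq:f z u},\ref{eq:f z Y}) yield exactly the required properties of $w$. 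Either remark should be made explicitly. Second, the piecewise definition with its $s$-dependent seam $|z|=(1+s)/2$ reintroduces a continuity check that the paper's formula avoids entirely, since there all of the needed information about $f$ on the annulus is already packaged in (\ref{eq:f z Y}).
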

\begin{pf}[Proof of Claim \ref{claim:f wt a}] We define the map $h:[0,1]\x\bar B^k\to X\x Y/\!\!\sim$ by
\[h(s,z):=\Big(f\Big(\big(1-\frac s2\big)z\Big),v'(sz)\Big).\]
Then 
\[h(0,\cdot)=\big(f,\pi\circ f(z_0)\big),\quad h(1,\cdot)=(u,v'),\]
\[ h(s,z)\in\im\{\iota,\pi\},\,\forall s\in[0,1],\,z\in S^{k-1}.\]
Here in the first equality we used (\ref{eq:f z Y}), in the second equality we used (\ref{eq:f z u}), and in the condition we used (\ref{eq:f z Y}) again. Hence $h$ is a $\big(S^{k-1},\{\im(\iota,\pi)\}\big)$-compatible homotopy from $\big(f,\pi\circ f(z_0)\big)$ to $(u,v')$. This proves Claim \ref{claim:f wt a} and hence Claim \ref{claim:phi psi}, and concludes the proof of Proposition \ref{prop:phi}.
\end{pf}
\end{pf}
\end{proof}

\subsection*{Open compact topology}\label{subsec:open compact}

The following lemma was used in the proofs of Lemmas \ref{le:Psi * Phi}, \ref{le:al}. For two topological spaces $X$ and $Y$ we equip the set of continuous maps $C(X,Y)$ with the compact open topology.
\begin{lemma}\label{le:compact open} Let $X,Y$ and $Z$ be topological spaces. Then the following statements hold. 
\begin{enui}
\item\label{le:compact open:comp} If $Y$ is locally compact and Hausdorff then the composition map $C(X,Y)\x C(Y,Z)\ni(f,g)\mapsto g\circ f\in C(X,Z)$ is continuous.
\item\label{le:compact open:ev} If $X$ is locally compact and Hausdorff then the evaluation map $C(X,Y)\x X\ni(f,x)\mapsto f(x)\in Y$ is continuous. 
\item\label{le:compact open:exp}If $X$ is Hausdorff and $Y$ is locally compact and Hausdorff then the map $\phi:C(X,C(Y,Z))\to C(X\x Y,Z)$ defined by $\phi(f)(x,y):=f(x)(y)$, for $(x,y)\in X\x Y$, is well-defined and a homeomorphism. 
\item\label{le:compact open:d}If $X$ is compact Hausdorff and $Y$ is metrized by a metric $d$, then $C(X,Y)$ is metrized by the metric $d'$ defined by 
\begin{equation}\label{eq:d'}d'(f,g):=\sup\big\{d(f(x),g(x))\,\big|\,x\in X\big\}.\end{equation}
\end{enui}
\end{lemma}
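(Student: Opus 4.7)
The plan is to treat the four parts in turn; all are standard consequences of the definition of the compact-open topology, with subbasis $U(K,V):=\{f\in C(A,B) : f(K)\subseteq V\}$ indexed by compact $K\subseteq A$ and open $V\subseteq B$.

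For (ii), given $(f_0,x_0)$ and an open $V\ni f_0(x_0)$, I use local compactness of $X$ to select a compact neighborhood $K$ of $x_0$ contained in $f_0^{-1}(V)$; the product $U(K,V)\times\mathrm{int}(K)$ is then a neighborhood of $(f_0,x_0)$ mapping into $V$ under evaluation. For (i), given a subbasic open $U(K,W)$ containing $g_0\circ f_0$, the compact set $f_0(K)$ lies in the open set $g_0^{-1}(W)$, and local compactness of $Y$ provides a compact $L$ with $f_0(K)\subseteq\mathrm{int}(L)\subseteq L\subseteq g_0^{-1}(W)$; the product $U(K,\mathrm{int}(L))\times U(L,W)$ is then a neighborhood of $(f_0,g_0)$ whose composition image lies in $U(K,W)$.

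For (iii), I define the candidate inverse $\psi:C(X\times Y,Z)\to C(X,C(Y,Z))$ by $\psi(g)(x)(y):=g(x,y)$. Each slice $\psi(g)(x)$ is continuous as a restriction of $g$, and $\psi(g):X\to C(Y,Z)$ is continuous by the tube lemma, since the preimage of a subbasic $U(K,V)\subseteq C(Y,Z)$ is $\{x:\{x\}\times K\subseteq g^{-1}(V)\}$, which is open because for any $x_0$ in it the compact slice $\{x_0\}\times K$ inside $g^{-1}(V)$ admits a tube $M\times K\subseteq g^{-1}(V)$. That $\phi(f)$ is continuous for $f\in C(X,C(Y,Z))$ follows from the factorization $\phi(f)=\mathrm{ev}\circ(f\times\mathrm{id}_Y)$ with $\mathrm{ev}$ continuous by (ii), which is the step that uses local compactness of $Y$. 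The identities $\phi\circ\psi=\mathrm{id}$ and $\psi\circ\phi=\mathrm{id}$ are immediate from the formulas. The main obstacle is showing $\phi$ and $\psi$ themselves are continuous: for $\psi$, unpack any subbasic $U(K',V')\subseteq C(X,C(Y,Z))$ in terms of finitely many subbasics of $V'$ and apply the tube lemma to each compact pair; for $\phi$, at a subbasic $U(C,V)\ni\phi(f_0)$ with compact $C\subseteq X\times Y$, cover $C$ by finitely many product rectangles $M_i\times N_i$ with $\overline{N_i}$ compact (using local compactness of $Y$) so that $f_0(M_i)\subseteq U(\overline{N_i},V)$, then take the intersection of the corresponding subbasic opens around $f_0$. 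Alternatively, since $\psi=\phi^{-1}$ as bijections, continuity of $\psi$ together with a direct verification that $\phi$ is open would also close the argument.

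For (iv), I would verify that on a compact Hausdorff $X$ the compact-open topology on $C(X,Y)$ coincides with the topology of uniform convergence, which is metrized by $d'$. In one direction, given a subbasic $U(K,V)\ni f_0$, compactness of $f_0(K)$ inside the open $V$ yields a positive distance $\eta$ from $f_0(K)$ to $Y\setminus V$, and any $d'$-ball of radius less than $\eta$ around $f_0$ is contained in $U(K,V)$. Conversely, given $B_{d'}(f_0,\varepsilon)$, cover the compact set $f_0(X)$ by finitely many open $d$-balls of radius $\varepsilon/3$ centered at points $y_i$, set $K_i:=f_0^{-1}(\overline{B_d(y_i,\varepsilon/3)})$ (compact in $X$) and $V_i:=B_d(y_i,\varepsilon/2)$; then $\bigcap_i U(K_i,V_i)$ is an open neighborhood of $f_0$ in the compact-open topology lying inside $B_{d'}(f_0,\varepsilon)$.
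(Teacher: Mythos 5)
The paper does not actually prove this lemma; it simply cites it as standard with a pointer to Hatcher's \emph{Algebraic Topology}. Your proposal, by contrast, supplies the arguments, and they are essentially the correct textbook ones: parts (\ref{le:compact open:comp}), (\ref{le:compact open:ev}), and (\ref{le:compact open:d}) are carried out cleanly and correctly, and the factorization $\phi(f)=\mathrm{ev}\circ(f\times\mathrm{id}_Y)$ for the well-definedness half of (\ref{le:compact open:exp}) is exactly the right move, exposing where local compactness of $Y$ enters.

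One step in part (\ref{le:compact open:exp}) is slightly misattributed and worth tightening. You invoke ``the tube lemma'' in two roles. For showing each $\psi(g):X\to C(Y,Z)$ is continuous, the tube lemma is indeed the right tool (the preimage of a subbasic $U(K,V)$ is open because a compact slice inside the open set $g^{-1}(V)$ admits a tube). But for the continuity of $\psi$ itself as a map $C(X\times Y,Z)\to C(X,C(Y,Z))$, the tube lemma is not the operative lemma. What you actually need is the observation that, because $X$ is Hausdorff, compact subsets of $X$ are normal, and the shrinking lemma for finite open covers of a normal space then shows that the sets $U(K',U(L,W))$, with $K'\subseteq X$ compact, $L\subseteq Y$ compact, $W\subseteq Z$ open, already form a subbasis for the compact-open topology on $C(X,C(Y,Z))$. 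Once that reduction is in place, $\psi^{-1}\bigl(U(K',U(L,W))\bigr)=U(K'\times L,W)$ is manifestly subbasic in $C(X\times Y,Z)$, and continuity of $\psi$ follows; this is also precisely the place where the Hausdorff hypothesis on $X$ is used. Your outline for the continuity of $\phi$ (covering the compact $C\subseteq X\times Y$ by product rectangles and shrinking) is correct and uses the same normality input for the compact set $C$. So the plan is sound; just replace the second ``tube lemma'' invocation by the shrinking-lemma/subbasis reduction and spell it out.
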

\begin{proof}[Proof of Lemma \ref{le:compact open}]\setcounter{claim}{0} These are standard results, see for example the book \cite{Hat}.
\end{proof}
\begin{lemma}\label{le:m om cont} For every symplectic vector space $(V,\om)$ the map $m_\om:C([0,1],\Aut\om)\to\R$ is continuous with respect to the compact open topology.
\end{lemma}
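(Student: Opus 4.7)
The plan is to express $m_\om$ as the composition of three explicitly continuous maps and invoke continuity of each factor. Recall from the definition on page at (\ref{eq:m om C}) that $m_\om(\Phi) = 2\al(\rho_\om\circ\Phi)$, so $m_\om$ factors as
\[
C([0,1],\Aut\om) \xrightarrow{\;\rho_{\om,*}\;} C([0,1],S^1) \xrightarrow{\;\al\;} \R \xrightarrow{\;\cdot 2\;} \R,
\]
where $\rho_{\om,*}(\Phi) := \rho_\om\circ\Phi$. Multiplication by $2$ is obviously continuous, and continuity of $\al$ is exactly the content of Lemma \ref{le:al}. So the only thing to verify is that the post-composition map $\rho_{\om,*}$ is continuous.

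For this I would apply Lemma \ref{le:compact open}(\ref{le:compact open:comp}) with $X := [0,1]$, $Y := \Aut\om$, $Z := S^1$, holding the second factor fixed at $g := \rho_\om$. Note that $\Aut\om$ is a Lie group, hence locally compact and Hausdorff, so the hypothesis of that lemma is satisfied. Moreover, $\rho_\om:\Aut\om\to S^1$ is continuous by Proposition \ref{prop:rho}. Therefore the map $\Phi\mapsto \rho_\om\circ\Phi$, obtained by fixing $g = \rho_\om$ in the composition map, is continuous from $C([0,1],\Aut\om)$ to $C([0,1],S^1)$.

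Composing the three continuous maps gives continuity of $m_\om$. There is no real obstacle here: all three facts are either cited from Proposition \ref{prop:rho}, from Lemma \ref{le:al}, or follow from the standard compact-open topology machinery in Lemma \ref{le:compact open}. The only minor point to keep in mind is to invoke local compactness of $\Aut\om$ so that Lemma \ref{le:compact open}(\ref{le:compact open:comp}) applies.
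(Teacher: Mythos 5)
Your proposal is correct and matches the paper's proof essentially verbatim: the paper also factors $m_\om$ through post-composition with $\rho_\om$ (using Lemma \ref{le:compact open}(\ref{le:compact open:comp})) followed by the winding map $\al$ (using Lemma \ref{le:al}). The only cosmetic difference is that you explicitly single out the final multiplication by $2$ as a third factor, which the paper absorbs without comment.
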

\begin{proof}[Proof of Lemma \ref{le:m om cont}]\setcounter{claim}{0} Since $\rho_\om$ is continuous, by Lemma \ref{le:compact open}(\ref{le:compact open:comp}) the map $C([0,1],\Aut\om)\ni\Phi\mapsto \rho_\om\circ\Phi\in C([0,1],S^1)$ is continuous. By Lemma \ref{le:al} the winding map $\al:C([0,1],S^1)\to \R$ is continuous. Since the map $m_\om$ is the composition of these two maps, the statement of Lemma \ref{le:m om cont} follows.
\end{proof}


\begin{thebibliography}{99}

\bibitem[Al]{Al} P. Albers, \emph{A Lagrangian Piunikhin-Salamon-Schwarz morphism and two comparison homomorphisms in Floer homology},  Int. Math. Res. Not. IMRN  2008,  {\bf no. 4}, Art. ID rnm134. 

\bibitem[Eh]{Eh} C. Ehresmann, \emph{Les connexions infinit\'esimales dans un espace fibr\'e diff\'erentiable}, Colloque de topologie (espaces fibr\'es), Bruxelles, 1950, 29-55. 

\bibitem[EP]{EP} M. Entov and L. Polterovich, \emph{Rigid subsets of symplectic manifolds}, preprint, arXiv:0704.0105v1

\bibitem[Fu]{Fu} K. Fukaya, \emph{Application of Floer homology of Langrangian submanifolds to symplectic topology}, Morse theoretic methods in nonlinear analysis and in symplectic topology,  231--276, NATO Sci. Ser. {\bf II} Math. Phys. Chem., {\bf 217}, Springer, Dordrecht, 2006. 

\bibitem[GS]{GS} A.R.~Gaio and D.A.~Salamon, \emph{Gromov-Witten invariants of symplectic quotients and adiabatic limits}, J. Symplectic Geom. {\bf 3}  (2005), {\bf no. 1}, 55--159.

\bibitem[Gi]{Gi} V. L. Ginzburg, \emph{On the Maslov class rigidity for coisotropic submanifolds}, preprint, arXiv:0910.5037v1

\bibitem[Hat]{Hat} A. Hatcher, \emph{Algebraic Topology}, Cambridge University Press, 2002.

\bibitem[Hu]{Hu} D. Husemoller, \emph{Fibre Bundles}, 2nd ed., Springer, 1966. 

\bibitem[Oh]{Oh} Y.-G. Oh, \emph{Geometry of coisotropic submanifolds in symplectic and K\"ahler manifolds}, preprint, arXiv:math/0310482

\bibitem[Mol]{Mol} P. Molino, \emph{Riemannian Foliations}, Progress in Mathematics, 73, Birkh\"auser, 1988. 

\bibitem[MS]{MS} D. McDuff and D.A. Salamon, \emph{J-Holomorphic Curves and Symplectic Topology}, AMS Colloquium Publications, Vol. 52, 2004.


\bibitem[SZ]{SZ} D. A. Salamon, E. Zehnder, \emph{Morse theory for periodic solutions of Hamiltonian systems and the Maslov index}, Comm. Pure Appl. Math. 45 (1992), 1303--1360. 

\bibitem[Zi]{ZiLeafwise} F. Ziltener, \emph{Coisotropic Submanifolds, Leaf-wise Fixed Points, and Presymplectic Embeddings}, arXiv:0811.3715, to appear in J. Symplectic Geom. {\bf 8} (2010), no. {\bf 1}

\end{thebibliography}
\end{document}